\newtheorem{thm}{Theorem}[section]
\newtheorem{lem}[thm]{Lemma}
\newtheorem{prop}[thm]{Proposition}
\newtheorem{cor}[thm]{Corollary}
\theoremstyle{definition}
\newtheorem{defn}[thm]{Definition}
\newtheorem{rem}[thm]{Remark}
\newtheorem{asmp}[thm]{Assumption}
\numberwithin{equation}{section}
\theoremstyle{remark}
\newcommand{\bba}{{\mathbb A}}
\newcommand{\bbc}{{\mathbb C}}
\newcommand{\bbq}{{\mathbb Q}}
\newcommand{\bbr}{{\mathbb R}}
\newcommand{\bbz}{{\mathbb Z}}
\font\tenscr=rsfs10 
\newcommand{\sS}{\hbox{\tenscr S}}
\newcommand{\esA}{{\EuScript{A}}}
\newcommand{\esB}{{\EuScript{B}}}
\newcommand{\esC}{{\EuScript{C}}}
\newcommand{\cM}{{\mathcal M}}
\newcommand{\co}{{\mathcal O}}
\newcommand{\cD}{{\mathcal D}}
\newcommand{\aut}{{\operatorname{Aut}}\,}
\newcommand{\Disc}{{\operatorname{Disc}}}
\newcommand{\aff}{{\operatorname {Aff}}}
\newcommand{\n}{{\operatorname {N}}}
\newcommand{\m}{{\operatorname {M}}}
\newcommand{\gl}{{\operatorname{GL}}}
\newcommand{\spl}{{\operatorname{SL}}}
\newcommand{\ord}{{\operatorname{ord}}}
\newcommand{\cl}{{\operatorname{Cl}}}
\newcommand{\res}{\operatorname{Res}}
\newcommand{\A}{\bba}
\newcommand{\Z}{\bbz}
\newcommand{\Q}{\bbq}
\newcommand{\R}{\bbr}
\newcommand{\C}{\bbc}
\newcommand{\md}{d^{\times}}
\newcommand{\twtw}[4]
{\begin{pmatrix}{#1}&{#2}\\{#3}&{#4}\\\end{pmatrix}}
\newcommand{\F}{\mathbb F}
\newcommand{\rf}{{\rm f}}
\newcommand{\ww}{x}
\begin{document}

\title[Orbital $L$-function]
{Orbital $L$-functions for the space of
binary cubic forms}

\author[Takashi Taniguchi]
{Takashi Taniguchi}
\address{
Department of Mathematics,
Graduate School of Science, Kobe University,
1-1, Rokkodai, Nada-ku, Kobe 657-8501, Japan}
\address{
Department of Mathematics, Princeton University,
Fine Hall, Washington Road, Princeton, NJ 08540}
\email{tani@math.kobe-u.ac.jp}
\author[Frank Thorne]
{Frank Thorne}
\address{Department of Mathematics,
University of South Carolina,
1523 Greene Street, Columbia, SC 29208
}
\email{thorne@math.sc.edu}
\date{\today}
\keywords{binary cubic forms; prehomogeneous vector spaces;
Shintani zeta functions; $L$-fucntions; cubic rings and fields\\
\quad 2010 {\it Mathematics Subject Classfication}. Primary 11M41, Secondary 11E76}


\maketitle

\begin{abstract}
We introduce the notion of orbital $L$-functions
for the space of binary cubic forms
and investigate their analytic properties.
We study their functional equations and residue formulas in some detail.
Aside from the intrinsic interest,
results from this paper are used to 
prove the existence of secondary terms in counting
functions for cubic fields.
This is worked out in a companion paper.
\end{abstract}

\tableofcontents

\section{Introduction}\label{sec:intro}

The theory of {\em prehomogeneous vector spaces}
was initiated by M. Sato in early 1960s.
A finite dimensional representation of complex algebraic group
$(G,V)$ is called a prehomogeneous vector space if
there exists a Zariski open orbit.
One arithmetic significance of this is,
if $(G,V)$ is defined over a number field,
then there exist {\em zeta functions} associated to $(G,V)$
which have analytic continuations and satisfy functional equations.
This was discovered by M. Sato and
Shintani \cite{sash} and numerous number theoretic applications
have been given (see e.g., \cite{dawrb}, \cite{ibsaa}, \cite{nakagawa},
\cite{shintania}, \cite{shintanib}, \cite{zps}, \cite{wryu}, \cite{yukiec}.)

Let $(G,V)$ be the space of binary cubic forms:
\begin{equation*}
G:=\gl_2,
\qquad
V:=\{x(u,v)=x_1u^3+x_2u^2v+x_3uv^2+x_4v^3
\}.
\end{equation*}
The discriminant
$
\Disc(x(u,v))=x_2^2x_3^2+18x_1x_2x_3x_4-4x_1x_3^3-4x_2^3x_4-27x_1^2x_4^2
$
is relatively invariant under the action of $G$,
i.e., $\Disc(gx)=(\det g)^2\Disc(x)$.
We denote by $V^\ast$ the dual
representation of $G$, which is similar to $V$ but
has a slightly different integral structure.

This $(G,V)$ is an interesting example of a prehomogeneous vector space,
and the associated zeta functions were studied extensively
by Shintani \cite{shintania}.
He introduced the Dirichlet series
\[
\xi_\pm(s)
:=\sum_{\substack{x\in\spl_2(\Z)\backslash V(\Z),\ \pm\Disc(x)>0}}
\frac{|{\rm Stab}(x)|^{-1}}{|\Disc(x)|^s}
\]
associated to the positive and negative subsets of $V(\Z)$,
and he similarly associated $\xi^\ast_\pm(s)$ to $V^\ast(\Z)$.
(Here $|{\rm Stab}(x)|$ is the order of the stabilizer
of $x$ in $\spl_2(\Z)$.)
Then he established their notable analytic properties.

\begin{thm}[Shintani]\label{thm:intro_Shintnai}
The four Dirichlet series
$\xi_\pm(s)$ and $\xi^\ast_\pm(s)$
have holomorphic continuations
to the whole complex plane except for simple poles at
$s=1,5/6$, and we have explicit formulas for their residues.
Moreover, these Dirichlet series satisfy the functional equation
\[
\begin{pmatrix}\xi_+(1-s)\\\xi_-(1-s)\end{pmatrix}
=
\frac{3^{3s-2}}{2\pi^{4s}}
\Gamma(s)^2\Gamma\left(s-\tfrac16\right)\Gamma\left(s+\tfrac16\right)
\begin{pmatrix}
\sin 2\pi s&\sin \pi s\\
3\sin \pi s&\sin2\pi s\\
\end{pmatrix}
\begin{pmatrix}\xi^\ast_+(s)\\\xi^\ast_-(s)\end{pmatrix}.
\]
\end{thm}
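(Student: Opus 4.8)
The plan is to recover this through the Sato--Shintani method of prehomogeneous zeta functions, in the form later streamlined by Datskovsky--Wright \cite{dawrb}. Fix a Schwartz--Bruhat function $\Phi$ on $V_\R$, identify $t\in\R_{>0}$ with the scalar matrix $tI_2$ (so that $\Disc(tgx)=t^4\Disc(x)$ for $g\in\spl_2(\R)$), and form the zeta integral
\[
Z_\pm(\Phi,s)=\int_0^\infty\int_{\spl_2(\Z)\backslash\spl_2(\R)}t^{4s}
\sum_{\substack{x\in V(\Z)\\ \pm\Disc(x)>0}}\Phi(tgx)\,dg\,\frac{dt}{t},
\]
and its counterpart $Z^\ast_\pm(\Phi,s)$ on $V^\ast$. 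Reduction theory for $\spl_2(\Z)$ together with a lattice-point bound in Siegel domains shows these converge absolutely for $\Re(s)>1$. Collapsing the sum into $\spl_2(\Z)$-orbits and integrating out the group, for a suitable factorizable $\Phi$ the integral $Z_\pm(\Phi,s)$ equals $\xi_\pm(s)$ times an explicit archimedean orbital integral over a representative of the corresponding open real orbit; evaluating the latter (a Mellin transform of $|\Disc|^{s}$ in the real variables) produces $\Gamma(s)^2\Gamma(s-\tfrac16)\Gamma(s+\tfrac16)$ up to elementary factors, and similarly on the dual side.

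Next comes the meromorphic continuation. Split the $t$-integral at $t=1$; the range $t\ge1$ is entire in $s$. On $0<t\le1$ apply the Poisson summation formula, $\sum_{x\in V(\Z)}\Phi(tgx)=c\,t^{-4}\sum_{y\in V^\ast(\Z)}\widehat\Phi({}^{t}(tg)^{-1}y)$ (the $t^{-4}$ being the Jacobian of $x\mapsto tgx$ on $V\cong\R^4$ and $c$ reflecting the integral structure of $V^\ast$), then substitute $t\mapsto t^{-1}$; this produces the dual integral $Z^\ast_\pm(\widehat\Phi,1-s)$ over the range $\ge1$ --- entire --- plus correction terms supported on the singular set $\{\Disc=0\}$. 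That set is the union of three $\gl_2$-orbits: $\{0\}$, the orbit of cubes of linear forms, and the orbit of binary cubic forms with a double but not triple linear factor. I would evaluate each orbit's contribution as the Mellin transform in $t$ of an associated orbital sum/integral of $\Phi$ (for the last, noncompact, orbit this is a small zeta-function computation in its own right). Assembling the three contributions yields explicit meromorphic functions with simple poles at $s=1$ and $s=5/6$ --- the pole at $s=5/6$ being governed by the subregular (double-root) orbit, the one responsible in the companion paper for the secondary term in the cubic-field count; the numerology $5/6=1-\tfrac16$ matches the shift in the Gamma factor. Reading off these Mellin transforms and bookkeeping the volume constants gives Shintani's explicit residues, while the absence of any other pole of $\xi_\pm$ is forced by the zeros of $\Gamma(s)^{-2}\Gamma(s-\tfrac16)^{-1}\Gamma(s+\tfrac16)^{-1}$.

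For the functional equation, equating the two expressions for $Z_\pm(\Phi,s)$ exhibits $\Gamma(s)^2\Gamma(s-\tfrac16)\Gamma(s+\tfrac16)\,\xi_\pm(s)$, modulo the entire singular corrections, as the same $\Gamma$-product evaluated at $1-s$ times a linear combination of $\xi^\ast_+(1-s)$ and $\xi^\ast_-(1-s)$; replacing $s$ by $1-s$ yields a relation of exactly the stated shape. The $2\times2$ transition matrix and the constant $\frac{3^{3s-2}}{2\pi^{4s}}$ come from the archimedean local functional equation: the real Fourier transform of the tempered distribution $|\Disc(x)|^{s-1}$, restricted to the sign components of $V_\R\setminus\{\Disc=0\}$, is a linear combination of $|\Disc(y)|^{-s}$ on the two sign components of $V^\ast_\R$, and computing this transform --- via the real orbit structure of $V$ and the Bernstein--Sato $b$-function of the cubic discriminant --- produces exactly $\Gamma(s)^2\Gamma(s-\tfrac16)\Gamma(s+\tfrac16)$ times $\left(\begin{smallmatrix}\sin2\pi s&\sin\pi s\\ 3\sin\pi s&\sin2\pi s\end{smallmatrix}\right)$, up to the elementary factor; the off-diagonal asymmetry and the powers of $3$ come from the relation between the integral structures on $V$ and $V^\ast$.

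\emph{Main obstacle.} The analytic heart is the continuation step: because of the singular vectors the theta series $\sum_{x}\Phi(tgx)$ grows like $t^{-4}$ as $t\to0$, so the integrand is not integrable near $t=0$ and the split-and-transform argument only works after the singular terms are isolated and regularized. One must run Shintani's truncation/smoothing device, cancel the divergent pieces against their counterparts on the $V^\ast$ side, and identify the finite remainder as the genuine principal part; doing this while simultaneously carrying out the explicit archimedean Fourier-transform computation that pins down the Gamma factors and the sine matrix (constant $3$ included) is where essentially all of the work lies.
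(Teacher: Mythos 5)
The paper does not prove this theorem; it is quoted as background from Shintani \cite{shintania}. Your sketch reproduces the Sato--Shintani method of that paper --- the $\spl_2(\R)\times\R_+$ zeta integral, unfolding, split at $t=1$, Poisson summation, isolation of the singular-orbit contributions via Shintani's truncation/smoothing, and the archimedean local functional equation of $|P|^s$ supplying the gamma product and the $\sin$-matrix --- so your approach is essentially the cited one, and your flagging of the $t\to 0$ regularization as the technical heart of the argument is exactly right.
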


The purpose of this paper is to study $L$-functions
corresponding to Shintani's zeta functions, extending
Datskovsky and Wright's work \cite{dawra}.
Let us briefly explain our formulation.
We fix a positive integer $N$.
For any $a \in \Z$, the usual 
partial function $\zeta(s,a)$ is defined by the 
formula $\zeta(s, a) = \sum_{n \in a + N\Z; \ n > 0} n^{-s}$.
Extending this idea, for any $a\in V(\Z)$
we define the {\em partial zeta function} by
\[
\xi_\pm(s,a)
:=\frac{1}{[\spl_2(\Z):\Gamma(N)]}
\sum_{\substack{x\in \Gamma(N)\backslash(a+NV(\Z))\\\pm\Disc(x)>0}}
\frac{|{\rm Stab}(x)|^{-1}}{|\Disc(x)|^s}.
\]
Here $\Gamma(N)$ is the principal congruence subgroup of $\spl_2(\Z)$,
and now $|{\rm Stab}(x)|$ denotes the size of the group of stabilizers
of $x$ in $\Gamma(N)$.
Since $\xi_\pm(s,a')$ counts the same orbits
if $a'\equiv a\mod N$,
it is natural to regard $a$ of $\xi_\pm(s,a)$
as an element of $V(\Z/N\Z)$
rather than of $V(\Z)$. We can easily check that
$\xi(s)=\sum_{a\in V(\Z/N\Z)}\xi(s,a)$, as expected.

Recall that the group $G(\Z/N\Z)$ acts on $V(\Z/N\Z)$.
We may now define the {\em orbital $L$-function} by
\[
\xi_\pm(s,\chi,a):=\sum_{g\in G(\Z/N\Z)}\chi(\det g)\xi_\pm(s,ga)
\]
for a Dirichlet character $\chi$ modulo $N$.
We hope the analogy to the Dirichlet $L$-function
$L(s,\chi)=\sum_{t\in(\Z/N\Z)^\times}\chi(t)\zeta(s,t)$ is clear\footnote{
As we will see in Lemma \ref{lem:zeta_basic} (3),
strong approximation for $\spl_2$ implies
that the action of $\spl_2(\Z/N\Z)$ on (the space of) partial zeta functions is
trivial. Hence it is enough to work with one dimensional representations
$\chi \circ \det$ of $\gl_2(\Z/N\Z)$; we can isolate each $\xi(s,a)$
by the orthogonality of characters on $(\Z/N\Z)^\times$.}.
This orbital $L$-function seems to be a natural class of
$L$-functions in the theory of prehomogeneous
vector spaces, and we focus on this $\xi_\pm(s,\chi,a)$.
We note that certain $L$-functions are introduced and studied in detail
in the extensive works of Datskovsky and Wright \cite{wright, dawra},
and our orbital $L$-functions are closely related to theirs.

\bigskip

In this paper we prove three main results.
The first one establishes fundamental analytic properties
for our zeta functions.
\begin{thm}\label{thm:introL}
For any congruence $N$,
the orbital $L$-functions $\xi_\pm(s,\chi,a)$
and the partial zeta functions $\xi_\pm(s,a)$
have meromorphic continuations to whole complex plane
and satisfy certain functional equations.
They are holomorphic except for possible simple poles at $s=1$ and $s=5/6$,
and their residues are described in terms of certain sums
over the $G(\Z/N\Z)$-orbit of $a\in V(\Z/N\Z)$.
We have explicit formulas of those residues in various cases.
In particular, we have residue formulas when $a \in V(\Z)$ detects cubic rings maximal at
all primes dividing $N$, or when $N$ is cube free.
\end{thm}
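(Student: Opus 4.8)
The plan is to realize the $\xi_\pm(s,\chi,a)$ as components of an adelic zeta integral in the style of Sato--Shintani, twisted by $\chi\circ\det$, and then to run the standard machine: unfolding, Poisson summation, and analysis of the boundary (singular-set) contributions.

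First I would set up the global integral. Write $G=\gl_2$ and fix a Schwartz--Bruhat function $\Phi=\Phi_\infty\otimes\Phi_f$ on $V_\bba$ whose finite component is a suitable combination of characteristic functions of cosets $a'+N\widehat V(\widehat\Z)$ (so that, via strong approximation for $\spl_2$ as in the footnote, the only surviving data at finite places is a character of $\det$), and consider
\[
Z(\Phi,s)=\int_{G(\bba)^1/G(\Q)}|\det g|^{2s}\sum_{x\in V(\Q),\ \Disc(x)\ne 0}\Phi(gx)\,dg .
\]
Unfolding against a Siegel domain for $G(\Q)\backslash G(\bba)^1$ expresses $Z(\Phi,s)$ as a finite linear combination of $\xi_+(s,\chi,a)$ and $\xi_-(s,\chi,a)$, with coefficients that are products of an archimedean integral (a ratio of $\Gamma$-functions, exactly as in Shintani's computation) and explicit non-archimedean volume factors; conversely, suitable choices of $\Phi$ recover each $\xi_\pm(s,a)$ and each $\xi_\pm(s,\chi,a)$ individually. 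Hence all assertions of Theorem~\ref{thm:introL} follow once they are established for $Z(\Phi,s)$.

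Next, cut the $g$-integral at the cusp. The part away from the cusp converges absolutely for all $s$ and is entire. To the part near the cusp I would apply the Poisson summation formula on $V_\bba$ (Shintani's distribution identity). This yields two kinds of terms. The first is the same expression with $s\mapsto 1-s$ and with $\Phi$ replaced by its Fourier transform $\widehat\Phi$ on the dual space $V^\ast(\bba)$; computing the local Fourier transforms of the chosen $\Phi_p$ turns this into the matrix functional equation relating $\bigl(\xi_+(s,\chi,a),\xi_-(s,\chi,a)\bigr)$ to the dual series $\xi^\ast_\pm(1-s,\overline\chi,\cdot)$ at level $N$, with the archimedean gamma matrix of Theorem~\ref{thm:intro_Shintnai}. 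The second kind of term comes from the orbits of $G$ on the singular set $\{\Disc=0\}$, stratified by rank: each stratum produces a one-dimensional integral whose meromorphic continuation has at most a simple pole, located at $s=1$ for the most singular stratum and at $s=5/6$ for the intermediate one (and at the symmetric points on the dual side), and nowhere else. Assembling these pieces gives the meromorphic continuation, the pole locations, and the functional equations.

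Finally, the residue at each pole is, up to the explicit archimedean constant already appearing in Shintani's and Datskovsky--Wright's formulas, a product over finite primes of a local orbital integral, and at the primes $p\mid N$ this local factor is a finite sum over the $G(\Z/N\Z)$-orbit of $a$ in $V(\Z/N\Z)$ --- a $\chi$-weighted count of points on a singular stratum mod $N$. I would then evaluate this sum in the two advertised cases. When $a\in V(\Z)$ corresponds to a cubic ring maximal at every $p\mid N$, the relevant stratum meets the orbit of $a$ in exactly the ``generic'' configuration and the local factor collapses to the unramified value; when $N$ is cube free, the Chinese remainder theorem reduces the count to the prime and prime-square cases, where the $G(\Z/N\Z)$-orbits on $V(\Z/N\Z)$ are few enough to enumerate directly. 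The main obstacle, in my view, is precisely this last step: the archimedean analysis and the global unfolding are by now routine, but one must push the Poisson summation through carefully enough to identify the singular-set boundary terms with concrete $(\Z/N\Z)$-orbit sums, and then carry out the somewhat delicate bookkeeping of level-$N$ local densities on the non-generic strata to see that they assemble into the clean closed forms claimed.
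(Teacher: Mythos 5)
Your overall strategy matches the paper's: realize $\xi_\pm(s,\chi,a)$ inside Wright's adelic zeta integral $Z(\Phi,s,\chi)$ with $\Phi_f$ the characteristic function of $a+N V_{\widehat\Z}$ (Proposition~\ref{prop:unfold}), read off the functional equation and pole locations from the adelic machinery, and then express the residues as Euler products of local densities. The one structural difference is that the paper does not re-derive the Poisson/boundary analysis: it quotes F.~Sato's Theorem~Q for the functional equation (Theorem~\ref{thm:FESato}, proved only in sketch) and Wright's residue formula for (\ref{eq:residue_distribution}), and spends its effort on the local computations. Your plan to push the Poisson summation through by hand is sound but would reproduce \cite{shintania}, \cite{wright}, \cite{fsatoe} rather than shorten the argument.

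Where the proposal genuinely understates the work is your last step. You assert that when $a$ detects a ring maximal at all $p\mid N$, ``the local factor collapses to the unramified value.'' This is not what happens. The local factor $\esC_{p^e}(a,\chi)$ still depends non-trivially on whether $\chi_p$ is ramified and on the orbital type of $a$ mod $p^2$ (or mod $p^3$ when $p=3$). For $\chi_p$ ramified and $a$ of type $(3),(21),(111)$ the value is $\pm\tau(\chi_p)^3\chi_p(P(a))/p^2$, obtained via Wright's evaluation of cubic character sums of cubic polynomials \cite{wrightc} (Propositions~\ref{prop:residue_rm_1} and~\ref{prop:residue_rm_max}); for the ramified types $(1^21_{\rm max})$, $(1^3_{\rm max})$ one gets expressions such as $\chi_p(2)\chi_p'(p)^2 p^{-1/3}$ or $\chi_p(\alpha)+\chi_p(\alpha)^2\chi_p'(p)^2p^{-1/3}$ from a careful orbit-by-orbit evaluation of the integral $I_{p^e}(a(u,v),\chi)$ over $W'_{p^e}$. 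Likewise, the cube-free case is not just ``CRT reduces to $p$ and $p^2$'': at $p^2\|N$ one must handle the non-maximal strata $(1^21_\ast),(1^3_\ast),(1^3_{\ast\ast})$ separately (Propositions~\ref{prop:residue_ur_nm}, \ref{prop:residue_rm_nm}), and at $p=3$ (where the cubic conductor is $9$) the cases are finite but delicate enough that the paper resorts to explicit machine verification. The key auxiliary facts you would also need to set up the local factorization are Lemma~\ref{lem:residue_orbit} (that $\esB_{p^e},\esC_{p^e}$ are $G_{\Z_p}$-orbital invariants once $e$ is large enough that $G_{p^e}a+p^eV_{\Z_p}$ is a single orbit) and Lemma~\ref{lem:residue_finiteorbit} (reducing the local integrals to finite sums over $G_{p^e}$ or $W'_{p^e}$). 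These, plus the character-sum evaluations, are the real content behind the phrase ``explicit formulas,'' and they do not follow from a soft collapse argument.
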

Hence, in principle we can understand the contributions
of an arbitrary subset $X\subset V(\Z)$ to the zeta function,
as long as $X$ is defined by
finite number of congruence conditions in $V(\Z)$.
We note that the first statement in this theorem
is due to F. Sato \cite{fsatoe}.
This theorem is a combination of various results in this paper,
and we give the proof in Remark \ref{rem:orbitalL}.
This result implies that there is a bias
for the class numbers of integral binary cubic forms
in arithmetic progressions, which seems to have been missed
in the literature.
We prove it in Theorem \ref{thm:bias_classnumber}.

For a function $f$ on $V(\Z/N\Z)$, its finite Fourier transform
$\widehat f$ is defined by
\[
\widehat f(b):=N^{-4}\sum_{a\in V(\Z/N\Z)}f(a)
	\exp\left(2\pi\sqrt{-1}\cdot\frac{[a,b]}{N}\right),
\qquad b\in V^\ast(\Z/N\Z),
\]
where $[*,*]$ is the canonical pairing between $V$ and $V^\ast$.
Our second result is explicit formulas of the Fourier transforms
of certain functions on $V(\Z/p^2\Z)$.
\begin{thm}[Theorems \ref{thm:hat(fp^2)}, \ref{thm:hat(fp^2')}]
\label{thm:intro_fourier}
Let $p$ be a prime not equal to $2$ and $3$.
We have the explicit formulas of the Fourier transforms of
$\Phi_{p}$ and $\Phi_{p}'$, where these are functions
over $V(\Z/p^2\Z)$ detecting nonmaximal and
nonmaximal-or-totally-ramified cubic rings at $p$, respectively.
\end{thm}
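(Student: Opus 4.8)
The plan is to reduce the four-dimensional exponential sum to a short, orbit-by-orbit computation, using the Delone--Faddeev correspondence to make $\Phi_p,\Phi_p'$ explicit and the $\gl_2$-equivariance of the finite Fourier transform to cut down the set of points at which the transform must be evaluated. To begin, recall that $\gl_2(\Z)$-orbits on $V(\Z)$ parametrize isomorphism classes of cubic rings, that this is compatible with reduction modulo $p^k$, and that the discriminant of the cubic ring $R_x$ attached to $x$ equals $\Disc(x)$. For $p\ne 2,3$ all ramification at $p$ is tame, so $\Disc(x)\not\equiv 0\bmod p$ forces $R_x$ to be maximal and unramified at $p$, $v_p(\Disc(x))=1$ forces $R_x$ to be maximal and partially ramified, and $v_p(\Disc(x))\ge 2$ holds exactly when $R_x$ is nonmaximal or totally ramified at $p$; hence $\Phi_p'=\mathbf 1[\,\Disc(x)\equiv 0\bmod p^2\,]$. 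For $\Phi_p$ one uses the classical local criterion for nonmaximality (Delone--Faddeev, in the form used by Datskovsky--Wright and Nakagawa), which is a condition mod $p^2$: stratifying by the reduction type mod $p$, the nonmaximal locus is the union of $\{x\equiv 0\bmod p\}$, the set of $x$ whose reduction mod $p$ has a double (but not triple) rational root together with $\Disc(x)\equiv 0\bmod p^2$, and the set of $x$ whose reduction mod $p$ is a nonzero triple line together with the mod-$p^2$ ``nonmaximality'' refinement of that condition; the difference $\Phi_p'-\Phi_p$ is the indicator of the triple-line stratum with the complementary refinement, i.e.\ of the totally ramified maximal locus. After moving a repeated root to $[1:0]$ by $\gl_2(\Z/p^2\Z)$ (here $p\ne 3$ is used to complete cubes), each stratum is cut out by explicit linear and divisibility congruences, and in particular $\Phi_p,\Phi_p'$ are $\gl_2(\Z/p^2\Z)$-invariant.

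Next I would exploit equivariance. Since the canonical pairing satisfies $[gx,b]=[x,g^\vee b]$ for the contragredient action $g\mapsto g^\vee$ on $V^\ast$, a direct substitution gives $\widehat{\Phi\circ g}=\widehat\Phi\circ g^\vee$; hence $\widehat{\Phi_p}$ and $\widehat{\Phi_p'}$ are invariant under $\gl_2(\Z/p^2\Z)$ acting on $V^\ast(\Z/p^2\Z)$. It therefore suffices to evaluate them on a set of orbit representatives of $V^\ast(\Z/p^2\Z)$, and for $p\ne 2,3$ (so that the integral structures of $V$ and $V^\ast$ agree up to units) this is a short explicit list: either the content of $b$ is a unit and the reduction $b\bmod p$ is a nonzero binary cubic form over $\F_p$ of one of the standard $\gl_2(\F_p)$-types (three rational roots, one rational root, irreducible, double-plus-simple root, triple line), with a further marker mod $p^2$ in the last two cases; or $b=p\,b'$ with $b'$ a binary cubic form mod $p$; or $b\equiv 0\bmod p^2$.

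Then I would carry out the sums, recalling that the normalizing factor is $N^{-4}=p^{-8}$. On the stratum $\{x\equiv 0\bmod p\}$ one writes $x=p x'$ and the contribution is
\[
p^{-8}\sum_{x'\in V(\Z/p\Z)}\exp\!\left(2\pi\sqrt{-1}\,\frac{[x',b]}{p}\right),
\]
which equals $p^{-4}$ when $b\equiv 0\bmod p$ and $0$ otherwise. On the double-root and triple-line strata, after placing the repeated root at $[1:0]$ and passing to coordinates adapted to it, the defining congruences become linear or divisibility conditions on the remaining coefficients, so the sum factors into one- and two-variable Gauss-type sums with closed-form evaluations; summing over the finitely many orbits with the correct multiplicities yields $\widehat{\Phi_p}$, and the same computation, but retaining the full triple-line stratum in place of its nonmaximality sub-locus (equivalently, transforming $\mathbf 1[\Disc\equiv 0\bmod p^2]$ directly), yields $\widehat{\Phi_p'}$. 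Finally I would record the answers uniformly in terms of $v_p$ of the content of $b$, of $\Disc(b)$, and of the splitting type of $b\bmod p$.

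The main obstacle is pinning down the mod-$p^2$ refinement of the triple-root condition with exactly the right normalization --- this is precisely the delicate point responsible for the secondary term in the companion paper --- and then carrying out the resulting character sums without sign or constant errors. Natural consistency checks are the identity $\widehat{\Phi_p}(0)=p^{-8}\,\#\Phi_p^{-1}(1)$ (and its analogue for $\Phi_p'$) against the known nonmaximal density, and the Fourier inversion formula recovering $\Phi_p$ from $\widehat{\Phi_p}$ at a few representatives.
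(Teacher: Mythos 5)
Your proposal is essentially the paper's own strategy, recast in slightly different language: identify $\Phi_p$ and $\Phi_p'$ as the indicator functions of unions of strata cut out by explicit congruence conditions (Proposition \ref{prop:maximality} and Definition \ref{defn:f_p^2} in the paper), use $\gl_2(\Z/p^2\Z)$-invariance of both the functions and the pairing to reduce the Fourier transform to orbit representatives of $b$, and then evaluate the resulting exponential sums stratum by stratum. Your characterization $\Phi_p'=\mathbf 1[\Disc(x)\equiv 0\bmod p^2]$ and your description of the nonmaximal locus for $\Phi_p$ are correct, as is the evaluation $p^{-4}$ for the contribution of the stratum $\{x\equiv 0\bmod p\}$ when $b\equiv 0\bmod p$, and zero otherwise. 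The only genuine difference in method is cosmetic: the paper fixes a single representative $a$ in each singular stratum and sums $\langle ga,b\rangle$ over $g$ in a Bruhat-type decomposition $G_{p^2}=G_1\sqcup G_2$, producing the orbital Gauss sums $W(\mathbf 1,a,b)$ of Proposition \ref{prop:singulargausssum}, whereas you propose to parametrize the stratum directly by its coefficient congruences; by the orbit--stabilizer identity these are equivalent.

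The gap is that the proposal stops at the level of a plan. The substance of Theorems \ref{thm:hat(fp^2)} and \ref{thm:hat(fp^2')} is the actual numerical table, and obtaining it requires the orbit-by-orbit Gauss sum computation that occupies several pages (Proposition \ref{prop:singulargausssum}); your sketch acknowledges this as ``the main obstacle'' and defers it. Two delicate points you do not address, and which would surface immediately on carrying out the computation: first, the strata $(1^21_{\rm max})$, $(1^3_\ast)$, and $(1^3_{\rm max})$ each split into several $G_{p^2}$-orbits (Proposition \ref{prop:singular_orbits}), and the individual Gauss sums $W(\mathbf 1,a,b)$ are \emph{not} constant across these orbits --- they agree only ``in average,'' which is why the Fourier transform is nevertheless stratum-constant after summing against the stratum-wise indicator functions. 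Second, the type of $b$ modulo $p$ is not enough; you correctly note a further marker mod $p^2$ is needed for the $(1^21)$ and $(1^3)$ types of $b$, and for those $b$ the sums do distinguish the substrata, so the final answer depends on that marker as Theorems \ref{thm:hat(fp^2)} and \ref{thm:hat(fp^2')} record. Without carrying out the sums, these tables (in particular the precise coefficients $\pm p^{-3},\,\pm p^{-4},\,\pm p^{-5}$ and which $b$-types receive zero) are not established.

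Your proposed consistency checks ($\widehat\Phi(0)$ against the density of the support, and Fourier inversion at a few representatives) are sensible; the paper uses Parseval's identity for a similar sanity check.
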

See Theorems \ref{thm:hat(fp^2)} and \ref{thm:hat(fp^2')}
for the exact formulas. These Fourier transforms occur in the
explicit formulas for the zeta and $L$-functions 
dual to $\xi_{\pm}(s, a)$ and $\xi_{\pm}(s, \chi, a)$, and
Theorem \ref{thm:intro_fourier} allows us to write down these
explicit formulas. This leads to an improved analytic understanding
of  $\xi_{\pm}(s, a)$ and $\xi_{\pm}(s, \chi, a)$.

Theorem \ref{thm:introL}, in combination with explicit
results such as Theorem \ref{thm:intro_fourier}, has fruitful arithmetic applications.
To explain our motivation, we quote
the main results of our companion paper \cite{scc}.
Our primary purpose in proving Theorems \ref{thm:hat(fp^2)}
and \ref{thm:hat(fp^2')} is to obtain the following
density theorems:
\begin{thm}[\cite{scc}]\label{thm:introscc}
\begin{enumerate}[{\rm (1)}]
\item
The number of cubic fields $K$ with $0<\pm\Disc(K)<X$ is
\[
{N_3^\pm(X)}
=\frac{C^\pm}{12\zeta(3)}X
+K^\pm\frac{4\zeta(1/3)}{5\Gamma(2/3)^3\zeta(5/3)}X^{5/6}
+O(X^{7/9+\epsilon}),
\]
where $C^+=K^+=1$ and $C^-=3,K^-=\sqrt3$.
\item
For a quadratic field $F$, let $\cl_3(F)$ denote the $3$-torsion subgroup
of the ideal class group of $F$. Then
\[
\sum_{\substack{[F:\Q]=2\\0<\pm\Disc(F)<X}}\!\!\!\!\!\!\#\cl_3(F)
=\frac{3+C^\pm}{\pi^2}X
+K^\pm\frac{8\zeta(1/3)}{5\Gamma(2/3)^3}
\prod_p\left(1-\frac{p^{1/3}+1}{p(p+1)}\right)X^{5/6}
+O(X^{18/23+\epsilon}),
\]
where the product in the secondary term is over all primes.
\end{enumerate}
\end{thm}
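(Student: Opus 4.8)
The plan is to realize both counting functions as weighted sums over $\gl_2(\Z)$-orbits of integral binary cubic forms and to read off the asymptotics from the analytic properties of the associated zeta functions established above. By the Delone--Faddeev correspondence, $\gl_2(\Z)$-equivalence classes of irreducible forms in $V(\Z)$ are in bijection with isomorphism classes of orders in cubic fields, the discriminant of a form being equal to that of the ring; under this bijection cubic fields $K$ correspond to \emph{maximal} cubic rings, while $\sum_F\#\cl_3(F)$ is, by class field theory via the quadratic resolvent together with elementary bookkeeping, essentially governed by the number of cubic fields $K$ for which $\Disc(K)$ is a fundamental discriminant --- equivalently, for which the associated ring is maximal and nowhere totally ramified. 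Thus, up to corrections for reducible forms and for stabilizers, $N_3^\pm(X)$ counts orbits of $x\in V(\Z)$ with $0<\pm\Disc(x)<X$ weighted by $\prod_p(1-\Phi_p)$, where $\Phi_p$ is the nonmaximality indicator of Theorem \ref{thm:intro_fourier}, while the class-group sum is the analogous quantity weighted by $\prod_p(1-\Phi_p')$, with $\Phi_p'$ the nonmaximal-or-totally-ramified indicator.

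First I would expand $\prod_p(1-\Phi_p)$ (respectively $\prod_p(1-\Phi_p')$) as a $\Z$-linear combination, indexed by squarefree (respectively cubefree) integers $n$, of the congruence indicators that enter the partial zeta functions $\xi_\pm(s,a)$; the key point is that nonmaximality at $p$ is detected modulo $p^2$, which is exactly the modulus for which Theorem \ref{thm:intro_fourier} supplies the finite Fourier transforms of $\Phi_p$ and $\Phi_p'$. This expresses the Dirichlet series in question, for $\re s$ large, as a convergent combination of orbital partial zeta functions with explicit coefficients, and hence --- invoking Theorem \ref{thm:introL} --- endows it with a meromorphic continuation whose only possible poles are simple poles at $s=1$ and $s=5/6$.

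Next I would evaluate the residues. At $s=1$ the residue factors as the global residue of $\xi_\pm$ times a convergent product of local densities, recovering the Davenport--Heilbronn main terms $C^\pm/(12\zeta(3))$ and $(3+C^\pm)/\pi^2$. The residue at $s=5/6$ is the delicate one: the sieve does not manifestly converge there, so I would pass to the dual side through the functional equation of Theorem \ref{thm:introL}, where the relevant object becomes a sum over $V^\ast(\Z)$ weighted by the transforms $\widehat\Phi_p$, $\widehat\Phi_p'$. Here Theorem \ref{thm:intro_fourier} makes it possible to evaluate these transforms in closed form, recognize the resulting Euler product, and extract the residue, producing the constants $K^\pm$, the factor $4\zeta(1/3)/(5\Gamma(2/3)^3\zeta(5/3))$, and the product $\prod_p(1-(p^{1/3}+1)/(p(p+1)))$.

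Finally I would run a Perron-type contour argument: shift the line of integration past the poles at $s=1$ and $s=5/6$, collecting the two main terms, and bound the shifted integral using convexity-type estimates for $\xi_\pm(s,\chi,a)$ on vertical lines together with the growth of the sieve coefficients; optimizing the truncation produces the error terms $O(X^{7/9+\ep})$ and $O(X^{18/23+\ep})$. The main obstacle is the secondary term: proving that the sieved sum genuinely has a pole at $s=5/6$ rather than cancelling, pinning down its residue in closed form, and simultaneously controlling the tail of the sieve uniformly enough to beat $X^{5/6}$. It is precisely to make this $s=5/6$ analysis effective on the dual side that the explicit Fourier-transform formulas of Theorem \ref{thm:intro_fourier} are needed.
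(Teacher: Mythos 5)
This theorem is stated here as a quotation from the companion paper \cite{scc}; the present paper does not itself contain a proof, so I compare your plan against the strategy that Sections \ref{sec:orbit}--\ref{sec:example} are explicitly set up to feed into.

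Your overall scheme is the correct one: Delone--Faddeev reduces both counts to $\gl_2(\Z)$-orbits on $V(\Z)$; maximality at $p$, and ``maximal and not totally ramified at $p$,'' are exactly the congruence conditions modulo $p^2$ detected by $1-\Phi_p$ and $1-\Phi_p'$ (Definition \ref{defn:f_p^2}, Proposition \ref{prop:maximality}); inclusion--exclusion turns these into a sieve by partial zeta functions $\xi(s,f)$; the residues at $s=1$ and $s=5/6$ furnish the two main terms; and a truncated sieve combined with a Tauberian/contour argument, with the truncation parameter optimized against the shifted-integral bound, yields the power-saving errors. You also correctly acknowledge the corrections for reducible forms and stabilizers, and the class-field-theoretic bookkeeping translating $\sum_F\#\cl_3(F)$ into a count of nowhere-totally-ramified cubic fields.

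The one substantive misstep is your account of the role of Theorem \ref{thm:intro_fourier}. You propose to compute the residue at $s=5/6$ by ``passing to the dual side through the functional equation'' and then reading the Euler product off $\widehat\Phi_p,\widehat\Phi_p'$, on the grounds that ``the sieve does not manifestly converge there.'' Both claims are off. The residue is computed directly on the primal side via the local densities of Section \ref{sec:residue}: the quantities $\esC_{p^2}(\Phi_p,\chi)$, $\esC_{p^2}(\Phi_p',\chi)$ of Corollary \ref{cor:f_p^2_residue}, computed from the local zeta integrals (not from $\widehat\Phi_p$), assemble into an \emph{absolutely convergent} Euler product (cf.\ Proposition \ref{prop:residue_f}, \eqref{eq:residue_standardLmax}, and the constants $K_\pm(\chi)$); in fact $1-\esC_{p^2}(\Phi_p,\mathbf 1)=(1-p^{-5/3})(1-p^{-2})$, which is where the $\zeta(5/3)$ in the displayed constant comes from, with no convergence issue at $s=5/6$. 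The explicit Fourier transforms of Theorems \ref{thm:hat(fp^2)}, \ref{thm:hat(fp^2')} enter instead through the functional equation of Theorem \ref{thm:FESato}: they give closed-form control of the dual zeta function $\xi^\ast(s,\widehat f)$, hence of $\xi(1-s,f)$ in the critical strip, uniformly in the sieve modulus. That is precisely what allows the truncation to be pushed far enough to beat $X^{5/6}$ and produce $O(X^{7/9+\epsilon})$, $O(X^{18/23+\epsilon})$. In short: the Fourier data is error-term machinery, not residue machinery, and the residue is not the place where convergence is in doubt.

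A smaller slip: both $\Phi_p$ and $\Phi_p'$ are functions on $V_{p^2}$, so the sieve is indexed by \emph{squarefree} integers $q$ (with modulus $q^2$) in both the field count and the class-group count; a ``cubefree'' indexing plays no role.
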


Part (1) of Theorem \ref{thm:introscc} was also proved in independent work of
Bhargava, Shankar, and Tsimerman \cite{BST}. Their paper studies the 
space $(G, V)$ geometrically, and does not apply the theory of the associated zeta
or $L$-functions.

We generalize this theorem to count these discriminants
in arbitrary arithmetic progressions as well.
In this case we discover a curious bias in the secondary term.
For example, when the modulus $m$ is not divisible by $4$ we prove:
\begin{thm}[\cite{scc}]\label{thm:introsccchi}
Suppose $m$ is a positive integer with $4\nmid m$ and $a\in\Z$ arbitrary.
\begin{enumerate}[{\rm (1)}]
\item
The number of cubic fields $K$ with $0<\pm\Disc(K)<X$
and $\Disc(K)\equiv a\mod m$ is
\[
N_3^\pm(X;m,a)
=C_1^\pm(m,(m,a))X
+K_1^\pm(m,a)X^{5/6}
+O(X^{7/9+\epsilon}m^{8/9}).
\]
The constant
$C_1^\pm$ depends only on $m$ and the greatest common divisor
$(m,a)$ of $m$ and $a$,
but 
$K_1^\pm$ may be different for different values of $a$, even when
$m$ and $(m,a)$ are fixed,
if there exist any nontrivial cubic characters modulo $m/(m,a)$.
\item
We have
\[
\sum_{\substack{[F:\Q]=2,\ 0<\pm\Disc(F)<X\\\Disc(F)\equiv a\!\!\!\mod m}}\!\!\!\!\!\!\!\!\#\cl_3(F)
=C_2^\pm(m,(m,a))X
+K_2^\pm(m,a)X^{5/6}
+O(X^{18/23+\epsilon}m^{{20}/{23}}),
\]
where $C_2^\pm$ and $K_2^\pm$ have the same properties as
$C_1^\pm$ and $K_1^\pm$, respectively.
\end{enumerate}
\end{thm}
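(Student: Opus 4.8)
The plan is to mirror the proof of Theorem~\ref{thm:introscc} (the case $m=1$), replacing the Shintani zeta function $\xi_\pm(s)$ by an appropriate $\Z$-linear combination of the partial zeta functions $\xi_\pm(s,b)$ and orbital $L$-functions $\xi_\pm(s,\chi,b)$ introduced above, and then tracking the dependence on $m$ throughout. First I would recall the Delone--Faddeev correspondence between integral binary cubic forms and cubic rings, compatible with discriminants: cubic fields $K$ with $0<\pm\Disc(K)<X$ correspond to maximal cubic rings that are domains, while for the $\#\cl_3(F)$ statement one instead, via the classical relation between $3$-torsion in quadratic class groups and cubic fields of fundamental discriminant, counts cubic rings that are maximal and \emph{not} totally ramified at every prime --- which is why $\Phi_p'$ rather than $\Phi_p$ enters. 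Since $\Disc$ is a polynomial, the congruence $\Disc\equiv a\pmod m$ is a union of residue classes of forms in $V(\Z/m\Z)$; hence the counting function is, after inclusion--exclusion against the local non-maximality data $\Phi_p$ (resp.\ $\Phi_p'$), a finite combination of coefficient-sums of the $\xi_\pm(s,b)$. Applying orthogonality of characters on $(\Z/m\Z)^\times$ as in the footnote above rewrites this combination through the orbital $L$-functions $\xi_\pm(s,\chi,b)$.

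Next I would feed in the analytic input. By Theorem~\ref{thm:introL} the resulting Dirichlet series has meromorphic continuation to $\C$ with at worst simple poles at $s=1$ and $s=5/6$ and residues given by explicit sums over $G(\Z/m\Z)$-orbits in $V(\Z/m\Z)$; Theorem~\ref{thm:intro_fourier} in turn makes the dual side --- the finite Fourier transforms of $\Phi_p$ and $\Phi_p'$ on $V(\Z/p^2\Z)$ --- completely explicit and polynomially bounded in the level. Combining the functional equation (Theorem~\ref{thm:intro_Shintnai} and its congruence refinement from Theorem~\ref{thm:introL}) with the trivial bound for the absolutely convergent dual series and Phragm\'en--Lindel\"of yields bounds for $\xi_\pm(s,\chi,b)$ on vertical lines in $0\le\re s\le 1+\epsilon$ that are polynomial in $m$. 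A standard Perron/contour-shift argument --- move the contour past $s=1$ and $s=5/6$ down to $\re s=7/9+\epsilon$, absorbing the usual smoothing loss --- then produces the two main terms $C_i^\pm X+K_i^\pm X^{5/6}$ together with the error $O(X^{7/9+\epsilon}m^{8/9})$, the exponent of $m$ being read off directly from the vertical-line bounds and the length of the sieve; the class-number statement is handled identically with the weaker exponents $18/23$ and $20/23$.

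It then remains to identify the constants and exhibit the bias. The term $C_i^\pm X$ is $\res_{s=1}$ of the relevant combination: using Shintani's residue at $s=1$ together with the local densities at the primes dividing $m$, this residue factors as an archimedean constant --- responsible for the $3$ versus $1$ in $C^\pm$ --- times a product of local factors that sees $m$ and $a$ only through the congruence class of $\Disc$ modulo $m$, hence only through $m$ and $(m,a)$. The term $K_i^\pm X^{5/6}$ is $\res_{s=5/6}$, which by the functional equation is controlled by the dual partial zeta functions and therefore by the Fourier transforms of the local data; the explicit formulas of Theorem~\ref{thm:intro_fourier} show that this residue carries cubic Gauss-type character sums over $V(\Z/p^2\Z)$. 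When $m/(m,a)$ admits a nontrivial cubic character these sums genuinely separate forms whose discriminant class shares the same $(m,a)$, producing the asserted bias in $K_i^\pm$; when no such character exists the cubic contributions cancel and $K_i^\pm$ again depends only on $m$ and $(m,a)$. The signs $\pm$ are carried through Shintani's functional-equation matrix in Theorem~\ref{thm:intro_Shintnai}.

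The main obstacle is the interaction at primes $p\mid m$ between the imposed congruence on the discriminant and the maximality (or ``not totally ramified'') condition, seen on the dual side: one does not merely need the transforms $\widehat\Phi_p,\widehat\Phi_p'$ of Theorem~\ref{thm:intro_fourier}, but their versions twisted by, or restricted to, a discriminant congruence, and one must push these computations to moduli that need not be cube-free while keeping every bound polynomial in $m$. Equivalently, the crux is to convert the abstract residue formulas of Theorem~\ref{thm:introL} at $s=5/6$ into closed form in the presence of congruence conditions, to isolate the cubic-character part cleanly, and to verify that everything else depends on $a$ only through $(m,a)$ --- which is precisely what yields both the stated form of the constants and the bias phenomenon.
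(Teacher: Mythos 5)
First, a point of comparison: this paper does not itself prove Theorem~\ref{thm:introsccchi}; the result is quoted from the companion paper \cite{scc}, and the present paper's role is to supply the orbital $L$-function machinery and the residue formulas (Sections~\ref{sec:residue}--\ref{sec:example}, culminating in \eqref{eq:residue_standardLmax} and \eqref{eq:density_twist}) that \cite{scc} consumes. The closest thing to a full proof appearing here is Theorem~\ref{thm:bias_classnumber}, which treats class numbers of binary cubic forms in arithmetic progressions \emph{without} any maximality sieve. With that caveat, your overall framework agrees with what the paper indicates the argument should be: Delone--Faddeev, the distinction $\Phi_p$ versus $\Phi_p'$ for fields versus $3$-torsion in quadratic class groups, decomposing the discriminant congruence into $G(\Z/m\Z)$-orbits, passing to $\xi_\pm(s,\chi,b)$ by orthogonality, and attributing the bias in $K_i^\pm$ to the cubic characters surviving in the residue at $s=5/6$.

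There is, however, a genuine gap in the central analytic step, and it is not cosmetic. You attribute the error exponents $7/9$ and $18/23$ to a ``standard Perron/contour-shift down to $\re s=7/9+\epsilon$.'' That is not where these exponents come from. For a \emph{fixed} modulus $N$, the Sato--Shintani Tauberian theorem already gives an error of $O_{N,\epsilon}(X^{3/5+\epsilon})$ --- this is precisely what is invoked in the proof of Theorem~\ref{thm:bias_classnumber} --- so a contour shift has no reason to stop at $7/9$. The exponents $7/9$ and $18/23$ arise because the maximality condition is an \emph{infinite} sieve over all primes: one truncates the product over $\Phi_p$ (resp.\ $\Phi_p'$) at a parameter $Q$, controls the tail via a uniformity estimate on the number of forms of bounded discriminant lying in $V_{p^2}^{\rm nm}$ for $p>Q$, and then balances the accumulated Tauberian error over sieve moduli $q\le Q$ against that tail. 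The sharper tail bound available for the $\cl_3$-count (the nonmaximal-or-totally-ramified locus is thinner) is what improves $7/9$ to $18/23$. Your sketch treats the sieve as a ``finite combination'' of partial zeta functions and thus never confronts this truncation and balancing, which is the principal technical obstacle in \cite{scc}; without it one cannot even isolate the $X^{5/6}$ term, since the untreated tail would swamp it. Relatedly, the $m$-exponents $8/9$ and $20/23$ emerge from tracking the explicit $q$-dependence in the Tauberian error and in the local factors through this same balancing, not from Phragm\'en--Lindel\"of on vertical lines.

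The last paragraph of your proposal, on the identification of $C_i^\pm$ and $K_i^\pm$ and the cubic-character bias, is in line with what Section~\ref{sec:example} establishes (Theorem~\ref{thm:standardL}, Theorem~\ref{thm:bias_classnumber}, and the discussion around \eqref{eq:residue_standardLmax}); this part you have essentially right.
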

We refer to \cite{scc} for more explicit and general statements, including
an explicit evaluation of the constants  $C_i^\pm, K_i^\pm$, as well
as associated numerical data. When $4\mid m$ the statements change slightly (the
discriminant of any field is $\equiv 0,1\mod 4$), but
we treat this case as well. 

Concerning Theorem \ref{thm:introsccchi},
Datskovsky and Wright \cite{dawra} proved that certain $L$-functions
may have a pole if the character is cubic but are otherwise entire, and
this is the origin of subtle behaviours of $K_i^\pm$.
In Section \ref{sec:residue} we refine their significant
residue formulas \cite{dawra} for $\xi(s,\chi,a)$,
and in particular we complete all
the cases where $a$ detects cubic rings maximal at all primes dividing $N$.
These formulas are used in \cite{scc}
to obtain the explicit formulas for $K_i^\pm$.
In Section \ref{sec:example},
we briefly discuss how Theorem \ref{thm:introsccchi}
relates to these residue formulas.
\bigskip

Besides these arithmetic applications, our explicit construction
of $L$-functions is also motivated by work of Ohno and Nakagawa.
In 1997, Ohno \cite{ohno} conjectured the remarkably simple relations
$\xi^\ast_+(s)=\xi_-(s)$ and $\xi^\ast_-(s)=3\xi_+(s)$, and 
these relations were proved by Nakagawa \cite{nakagawa}.
As a consequence, 
Shintani's functional equation in Theorem \ref{thm:intro_Shintnai}
takes the following self dual form:
\begin{thm}[Ohno-Nakagawa]\label{thm:intro_ON}
Let
$\theta_\pm(s):=\sqrt3\xi_+(s)\pm\xi_-(s)$
for each sign. Then
\begin{equation*}
\Delta_\pm(1-s)\theta_\pm(1-s)
=\Delta_\pm(s)\theta_\pm(s),
\end{equation*}
where
\begin{align*}
\Delta_+(s)&:=
\left(\frac{2^43^3}{\pi^4}\right)^{s/2}
		\Gamma\left(\frac s2\right)\Gamma\left(\frac s2+\frac12\right)
		\Gamma\left(\frac{s}{2}-\frac1{12}\right)
		\Gamma\left(\frac{s}{2}+\frac1{12}\right),\\
\Delta_-(s)&:=
\left(\frac{2^43^3}{\pi^4}\right)^{s/2}
		\Gamma\left(\frac s2\right)\Gamma\left(\frac s2+\frac12\right)
		\Gamma\left(\frac{s}{2}+\frac5{12}\right)
		\Gamma\left(\frac{s}{2}+\frac7{12}\right).
\end{align*}
\end{thm}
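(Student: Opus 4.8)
The plan is to \emph{derive} the self-dual identity from Shintani's matrix functional equation (Theorem~\ref{thm:intro_Shintnai}) together with the Ohno--Nakagawa relations $\xi^\ast_+(s)=\xi_-(s)$, $\xi^\ast_-(s)=3\xi_+(s)$ (proved in \cite{nakagawa}): substituting these relations into Shintani's equation turns the $2\times2$ system into one that is \emph{diagonal} in the combinations $\theta_\pm$, and the remaining work is to fold the resulting trigonometric prefactor into the archimedean factors $\Delta_\pm$ by the reflection and duplication formulas for $\Gamma$.

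First I would substitute. Write $C(s):=\frac{3^{3s-2}}{2\pi^{4s}}\Gamma(s)^2\Gamma(s-\tfrac16)\Gamma(s+\tfrac16)$ and $\Xi(s):=\left(\begin{smallmatrix}\xi_+(s)\\\xi_-(s)\end{smallmatrix}\right)$. Since the Ohno--Nakagawa relations say $\left(\begin{smallmatrix}\xi^\ast_+(s)\\\xi^\ast_-(s)\end{smallmatrix}\right)=P\,\Xi(s)$ with $P=\left(\begin{smallmatrix}0&1\\3&0\end{smallmatrix}\right)$, Theorem~\ref{thm:intro_Shintnai} reads $\Xi(1-s)=C(s)\,M(s)\,P\,\Xi(s)$, where $M(s)=\left(\begin{smallmatrix}\sin2\pi s&\sin\pi s\\3\sin\pi s&\sin2\pi s\end{smallmatrix}\right)$. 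A one-line computation gives $M(s)P=\left(\begin{smallmatrix}3\sin\pi s&\sin2\pi s\\3\sin2\pi s&3\sin\pi s\end{smallmatrix}\right)$, and I would check that $(\sqrt3,\,1)$ and $(\sqrt3,\,-1)$ are left eigenvectors of $M(s)P$ with eigenvalues $3\sin\pi s\pm\sqrt3\sin2\pi s$. Pairing both sides of $\Xi(1-s)=C(s)M(s)P\,\Xi(s)$ with these row vectors and using $\theta_\pm(s)=\sqrt3\,\xi_+(s)\pm\xi_-(s)$ gives the scalar functional equations
\[
\theta_\pm(1-s)=C(s)\bigl(3\sin\pi s\pm\sqrt3\sin2\pi s\bigr)\,\theta_\pm(s),
\]
valid as an identity of meromorphic functions since $\theta_\pm\not\equiv0$ and the Dirichlet series converge for $s$ in a right half-plane.

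It then remains to verify $C(s)\bigl(3\sin\pi s\pm\sqrt3\sin2\pi s\bigr)=\Delta_\pm(s)/\Delta_\pm(1-s)$, which is the asserted identity after cross-multiplying. For this I would factor $3\sin\pi s\pm\sqrt3\sin2\pi s=\sqrt3\,\sin\pi s\,(\sqrt3\pm2\cos\pi s)$ and apply the product formulas $\sqrt3+2\cos\pi s=4\cos\!\bigl(\tfrac{\pi s}{2}+\tfrac{\pi}{12}\bigr)\cos\!\bigl(\tfrac{\pi s}{2}-\tfrac{\pi}{12}\bigr)$, $\sqrt3-2\cos\pi s=4\sin\!\bigl(\tfrac{\pi s}{2}+\tfrac{\pi}{12}\bigr)\sin\!\bigl(\tfrac{\pi s}{2}-\tfrac{\pi}{12}\bigr)$ and $\sin\pi s=2\sin\tfrac{\pi s}{2}\cos\tfrac{\pi s}{2}$, rewriting every sine and cosine that occurs as a quotient of two Gamma values through $\sin\pi z=\pi/(\Gamma(z)\Gamma(1-z))$; at the same time I would apply Legendre duplication to $\Gamma(s)^2$ and to $\Gamma(s-\tfrac16)\Gamma(s+\tfrac16)$ inside $C(s)$. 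After these substitutions the Gamma values coming from $C(s)$ cancel against those produced by the trigonometric factor except for the four that make up $\Delta_\pm(s)$, which survive in the numerator, while the trigonometric factor leaves behind precisely the four Gamma values (whose arguments have the form $c-\tfrac s2$) that make up $\Delta_\pm(1-s)$ in the denominator; the leftover powers of $2$, $3$ and $\pi$ collapse to $(2^43^3/\pi^4)^{s-1/2}$. Hence $C(s)\bigl(3\sin\pi s\pm\sqrt3\sin2\pi s\bigr)=\Delta_\pm(s)/\Delta_\pm(1-s)$, i.e.\ $\Delta_\pm(1-s)\theta_\pm(1-s)=\Delta_\pm(s)\theta_\pm(s)$.

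The conceptual point is the second step: once the Ohno--Nakagawa relations are imposed, the Shintani system is diagonal in the basis $\theta_\pm$. The \textbf{main obstacle} is then the final gamma-factor bookkeeping in the third step --- tracking which of the many $\Gamma$'s cancel, distinguishing the shifts $\pm\tfrac1{12}$ (which enter $\Delta_+$) from $\tfrac5{12},\tfrac7{12}$ (which enter $\Delta_-$), carrying the ``$+$'' and ``$-$'' cases in parallel, and confirming that the elementary constants combine into the single factor $(2^43^3/\pi^4)^{s-1/2}$. This is routine but lengthy and easy to slip on; no new idea is needed beyond it.
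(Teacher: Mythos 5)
Your proposal is correct and follows essentially the same route as the paper: substitute the Ohno--Nakagawa relations into Shintani's functional equation, observe that $(\sqrt3,\pm1)$ diagonalize the resulting matrix, and fold the trigonometric eigenvalues into the $\Gamma$-factors $\Delta_\pm$. The paper compresses this into the single matrix identity $\Delta(1-s)TM(s)=\Delta(s)TA^{-1}$ (attributed to Datskovsky and Wright); you unroll it into the eigenvector computation and the $\Gamma$-function bookkeeping, and I have checked that the bookkeeping does close up as you claim.
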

If we put
$\Delta(s)=\left(\begin{smallmatrix}\Delta_+(s)&0\\0&\Delta_-(s)\end{smallmatrix}\right)$,
$T=\left(\begin{smallmatrix}\sqrt3&1\\\sqrt3&-1\end{smallmatrix}\right)$
and
$\xi(s)=\left(\begin{smallmatrix}\xi_+(s)\\\xi_-(s)\end{smallmatrix}\right)$,
then the formula is
\[
\Delta(1-s)\cdot T\cdot \xi(1-s)=\Delta(s)\cdot T\cdot \xi(s).
\]
Although the formulas
$\xi^\ast_+(s)=\xi^\ast_-(s)$ and $\xi^\ast_-(s)=3\xi_+(s)$ are very simple,
Nakagawa's proof is quite technical; 
in particular, he used class field theory in a sophisticated manner.
Analogous formulas were proved by Ohno, the first author, and Wakatsuki
\cite{yt,sty} for zeta functions associated with other integral models for
$(G,V)$, leading to similar functional equations.

In this paper we will prove the following.
\begin{thm}[Theorem \ref{thm:FE_thetaN}]\label{thm:intro_divisible}
For a positive integer $m$,
let
\[
\xi_{m,\pm}(s)
:=
\sum_{\substack{x\in \spl_2(\Z)\backslash V(\Z)\\m\mid\Disc(x),\ \pm\Disc(x)>0}}
\frac{|{\rm Stab}(x)|^{-1}}{|\Disc(x)|^s},
\qquad
\xi_m(s)
:=
\left(
\begin{array}{l}
\xi_{m,+}(s)\\
\xi_{m,-}(s)
\end{array}
\right),
\]
and for a square free integer $N$, write
\begin{equation*}
\theta_N(s):=\sum_{m\mid N}\mu(m)m\xi_m(s).
\end{equation*}
With this notation, $\theta_N(s)$
satisfies the functional equation
\[
N^{2(1-s)}\Delta(1-s)\cdot T\cdot \theta_N(1-s)
=N^{2s}\Delta(s)\cdot T\cdot \theta_N(s).
\]
\end{thm}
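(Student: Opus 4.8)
The plan is to realize $\theta_N(s)$ as a partial zeta function twisted by a multiplicative weight modulo $N$, apply the functional equation of Theorem~\ref{thm:introL}, evaluate the finite Fourier transform of that weight, and then invoke the Ohno--Nakagawa identities. First I would note that, since $N$ is square free, for $m\mid N$ the condition $m\mid\Disc(x)$ depends only on $x\bmod N$, so by the same argument that yields $\xi(s)=\sum_a\xi(s,a)$ one gets $\xi_{m,\pm}(s)=\sum_{a\in V(\Z/N\Z),\ m\mid\Disc(a)}\xi_\pm(s,a)$. Interchanging the two sums in the definition of $\theta_N$ and using $\sum_{m\mid d}\mu(m)m=\prod_{p\mid d}(1-p)$ for square free $d$ gives $\theta_{N,\pm}(s)=\sum_{a\in V(\Z/N\Z)}\Phi_N(a)\,\xi_\pm(s,a)$, where $\Phi_N(a):=\prod_{p\mid\gcd(N,\Disc(a))}(1-p)$; by the Chinese Remainder Theorem this weight is a tensor product $\Phi_N=\bigotimes_{p\mid N}\Phi_p$ with $\Phi_p(a)=1-p\cdot\mathbf{1}_{\{p\mid\Disc(a)\}}$ on $V(\Z/p\Z)$.

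Next I would feed $\sum_a\Phi_N(a)\xi_\pm(s,a)$ into the functional equation of Theorem~\ref{thm:introL}. This produces (as $2$-vectors in $\pm$) an identity $\theta_N(1-s)=\mathcal G(s)\,N^{4s}\sum_{b\in V^\ast(\Z/N\Z)}\widehat{\Phi_N}(b)\,\xi^\ast(s,b)$, where $\mathcal G(s)$ is Shintani's gamma matrix from Theorem~\ref{thm:intro_Shintnai} and the power $N^{4s}$ records the scaling of the lattice $NV(\Z)$ in the variable $s$ (its value is pinned down by the consistency check below). So the question reduces to computing $\widehat{\Phi_N}$, and since $\Phi_N=\bigotimes_p\Phi_p$ the Fourier transform factors: $\widehat{\Phi_N}=\bigotimes_p\widehat{\Phi_p}$. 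Writing $\Phi_p=1-p\cdot\mathbf{1}_{\{p\mid\Disc\}}$ gives $\widehat{\Phi_p}=\mathbf{1}_{\{0\}}-p\,\widehat{\mathbf{1}_{\{p\mid\Disc\}}}$, where $\widehat{\mathbf{1}_{\{p\mid\Disc\}}}(b)$ is an exponential sum over the singular locus $\{\Disc=0\}\subset V(\F_p)$. Classifying the singular binary cubics over $\F_p$ --- such a form has a repeated linear factor, necessarily $\F_p$-rational, so the singular locus has $p^3+p^2-p$ points --- and evaluating the resulting Gauss-type sums, I expect the clean identity $\widehat{\Phi_p}=p^{-2}\,\Phi^\ast_p$, where $\Phi^\ast_p(b):=1-p\cdot\mathbf{1}_{\{p\mid\Disc^\ast(b)\}}$ is the analogous weight on $V^\ast(\Z/p\Z)$; this is the square free analogue of Theorem~\ref{thm:intro_fourier} and is considerably simpler (as a check, at $b=0$ both sides equal $-(p-1)/p^2$). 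Hence $\widehat{\Phi_N}=N^{-2}\Phi^\ast_N$, and by the dual version of the first paragraph $\sum_b\widehat{\Phi_N}(b)\,\xi^\ast(s,b)=N^{-2}\,\theta^\ast_N(s)$ with $\theta^\ast_N(s):=\sum_{m\mid N}\mu(m)m\,\xi^\ast_m(s)$; thus $\theta_N(1-s)=N^{4s-2}\,\mathcal G(s)\,\theta^\ast_N(s)$.

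Finally I would bring in Ohno--Nakagawa. Because ``$m\mid\Disc$'' is a ramification condition at the primes dividing the square free integer $m$, and such conditions are matched under the reflection underlying the Ohno--Nakagawa correspondence, those identities refine to $\xi^\ast_{m,+}(s)=\xi_{m,-}(s)$ and $\xi^\ast_{m,-}(s)=3\xi_{m,+}(s)$ for every $m\mid N$; this discriminant-divisibility case of the relations of \cite{nakagawa,yt,sty} is the only genuinely arithmetic input. Summing over $m\mid N$ with the Möbius weights yields $\theta^\ast_N(s)=\left(\begin{smallmatrix}0&1\\3&0\end{smallmatrix}\right)\theta_N(s)$, so $\theta_N(1-s)=N^{4s-2}\,\mathcal G(s)\left(\begin{smallmatrix}0&1\\3&0\end{smallmatrix}\right)\theta_N(s)$. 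Multiplying on the left by $\Delta(1-s)T$ and invoking the matrix identity $\Delta(1-s)\,T\,\mathcal G(s)\left(\begin{smallmatrix}0&1\\3&0\end{smallmatrix}\right)=\Delta(s)\,T$ --- which is precisely the computation underlying the self-dual form of Shintani's functional equation in Theorem~\ref{thm:intro_ON} --- gives $\Delta(1-s)\,T\,\theta_N(1-s)=N^{4s-2}\,\Delta(s)\,T\,\theta_N(s)$, i.e.\ $N^{2(1-s)}\Delta(1-s)\,T\,\theta_N(1-s)=N^{2s}\Delta(s)\,T\,\theta_N(s)$; this also forces the $N^{4s}$ used above, consistent with the lattice scaling.

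The hard part will be the exact evaluation of $\widehat{\Phi_p}$: one must compute the exponential sum over the singular binary cubics modulo $p$ precisely enough to recognize it as $p^{-2}\Phi^\ast_p$ --- with no dependence on $b$ beyond the divisibility $p\mid\Disc^\ast(b)$ --- rather than merely estimate it, while correctly handling the ``slightly different integral structure'' of $V^\ast$, in particular at $p=3$. (The discriminant-divisibility Ohno--Nakagawa relations are a second, more arithmetic, input.) Once these are in place the remaining bookkeeping of gamma factors and powers of $N$ is forced by Theorem~\ref{thm:intro_ON}.
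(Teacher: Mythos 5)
Your proposal is correct and rests on the same two pillars as the paper's proof---F.\ Sato's functional equation for congruence-weighted zeta functions (Theorem~\ref{thm:FESato}) and the coefficient-wise Ohno--Nakagawa relation $\xi^\ast_m(s)=A\xi_m(s)$---but it organizes the Fourier-analytic step more cleanly. The paper first proves a functional equation for the intermediate quantity $\xi_N(s)=\xi(s,f_N)$ (Proposition~\ref{prop:divisibleFE}), which requires expanding $\widehat{f_N}(b)=\prod_{p\mid N}(-p^{-3}+g_{p,2}(b)p^{-2}+g_{p,3}(b)p^{-1})$ into a triple sum over $m_1m_2m_3=N$, and then recombines via a M\"obius inversion in the proof of Theorem~\ref{thm:FE_thetaN}. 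You instead observe that $\theta_N(s)=\xi(s,\Phi_N)$ directly, where $\Phi_N=\prod_{p\mid N}\Phi_p$ with $\Phi_p=1-p\,f_p$, and that the Fourier transform satisfies the tidy self-duality $\widehat{\Phi_p}=p^{-2}\Phi^\ast_p$. This collapses the two-step M\"obius bookkeeping into a single line, $\theta_N(1-s)=N^{4s-2}M(s)\theta^\ast_N(s)$, after which the argument is identical. (When passing from $\widehat{\Phi_N}=\bigotimes_p\widehat{\Phi_p}$ one does need, as Lemma~\ref{lem:productf} warns, that $\Phi_p$ is invariant under scaling by $(\Z/p\Z)^\times$, which it is since the condition $p\mid P(a)$ is scale-invariant; you should say so explicitly.)

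Two small remarks on what you flagged as gaps. First, the ``hard part'' you identify---the exact evaluation $\widehat{\Phi_p}=p^{-2}\Phi^\ast_p$---is not an extra computation beyond what the paper already provides: it is equivalent, by linearity in $\widehat{\Phi_p}=\mathbf{1}_{b=0}-p\,\widehat{f_p}$, to Proposition~\ref{prop:hat(fp)}, which the paper establishes from Mori's orbital Gauss sums (Proposition~\ref{prop:ogs_p}) for $p\neq3$ and a numerical check for $p=3$. Plugging the three values of $\widehat{f_p}(b)$ from Proposition~\ref{prop:hat(fp)} into $\mathbf{1}_{b=0}-p\,\widehat{f_p}(b)$ does indeed give $p^{-2}(1-p)$ when $P^\ast(b)=0$ and $p^{-2}$ otherwise, confirming your formula; so once that proposition is cited your route is complete. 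Second, the ``discriminant-divisibility refinement'' $\xi^\ast_{m,\pm}(s)=\xi_{m,\mp\ (\text{with factor }3)}(s)$ is not a new arithmetic input beyond Ohno--Nakagawa: Theorem~\ref{thm:ON} is an equality of Dirichlet series, hence of coefficients, and restricting both sides to terms with $m\mid n$ gives the claim immediately---the same observation the paper uses in the last line of the proof of Theorem~\ref{thm:FE_thetaN}.
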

Here $\mu(m)$ is the M\"obius function.
In Theorem \ref{thm:FE_thetaN},
we also describe the residues of $\theta_N(s)$.
The case $N=1$ is Theorem \ref{thm:intro_ON}, and
we will in fact
reduce the proof of this theorem to Ohno-Nakagawa's original formula.
In the proof, we use Mori's explicit formulas \cite{mori} for
certain orbital Gauss sums over $\Z/p\Z$.

\bigskip

As the terminology ``orbital $L$-function'' indicates, understanding
the $G(\Z/N\Z)$-orbit structure of $a\in V(\Z/N\Z)$
is fundamental for the analysis of $\xi_\pm(s,\chi,a)$.
We pursue the theory from this viewpoint.
As we noted earlier, our $L$-functions
are closely related to those studied by
Datskovsky and Wright \cite{dawra}.
Although our work overlaps with theirs to some extent,
our orbital $L$-functions have elementary and explicit
descriptions, and we hope we developed the theory
to a point where it is quite usable in applications.

Although we focus on the particular example of the space of binary cubic forms,
the definitions and basic properties studied in
Sections \ref{sec:L} and \ref{sec:FE} are applicable
to general irreducible regular prehomogeneous vector spaces
with a fixed integral model.
Also, our arguments in Sections \ref{sec:orbit},
\ref{sec:gausssum} and \ref{sec:residue},
regarded as arguments over $\Z_p$, 
generalize to the integer ring of other local fields, and
a version of Theorem \ref{thm:intro_fourier}
holds over an arbitrary local field with residue
characteristic not $2$ or $3$.
In a forthcoming paper (joint with Manjul Bhargava), this will be used to improve the error term
in the function counting cubic extensions
of any base number field, previously studied by Datskovsky and
Wright \cite{dawrb}.

We also discuss some related results and problems.
When a Dirichlet series $\xi(s)=\sum a_n/n^s$ is given,
it is natural to 
twist by a Dirichlet character $\chi$, yielding the $L$-function
$\xi(s,\chi)=\sum a_n\chi(n)/n^s$.
$L$-functions of this type associated to 
prehomogeneous vector spaces have been previously discussed in the literature; 
see, e.g., \cite{ibsad}, \cite{hsaitoe},
\cite{hsaitod}, \cite{fsatoe}.
For our case of the space of binary cubic forms,
we discuss this $\xi(s,\chi)$ in Section \ref{sec:example}.
Since this $\xi(s,\chi)$ is expressed
in terms of linear combinations of
orbital $L$-functions of the form $\xi(s,\chi^2,a)$,
in principle our theory contains the theory of $\xi(s,\chi)$.
However, there are some rich stories
involving $\xi(s,\chi)$ for which we do not yet have good analogues for $\xi(s,\chi,a)$.
Among others, we mention the significant work of Denef and Gyoja \cite{degy},
who proved an explicit formula for a certain Gauss sum.
As a result, the functional equation of $\xi(s,\chi)$ turns out
to have a nice simple form, as observed in \cite{fsatoe}.
Their work is notable since the formula is proved
for a general prehomogeneous vector space.
In this direction, although we obtain
explicit formulas of orbital Gauss sums
\[
W(\chi,a,b)
:=\sum_{g\in G(\Z/N\Z)}\chi(\det g)
	\exp\left(2\pi\sqrt{-1}\cdot\frac{[ga,b]}{N}\right)
\]
for some special $a\in V(\Z/N\Z), b\in V^\ast(\Z/N\Z)$,
it would be very interesting to further investigate the general case.

We also remark on the secondary pole of zeta functions
for ``cubic cases''. What principle underlies the fact
that the space of binary cubic forms $(G,V)$ describes the family
of cubic extensions?
In 1992, Wright and Yukie \cite{wryu}
clarified that this is because the component group of the generic stabilizer
is isomorphic to $\mathfrak S_3$, the permutation group of degree $3$,
and they studied the relationship of this fact to geometric interpretations of rational orbits.
Among $29$ types of irreducible regular prehomogeneous vector spaces
classified by M. Sato and Kimura \cite{saki}, $4$ of them
share this property and hence they should be regarded as
``cubic cases''. One such cubic case is the representation
$(\gl_2\times\gl_3^2, \aff^2\otimes \aff^3\otimes\aff^3)$,
and the global theory for a non-split form of this representation
was given by the first author \cite{zps}.
Interestingly, as with the $(G,V)$ studied in this paper,
the secondary pole of the zeta function does not vanish
under a twist by cubic characters.
It is likely that this property is shared for all cubic case zeta functions,
and ultimately this would reflect phenomena
similar to Theorem \ref{thm:introsccchi}.

\bigskip

This paper is organized as follows:
In Section \ref{sec:GVdefn}
we introduce the space of binary cubic forms $V$
and its dual space $V^\ast$, with natural actions of
$G=\gl_2$. We also recall the Delone-Faddeev correspondence.
In Section \ref{sec:L}
we introduce the orbital $L$-functions $\xi(s,\chi,a)$
and the orbital Gauss sums $W(\chi,a,b)$.
In Section \ref{sec:FE}
we discuss the functional equations.
These functional equations were obtained by F. Sato \cite{fsatoe}
in a general setting and we apply his result.
We also introduce zeta functions
$\xi(s,f)$ associated to $G_N$-relative invariant functions $f$, and 
express them in terms of
orbital $L$-functions.

Later sections develop the more specific theory for $(G,V)$.
Let $p$ be a prime.
In Section \ref{sec:orbit} we give orbit descriptions
over $\Z/p\Z$ and $\Z/p^2\Z$ which have arithmetic meanings.
In Section \ref{sec:gausssum}, we discuss the orbital Gauss sums
over $\Z/p\Z$ and $\Z/p^2\Z$ in detail. Over $\Z/p\Z$, this was
studied by S. Mori \cite{mori} and we recall his result.
After that, we compute various orbital Gauss sums
over $\Z/p^2\Z$ for $p\neq 2,3$, which is the main technical contribution
of this paper, and we prove Theorem \ref{thm:intro_fourier} as a consequence.

In Section \ref{sec:ON} we study ``divisible'' zeta functions
and prove Theorem \ref{thm:intro_divisible}.

In Section \ref{sec:residue} we study the residues of
the orbital $L$-functions $\xi(s,\chi,a)$.
With a natural
choice of the test function $\Phi_a$, Wright's
adelic zeta function \cite{wright}
gives an integral expression for $\xi(s,\chi,a)$.
Using the residue formula in \cite{wright}
we compute the residues of $\xi(s,\chi,a)$ for the cases of our interest.
The method as well as many of these results are due to
Datskovsky and Wright \cite{dawra}, and
our results refine theirs as needed for our application to Theorem \ref{thm:introsccchi}.
These computations lead to an explicit version of Theorem \ref{thm:introL}.
In Section \ref{sec:example}, we apply these results to prove
residue formulas for $\xi(s,\chi)$ (Theorem \ref{thm:standardL})
and we prove that class numbers of binary cubic forms are biased
in arithmetic progressions (Theorem \ref{thm:bias_classnumber}).
We also compare our results to Theorem \ref{thm:introsccchi}.

\bigskip
\noindent
{\bf Acknowledgement.}
The first author thanks Shingo Mori for many discussions,
which became the starting point of this work.

\bigskip
\noindent
{\bf Notation.}
For a finite set $X$, we denote by $|X|$ its cardinality.
For a variety $V$ defined over $\Z$ and a ring $R$,
the set of $R$-rational points is denoted by $V_R$
rather than $V(R)$. The trivial Dirichlet character
is denoted by $\bf1$. Our notation mostly matches
the companion paper \cite{scc}, but there are a few exceptions:
the dual vector space $V^\ast$ and
the zeta function $\xi^\ast(s)$ for $V^\ast$
in this paper are denoted by $\widehat V$ and $\widehat\xi(s)$
in \cite{scc}, respectively.
Also, $\xi_m(s)$ in this paper denotes the $m$-divisible zeta function,
while $\xi_q(s)$ in \cite{scc} denotes the $q$-nonmaximal zeta function.
We hope this does not confuse the reader.

\section{The space of binary cubic forms and cubic rings}
\label{sec:GVdefn}
In this section, we introduce the space of binary cubic forms $V$
and its dual space $V^\ast$, with natural actions of
$G=\gl_2$. We regard all of these spaces over $\Z$.
After discussing their basic properties, we recall the
Delone-Faddeev correspondence relating cubic forms to cubic rings.

Let $G=\gl_2$ and
\[
V=\{x(u,v)=x_1u^3+x_2u^2v+x_3uv^2+x_4v^3\mid x_1,x_2,x_3,x_4\in\aff\}.
\]
We identify $V$ with the four dimensional affine space $\aff^4$ via
$x(u,v)=x_1u^3+x_2u^2v+x_3uv^2+x_4v^3\leftrightarrow x=(x_1,x_2,x_3,x_4)$.
We consider the following twisted action of $G$ on $V$:
\[
(g\cdot x)(u,v)=\frac{1}{\det g}x(\alpha u+\gamma v, \beta u+\delta v),
\qquad
x\in V,
\quad
g=\begin{pmatrix}\alpha&\beta\\\gamma&\delta\\\end{pmatrix}\in G.
\]
We note that the scalar matrices act on $V$
by the usual scalar multiplication.
In terms of coordinates, the action is given by
\[
{}^t(g\cdot x)=
\frac{1}{\alpha \delta -\beta \gamma }
\begin{pmatrix}
\alpha ^3		&\alpha ^2\beta 		
&\alpha \beta ^2		&\beta ^3		\\
3\alpha ^2\gamma 		&\alpha ^2\delta +2\alpha \beta \gamma 	
&\beta ^2\gamma +2\alpha \beta \delta 	&3\beta ^2\delta 		\\
3\alpha \gamma ^2		&\beta \gamma ^2+2\alpha \gamma \delta 	
&\alpha \delta ^2+2\beta \gamma \delta 	&3\beta \delta ^2		\\
\gamma ^3		&\gamma ^2\delta 		
&\gamma \delta ^2		&\delta ^3		\\
\end{pmatrix}
\begin{pmatrix} x_1\\x_2\\x_3\\x_4\end{pmatrix},
\quad
g=\begin{pmatrix}\alpha&\beta\\\gamma&\delta\end{pmatrix}.
\]
We usually omit $\cdot$ from $g\cdot x$ and write $gx$ instead.
Let
\[
P(x)=x_2^2x_3^2+18x_1x_2x_3x_4-4x_1x_3^3-4x_2^3x_4-27x_1^2x_4^2,
\]
which is the discriminant of $x(u,v)$.
Then $P(gx)=(\det g)^2P(x)$.

The dual representation
$V^\ast$ is the space of linear forms on $V$.
In the dual coordinate system on $V^\ast$, we
express elements of $V^\ast$ as $y=(y_1,y_2,y_3,y_4)$.
We denote the canonical pairing of $V$ and $V^\ast$
by $[x,y]$, so that $[x,y]=x_1y_1+x_2y_2+x_3y_3+x_4y_4$.
We define the left action of $G$ on $V^\ast$
via $[x,g\ast y]=[(\det g)g^{-1}\cdot x,y]$.
Then the scalar matrices act
by the usual scalar multiplication on $V^\ast$ also.
In terms of coordinates, we have
\[
{}^t(g\ast y)=
\frac{1}{\alpha \delta -\beta \gamma }
\begin{pmatrix}
\delta ^3		&-3\gamma \delta ^2		
&3\gamma ^2\delta 		&-\gamma ^3		\\
-\beta \delta ^2		&\alpha \delta ^2+2\beta \gamma \delta 	
&-(\beta \gamma ^2+2\alpha \gamma \delta )	&\alpha \gamma ^2		\\
\beta ^2\delta 		&-(\beta ^2\gamma +2\alpha \beta \delta )	
&\alpha ^2\delta +2\alpha \beta \gamma 	&-\alpha ^2\gamma 		\\
-\beta ^3		&3\alpha \beta ^2		
&-3\alpha ^2\beta 		&\alpha ^3		\\
\end{pmatrix}
\begin{pmatrix} y_1\\y_2\\y_3\\y_4\end{pmatrix},
\quad
g=\begin{pmatrix}\alpha&\beta\\\gamma&\delta\end{pmatrix}.
\]
We usually omit $\ast$ from $g\ast y$ and write $gy$ instead.
Let
\[
P^\ast(y)=3y_2^2y_3^2+6y_1y_2y_3y_4-4y_1y_3^3-4y_2^3y_4-y_1^2y_4^2.
\]
Then $P^\ast(gy)=(\det g)^{2}P^\ast(y)$.

We recall an embedding of $V^\ast$ into $V$
which is compatible with
the action of $G$. Let
\[
\iota\colon V^\ast\ni (y_1,y_2,y_3,y_4)\mapsto (y_4,-3y_3,3y_2,-y_1)
\in V.
\]
Then we have $\iota (g\ast y)=g\cdot\iota(y)$.
Hence if $3$ is not a zero divisor in a ring $R$,
we can realize $V^\ast_R$ as a $G_R$-submodule of $V_R$.
Moreover, if $3$ is invertible in $R$ then
we can identify $V^\ast_R$ with $V_R$ in terms of $\iota$.
We use this identification in Section \ref{sec:gausssum}.
Under this identification, 
the bilinear form on $V$ induced by the pairing $[\ast,\ast]$
is given as
\begin{equation}\label{eq:bilin_form}
[x,x']=x_4x_1'-\frac13x_3x_2'+\frac13x_2x_3'-x_1x_4',
\qquad
x,x'\in V,
\end{equation}
and this satisfies $[gx,gx']=(\det g)[x,x']$.
We also note that $P(\iota(y))=27P^\ast(y)$.

We now recall the {\em Delone-Faddeev correspondence},
which gives a ring theoretic interpretation
of rational orbits of $(G,V)$.
This was first given by Delone and Faddeev \cite{defa}
for certain restricted cases and recently by Gan, Gross and Savin
\cite{ggs} in full generality.
Let $R$ be an arbitrary ring. A finite $R$-algebra $S$
is called a {\em cubic ring over $R$} if $S$ is
free of rank $3$ as an $R$-module.

\begin{thm}[\cite{defa}, \cite{ggs}]\label{thm:defa}
There is a canonical discriminant preserving bijection
between the set of orbits $G_R\backslash V_R$
and the set of isomorphism classes of cubic rings over $R$.
If $x\in V_R$ corresponds to a cubic ring $S$ over $R$,
then the group of stabilizers $G_{R,x}$ of $x$ in $G_R$ is
isomorphic to $\aut_R(S)$, the group of automorphisms
of $S$ as an $R$-algebra.
Moreover if $x\in V_{R}$ is of the form
$x(u,v)=u^3+bu^2v+cuv^2+dv^3$, then the corresponding
cubic ring is $R[X]/(X^3+bX^2+cX+d)$.
Also if $x$ is of the form $x(u,v)=v(u^2+cuv+dv^2)$,
then the corresponding cubic ring is $R\times R[X]/(X^2+cX+d)$.
\end{thm}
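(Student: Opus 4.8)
The plan is to exhibit explicit, mutually inverse maps between $G_R\backslash V_R$ and the set of isomorphism classes of cubic rings over $R$, and then to read the discriminant and stabilizer claims off the construction. From forms to rings: given $x=(x_1,x_2,x_3,x_4)\in V_R$, let $S_x$ be the free rank-three $R$-module $R\oplus R\omega\oplus R\theta$, with $1$ as identity and multiplication extended $R$-bilinearly (and symmetrically) from
\[
\omega^2=-x_1x_3-x_2\omega-x_1\theta,\qquad \omega\theta=x_1x_4,\qquad \theta^2=-x_2x_4-x_4\omega-x_3\theta .
\]
This is a commutative $R$-algebra by construction; the one substantive point is \emph{associativity}, which reduces to the identities $(\omega\omega)\theta=\omega(\omega\theta)$, $(\theta\theta)\omega=\theta(\theta\omega)$, $(\omega\omega)\omega=\omega(\omega\omega)$ and their symmetric partners, each of which becomes a polynomial identity in $x_1,\dots,x_4$ valid universally over $\Z$, so that it suffices to check it once at the generic point. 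This is the heart of the correspondence---precisely the assertion that \emph{every} binary cubic form defines a ring. The tautological basis $(1,\omega,\theta)$ is \emph{normalized}, meaning $\omega\theta\in R$.

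For the reverse map, let $S$ be a cubic ring over $R$ and $1,\omega_0,\theta_0$ any $R$-basis; writing $\omega_0\theta_0=e+f\omega_0+g\theta_0$ with $e,f,g\in R$, the replacement $(\omega,\theta):=(\omega_0-g,\theta_0-f)$ is a normalized basis, and this works over an arbitrary $R$. Reading the $\omega$- and $\theta$-components of $\omega^2$ and $\theta^2$ off the multiplication table recovers $x_1,\dots,x_4\in R$, and the associativity relations in $S$ force the remaining structure constants to be those of $S_x$, so that $S\cong S_x$. For the map $S\mapsto x$ to descend to isomorphism classes, one must show that the normalized bases of a fixed $S$ form a torsor under $\gl_2(R)$: a base change $\binom{\omega'}{\theta'}=\gamma\binom{\omega}{\theta}+\binom{r}{s}$ with $\gamma\in\gl_2(R)$, $r,s\in R$ that preserves normalization has $(r,s)$ uniquely determined by $\gamma$ (for $\gamma=\id$ one is forced to $r=s=0$), and a direct computation identifies the induced map on extracted forms with the twisted action $(g\cdot x)(u,v)=(\det g)^{-1}x(\alpha u+\gamma v,\beta u+\delta v)$ of Section~\ref{sec:GVdefn}. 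I expect this equivariance bookkeeping---matching the signs and the exact normalization of the $\gl_2$-action---to be the main obstacle, together with the associativity verification; neither is deep, but both must be carried out in the precise coordinates of Section~\ref{sec:GVdefn} so that the twisted action and the discriminant $P$ emerge with the stated normalizations.

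The remaining assertions follow quickly. The two maps are mutually inverse, since $S_x$ carries a tautological normalized basis returning $x$ while $x$ is defined so that $S\cong S_x$. An $R$-algebra automorphism of $S_x$ sends the normalized basis to another normalized basis, hence is recorded by some $g\in\gl_2(R)$; the multiplication table relative to the new basis is again $x$, so $g\cdot x=x$, and conversely any $g$ fixing $x$ defines an automorphism, giving $G_{R,x}\cong\aut_R(S_x)$. For the discriminant, the trace form of $S_x$ in the basis $1,\omega,\theta$ has $\tr(1)=3$, $\tr(\omega)=-x_2$, $\tr(\theta)=-x_3$, $\tr(\omega^2)=x_2^2-2x_1x_3$, $\tr(\omega\theta)=3x_1x_4$, $\tr(\theta^2)=x_3^2-2x_2x_4$ straight from the table, and a short expansion of the $3\times3$ Gram determinant gives $\Disc(S_x)=P(x)$. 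Finally, the two special shapes follow by specialization: when $x_1=1$ the relation for $\omega^2$ eliminates $\theta$ and yields $\omega^3+x_2\omega^2+x_3\omega+x_4=0$, so $S_x\cong R[X]/(X^3+x_2X^2+x_3X+x_4)$; when $x_1=0$ and $x_2=1$ we have $\omega^2=-\omega$, so $-\omega$ is an idempotent splitting off an $R$-factor, and on its complement $\theta$ satisfies $\theta^2+x_3\theta+x_4=0$, giving $S_x\cong R\times R[X]/(X^2+x_3X+x_4)$.
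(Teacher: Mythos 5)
Your proof follows the same route as the paper and the sources \cite{defa}, \cite{ggs} it cites: exhibit the explicit multiplication table, show that normalized bases of a cubic ring exist and form a $\gl_2(R)$-torsor (with the affine shift $(r,s)$ uniquely determined by $\gamma$), read the discriminant off the trace form, match stabilizers with $R$-algebra automorphisms, and verify the two special shapes by direct simplification. The logical structure is sound, the trace-form entries are correct, and your Gram-determinant expansion does give $P(x)$.

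One thing to repair before you carry out the equivariance computation you defer: the multiplication table you write,
\[
\omega^2=-x_1x_3-x_2\omega-x_1\theta,\qquad \theta^2=-x_2x_4-x_4\omega-x_3\theta,\qquad \omega\theta=x_1x_4,
\]
is not the one displayed after the theorem in Section~\ref{sec:GVdefn}; the paper has $+x_1\theta$, $+x_3\theta$, and $-x_1x_4$. Your table is the paper's after the substitution $\theta\mapsto-\theta$, i.e.\ after acting on the normalized basis by $\diag(1,-1)\in\gl_2(R)$. It is a self-consistent, associative normalization, it induces the same bijection on $G_R$-orbits, and the discriminant comes out the same; but the base-change bookkeeping in your signs will recover the $\diag(1,-1)$-conjugate of the twisted action $(g\cdot x)(u,v)=(\det g)^{-1}x(\alpha u+\gamma v,\beta u+\delta v)$ rather than that formula itself. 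Since you explicitly want the paper's normalizations to emerge, adopt the paper's signs. Beyond this, you correctly identify and appropriately defer the two genuinely computational verifications (universal associativity over $\Z[x_1,\dots,x_4]$, and $\gl_2$-equivariance); the paper defers these too and cites \cite{ggs}.
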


The proof of this theorem is well known.
We simply recall the construction here.
For $x=(x_1,x_2,x_3,x_4)\in V_R$,
the corresponding cubic ring is
the $R$-module $R1\oplus R\omega\oplus R\theta$
with the commutative multiplicative structure
so that $1$ is the multiplicative identity and that
\[
\omega^2=-x_1x_3-x_2\omega+x_1\theta,
\quad
\theta^2=-x_2x_4-x_4\omega+x_3\theta,
\quad
\omega\theta=-x_1x_4.
\]
For more details, see \cite{ggs} for example.

\section{Orbital $L$-functions and orbital Gauss sums}
\label{sec:L}
In this section, we introduce the notion of
{\em orbital $L$-functions} and {\em orbital Gauss sums},
and discuss their most basic properties.
Further analytic properties are studied in later sections.

Let $N$ be a positive integer. We put
\[
G_N:=G_{\Z/N\Z}=\gl_2(\Z/N\Z),
\qquad
V_N:=V_{\Z/N\Z}\cong V_\Z/N V_\Z.
\]
Then $G_N$ acts on $V_N$.
For $a\in V_N$, let $G_{N,a}:=\{g\in G_N\mid ga=a\}$,
the group of stabilizers.
For each $a\in V_N$, $a+NV_\Z$ is invariant
under the action of the principal congruence subgroup
$\Gamma(N):=\ker(\spl_2(\Z)\rightarrow \spl_2(\Z/N\Z))$
of $\spl_2(\Z)$.
Let $\chi$ be a Dirichlet character whose conductor
$m=m(\chi)$ is a divisor of $N$.
As usual, we regard $\chi$ as a character modulo $N$
via the reduction map
$(\Z/N\Z)^\times\rightarrow (\Z/m\Z)^\times$.

For a congruence subgroup $\Gamma$ of ${\rm SL}_2(\Z)$
and a $\Gamma$-invariant subset $X$ of $V_\Z$,
we put
\[
\xi_{\pm}(s,X,\Gamma):=\frac{1}{[{\rm SL}_2(\Z): \Gamma]}\!
\sum_{\substack{x\in{\Gamma}\backslash X\\\pm P(x)>0}}\!
\frac{|\Gamma_x|^{-1}}{|P(x)|^s},
\qquad
\xi(s,X,\Gamma)
:=
\left(
\begin{array}{l}
\xi_+(s,X,\Gamma)\\
\xi_-(s,X,\Gamma)
\end{array}
\right),
\]
where 
$\Gamma_x=\{\gamma\in\Gamma\mid \gamma x=\gamma\}$.
We put
\[
\xi(s):=\xi(s,V_\Z,\spl_2(\Z)).
\]
This is the zeta function introduced and studied
by Shintani \cite{shintania}.
As a generalisation, we introduce the following.
\begin{defn}\label{def:orbitalL}
We define
\begin{align*}
\xi(s,a)&:=\xi(s,a+NV_\Z,\Gamma(N)),\\
\xi(s,\chi,a)&:=\sum_{g\in G_N}\chi(\det g)\xi(s,ga).
\end{align*}
We call $\xi(s,a)$ a {\em partial zeta function}
and $\xi(s,\chi,a)$ an {\em orbital $L$-function}
for the space of binary cubic forms.
\end{defn}
\begin{rem}
We later observe in Propositions \ref{prop:f_inv_splZ} and
\ref{prop:f_inv_L_ogs}
that we can define $\xi(s,\chi,a)$ in terms of only $\spl_2(\Z)$
and not $\Gamma(N)$.
\end{rem}
For these zeta functions, following basic properties hold.
\begin{lem}\label{lem:zeta_basic}
\begin{enumerate}[{\rm (1)}]
\item
For $g\in G_N$, $\xi(s,\chi,ga)=\chi(\det g)^{-1}\xi(s,\chi,a)$.
\item
We have $\sum_{a\in V_N}\xi(s,a)=\xi(s)$.
\item
If $g\in \spl_2(\Z/N\Z)$, then $\xi(s,ga)=\xi(s,a)$.
\end{enumerate}
\end{lem}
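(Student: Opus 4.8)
The statement to prove is Lemma~\ref{lem:zeta_basic}, a package of three elementary compatibility properties; I focus on the proof strategy for all three parts.

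\textbf{Part (1).} This is a direct unwinding of the definition. Writing $\xi(s,\chi,ga)=\sum_{h\in G_N}\chi(\det h)\xi(s,hga)$ and substituting $h'=hg$ (a bijection of $G_N$ onto itself), the sum becomes $\sum_{h'\in G_N}\chi(\det(h'g^{-1}))\xi(s,h'a)=\chi(\det g)^{-1}\sum_{h'\in G_N}\chi(\det h')\xi(s,h'a)=\chi(\det g)^{-1}\xi(s,\chi,a)$, using multiplicativity of $\chi\circ\det$. No obstacle here; it is a one-line reindexing.

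\textbf{Part (2).} Here the plan is to observe that, as $a$ ranges over $V_N=V_\Z/NV_\Z$, the cosets $a+NV_\Z$ partition $V_\Z$, and correspondingly $\Gamma(N)$-orbits refine $\spl_2(\Z)$-orbits. The subtlety is the weighting: each $\spl_2(\Z)$-orbit of some $x\in V_\Z$ with $\pm P(x)>0$ breaks up into $[\spl_2(\Z):\Gamma(N)\,\mathrm{Stab}_{\spl_2(\Z)}(x)]$ many $\Gamma(N)$-orbits, and the stabilizer of a representative in $\Gamma(N)$ has order $|\mathrm{Stab}_{\spl_2(\Z)}(x)\cap\Gamma(N)|$. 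A counting argument (orbit-stabilizer applied to the action of $\spl_2(\Z)/\Gamma(N)$ on the set of $\Gamma(N)$-orbits inside a fixed $\spl_2(\Z)$-orbit, weighting each $\Gamma(N)$-orbit by the reciprocal of its stabilizer order) shows that $\sum_{a}\frac{1}{[\spl_2(\Z):\Gamma(N)]}\sum_{x\in\Gamma(N)\backslash(a+NV_\Z)}|\Gamma(N)_x|^{-1}\,|P(x)|^{-s}$ collapses, orbit by orbit, to $\sum_{x\in\spl_2(\Z)\backslash V_\Z}|\mathrm{Stab}_{\spl_2(\Z)}(x)|^{-1}|P(x)|^{-s}$. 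This identity $\sum_{y\in\Gamma\backslash O}|\Gamma_y|^{-1}=[\Gamma':\Gamma]\cdot|\Gamma'_{x}|^{-1}$ for any single $\Gamma'$-orbit $O$ (with $\Gamma\trianglelefteq\Gamma'$ of finite index, $x\in O$) is the one mildly technical point, but it is the standard ``mass'' bookkeeping lemma for weighted orbit sums under a subgroup; I would state it cleanly and apply it. The signs match since $P$ is constant on orbits.

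\textbf{Part (3).} This is where strong approximation enters, and I expect it to be the only part requiring genuine input beyond bookkeeping. Given $g\in\spl_2(\Z/N\Z)$, lift it to $\tilde g\in\spl_2(\Z)$: this is possible precisely because the reduction $\spl_2(\Z)\to\spl_2(\Z/N\Z)$ is surjective (a consequence of strong approximation for $\spl_2$, or elementarily of the fact that $\spl_2$ is generated by elementary matrices, which lift trivially). Then $\tilde g$ maps $a+NV_\Z$ bijectively onto $\tilde g a+NV_\Z=ga+NV_\Z$ (the last equality being reduction mod $N$), it normalizes $\Gamma(N)$ since $\Gamma(N)$ is normal in $\spl_2(\Z)$, and it preserves $P$ because $P(\tilde g x)=(\det\tilde g)^2P(x)=P(x)$. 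Hence conjugation by $\tilde g$ gives a bijection of $\Gamma(N)$-orbits on $a+NV_\Z$ onto $\Gamma(N)$-orbits on $ga+NV_\Z$ preserving stabilizer orders and discriminants, so the two Dirichlet series agree term by term; the signs are again preserved. The plan is to isolate ``$\spl_2(\Z)\twoheadrightarrow\spl_2(\Z/N\Z)$'' as the single external fact, then the rest is the same orbit-relabeling argument. I would write Part (3) before finalizing the Remark following the lemma, since it is exactly Part (3) that legitimizes reducing $\gl_2(\Z/N\Z)$-equivariance to the one-dimensional characters $\chi\circ\det$.
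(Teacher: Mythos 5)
Your proposal is correct and takes essentially the same route as the paper: part (1) is the same reindexing, part (3) rests on the same three facts the paper uses (surjectivity of $\spl_2(\Z)\to\spl_2(\Z/N\Z)$, normality of $\Gamma(N)$ in $\spl_2(\Z)$, and $\spl_2$-invariance of $P$), and part (2) is the same orbit-mass bookkeeping, which the paper delegates to Proposition 3.3 of \cite{yt} while you state the underlying identity $\sum_{y\in\Gamma\backslash O}|\Gamma_y|^{-1}=[\Gamma':\Gamma]\,|\Gamma'_x|^{-1}$ explicitly. The only cosmetic difference is that the paper cites a packaged result for the mass formula whereas you verify it directly; both are fine.
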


\begin{proof}
These are not too difficult to
verify from the definitions. Alternatively, one can apply
Proposition 3.3 of \cite{yt} as follows:
(1) immediately follows from definition,
while (2) follows from Proposition 3.3 (1) and (4) of \cite{yt}.
For (3), first note that the canonical map
$\spl_2(\Z)\rightarrow \spl_2(\Z/N\Z)$ is surjective.
Take any lift $\gamma\in\spl_2(\Z)$ of $g\in\spl_2(\Z/N\Z)$,
then $\gamma(a+NV_\Z)=ga+NV_\Z$ by definition.
By Proposition 3.3 (2) of \cite{yt}, we have
\begin{align*}
\xi(s,ga)&=\xi(s,ga+NV_\Z,\Gamma(N))
=\xi(s,\gamma(a+NV_\Z),\Gamma(N))\\
&=\xi(s,\gamma(a+NV_\Z),\gamma\Gamma(N)\gamma^{-1})
=\xi(s,a+NV_\Z,\Gamma(N))=\xi(s,a).
\end{align*}
Note that $\Gamma(N)$ is a normal subgroup of $\spl_2(\Z)$.
\end{proof}

We put
\[
T:=\left\{t=\begin{pmatrix}1&0\\0&{t_1}\end{pmatrix}\ \vrule\ t_1\in\gl_1\right\}\cong\gl_1
\]
and $T_N=T_{\Z/N\Z}\cong(\Z/N\Z)^\times$.
Then since $G=\spl_2\rtimes T$, we can write
$\xi(s,\chi,a)$ as
\begin{equation}\label{eq:sum_TN}
\xi(s,\chi,a)=\frac{|G_N|}{|T_N|}\sum_{t\in T_N}\chi(\det t)\xi(s,ta).
\end{equation}
Since $T_N$ is an abelian group, by the orthogonality
of characters we have the following.
\begin{prop}\label{prop:partial_orbital}
We have
\[
\xi(s,a)
=|G_N|^{-1}\sum_\chi\xi(s,\chi,a).
\]
Here $\chi$ runs through all the Dirichlet characters
of conductors dividing $N$.
\end{prop}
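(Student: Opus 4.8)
The plan is to substitute the expression \eqref{eq:sum_TN} for each $\xi(s,\chi,a)$ into the right-hand side, interchange the two (finite) sums, and collapse the inner sum using orthogonality of characters on the abelian group $T_N$.

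First I would record that $\det$ restricts to a group isomorphism $T_N \xrightarrow{\ \sim\ } (\Z/N\Z)^\times$, sending $t=\diag(1,t_1)$ to $t_1$. Under this identification, the Dirichlet characters of conductor dividing $N$ --- each regarded, as in Definition \ref{def:orbitalL}, as a character of $(\Z/N\Z)^\times$ via reduction --- are exactly the characters $\chi\circ\det$ of $T_N$, and this correspondence is a bijection; in particular there are precisely $|T_N|=\varphi(N)$ of them, matching the index $|G_N|/|T_N|$ appearing in \eqref{eq:sum_TN}.

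Next, applying \eqref{eq:sum_TN} to every $\chi$ and interchanging the order of summation,
\[
\sum_\chi \xi(s,\chi,a)
=\frac{|G_N|}{|T_N|}\sum_{t\in T_N}\Bigl(\sum_\chi \chi(\det t)\Bigr)\xi(s,ta).
\]
By the orthogonality relations on $(\Z/N\Z)^\times$, the inner sum $\sum_\chi \chi(\det t)$ equals $|T_N|$ when $\det t=1$ --- i.e.\ when $t$ is the identity of $T_N$ --- and vanishes otherwise. Hence only the term $t=e$ survives, the right-hand side equals $|G_N|\,\xi(s,a)$, and dividing by $|G_N|$ yields the claim.

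There is essentially no obstacle here beyond bookkeeping: the single point deserving a moment's care is the bijection between Dirichlet characters of conductor dividing $N$ and the characters of $T_N$, since this is precisely what makes the orthogonality step legitimate; everything else is formal. Alternatively one could phrase the argument by observing that \eqref{eq:sum_TN} exhibits $\bigl(\xi(s,\chi,a)\bigr)_\chi$ as a scalar multiple of the finite Fourier transform on $T_N\cong(\Z/N\Z)^\times$ of the tuple $\bigl(\xi(s,ta)\bigr)_{t\in T_N}$, and then invert the transform and read off the component at $t=e$, which is $\xi(s,a)$ by Lemma \ref{lem:zeta_basic}.
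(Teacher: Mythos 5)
Your proof is correct and follows exactly the route the paper has in mind: the paper derives Proposition \ref{prop:partial_orbital} directly from \eqref{eq:sum_TN} together with orthogonality of characters on the abelian group $T_N\cong(\Z/N\Z)^\times$, which is precisely your argument. You have simply spelled out the bookkeeping (the bijection via $\det$ and the interchange of sums) that the paper leaves implicit.
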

Hence the study of partial zeta functions is equivalent to that of
orbital $L$-functions. Since orbital $L$-functions are theoretically
more natural to study, we concentrate on these for the rest of this paper.

We put $V^\ast_N=V^\ast_{\Z/N\Z}\cong V^\ast_\Z/NV^\ast_\Z$.
Then $G_N$ acts on $V^\ast_N$ also.
We define the
zeta functions for the dual
$\xi^\ast(s,b), \xi^\ast(s,\chi,b)$
for each $b\in V^\ast_N$
in exactly the same way; letting
\[
\xi^\ast_{\pm}(s,Y,\Gamma):=\frac{1}{[\spl_2(\Z) : \Gamma]}\!
\sum_{\substack{y\in{\Gamma}\backslash Y\\\pm P^\ast(y)>0}}\!
\frac{|\Gamma_y|^{-1}}{|P^\ast(y)|^s},
\qquad
\xi^\ast(s,Y,\Gamma)
:=
\left(
\begin{array}{l}
	\xi^\ast_+(s,Y,\Gamma)\\
	\xi^\ast_-(s,Y,\Gamma)\\
\end{array}\right),
\]
we define
\begin{align*}
\xi^\ast(s,b)&:=\xi^\ast(s,b+NV^\ast_\Z,\Gamma(N)),\\
\xi^\ast(s,\chi,b)&:=\sum_{g\in G_N}\chi(\det g)\xi^\ast(s,gb).
\end{align*}
They satisfy the same properties in Lemma \ref{lem:zeta_basic}.
Namely, $\sum_{b\in V_N^\ast}\xi^\ast(s,b)=\xi^\ast(s)$,
$\xi^\ast(s,gb)=\xi^\ast(s,b)$ for $g\in\spl_2(\Z/N\Z)$,
and $\xi^\ast(s,\chi,gb)=\chi(\det g)^{-1}\xi^\ast(s,\chi,b)$.
Here we put $\xi^\ast(s)=\xi^\ast(s,V^\ast_\Z,\Gamma(1))$.

We note that
$\xi^\ast(s,Y,\Gamma)=27^{s}\xi(s,\iota(Y),\Gamma)$
where $\iota\colon V^\ast_\Z\rightarrow V_\Z$
is the embedding introduced in Section \ref{sec:GVdefn}.
The factor $27^s$ comes from the relation
$P(\iota(y))=27P^\ast(y)$ for $y\in Y\subset V^\ast_\Z$.
Also if $Y$ is defined by congruence conditions modulo $N$ in $V^\ast_\Z$,
then $\iota(Y)$ is determined by congruence conditions modulo $3N$ in $V_\Z$.
Hence it is possible to write $\xi^\ast(s,b)$
(and hence $\xi^\ast(s,\chi,b)$)
in terms of linear combinations of $\xi(s,a)$.

We conclude this section with the definition of
the orbital Gauss sum.
For $a\in V_N, b\in V^\ast_N$, we put
$\langle a,b\rangle:=\exp(2\pi i[a,b]/N)$.
If we emphasize the dependence on $N$,
we write $\langle a,b\rangle_N$ also.
\begin{defn}\label{def:orbitalGausssum}
For $a\in V_N$, $b\in V_N^\ast$ we define
\[
W(\chi,a,b)
:=\sum_{g\in G_N}\chi(\det g)\langle ga,b\rangle
=\sum_{g\in G_N}\chi(\det g)\langle a,gb\rangle
\]
and call it the {\em orbital Gauss sum}.
\end{defn}
The second equality holds because $[a,gb]=[(\det g)g^{-1}a,b]$.
If we emphasize the dependence on $N$, we write $W_N(\chi,a,b)$ also.
The following immediately follows from the definition.
\begin{lem}
For $g_1,g_2\in G_N$,
\[
W(\chi,g_1a,g_2b)=\chi(\det g_1)^{-1}\chi(\det g_2)^{-1}W(\chi,a,b).
\]
In particular, if $\chi\circ\det$ is non-trivial
either on $G_{N,a}$ or $G_{N,b}$, then $W(\chi,a,b)=0$.
\end{lem}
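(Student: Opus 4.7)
The plan is a short direct manipulation of the definition, leveraging the two equivalent forms of $W(\chi,a,b)$ recorded in Definition \ref{def:orbitalGausssum}. I would split the main identity into two independent pieces,
\[
W(\chi,g_1 a, b) = \chi(\det g_1)^{-1} W(\chi,a,b), \qquad W(\chi,a,g_2 b) = \chi(\det g_2)^{-1} W(\chi,a,b),
\]
whose conjunction immediately yields the full transformation rule $W(\chi,g_1 a, g_2 b) = \chi(\det g_1)^{-1}\chi(\det g_2)^{-1}W(\chi,a,b)$.

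For the first displayed identity I would start from $W(\chi,g_1 a, b) = \sum_{g \in G_N} \chi(\det g)\langle g g_1 a, b\rangle$ and perform the substitution $g \mapsto g g_1^{-1}$, which is a bijection of $G_N$. Since $\chi(\det(g g_1^{-1})) = \chi(\det g)\chi(\det g_1)^{-1}$, the sum factors as $\chi(\det g_1)^{-1} \sum_g \chi(\det g)\langle g a, b\rangle = \chi(\det g_1)^{-1} W(\chi,a,b)$. The second identity is handled in parallel, but starting instead from the other form $W(\chi,a,b)=\sum_g \chi(\det g)\langle a, gb\rangle$ and substituting $g \mapsto g g_2^{-1}$; this is where the second equality in Definition \ref{def:orbitalGausssum} pulls its weight.

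The in-particular statement then follows by a standard invariance argument: if $g \in G_{N,a}$ satisfies $\chi(\det g)\ne 1$, applying the first identity with $g_1 = g$ gives $W(\chi,a,b) = W(\chi,ga,b) = \chi(\det g)^{-1} W(\chi,a,b)$, and $1-\chi(\det g)^{-1}\ne 0$ forces $W(\chi,a,b) = 0$; the case of $g \in G_{N,b}$ with $\chi(\det g)\ne 1$ is handled identically using the second identity. There is essentially no obstacle in this argument — it is a straightforward change of variable — and the only point requiring even mild care is that both presentations of $W(\chi,a,b)$ in Definition \ref{def:orbitalGausssum} must be invoked, one for each of the two transformation rules.
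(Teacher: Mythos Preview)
Your proof is correct and is exactly the kind of direct change-of-variables argument the paper has in mind when it says the lemma ``immediately follows from the definition.'' There is no meaningful difference between your approach and the paper's.
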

The significance of this character sum will be
clarified in the next section. In particular,
this appears in the functional equation
satisfied by $\xi(s,\chi,a)$ and $\xi^\ast(s,\chi^{-1},b)$.

\section{Functional equation}
\label{sec:FE}
In this section we discuss the functional equation
of the zeta functions.
To begin, we recall Shintani's functional equation \cite{shintania}.
\begin{thm}[Shintani]\label{thm:FEShintani}
Let
\[
M(s):=
\frac{3^{3s-2}}{2\pi^{4s}}
\Gamma(s)^2\Gamma\left(s-\tfrac16\right)\Gamma\left(s+\tfrac16\right)
\begin{pmatrix}
\sin 2\pi s&\sin \pi s\\
3\sin \pi s&\sin2\pi s\\
\end{pmatrix}.
\]
Then
\[
\xi(1-s)=M(s)\cdot\xi^\ast(s).
\]
\end{thm}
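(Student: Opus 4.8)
Since Theorem~\ref{thm:FEShintani} is Shintani's theorem \cite{shintania}, in practice one simply invokes it; but the route I would take is the following. The plan is to attach to a Schwartz function $\Phi$ on $V_\R$ the global zeta \emph{integral}
\[
Z(\Phi,s):=\int_{\spl_2(\Z)\backslash\spl_2(\R)}\int_0^\infty t^{4s}\sum_{x\in V_\Z'}\Phi(tg_1x)\,\frac{dt}{t}\,dg_1,
\qquad V_\Z':=\{x\in V_\Z:P(x)\neq0\},
\]
convergent for $\re s$ large, and to read $\xi_\pm(s)$ and $\xi^\ast_\pm(s)$ off it in two ways. First I would unfold the sum against the $\spl_2(\Z)$-action and push forward along the orbit maps $(t,g_1)\mapsto tg_1x_0$, whose fibres are the real stabilizers, whose Jacobian is a power of $|P|$, and along which $P(tg_1x_0)=t^4P(x_0)$; together with the computation of the orbital volumes this gives
\[
Z(\Phi,s)=\sum_{\pm}\xi_\pm(s)\,I_\pm(\Phi,s),\qquad I_\pm(\Phi,s):=\int_{V^\pm_\R}\Phi(x)\,|P(x)|^{s-1}\,dx,
\]
where $V^\pm_\R$ are the two open $\spl_2(\R)$-orbits (given by the sign of the discriminant) and the $I_\pm$ are the archimedean local zeta functions, entire after the standard Gamma-factor normalization.

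Next I would extract the functional equation from the Poisson summation formula for the four-dimensional lattice $V_\Z\subset V_\R$. Since $g$ acts on $V$ with determinant $(\det g)^2$, for $h=tg_1\in\gl_2(\R)$ one has $\sum_{x\in V_\Z}\Phi(hx)=t^{-4}\sum_{y}\widehat\Phi\bigl(t^{-1}({}^tg_1)^{-1}y\bigr)$, the sum on the right running over the dual lattice, which is $V^\ast_\Z$ up to an index discrepancy accounting for some powers of $3$. Substituting $t\mapsto t^{-1}$ and $g_1\mapsto({}^tg_1)^{-1}$ (an automorphism of $\spl_2(\Z)\backslash\spl_2(\R)$) turns the weight $t^{4s}$ into $t^{4(1-s)}$ and converts $Z(\Phi,s)$ into the dual integral $Z^\ast(\widehat\Phi,1-s)$, modulo two corrections: the contribution of $\{x\in V_\Z:P(x)=0\}$, and the divergence of the $g_1$-integral at the cusp of $\spl_2(\Z)\backslash\spl_2(\R)$, which forces a truncation. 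The outcome is an identity of the shape
\[
Z(\Phi,s)=Z^\ast(\widehat\Phi,1-s)+(\text{correction terms}),
\]
in which the corrections are built from lower-rank data — binary quadratic forms at the cusp, and the singular distribution supported on $P=0$ — and are exactly what produces the meromorphic continuation of everything, together with the simple poles at $s=1$ and $s=5/6$. I expect \emph{this} step to be the main obstacle: rigorously controlling the cusp contribution, establishing the meromorphic continuation of $Z(\Phi,s)$, and showing that the correction terms reassemble into a clean functional equation rather than obstructing it. It is the technical heart of \cite{shintania}; an adelic reformulation (Sato--Shintani, or Wright \cite{wright}) streamlines the bookkeeping but not this point.

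The remaining step is the archimedean local functional equation. Varying $\widehat\Phi$, one sees that $\bigl(I_+,I_-\bigr)(\Phi,s)$ is carried to $\bigl(I^\ast_+,I^\ast_-\bigr)(\widehat\Phi,1-s)$ by an explicit $2\times2$ matrix; taking $\Phi$ Gaussian and exploiting $\spl_2(\R)$-equivariance reduces each $I_\pm$ to a one-dimensional Mellin--Barnes integral, which the beta integral evaluates in terms of $\Gamma(s)^2\Gamma\bigl(s-\tfrac16\bigr)\Gamma\bigl(s+\tfrac16\bigr)$ times trigonometric factors, with $\sin2\pi s$ on the diagonal and $\sin\pi s$, $3\sin\pi s$ off the diagonal, reflecting the real-root structure of positive- versus negative-discriminant forms; here I would also use $\iota$ and the relation $P(\iota(y))=27P^\ast(y)$ from Section~\ref{sec:GVdefn} to match the dual form with $P$. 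Feeding this matrix into the two expressions for $Z(\Phi,s)$, matching the coefficients of $I^\ast_\pm(\widehat\Phi,1-s)$, checking that the correction terms cancel out of the functional equation, and inverting, one arrives at exactly $\xi(1-s)=M(s)\,\xi^\ast(s)$.
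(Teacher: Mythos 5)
The paper does not prove Theorem~\ref{thm:FEShintani}; it is quoted directly from Shintani~\cite{shintania}, so there is no in-paper argument to compare against. Your outline does capture the structure of that proof: a global zeta integral over $\spl_2(\Z)\backslash\spl_2(\R)\times\R_+^\times$, unfolding to write it as $\sum_\pm\xi_\pm(s)I_\pm(\Phi,s)$, Poisson summation on the lattice $V_\Z\subset V_\R$ to relate $Z(\Phi,s)$ to the dual integral, and the $2\times2$ archimedean local functional-equation matrix giving $M(s)$.

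Where your sketch remains genuinely incomplete is the handling of the correction terms from the cusp and from $\{P=0\}$, and there is a standard refinement worth noting, since the paper itself uses it in its sketch of the more general Theorem~\ref{thm:FESato}. Following Shintani~\cite[Theorem~5]{shintanib}, one may choose $\Phi_\infty\in\sS(V_\R)$ so that \emph{both} $\Phi_\infty$ vanishes on $\{x:P(x)=0\}$ and $\widehat\Phi_\infty$ vanishes on $\{y:P^\ast(y)=0\}$. For such test functions the singular and cusp contributions vanish identically, so the correction terms need not be shown to ``cancel'' from the functional equation — one gets a clean identity $Z(\Phi,s)=Z^\ast(\widehat\Phi,1-s)$ directly. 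What remains is the computation of the $\Gamma_\infty$-matrix (yielding $M(s)$, for which the Mellin--Barnes/Beta-integral computation you describe is indeed the right tool), plus a nondegeneracy check that this class of $\Phi_\infty$ is rich enough to separate the two components and extract $\xi(1-s)=M(s)\xi^\ast(s)$. This sidesteps the direct control of the cusp integral that you correctly flag as the technical bottleneck of the unrefined approach.

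One small inaccuracy in your Poisson summation step: under the canonical pairing $[\cdot,\cdot]$ (which is what the paper uses throughout), the dual lattice of $V_\Z$ is exactly $V^\ast_\Z$, with no index discrepancy. The factors of $3$ appear only if one instead identifies $V^\ast$ with $V$ via $\iota$, at which point one must carry the relation $P(\iota(y))=27P^\ast(y)$, as the paper records when it writes $\xi^\ast(s,Y,\Gamma)=27^s\xi(s,\iota(Y),\Gamma)$.
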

An extension necessary for us was given by F. Sato \cite{fsatoe}
in a general setting.
We recall his formula and apply it to our orbital $L$-functions.
Let $C(V_N)$ and $C(V^\ast_N)$ be the space of
$\C$-valued functions on $V_N$ and $V^\ast_N$, respectively.
\begin{defn}\label{defn:zetaf}
For $f\in C(V_N)$, $f^\ast\in C(V^\ast_N)$ we define
the associated zeta functions as follows:
\[
\xi(s,f):=\sum_{a\in V_N}f(a)\xi(s,a),
\quad
\xi^\ast(s,f^\ast):=\sum_{b\in V^\ast_N}f^\ast(b)\xi^\ast(s,b).
\]
\end{defn}
This is the most general class of the zeta functions
from our viewpoint and this class
contains partial zeta functions and orbital $L$-functions
as special cases.
Our main interest is the orbital $L$-function but the functional
equation is described most naturally for this class.
For $f\in C(V_N)$, we define its Fourier transform
$\widehat f\in C(V^\ast_N)$ by
\[
\widehat f (b) =N^{-4}\sum_{a\in V_N}f(a)\langle a,b\rangle.
\]
By the Fourier inversion formula, we have
$f(a)=\sum_{b\in V^\ast_N}\widehat f(b)\langle -a,b\rangle$.

By \cite[Theorem Q]{fsatoe}, we have the following
functional equation. 
\begin{thm}[F. Sato]\label{thm:FESato}
We have
\[
\xi(1-s,f)=N^{4s}M(s)\cdot\xi^\ast(s,\widehat f).
\]
\end{thm}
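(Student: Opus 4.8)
The plan is to obtain Theorem~\ref{thm:FESato} as a special case of F.~Sato's general functional equation \cite[Theorem Q]{fsatoe}, specialized to $(G,V)=(\gl_2,\{\text{binary cubic forms}\})$ with the twisted action of Section~\ref{sec:GVdefn}, the lattice $V_\Z$, the subgroup $\Gamma(N)$, and an arbitrary weight $f\in C(V_N)$. Unwinding Definition~\ref{defn:zetaf} gives the single expression
\[
\xi(s,f)=\frac{1}{[\spl_2(\Z):\Gamma(N)]}\sum_{\substack{x\in\Gamma(N)\backslash V_\Z\\ \pm P(x)>0}}\frac{f(x\bmod N)\,|\Gamma(N)_x|^{-1}}{|P(x)|^s},
\]
which is exactly the kind of weighted Shintani zeta function to which \cite[Theorem Q]{fsatoe} applies; the dual object it produces is the analogous zeta function for $V^\ast_\Z$ and $\Gamma(N)$ with a finite Fourier transform of $f$ as its weight. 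So the first step is bookkeeping: transcribe Sato's hypotheses and conclusion into the present notation and check that our $(G,V)$, our action, our $\Gamma(N)$, and our normalization (the factor $[\spl_2(\Z):\Gamma]^{-1}$ in $\xi(s,X,\Gamma)$) match his requirements. If \cite[Theorem Q]{fsatoe} is stated only for $\spl_2(\Z/N\Z)$-invariant weights, one first notes, using Lemma~\ref{lem:zeta_basic}(3) and its dual, that both $\xi(s,f)$ and $\xi^\ast(s,\widehat f)$ depend on $f$ only through its average over $\spl_2(\Z/N\Z)$, so nothing is lost.

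Next I would identify the two universal ingredients of Sato's functional equation. The $\Gamma$-factor is built from the archimedean local factors of $(G,V)$ only --- the real orbits, the fixed archimedean test function --- and is insensitive to $N$ and to $f$; since the case $N=1$, $f\equiv1$ is Theorem~\ref{thm:FEShintani} with the matrix $M(s)$, the $\Gamma$-factor for general $(N,f)$ is the same $M(s)$, and no new archimedean computation is needed. Likewise Sato's dual weight is a finite Fourier transform of $f$ against the pairing $[\ast,\ast]$; after matching the normalization of the additive character and the placement of the power of $N$ (and noting that $\xi^\ast(s,b)=\xi^\ast(s,-b)$, since $-I\in\spl_2(\Z/N\Z)$, so the sign in the exponent is irrelevant), this is our $\widehat f$ up to an overall constant, which a priori is a power of $N$. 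Thus the functional equation takes the form $\xi(1-s,f)=c(N,s)\,M(s)\,\xi^\ast(s,\widehat f)$ with $c(N,s)$ a monomial in $N$ and $s$, independent of $f$.

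Finally I would pin down $c(N,s)=N^{4s}$ by specialization. Take $f$ to be the constant function $1$. On the left, $\xi(s,f)=\sum_{a\in V_N}\xi(s,a)=\xi(s)$ by Lemma~\ref{lem:zeta_basic}(2). On the right, $\widehat f$ is the indicator of the origin of $V^\ast_N$, so $\xi^\ast(s,\widehat f)=\xi^\ast(s,0)=\xi^\ast(s,NV^\ast_\Z,\Gamma(N))$; substituting $y=Ny'$ and using that $P^\ast$ is homogeneous of degree $4$, that multiplication by $N$ is $\spl_2$-equivariant and bijective from $V^\ast_\Z$ onto $NV^\ast_\Z$ (hence preserves stabilizers and the sign of $P^\ast$), and that $\xi^\ast(s,V^\ast_\Z,\Gamma(N))=\xi^\ast(s)$ by the dual analogue of Lemma~\ref{lem:zeta_basic}(2), one gets $\xi^\ast(s,0)=N^{-4s}\xi^\ast(s)$. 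Comparing with Theorem~\ref{thm:FEShintani}, the identity $\xi(1-s)=c(N,s)M(s)N^{-4s}\xi^\ast(s)$ forces $c(N,s)=N^{4s}$, which finishes the proof.

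The main obstacle is the convention-matching in the first two steps: reconciling Sato's normalizations of the additive character, of the Fourier transform and its $N$-powers, and of the archimedean test function with the elementary definitions used here, and verifying that \cite[Theorem Q]{fsatoe} is being invoked in the intended generality. Once those dictionaries are fixed there is no genuinely hard analysis --- the $\Gamma$-factor is inherited from Shintani \cite{shintania} and the remaining constant is forced by the $f\equiv1$ specialization. (Alternatively one could redo the adelic Poisson-summation argument directly with a test function detecting $a\bmod N$, but this amounts to re-deriving \cite[Theorem Q]{fsatoe}.)
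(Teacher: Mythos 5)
Your proposal is correct, and it is genuinely a different route from the paper's. The paper also cites \cite[Theorem Q]{fsatoe}, but its ``sketch of proof'' essentially re-derives Sato's argument in the present notation: one unfolds $\int_{G_\R^+/\Gamma(N)}|\det g_\infty|_\infty^{2s}\sum_{x\in V_\Z}f(x)\Phi_\infty(g_\infty x)\,dg_\infty = \Gamma_\infty(\Phi_\infty,s)\xi(s,f)$ using the archimedean local zeta function~\eqref{eqn:vv_zeta}, applies Poisson summation on the lattice $V_\Z$ with weight $f$ to obtain the dual lattice sum with weight $\widehat f$, and then invokes the standard smoothed Eisenstein series/contour-shifting machinery to extract the functional equation. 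Your route instead treats Theorem Q as a black box, argues that its $\Gamma$-factor is purely archimedean (hence equal to Shintani's $M(s)$ by the $N=1$, $f\equiv1$ case), and pins down the scalar $N^{4s}$ by the specialization $f\equiv1$, using $\widehat{\mathbf 1}=\mathbf 1_{\{0\}}$, the homogeneity $P^\ast(Ny')=N^4P^\ast(y')$, and the dual of Lemma~\ref{lem:zeta_basic}(2) to get $\xi^\ast(s,0)=N^{-4s}\xi^\ast(s)$. Your computation there is correct (including the observation that one may restrict to $\spl_2(\Z/N\Z)$-averaged $f$, and that $\widehat{f\circ g}=\widehat f\circ g$ for $g\in\spl_2(\Z/N\Z)$). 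The trade-off is that the paper's sketch is self-contained modulo ``standard PVS machinery,'' while your version avoids the unfolding and Poisson bookkeeping at the cost of leaning on two structural facts about Sato's statement --- that the $\Gamma$-factor is insensitive to $N$ and $f$, and that the scalar multiple of the Fourier transform is a monomial in $N$ --- which are true but still require reading the hypotheses and conclusion of Theorem Q closely enough to check them, so the net savings in convention-matching is modest.
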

\begin{proof}[Sketch of proof]
For the convenience of the reader, we briefly review Sato's proof.
Let $\sS(V_\R)$ be the space of Schwarz-Bruhat functions on $V_\R$.
We put $G_\R^+=\{g\in G_\R\mid \det g>0\}$ and denote by
$dg_\infty$ a fixed Haar measure on it.
We define the (vector valued) local zeta function
\begin{equation}\label{eqn:vv_zeta}
\Gamma_\infty(\Phi_\infty,s):=
\left(
\int_{G_\R^+}|P(g_\infty x_+)|_\infty^s\Phi_\infty(g_\infty x_+)dg_\infty,
\int_{G_\R^+}|P(g_\infty x_-)|_\infty^s\Phi_\infty(g_\infty x_-)dg_\infty
\right),
\end{equation}
where $\Phi_\infty\in\sS(V_\R)$.
Here $x_\pm\in V_\R^\pm=\{x\in V_\R\mid \pm P(x)>0\}$ is arbitrary.

As in the proof of \cite[Theorem 5]{shintanib},
we can take $\Phi_\infty\in\sS(V_\R)$ such that
$\Phi_\infty$ vanishes on $\{x\in V_\R\mid P(x)=0\}$ and 
$\widehat\Phi_\infty$ vanishes on $\{y\in V^\ast_\R\mid P^\ast(y)=0\}$,
where we put $\widehat\Phi_\infty(y)=\int_{V_\R}\Phi(x)\exp(-2\pi i[x,y])dx$.
For $x\in V_\Z, y\in V^\ast_\Z$,
let $f(x)=f(x\mod N)$ and $\widehat f(y)=\widehat f(y\mod N)$.
Then by the standard unfolding method\footnote{Though
we use another integral expression
for $\xi(s,\chi,a)$ to compute residue formulas in Section \ref{sec:residue},
we find this more convenient to prove the functional equation.}
(see the proof of Proposition \ref{prop:unfold} for detail),
\[
\int_{G_\R^+/\Gamma(N)}
	|\det g_\infty|_\infty^{2s}\sum_{x\in V_\Z}f(x)\Phi_\infty(g_\infty x)dg_\infty
=\Gamma_\infty(\Phi_\infty,s)\xi(s,f).
\]
On the other side, by the Poisson summation formula,
\[
\sum_{x\in V_\Z}f(x)\Phi_\infty(g_\infty x)
=|\det g_\infty|_\infty^{-2}\sum_{y\in V^\ast_\Z}\widehat f(y)\widehat\Phi_\infty((\det g_\infty)^{-1}g_\infty y/N).
\]
The rest of argument is standard in the theory of prehomogeneous vector space
and we omit the details.
\end{proof}

Hence the study of the Fourier transform $\widehat f$
is fundamental for further analysis of
the functional equation.

Let $N=N_1N_2$, where $N_1$ and $N_2$ are coprime integers.
For $f^i\in C(V_{N_i})$ for $i=1,2$,
we define $f^1\times f^2\in C(V_N)$ by
\[
(f^1\times f^2)(a)=f^1(a\mod N_1)f^2(a\mod N_2),
\qquad
a\in V_N.
\]
We use the similar notation for the dual space.
We note that the Fourier transform
$(f^1\times f^2)^\wedge$ of $f^1\times f^2$
does {\em not} coincide with $\widehat{f^1}\times\widehat{f^2}$ in general.
Instead, we have the following.
For $t\in(\Z/N\Z)^\times$, let $f_t(a)=f(ta)$.
\begin{lem}\label{lem:productf}
With the notation above, we have
\[
(f^1\times f^2)^\wedge=(f^{1}_{N_2})^\wedge\times(f^{2}_{N_1})^\wedge.
\]
\end{lem}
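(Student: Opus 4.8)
The plan is to follow the finite Fourier transform through the Chinese Remainder Theorem isomorphism $V_N\cong V_{N_1}\times V_{N_2}$ (and the corresponding one for $V^\ast_N$), the only genuine point being that the additive character $\langle\,\cdot\,,\cdot\,\rangle_N$ does not factor diagonally: on each of the two pieces it is twisted by a unit.

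First I would fix the CRT data. Let $\overline{N_2}\in\Z$ be an inverse of $N_2$ modulo $N_1$ and $\overline{N_1}\in\Z$ an inverse of $N_1$ modulo $N_2$, and put $e_1:=N_2\overline{N_2}$, $e_2:=N_1\overline{N_1}$, so that $e_1\equiv 1,\ e_2\equiv 0\pmod{N_1}$ while $e_1\equiv 0,\ e_2\equiv 1\pmod{N_2}$ and $e_1+e_2\equiv 1\pmod N$. Under the induced identification $a\leftrightarrow(a_1,a_2)\in V_{N_1}\times V_{N_2}$ and $b\leftrightarrow(b_1,b_2)\in V^\ast_{N_1}\times V^\ast_{N_2}$, the pairing is compatible with reduction — $[\,\cdot\,,\cdot\,]$ is $\Z$-bilinear, hence commutes with the maps $\Z/N\Z\to\Z/N_i\Z$ — so $[a,b]\equiv e_1[a_1,b_1]+e_2[a_2,b_2]\pmod N$. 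Dividing by $N$ and using $e_1/N=\overline{N_2}/N_1$, $e_2/N=\overline{N_1}/N_2$, this yields the character factorization
\[
\langle a,b\rangle_N=\langle\overline{N_2}\,a_1,b_1\rangle_{N_1}\cdot\langle\overline{N_1}\,a_2,b_2\rangle_{N_2}.
\]

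With this identity the rest is a formal computation. Substituting it together with $(f^1\times f^2)(a)=f^1(a_1)f^2(a_2)$ and $N^{-4}=N_1^{-4}N_2^{-4}$ into the definition of $(f^1\times f^2)^\wedge(b)$, the sum over $V_N$ splits as a product of a sum over $V_{N_1}$ and a sum over $V_{N_2}$. In the $V_{N_1}$-factor I would change variables $a_1\mapsto N_2a_1$, which is a bijection of $V_{N_1}$ since $N_1$ and $N_2$ are coprime; this cancels the twist $\overline{N_2}$ inside the character and replaces $f^1(a_1)$ by $f^1(N_2a_1)=f^{1}_{N_2}(a_1)$, so the factor becomes precisely $(f^{1}_{N_2})^\wedge(b_1)$. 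The $V_{N_2}$-factor is handled symmetrically and gives $(f^{2}_{N_1})^\wedge(b_2)$; multiplying the two factors is exactly $\big((f^{1}_{N_2})^\wedge\times(f^{2}_{N_1})^\wedge\big)(b)$.

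The only thing that needs care — and what I would call the main obstacle, modest as it is — is bookkeeping the direction of the twist: one must verify that after clearing the unit from the character the shift that lands on $f^1$ is multiplication by $N_2$ rather than by its inverse $\overline{N_2}$, which is exactly why the statement involves $f^{1}_{N_2}$ and $f^{2}_{N_1}$ (with the moduli swapped) rather than their inverses. Everything else is manipulation of finite sums.
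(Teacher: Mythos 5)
Your proof is correct and is essentially the paper's argument: the paper simply builds your change of variables $a_1\mapsto N_2a_1$, $a_2\mapsto N_1a_2$ into the CRT parametrization from the outset, writing $a=N_2a_1+N_1a_2$ so that $\langle N_2a_1,b\rangle_{N_1N_2}=\langle a_1,b_1\rangle_{N_1}$ falls out without any inverse units appearing. Your version just makes the same substitution explicit after first factoring the character via the CRT idempotents.
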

\begin{proof}
Let $f=f^1\times f^2$. Then
\[
f^\wedge(b)=N_1^{-4}N_2^{-4}\sum_{a\in V_{N_1N_2}}f(a)\langle a,b\rangle_{N_1N_2}.
\]
By the Chinese remainder theorem, the set of $a\in V_{N_1N_2}$
is equal to the set of $N_2a_1+N_1a_2$ for $a_1\in V_1, a_2\in V_2$.
Therefore, the quantity above is
\begin{align*}
&N_1^{-4}N_2^{-4}\sum_{a_1\in V_{N_1}}\sum_{a_2\in V_{N_2}}f(N_2a_1+N_1a_2)\langle N_2a_1+N_1a_2,b\rangle_{N_1N_2}
\\
&\quad=N_1^{-4}N_2^{-4}\sum_{a_1\in V_{N_1}}\sum_{a_2\in V_{N_2}}f^1(N_2a_1)f^2(N_1a_2)\langle N_2a_1,b\rangle_{N_1N_2}\langle N_1a_2,b\rangle_{N_1N_2}.
\end{align*}
We have $\langle N_2a_1,b\rangle_{N_1N_2}=\langle a_1,b\rangle_{N_1}$
and $\langle N_1a_2,b\rangle_{N_1N_2}=\langle a_2,b\rangle_{N_2}$,
where we reduce $b$ modulo $N_1$ and $N_2$, respectively,
and the result follows.
\end{proof}

Let $N=\prod N_i$ where $N_i$ and $N_j$ are coprime for $i\neq j$
and $f^i\in C(V_{N_i})$.
Then by the repeated use of this lemma, we have
\[
(\prod f^i)^\wedge=\prod (f^{i}_{N/N_i})^\wedge,
\]
where $\prod f^i\in C(V_N)$ is defined similarly.

We now introduce the following space of functions on $V_N$,
which are our main interest.

\begin{defn}
We define
\[
C(V_N,\chi):=
\left\{f\colon V_N\rightarrow\C
	\mid\text{$f(ga)=\chi(\det g)f(a)$ for all $g\in G_N, a\in V_N$}\right\},
\]
the space of $G_N$-relative invariant function with respect to $\chi$.
\end{defn}

Let $f\in C(V_N,\chi)$ and consider the associated zeta function $\xi(s,f)$.
For $x\in V_\Z$, let $f(x)=f(x\mod N)$. We note that for
$\gamma\in\spl_2(\Z)$,
$f(\gamma x)=f(x)$ since
$(\gamma\mod N)\in\spl_2(\Z/N\Z)$ is of determinant $1$.
For $f\in C(V_N,\chi)$, the zeta function $\xi(s,f)$
has the following description.
\begin{prop}\label{prop:f_inv_splZ}
For $f\in C(V_N,\chi)$,
\[
\xi_{\pm}(s,f)
=\sum_{\substack{x\in\spl_2(\Z)\backslash V_\Z\\\pm P(x)>0}}
f(x)\frac{|\spl_2(\Z)_x|^{-1}}{|P(x)|^s}
\quad
\text{where}
\quad
\xi(s,f)=
\begin{pmatrix}\xi_+(s,f)\\\xi_-(s,f)\end{pmatrix}.
\]
\end{prop}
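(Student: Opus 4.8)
The identity to prove rewrites $\xi(s,f)=\sum_{a\in V_N}f(a)\,\xi(s,a)$ as a single sum over $\spl_2(\Z)\backslash V_\Z$. The strategy is to unfold the definitions, starting from the right-hand side, and use two facts: that $f$ is constant on $\spl_2(\Z)$-orbits (already observed in the paragraph preceding the proposition, since $\gamma \bmod N$ has determinant $1$), and that the sets $a+NV_\Z$ for $a\in V_N$ partition $V_\Z$. First I would expand
\[
\xi_\pm(s,f)=\sum_{a\in V_N}f(a)\,\xi_\pm(s,a+NV_\Z,\Gamma(N))
=\sum_{a\in V_N}\frac{f(a)}{[\spl_2(\Z):\Gamma(N)]}
\sum_{\substack{x\in\Gamma(N)\backslash(a+NV_\Z)\\\pm P(x)>0}}\frac{|\Gamma(N)_x|^{-1}}{|P(x)|^s}.
\]

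\textbf{Reorganizing the double sum.} Since $\{a+NV_\Z\}_{a\in V_N}$ is a partition of $V_\Z$ and $f(x)$ depends only on $x\bmod N$, the double sum over $a$ and over $\Gamma(N)\backslash(a+NV_\Z)$ collapses to a single sum over $\Gamma(N)\backslash V_\Z$ with weight $f(x)$:
\[
\xi_\pm(s,f)=\frac{1}{[\spl_2(\Z):\Gamma(N)]}
\sum_{\substack{x\in\Gamma(N)\backslash V_\Z\\\pm P(x)>0}}f(x)\frac{|\Gamma(N)_x|^{-1}}{|P(x)|^s}.
\]
Now I would pass from $\Gamma(N)$-orbits to $\spl_2(\Z)$-orbits. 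Fix a set of representatives $x$ for $\spl_2(\Z)\backslash V_\Z$; the $\Gamma(N)$-orbits inside $\spl_2(\Z)x$ are indexed by $\Gamma(N)\backslash\spl_2(\Z)/\spl_2(\Z)_x$, and each such double coset $\Gamma(N)\gamma\spl_2(\Z)_x$ contributes $|\Gamma(N)_{\gamma x}|^{-1}=|\Gamma(N)\cap\gamma\spl_2(\Z)_x\gamma^{-1}|^{-1}$. A standard counting argument (orbit–stabilizer applied to the action of $\spl_2(\Z)$ on $\Gamma(N)\backslash\spl_2(\Z)x$, using normality of $\Gamma(N)$) shows
\[
\sum_{\gamma\in\Gamma(N)\backslash\spl_2(\Z)/\spl_2(\Z)_x}\frac{1}{|\Gamma(N)_{\gamma x}|}
=\frac{[\spl_2(\Z):\Gamma(N)]}{|\spl_2(\Z)_x|},
\]
and since $f$ and $P$ are $\spl_2(\Z)$-invariant, summing over these double cosets and then over $x$ yields exactly
$\sum_{x\in\spl_2(\Z)\backslash V_\Z,\ \pm P(x)>0}f(x)\,|\spl_2(\Z)_x|^{-1}|P(x)|^{-s}$, after the factor $[\spl_2(\Z):\Gamma(N)]$ cancels.

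\textbf{Main obstacle.} The only genuinely delicate point is the orbit-counting identity for the stabilizer weights when passing from $\Gamma(N)$ to $\spl_2(\Z)$; one must be careful that $\Gamma(N)_x$ is finite and that the double-coset bookkeeping is exact, not just asymptotic. This is where normality of $\Gamma(N)$ in $\spl_2(\Z)$ is used essentially. Everything else is formal manipulation of absolutely convergent sums in the region $\re(s)>1$, where convergence is inherited from Shintani's zeta function $\xi(s)$ via Lemma \ref{lem:zeta_basic}(2); the resulting identity of Dirichlet series then holds on the common domain of meromorphic continuation. Alternatively, one can sidestep the explicit counting by invoking Proposition 3.3 of \cite{yt} in the same manner as in the proof of Lemma \ref{lem:zeta_basic}, which packages precisely this $\Gamma(N)$-to-$\spl_2(\Z)$ comparison.
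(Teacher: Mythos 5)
Your proof is correct and takes essentially the same approach as the paper: you unfold the definition of $\xi_\pm(s,f)$, collapse the partition $\{a+NV_\Z\}$ into a single sum over $\Gamma(N)\backslash V_\Z$, and then pass from $\Gamma(N)$-orbits to $\spl_2(\Z)$-orbits via the orbit-counting identity. The paper runs exactly the same chain of three equalities in the reverse direction and simply asserts the first (stabilizer-weighted) equality without justification; your explicit double-coset argument, using normality of $\Gamma(N)$, supplies the detail the paper leaves implicit and is verifiably correct.
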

\begin{proof}
For $X\subset V_\Z$, let $X^\pm=\{x\in X\mid \pm P(x)>0\}$.
Then this follows from
\begin{align*}
\sum_{x\in\spl_2(\Z)\backslash V_\Z^\pm}f(x)\frac{|\spl_2(\Z)_x|^{-1}}{|P(x)|^s}
&=[\spl_2(\Z):\Gamma(N)]^{-1}
\sum_{x\in\Gamma(N)\backslash V_\Z^\pm}f(x)\frac{|\Gamma(N)_x|^{-1}}{|P(x)|^s}\\&=[\spl_2(\Z):\Gamma(N)]^{-1}\sum_{a\in V_N}f(a)
\sum_{x\in\Gamma(N)\backslash (a+NV_\Z)^\pm}\frac{|\Gamma(N)_x|^{-1}}{|P(x)|^s}\\
&=\xi_\pm(s,f).
\vspace*{-2cm}
\end{align*}
\end{proof}

All the zeta functions we study in the rest of this paper
will be of the form $\xi(s,f)$ for some $f\in C(V_N,\chi)$.
We explain how $\xi(s,f)$ and $\widehat f$ for these $f$ are related to
the orbital $L$-functions and the orbital Gauss sums
introduced in Section \ref{sec:L}.

For $a\in V_N$, we define $f_{\chi,a}\in C(V_N)$ as follows:
If $\chi\circ\det$ is trivial on $G_{N,a}$, we define
\[
f_{\chi,a}(a'):=
\begin{cases}
|G_{N,a}|\chi(\det g)& a'=ga, g\in G_N,\\
0	&a'\notin G_N\cdot a.
\end{cases}
\]
Otherwise, we put $f_{\chi,a}=0$.
Then one can easily see that
\[
\xi(s,f_{\chi,a})=\xi(s,\chi,a)
\quad\text{and}\quad
\widehat{f_{\chi,a}}(b)=N^{-4}W(\chi,a,b).
\]
We also check that $f_{\chi,a}\in C(V_N,\chi)$,
and moreover if $f\in C(V_N,\chi)$, we have
\[
f=|G_N|^{-1}\sum_{a\in V_N}f(a)f_{\chi,a}.
\]
Hence we have the following.
\begin{prop}\label{prop:f_inv_L_ogs}
Let $f\in C(V_N,\chi)$. Then
\begin{align*}
\xi(s,f)&=|G_N|^{-1}\sum_{a\in V_N}f(a)\xi(s,\chi,a),\\
\widehat f(b)&=N^{-4}|G_N|^{-1}\sum_{a\in V_N}f(a)W(\chi,a,b).
\end{align*}
\end{prop}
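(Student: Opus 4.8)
The plan is to reduce everything to the ``elementary'' functions $f_{\chi,a}$ introduced just above the statement, for which the two required formulas have already been recorded: $\xi(s,f_{\chi,a})=\xi(s,\chi,a)$ and $\widehat{f_{\chi,a}}(b)=N^{-4}W(\chi,a,b)$. Both maps $f\mapsto\xi(s,f)$ (by Definition \ref{defn:zetaf}) and $f\mapsto\widehat f$ are manifestly $\C$-linear in $f$, so the proposition follows at once by applying these two identities term by term to the decomposition
\[
f=|G_N|^{-1}\sum_{a\in V_N}f(a)\,f_{\chi,a},\qquad f\in C(V_N,\chi),
\]
which is also recorded in the discussion preceding the statement. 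Thus the real content is this decomposition, and everything else is bookkeeping.

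To prove the decomposition from scratch I would evaluate both sides at an arbitrary $a'\in V_N$. On the right, $f_{\chi,a}(a')\neq 0$ forces $a'\in G_N\cdot a$, i.e.\ $a$ lies in the single $G_N$-orbit of $a'$; parametrizing that orbit by $a=ka'$ with $k$ running over representatives of $G_N/G_{N,a'}$ (so $|G_N|/|G_{N,a'}|$ terms), one has $f(a)=\chi(\det k)f(a')$ by relative invariance of $f$, while $f_{\chi,a}(a')=|G_{N,a'}|\chi(\det g)$ for any $g$ with $ga=a'$; since such a $g$ satisfies $gk\in G_{N,a'}$ and $\chi\circ\det$ is trivial on $G_{N,a'}$ (this triviality being exactly the hypothesis making $f_{\chi,a}$, and also $f(a')$, nonzero), one gets $\chi(\det g)=\chi(\det k)^{-1}$. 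Hence every term equals $|G_{N,a'}|f(a')$, and summing over the $|G_N|/|G_{N,a'}|$ representatives yields $|G_N|f(a')$, which is the claimed identity. The degenerate orbits, where $\chi\circ\det$ is nontrivial on $G_{N,a'}$, are consistent: there $f(a')=f(ha')=\chi(\det h)f(a')$ for $h\in G_{N,a'}$ forces $f(a')=0$, and simultaneously every $f_{\chi,a}$ with $a$ in the orbit of $a'$ is the zero function by definition.

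Given the decomposition, linearity of $\xi(s,-)$ gives $\xi(s,f)=|G_N|^{-1}\sum_{a}f(a)\xi(s,f_{\chi,a})=|G_N|^{-1}\sum_{a}f(a)\xi(s,\chi,a)$, and linearity of the Fourier transform gives $\widehat f(b)=|G_N|^{-1}\sum_{a}f(a)\widehat{f_{\chi,a}}(b)=N^{-4}|G_N|^{-1}\sum_{a}f(a)W(\chi,a,b)$, which are the two asserted formulas. The only step demanding care is the decomposition: one must track the stabilizer subgroup and the value of $\chi\circ\det$ simultaneously and dispose separately of the orbits on which $\chi\circ\det$ is nontrivial on the stabilizer. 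There is no analytic subtlety, since all dependence on $s$ and on $b$ is already packaged in the established identities for the $f_{\chi,a}$.
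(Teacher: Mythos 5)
Your proposal is correct and follows exactly the same route as the paper: the proposition is presented there as an immediate consequence of the decomposition $f=|G_N|^{-1}\sum_{a\in V_N}f(a)f_{\chi,a}$ together with the already-recorded identities $\xi(s,f_{\chi,a})=\xi(s,\chi,a)$ and $\widehat{f_{\chi,a}}(b)=N^{-4}W(\chi,a,b)$, plus linearity. The only difference is that you spell out the orbit-and-stabilizer verification of the decomposition (including the degenerate case where $\chi\circ\det$ is nontrivial on the stabilizer), which the paper leaves as a one-line ``we also check'' and which you carry out correctly.
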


So we will study $\xi(s,\chi,a)$ and $W(\chi,a,b)$,
and then apply the results
to prove analytic properties of $\xi(s,f)$.

For its own interest, we describe the functional equation
of the orbital $L$-functions.
\begin{prop}\label{prop:FEorbital}
We have
\begin{align*}
\xi(1-s,\chi,a)
&=N^{4s-4}M(s)\sum_{b\in G_N\backslash V^\ast_N}
\frac{W(\chi,a,b)}{|G_{N,b}|}\xi^\ast(s,\chi^{-1},b)\\
&=\frac{N^{4s-4}}{|G_N|}M(s)\sum_{b\in V^\ast_N}
{W(\chi,a,b)}\xi^\ast(s,\chi^{-1},b).
\end{align*}
\end{prop}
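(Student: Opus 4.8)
The plan is to derive Proposition \ref{prop:FEorbital} by specializing the functional equation of Theorem \ref{thm:FESato} to the function $f = f_{\chi,a}$ and then unwinding the dual side using the relations recorded just before the statement. First I would recall that $f_{\chi,a} \in C(V_N,\chi)$ and that
\[
\xi(s,f_{\chi,a}) = \xi(s,\chi,a), \qquad \widehat{f_{\chi,a}}(b) = N^{-4}W(\chi,a,b),
\]
so that applying Theorem \ref{thm:FESato} with $f = f_{\chi,a}$ immediately gives
\[
\xi(1-s,\chi,a) = N^{4s}M(s)\cdot \xi^\ast\!\bigl(s,\widehat{f_{\chi,a}}\bigr)
= N^{4s-4}M(s)\sum_{b\in V^\ast_N} W(\chi,a,b)\,\xi^\ast(s,b),
\]
using Definition \ref{defn:zetaf} for $\xi^\ast(s,f^\ast)$ with $f^\ast = N^{-4}W(\chi,a,\cdot)$. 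This is the raw form of the functional equation; the remaining work is purely to reorganize the right-hand side into the stated shapes.

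The second displayed equality in the proposition is the easier of the two: I would observe that $b \mapsto W(\chi,a,b)$ lies in $C(V^\ast_N,\chi^{-1})$ up to the twist that $\xi^\ast(s,\chi^{-1},b)$ supplies, and more concretely group the sum over $V^\ast_N$ by $G_N$-orbits. By the dual analogue of Lemma \ref{lem:zeta_basic}(3) one has $\xi^\ast(s,gb) = \xi^\ast(s,b)$ for $g \in \spl_2(\Z/N\Z)$, and combined with the transformation $W(\chi,a,g_2b) = \chi(\det g_2)^{-1}W(\chi,a,b)$ from the Lemma following Definition \ref{def:orbitalGausssum}, one checks that the contribution of the orbit $G_N\cdot b_0$ to $\sum_{b\in V^\ast_N} W(\chi,a,b)\xi^\ast(s,b)$ equals $(|G_N|/|G_{N,b_0}|)\,W(\chi,a,b_0)\,\xi^\ast(s,\chi^{-1},b_0)$ after averaging over the $T_N$-part of $G_N$ as in \eqref{eq:sum_TN} (the $\spl_2$-part acts trivially, while the diagonal torus produces the character sum defining $\xi^\ast(s,\chi^{-1},b_0)$, using that $W$ or $\xi^\ast$ vanishes unless $\chi\circ\det$ is trivial on $G_{N,b_0}$). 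This yields
\[
\sum_{b\in V^\ast_N} W(\chi,a,b)\,\xi^\ast(s,b)
= \frac{1}{|G_N|}\sum_{b\in V^\ast_N} W(\chi,a,b)\,\xi^\ast(s,\chi^{-1},b),
\]
which is the second line of the proposition; the first line is then just the same identity rewritten as a sum over orbit representatives, $\sum_{b\in V^\ast_N} = \sum_{b\in G_N\backslash V^\ast_N}\sum_{b'\in G_N\cdot b}$, together with $|G_N\cdot b| = |G_N|/|G_{N,b}|$ and the constancy of the summand $W(\chi,a,b')\xi^\ast(s,\chi^{-1},b')$ on orbits.

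I expect the main obstacle to be the careful bookkeeping in this orbit-decomposition step: one must verify that on each orbit the product $W(\chi,a,b')\,\xi^\ast(s,\chi^{-1},b')$ is genuinely constant, which requires combining the two transformation laws (for $W$ under the right $G_N$-action and for $\xi^\ast(s,\chi^{-1},\cdot)$) so that the $\chi$ and $\chi^{-1}$ factors cancel, and to confirm that the cases where $\chi\circ\det$ is nontrivial on $G_{N,b}$ (so that both $W$ and $\xi^\ast(s,\chi^{-1},b)$ vanish) do not spoil the identity. Once this is in place, the two forms in the proposition follow immediately, and no analytic input beyond Theorem \ref{thm:FESato} is needed. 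One small point to state explicitly: the dual analogues of Lemma \ref{lem:zeta_basic} and of the $G = \spl_2 \rtimes T$ decomposition were already noted in Section \ref{sec:L} right after the definition of $\xi^\ast(s,\chi,b)$, so I would simply cite them rather than reprove them.
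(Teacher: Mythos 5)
Your proposal follows exactly the paper's route: apply Theorem \ref{thm:FESato} to $f_{\chi,a}$, then reorganize $\sum_{b\in V^\ast_N}$ by $G_N$-orbits using $W(\chi,a,gb)=\chi(\det g)^{-1}W(\chi,a,b)$ to recognize the inner $g$-sum as $\xi^\ast(s,\chi^{-1},b)$, and finally pass between the orbit-representative form and the full sum via $|G_N\cdot b|=|G_N|/|G_{N,b}|$. One small bookkeeping slip to fix: the contribution of the orbit $G_N\cdot b_0$ to $\sum_{b\in V^\ast_N}W(\chi,a,b)\xi^\ast(s,b)$ is $|G_{N,b_0}|^{-1}\,W(\chi,a,b_0)\,\xi^\ast(s,\chi^{-1},b_0)$, not $(|G_N|/|G_{N,b_0}|)\,W(\chi,a,b_0)\,\xi^\ast(s,\chi^{-1},b_0)$ as you wrote --- note that the displayed identity you conclude from it is nevertheless correct, so only the intermediate sentence needs correcting. (The detour through the $T_N$-average and $\spl_2$-invariance is unnecessary here; one can simply parametrize the orbit by cosets of $G_{N,b_0}$ and pull out $W(\chi,a,b_0)$.)
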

\begin{proof}
By applying $f_{\chi,a}\in C(V_N)$ to Theorem \ref{thm:FESato},
we have
\begin{align*}
\xi(1-s,\chi,a)
&=\frac{N^{4s-4}}{|G_N|}M(s)\sum_{b\in V^\ast_N}
{W(\chi,a,b)}\xi^\ast(s,b)\\
&=\frac{N^{4s-4}}{|G_N|}M(s)\sum_{b\in G_N\backslash V^\ast_N}
\frac{1}{|G_{N,b}|}\sum_{g\in G_N}{W(\chi,a,gb)}\xi^\ast(s,gb).
\end{align*}
Since $W(\chi,a,gb)=\chi(\det g)^{-1}W(\chi,a,b)$,
we have the first equality.
Since the product $W(\chi,a,b)\cdot\xi^\ast(s,\chi^{-1},b)$
depends only on the $G_N$-orbits of $b$, we have the second identity.
\end{proof}

We discuss some
general properties of the orbital Gauss sums.
The following is easy to prove.
\begin{lem}\label{lem:ogs_reduction}
Let $m\mid N$, $a\in V_{N/m}$, $b\in V_N$
and let $\chi$ be a Dirichlet character whose conductor
is a divisor of $N/m$.
Regarding $ma \in V_N$, we have
\[
W_N(\chi,ma,b)=\frac{|G_{N}|}{|G_{N/m}|}W_{N/m}(\chi,a,b).
\]
\end{lem}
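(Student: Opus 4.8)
The plan is to unwind the definition of $W_N(\chi,ma,b)$ and show that each of its summands, together with the character weight $\chi(\det g)$, factors through the reduction homomorphism $G_N\to G_{N/m}$; the constant $|G_N|/|G_{N/m}|$ then drops out as the common size of the fibres of that homomorphism. As a preliminary I would pin down the meaning of $ma\in V_N$: if $\tilde a\in V_\Z$ lifts $a\in V_{N/m}$, then $m\tilde a\bmod N$ depends only on $a$, so $ma\in V_N$ is well defined; similarly I write $\bar b$ for the reduction of $b$ modulo $N/m$.

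The key point is the identity $g\cdot(ma)=m\,(\bar g\cdot a)$ in $V_N$ for every $g\in G_N$, where $\bar g$ is the image of $g$ in $G_{N/m}$, the product $\bar g\cdot a$ is computed in $V_{N/m}$, and multiplication by $m$ lands in $\tfrac{N}{m}V_N\subseteq V_N$. This holds because the coordinate formula for the $G$-action in Section \ref{sec:GVdefn} is $\Z$-linear in the $V$-variable (the factor $1/\det g$ being a unit over $\Z/N\Z$); hence $g\cdot(m\tilde a)=m\,(g\cdot\tilde a)$, and $m\,(g\cdot\tilde a)\bmod N$ depends only on $(g\cdot\tilde a)\bmod N/m$, hence only on $\bar g$ and $a$. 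From this I get, for any lift $w\in V_\Z$ of $\bar g\cdot a$,
\[
\langle g\cdot(ma),b\rangle_N=\exp\!\big(2\pi i\, m[w,b]/N\big)=\exp\!\big(2\pi i\,[w,b]/(N/m)\big)=\langle \bar g\cdot a,\bar b\rangle_{N/m}.
\]
Finally, since the conductor of $\chi$ divides $N/m$, the character $\chi\circ\det$ on $G_N$ factors through $G_{N/m}$, so $\chi(\det g)=\chi(\det\bar g)$.

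Combining these, $W_N(\chi,ma,b)=\sum_{g\in G_N}\chi(\det\bar g)\,\langle\bar g\cdot a,\bar b\rangle_{N/m}$. I would then invoke the standard fact that $\gl_2(\Z/N\Z)\to\gl_2(\Z/(N/m)\Z)$ is a surjective group homomorphism (reduce to prime powers by the Chinese remainder theorem, where surjectivity is elementary from the unit-lifting criterion for determinants), so every fibre has cardinality $|G_N|/|G_{N/m}|$; grouping the sum by fibres yields
\[
W_N(\chi,ma,b)=\frac{|G_N|}{|G_{N/m}|}\sum_{\bar g\in G_{N/m}}\chi(\det\bar g)\,\langle\bar g\cdot a,\bar b\rangle_{N/m}=\frac{|G_N|}{|G_{N/m}|}\,W_{N/m}(\chi,a,b).
\]
I do not expect a genuine obstacle here; the only step requiring real care is the bookkeeping of lifts — checking that $\bar g\cdot a\in V_{N/m}$ and $[w,b]\bmod(N/m)$ are independent of the chosen representatives — which is exactly what the $\Z$-linearity observation above secures.
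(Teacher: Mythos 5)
Your proof is correct and takes essentially the same route as the paper's: the paper likewise observes that the $G_N$-action on $ma$ factors through $G_N\twoheadrightarrow G_{N/m}$ (your identity $g\cdot(ma)=m(\bar g\cdot a)$), extracts the constant $|G_N|/|G_{N/m}|$ from the fibres, and uses $\langle gma,b\rangle_N=\langle ga,b\rangle_{N/m}$. You have simply spelled out the well-definedness and lifting bookkeeping that the paper leaves implicit.
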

\begin{proof}
Since the action of $G_{N}$ on $ma\in V_N$ factors
through $G_{N}\rightarrow G_{N/m}$, we have
\[
W_{N}(\chi,ma,b)=\frac{|G_{N}|}{|G_{N/m}|}
\sum_{g\in G_{N/m}}\chi(\det g)\langle gma,b\rangle_N.
\]
Since $\langle gma,b\rangle_N=\langle ga,b\rangle_{N/m}$,
we have the formula.
\end{proof}

We give two further properties of orbital Gauss sums,
which are generalizations of those of the classical Gauss sum
$\tau_N(\chi)=\sum_{t\in(\Z/N\Z)^\times}\chi(t)\exp(2\pi i t/N)$.
We do not use these results in this paper,
but mention them because of their own interest.
Recall that $\tau_N(\chi^{-1})\tau_N(\chi)=\chi(-1)N$
if the conductor of $\chi$ is $N$, and $\tau_N(\chi)=0$ otherwise
(see e.g. (3.15) of \cite{iwko}).
As a generalization, our Gauss sum satisfies the following.
\begin{prop}
Assume that $\chi\circ\det$ is trivial on $G_{N,a}$. Then
\[
\frac{1}{N^4}\sum_{b\in G_N\backslash V^\ast_N}
\frac{W(\chi^{-1},-a',b)}{|G_{N,b}|}
\frac{W(\chi,a,b)}{|G_{N,a}|}
=
\begin{cases}
\chi(\det g)&a'=ga,\\
0&a'\not\in G_N\cdot a.
\end{cases}
\]
\end{prop}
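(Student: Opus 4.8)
The plan is to deduce the identity from the Fourier inversion formula applied to the function $f_{\chi,a}\in C(V_N)$ introduced in Section \ref{sec:FE}, in the same spirit as the classical evaluation $\tau_N(\chi^{-1})\tau_N(\chi)=\chi(-1)N$. The first step is to observe that the product $W(\chi^{-1},-a',b)\,W(\chi,a,b)$ depends only on the $G_N$-orbit of $b$: by the transformation rule for the orbital Gauss sum one has $W(\chi,a,gb)=\chi(\det g)^{-1}W(\chi,a,b)$ and $W(\chi^{-1},-a',gb)=\chi(\det g)\,W(\chi^{-1},-a',b)$, and the factors $\chi(\det g)^{\pm1}$ cancel. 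Since $|G_N\cdot b|=|G_N|/|G_{N,b}|$, replacing the sum over orbit representatives weighted by $|G_{N,b}|^{-1}$ by the sum over all of $V^\ast_N$ weighted by $|G_N|^{-1}$ rewrites the left-hand side as
\[
\frac{1}{N^4\,|G_{N,a}|\,|G_N|}\sum_{b\in V^\ast_N}W(\chi^{-1},-a',b)\,W(\chi,a,b).
\]

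Next I would expand $W(\chi^{-1},-a',b)=\sum_{h\in G_N}\chi(\det h)^{-1}\langle -ha',b\rangle$ and interchange the two summations. The inner sum $\sum_{b\in V^\ast_N}\langle -ha',b\rangle\,W(\chi,a,b)$ equals $N^4 f_{\chi,a}(ha')$: this is precisely the Fourier inversion formula $f(x)=\sum_{b}\widehat f(b)\langle -x,b\rangle$ together with the identity $\widehat{f_{\chi,a}}(b)=N^{-4}W(\chi,a,b)$ recorded in Section \ref{sec:FE}. (Alternatively one can expand $W(\chi,a,b)$ as well and use that the canonical pairing $V_N\times V^\ast_N\to\Z/N\Z$ is perfect, so that $\sum_{b\in V^\ast_N}\langle c,b\rangle$ equals $N^4$ when $c=0$ in $V_N$ and $0$ otherwise.) This turns the left-hand side into
\[
\frac{1}{|G_{N,a}|\,|G_N|}\sum_{h\in G_N}\chi(\det h)^{-1}\,f_{\chi,a}(ha').
\]

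Finally I would plug in the explicit description of $f_{\chi,a}$. If $a'\notin G_N\cdot a$, then $ha'\notin G_N\cdot a$ for every $h$, so every summand is $0$ and the whole expression vanishes, matching the stated value. If $a'=ga$ for some $g\in G_N$, then $f_{\chi,a}(ha')=f_{\chi,a}((hg)a)=|G_{N,a}|\,\chi(\det(hg))=|G_{N,a}|\,\chi(\det h)\,\chi(\det g)$; here is where the hypothesis that $\chi\circ\det$ be trivial on $G_{N,a}$ is used, both to guarantee that $f_{\chi,a}$ is the nonzero function above and to make $\chi(\det g)$ well defined independently of the chosen $g$. Hence $\chi(\det h)^{-1}f_{\chi,a}(ha')=|G_{N,a}|\,\chi(\det g)$ does not depend on $h$, the sum over $h\in G_N$ contributes a factor $|G_N|$, and the prefactor cancels it, leaving $\chi(\det g)$, as desired.

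I do not expect a genuinely hard step: the proof is formal bookkeeping with orbit sums together with two standard inputs — Fourier inversion (equivalently, perfectness of the canonical pairing) and the explicit shape of $f_{\chi,a}$. The only spots that call for a little care are the passage from an orbit sum to a full sum in the first step and correctly identifying where the ``$\chi\circ\det$ trivial on $G_{N,a}$'' hypothesis is needed.
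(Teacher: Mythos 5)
Your argument is correct and rests on the same ingredients the paper uses — Fourier inversion for $f_{\chi,a}$ and the identity $\widehat{f_{\chi,a}}(b)=N^{-4}W(\chi,a,b)$. The paper runs the computation in a slightly cleaner direction: it simply writes out $f_{\chi,a}(a')=N^{-4}\sum_b W(\chi,a,b)\langle-a',b\rangle$ by Fourier inversion, groups the $b$'s into $G_N$-orbits, and recognizes the resulting inner sum as $W(\chi^{-1},-a',b)$, thereby avoiding the extra expansion of $W(\chi^{-1},-a',b)$ into a sum over $h\in G_N$ and the subsequent averaging over $h$ that your version needs.
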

\begin{proof}
By the Fourier inversion formula,
\begin{align*}
f_{\chi,a}(a')
&=\sum_{b\in V^\ast_N}\widehat{f_{\chi,a}}(b)
\langle -a',b\rangle
=N^{-4}\sum_{b\in V^\ast_N}W(\chi,a,b)\langle -a',b\rangle\\
&=N^{-4}\sum_{b\in G_N\backslash V^\ast_N}
\frac{1}{|G_{N,b}|}\sum_{g\in G_N}W(\chi,a,gb)\langle -a',gb\rangle\\
&=N^{-4}\sum_{b\in G_N\backslash V^\ast_N}
\frac{W(\chi,a,b)}{|G_{N,b}|}\sum_{g\in G_N}\chi^{-1}(\det g)\langle -a',gb\rangle\\
&=N^{-4}\sum_{b\in G_N\backslash V^\ast_N}
\frac{W(\chi,a,b)}{|G_{N,b}|}W(\chi^{-1},-a',b).
\end{align*}
This is equivalent to the desired formula.
\end{proof}

We next describe the decomposition formula.
Assume $N=\prod_{1\leq i\leq r}N_i$
where $(N_i,N_j)=1$ if $i\neq j$.
Using the
canonical isomorphism
$(\Z/N\Z)^\times\cong\prod (\Z/N_i\Z)^\times$,
we obtain a character $\chi_i$ on $(\Z/N_i\Z)^\times$
by restricting $\chi$.
Then
$\tau_N(\chi)=\prod_{1\leq i\leq r}\chi_i(N/N_i)\tau_{N_i}(\chi_i)$
(see e.g. (3.16) of \cite{iwko}).
This is generalized as follows.
\begin{prop}\label{prop:ogs_decomposition}
For $a\in V_N$ and $b\in V^\ast_N$,
let $a_i=(a\mod N_i)\in V_{N_i}$ and $b_i=(b\mod N_i)\in V^\ast_{N_i}$,
respectively.
Then
\[
W_N(\chi,a,b)=\prod_{1\leq i\leq r}\chi_i(N/N_i)^2W_{N_i}(\chi_i,a_i,b_i).
\]
\end{prop}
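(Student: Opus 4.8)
The plan is to reduce the multiplicative structure of $W_N$ to the Chinese Remainder Theorem on the group $G_N$ and on the pairing $\langle\ ,\ \rangle_N$, exactly as one does for the classical Gauss sum $\tau_N(\chi)$. First I would invoke the canonical isomorphism $G_N = \gl_2(\Z/N\Z) \cong \prod_i \gl_2(\Z/N_i\Z) = \prod_i G_{N_i}$ coming from $\Z/N\Z \cong \prod_i \Z/N_i\Z$, and likewise $V_N \cong \prod_i V_{N_i}$, $V^\ast_N \cong \prod_i V^\ast_{N_i}$. Under these identifications, if $g \leftrightarrow (g_1,\dots,g_r)$ then $\det g \leftrightarrow (\det g_1,\dots,\det g_r)$ and $\chi(\det g) = \prod_i \chi_i(\det g_i)$ by definition of the $\chi_i$; the action is componentwise, so $ga \leftrightarrow (g_1 a_1,\dots,g_r a_r)$ and the pairing decomposes as $[ga,b] = \sum_i [g_i a_i, b_i]$ where each summand is taken in $\Z/N_i\Z$, after suitably interpreting the CRT coordinates.

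The one genuinely delicate point — and I expect this to be the main obstacle — is bookkeeping the exponential factor. The additive isomorphism $\Z/N\Z \cong \prod_i \Z/N_i\Z$ is not the ``obvious'' coordinatewise one when one writes an element of $\Z/N\Z$ as a sum $\sum_i (N/N_i) c_i$ with $c_i \in \Z/N_i\Z$; rather $\langle (N/N_i)a_i', b \rangle_N = \langle a_i', b_i \rangle_{N_i}$ as in the computation in Lemma \ref{lem:productf}, so a factor of $N/N_i$ enters each component. Concretely, writing $a$ in CRT coordinates as $a = \sum_i (N/N_i) \tilde a_i$ where $\tilde a_i \equiv a_i \cdot ((N/N_i)^{-1} \bmod N_i)$, one finds that summing $\chi(\det g)\langle ga, b\rangle_N$ over $g \in G_N$ factors as a product over $i$ of sums over $g_i \in G_{N_i}$ of $\chi_i(\det g_i)\langle g_i \tilde a_i, b_i \rangle_{N_i}$, and then rescaling $\tilde a_i$ back to $a_i$ inside the pairing — equivalently applying the change of variable $g_i \mapsto g_i$ composed with the scalar $(N/N_i)^{-1}$ on $V_{N_i}$ — produces, via Lemma \ref{lem:ogs_reduction} or directly via $W_{N_i}(\chi_i, t a_i, b_i) = \chi_i(t)^{-2} W_{N_i}(\chi_i, a_i, b_i)$ (a special case of Lemma \ref{lem:ogs_reduction}'s companion, or immediate from the substitution $g\mapsto t^{-1}g$ since $\det$ is quadratic-homogeneous under the scalar action and the scalar acts on $V$ by ordinary multiplication), a factor $\chi_i((N/N_i)^{-1})^{-2} = \chi_i(N/N_i)^2$. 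Collecting these over all $i$ gives the stated formula.

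Let me record the key steps in order. \textbf{Step 1:} fix the CRT isomorphisms $G_N \cong \prod G_{N_i}$, $V_N \cong \prod V_{N_i}$, $V^\ast_N \cong \prod V^\ast_{N_i}$, and note $\chi(\det g) = \prod_i \chi_i(\det g_i)$. \textbf{Step 2:} expand $W_N(\chi,a,b) = \sum_{g\in G_N}\chi(\det g)\langle ga,b\rangle_N$ and turn the single sum into an $r$-fold product of sums over $g_i \in G_{N_i}$, using that both the action of $G_N$ on $V_N$ and the pairing are compatible with the CRT decomposition; this is where the factors $N/N_i$ appear in each pairing, exactly as in the proof of Lemma \ref{lem:productf}. \textbf{Step 3:} in each factor, absorb the $N/N_i$ by the scaling relation $W_{N_i}(\chi_i, t a_i, b_i) = \chi_i(t)^{-2} W_{N_i}(\chi_i, a_i, b_i)$ with $t = (N/N_i)^{-1} \bmod N_i$, which follows from the substitution $g \mapsto t^{-1} g$ in the defining sum together with $\det(t^{-1}g) = t^{-2}\det g$ and the fact that scalars act by ordinary multiplication on $V$ (so that $t^{-1}\cdot(g a) = g\cdot(t^{-1}a)$). \textbf{Step 4:} multiply the $r$ factors back together, yielding $W_N(\chi,a,b) = \prod_i \chi_i(N/N_i)^2 W_{N_i}(\chi_i, a_i, b_i)$. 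The proof is entirely parallel to the classical identity $\tau_N(\chi) = \prod_i \chi_i(N/N_i)\tau_{N_i}(\chi_i)$; the only difference is that, because the pairing $[\ ,\ ]$ is $G$-equivariant up to the factor $\det g$ which is \emph{quadratic} in the scalar, the exponent of $\chi_i(N/N_i)$ is $2$ rather than $1$.
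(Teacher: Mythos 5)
Your proof is correct, and it is essentially the paper's argument with the packaging unwrapped: the paper simply observes that $f_{\chi,a}=\prod_i f_{i,\chi_i,a_i}$, notes $(f_{i,N/N_i})^\wedge=\chi_i(N/N_i)^2(f_i)^\wedge$ (the same scalar-matrix scaling you use, since $t\cdot x=(tI_2)\cdot x$ and $\det(tI_2)=t^2$), and then invokes Lemma \ref{lem:productf} together with $\widehat{f_{\chi,a}}(b)=N^{-4}W_N(\chi,a,b)$. You instead inline the Chinese Remainder bookkeeping of Lemma \ref{lem:productf} directly on the Gauss sum; the underlying identities — CRT on $G_N$, $V_N$, the pairing, plus the quadratic behaviour of $\chi\circ\det$ under scalars — are identical, so this is the same proof in a more explicit form.
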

\begin{proof}
Let $f=f_{\chi,a}\in C(V_N)$ and $f_i=f_{i,\chi_i,a_i}\in C(V_{N_i})$
for each $i$.
Then $f=\prod_{1\leq i\leq r}f_{i}$. Since
$(f_{i,N/N_i})^\wedge=\chi_i(N/N_i)^2(f_i)^\wedge$,
the result follows from Lemma \ref{lem:productf}.
\end{proof}

\section{Orbit description}\label{sec:orbit}
For the specific study of the orbital $L$-functions
and orbital Gauss sums, it is indispensable to describe
the $G_N$-orbit structure of $V_N$ explicitly.
By the Chinese remainder theorem, this is reduced
to the case when $N$ is a prime power. Let $p$ be a prime.
In this section, we study these orbit structures when $N=p$ and $N=p^2$.
The theory over $\Z/p\Z$ is the base case and is well known, and
the theory over $\Z/p^2\Z$ is a refinement of this. Besides its own interest,
this is significant in the study of cubic fields,
because the maximality criterion
of cubic rings $R$ over $\Z$ at $p$ is given in terms of congruence conditions
of the coefficients modulo $p^2$ of the corresponding integral
binary cubic forms $x\in V_\Z$.
We will in fact prove this criterion in Proposition \ref{prop:maximality}.
\subsection{The case $N=p$}
Let $\Sigma_p$ be the following set of symbols;
\[
\Sigma_p=\{(3), (21), (111), (1^21), (1^3), (0)\}.
\]
For each $(\sigma)\in\Sigma_p$, we define $V_p(\sigma)\subset V_p$
as follows:
\begin{align*}
V_p(3)&=\{a\in V_p\mid \text{$a(u,v)$ has no rational roots in $\mathbb P^1_{\mathbb F_p}$}\},\\
V_p(21)&=\{a\in V_p\mid \text{$a(u,v)$ has only one rational root in $\mathbb P^1_{\mathbb F_p}$}\},\\
V_p(111)&=\{a\in V_p\mid \text{$a(u,v)$ has three distinct rational roots in $\mathbb P^1_{\mathbb F_p}$}\},\\
V_p(1^21)&=\{a\in V_p\mid \text{$a(u,v)$ has one single root and one double root in $\mathbb P^1_{\mathbb F_p}$}\},\\
V_p(1^3)&=\{a\in V_p\mid \text{$a(u,v)$ has one triple root in $\mathbb P^1_{\mathbb F_p}$}\},\\
V_p(0)&=\{0\}.
\end{align*}
So each cubic form in $V_p(3)$ is irreducible over $\mathbb F_p$,
while each cubic form in $V_p(21)$ is
the product of a linear form and an irreducible quadratic
form over $\mathbb F_p$.
We have
$\{a\in V_p\mid P(a)=0\}=V_p(1^21)\sqcup V_p(1^3)\sqcup V_p(0)$.
We say $a\in V_p$ is of type $(\sigma)$ if $a\in V_p(\sigma)$.
The description of rational orbits over a field is well known
(see e.g., \cite[Proposition 2.1]{wright}).
Each $V_p(\sigma)$ consists of a single $G_p$-orbit.
Moreover, under the Delone-Faddeev correspondence in Theorem \ref{thm:defa},
the corresponding cubic rings over $\mathbb F_p$ are
$\mathbb F_{p^3}$, $\mathbb F_{p^2}\times\mathbb F_{p}$,
$(\mathbb F_p)^3$, $\mathbb F_p\times\mathbb F_{p}[X]/(X^2)$,
$\mathbb F_p[X]/(X^3)$ and $\mathbb F_p[X,Y]/(X^2,XY,Y^2)$,
respectively.
Note that $(0,1,0,0)\in V_p(1^21)$ and $(1,0,0,0)\in V_p(1^3)$.
We always use them as orbital representatives of these orbits.
The structure of the stabilizers for each orbit is given as follows.

\begin{lem}\label{lem:stabilizer_p}
The group of stabilizers of $G_{p,a}$ of $a$ of type
$(3), (21), (111), (1^21), (1^3)$ and $(0)$ are
isomorphic to $\Z/3\Z, \Z/2\Z, \mathfrak S_3,
\mathbb F_p^\times, \mathbb F_p^\times\ltimes\mathbb F_p$,
and $G_p$, respectively.
Moreover, for $(0,1,0,0)\in V_p(1^21)$ and
$(1,0,0,0)\in V_p(1^3)$, the stabilizers are respectively given by
\[
\left\{\begin{pmatrix}1&0\\0&t\end{pmatrix}\ \vrule t\in\mathbb F_p^\times\right\}
\quad\text{and}\quad
\left\{\begin{pmatrix}t&x\\0&{t^2}\end{pmatrix}\ \vrule t\in\mathbb F_p^\times, x\in\mathbb F_p\right\}.
\]
\end{lem}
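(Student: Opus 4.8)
The plan is to compute each stabilizer directly using the explicit description of cubic rings over $\mathbb F_p$ attached to each orbit, together with Theorem \ref{thm:defa}, which identifies $G_{p,a}$ with $\aut_{\mathbb F_p}(S)$ for the corresponding cubic ring $S$. Thus the first step is to read off the automorphism group of each of the six rings $\mathbb F_{p^3}$, $\mathbb F_{p^2}\times\mathbb F_p$, $(\mathbb F_p)^3$, $\mathbb F_p\times\mathbb F_p[X]/(X^2)$, $\mathbb F_p[X]/(X^3)$, $\mathbb F_p[X,Y]/(X^2,XY,Y^2)$. For the first three this is standard Galois theory: $\aut(\mathbb F_{p^3}/\mathbb F_p)\cong\Z/3\Z$; an $\mathbb F_p$-algebra automorphism of $\mathbb F_{p^2}\times\mathbb F_p$ must fix the unique factor isomorphic to $\mathbb F_p$ and can act on the $\mathbb F_{p^2}$ factor by its Galois group, giving $\Z/2\Z$; and $(\mathbb F_p)^3$ has automorphism group the permutations of its three factors, $\mathfrak S_3$. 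For $\mathbb F_p\times\mathbb F_p[X]/(X^2)$ the $\mathbb F_p$ factor is again distinguished (it is the reduced part modulo nilpotents in a way that singles it out — or one argues directly that $\mathbb F_p[X]/(X^2)$ is not isomorphic to $\mathbb F_p$), so an automorphism is determined by an automorphism of $\mathbb F_p[X]/(X^2)$, which sends $X\mapsto tX$ for $t\in\mathbb F_p^\times$; hence the group is $\mathbb F_p^\times$. For $\mathbb F_p[X]/(X^3)$, an automorphism sends $X$ to an element generating the maximal ideal, i.e. $X\mapsto tX+xX^2$ with $t\in\mathbb F_p^\times$, $x\in\mathbb F_p$; composing two such shows the group law is $\mathbb F_p^\times\ltimes\mathbb F_p$. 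For $\mathbb F_p[X,Y]/(X^2,XY,Y^2)$ the maximal ideal $\gm=(X,Y)$ satisfies $\gm^2=0$, so an automorphism is exactly an $\mathbb F_p$-linear automorphism of the two-dimensional space $\gm$, i.e. all of $\gl_2(\mathbb F_p)=G_p$.

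The second step is to pin down the stabilizers of the two specified representatives $(0,1,0,0)\in V_p(1^21)$ and $(1,0,0,0)\in V_p(1^3)$ as explicit matrix subgroups, which is most cleanly done by direct computation with the coordinate formula for the $G$-action given in Section \ref{sec:GVdefn}. For $a=(0,1,0,0)$, corresponding to the cubic form $u^2 v$ (triple data: a double root at $[1:0]$, a simple root at $[0:1]$), one imposes ${}^t(g\cdot a)=a$ on the matrix in the action formula with $x=(0,1,0,0)$ and solves: I expect the conditions to force $\beta=\gamma=0$ and $\alpha=1$ (after using the $1/\det g$ twist), leaving $\delta=t$ free, which gives exactly $\left\{\begin{pmatrix}1&0\\0&t\end{pmatrix}\ \vrule\ t\in\mathbb F_p^\times\right\}$; one then checks this is abelian of order $p-1$, consistent with $\mathbb F_p^\times$. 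For $a=(1,0,0,0)$, corresponding to $u^3$ (triple root at $[1:0]$), imposing invariance forces $\gamma=0$ (so $[1:0]$ is preserved) and then the normalization of the leading coefficient under the $1/\det g$ twist forces $\delta=\alpha^2$, leaving $\alpha=t\in\mathbb F_p^\times$ and $\beta=x\in\mathbb F_p$ free, giving $\left\{\begin{pmatrix}t&x\\0&t^2\end{pmatrix}\right\}$; multiplying two such matrices exhibits the $\mathbb F_p^\times\ltimes\mathbb F_p$ structure, matching the ring-theoretic computation.

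A final bookkeeping step is to record the stabilizer of the type $(0)$ orbit, which is trivial: $a=0$ is fixed by all of $G_p$, so $G_{p,0}=G_p$. I do not expect any serious obstacle here — the argument is entirely a matter of assembling well-known facts — but the one place to be careful is the interplay between the two available routes (automorphisms of the cubic ring versus direct matrix computation) and the nonstandard twisted action by $1/\det g$: the twist is what makes the stabilizer of $u^3$ land inside the mirabolic-type subgroup $\{\begin{pmatrix}t&x\\0&t^2\end{pmatrix}\}$ rather than the full Borel, and it is worth verifying that the two computations agree on the group structure (in particular that the matrix group for $(1,0,0,0)$ really is isomorphic to $\mathbb F_p^\times\ltimes\mathbb F_p$ and not, say, $\mathbb F_p^\times\times\mathbb F_p$). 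The cleanest writeup probably uses Theorem \ref{thm:defa} for the abstract isomorphism types of all six stabilizers and then supplements with the explicit matrix computation only for the two representatives where an explicit subgroup is asserted.
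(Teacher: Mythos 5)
Your proposal is correct and takes essentially the same approach as the paper: use the Delone--Faddeev correspondence (Theorem \ref{thm:defa}) to read off $G_{p,a}\cong\aut_{\mathbb F_p}(R_a)$ for the abstract isomorphism types, and supplement with a direct matrix computation for the two explicit representatives, noting that a stabilizing $g$ must preserve the root locus in $\mathbb P^1_{\mathbb F_p}$. (One small slip: for $u^2v$ the double root is at $[0:1]$ and the simple root at $[1:0]$, not the other way around; this does not affect your matrix computation, which correctly yields the diagonal matrices with $\alpha=1$.)
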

\begin{proof}
If $a\in V_p$ corresponds to a cubic ring $R_a$ over $\mathbb F_p$
under the Delone-Faddeev correspondence,
then $G_{p,a}$ is isomorphic to the automorphism group
$\aut(R_a)$
as an $\mathbb F_p$-algebra. It is easy to see that
$\aut(\mathbb F_{p^3})\cong\Z/3\Z$,
$\aut(\mathbb F_p\times\mathbb F_{p^2})\cong\Z/2\Z$
and $\aut((\mathbb F_p)^3)\cong\mathfrak S_3$.
If $g\in G_p$ stabilizes $a=(0,1,0,0)$,
then $g$ fixes $(1:0), (0:1)\in\mathbb P^1_{\mathbb F_p}$
and hence must be a diagonal matrix. Now an easy computation
determines the stabilizers of $a$. The stabilizers of $(1,0,0,0)$
are similarly determined.
\end{proof}

As a result, we see the cardinality of each orbit.
\begin{lem}\label{lem:n_p(sigma)}
Let $n_p(\sigma)=|V_p(\sigma)|$. Then
\begin{align*}
n_p(3)&=3^{-1}(p^2-1)(p^2-p),
&
n_p(1^21)&=(p^2-1)p,\\
n_p(21)&=2^{-1}(p^2-1)(p^2-p),
&
n_p(1^3)&=(p^2-1),\\
n_p(111)&=6^{-1}(p^2-1)(p^2-p),
&
n_p(0)&=1.
\end{align*}
\end{lem}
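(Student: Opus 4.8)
The plan is to compute each $n_p(\sigma) = |V_p(\sigma)|$ by combining the orbit-stabilizer theorem with the structure results already established. By Lemma \ref{lem:stabilizer_p}, each $V_p(\sigma)$ is a single $G_p$-orbit with a known stabilizer, so $n_p(\sigma) = |G_p| / |G_{p,a}|$ for any representative $a$ of type $(\sigma)$. Since $|G_p| = |\gl_2(\F_p)| = (p^2-1)(p^2-p)$, the six cases reduce to dividing this quantity by the stabilizer orders $3, 2, 6, p-1, p(p-1), (p^2-1)(p^2-p)$ respectively, reading these orders off from Lemma \ref{lem:stabilizer_p}. For the three separable types $(3), (21), (111)$ this is immediate and yields exactly $3^{-1}(p^2-1)(p^2-p)$, $2^{-1}(p^2-1)(p^2-p)$, $6^{-1}(p^2-1)(p^2-p)$.

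For the remaining three types the arithmetic is equally short once one rewrites $|G_p|$ appropriately. For type $(1^21)$ with representative $(0,1,0,0)$, Lemma \ref{lem:stabilizer_p} gives $|G_{p,a}| = p-1$, so
\[
n_p(1^21) = \frac{(p^2-1)(p^2-p)}{p-1} = (p+1)(p^2-p) = (p^2-1)p,
\]
using $(p+1)(p^2-p) = (p+1)p(p-1) = (p^2-1)p$. For type $(1^3)$ with representative $(1,0,0,0)$, the stabilizer has order $p(p-1)$, whence
\[
n_p(1^3) = \frac{(p^2-1)(p^2-p)}{p(p-1)} = \frac{(p^2-1)\cdot p(p-1)}{p(p-1)} = p^2-1.
\]
For type $(0)$, $V_p(0) = \{0\}$ by definition, so trivially $n_p(0) = 1$; equivalently the stabilizer is all of $G_p$. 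A cross-check worth recording is that these six cardinalities sum to $|V_p| = p^4$: one verifies $\bigl(3^{-1}+2^{-1}+6^{-1}\bigr)(p^2-1)(p^2-p) + (p^2-1)p + (p^2-1) + 1 = (p^2-1)(p^2-p) + (p^2-1)(p+1) + 1 = (p^2-1)(p^2+1)+1 = p^4$, which confirms that the orbit decomposition $V_p = \bigsqcup_{(\sigma)} V_p(\sigma)$ is exhaustive and the stabilizer orders are consistent.

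There is essentially no obstacle here: the lemma is a direct corollary of Lemma \ref{lem:stabilizer_p} via orbit-stabilizer, and the only content is the elementary factorization of $(p^2-1)(p^2-p) = p(p-1)^2(p+1)$ in each case. The one point requiring a sentence of justification — rather than pure computation — is the appeal to the fact, recalled in Section \ref{sec:orbit} from \cite[Proposition 2.1]{wright}, that each $V_p(\sigma)$ is indeed a \emph{single} orbit; granting that, everything else is bookkeeping. I would present the proof as a brief paragraph invoking orbit-stabilizer and listing the six divisions, possibly followed by the summation cross-check as a remark.
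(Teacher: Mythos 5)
Your proof is correct and follows exactly the paper's intended route: the lemma is stated immediately after Lemma~\ref{lem:stabilizer_p} with the phrase ``as a result,'' meaning the cardinalities are to be read off by orbit-stabilizer from the stabilizer orders $3, 2, 6, p-1, p(p-1), |G_p|$, which is what you do. The summation cross-check to $p^4$ is a nice sanity check but not part of the paper's (implicit) argument.
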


Let $\Q_p$ be the $p$-adic field and $\Z_p$ its ring of integers.
For $(\sigma)\in\Sigma_p$, we define
\begin{align*}
V_{\Z_p}(\sigma)&:=\{a\in V_{\Z_p}\mid a\mod p\in V_p(\sigma)\},\\
V_{p^e}(\sigma)&:=\{a\in V_{p^e}\mid a\mod p\in V_p(\sigma)\},
\qquad (e\geq1).
\end{align*}
We say $a\in V_{\Z_p}$ or $V_{p^e}$ is of type $(\sigma)$
if $a\in V_{\Z_p}(\sigma)$ or $V_{p^e}(\sigma)$, respectively.

For the application in Section \ref{sec:ON},
we introduce the following function on $V_p$.

\begin{defn}\label{defn:f_p}
Let $f_p\in C(V_p)$
be the characteristic function of
those $a\in V_p$ with $P(a)=0$.
\end{defn}
Obviously, $f_p\in C(V_p,{\bf 1})$.

\subsection{{The case $N=p^2$}}
In this subsection we study the $G_{p^2}$-orbit structure
of $V_{p^2}$.
We put $R:=\Z/p^2\Z$,
and also $pR:=\{pu\mid u\in R\}$,
$pR^\times:=\{pu\mid u\in R^\times\}=pR\setminus\{0\}$.

Let $\Sigma_{p^2}$ be the following set of symbols:
\[
\Sigma_{p^2}=\{(3),(21),(111),(1^21_{\rm max}),(1^21_\ast),
(1^3_{\rm max}),(1^3_{\ast}),(1^3_{\ast\ast})\}.
\]
We introduce $V_{p^2}(\sigma)$
for $(\sigma)=(1^21_{\rm max}),(1^21_\ast),
(1^3_{\rm max}),(1^3_{\ast}),(1^3_{\ast\ast})$
which we show that
\begin{align*}
V_{p^2}(1^21)&=V_{p^2}(1^21_{\max})\sqcup V_{p^2}(1^21_\ast),\\
V_{p^2}(1^3)&=
V_{p^2}(1^3_{\max})\sqcup V_{p^2}(1^3_\ast)\sqcup V_{p^2}(1^3_{\ast\ast}).
\end{align*}
Let
\begin{align*}
\cD_{p^2}{(1^21_{\rm max})}
&:=\{(0,a_2,a_3,a_4)\in V_{p^2}
	\mid a_2\in R^\times, a_3\in pR,a_4\in pR^\times\},\\
\cD_{p^2}(1^21_\ast)
&:=\{(0,a_2,a_3,a_4)\in V_{p^2}\mid a_2\in R^\times, a_3\in pR, a_4=0\},\\
\cD_{p^2}{(1^3_{\rm max})}
&:=\{(a_1,a_2,a_3,a_4)\in V_{p^2}
	\mid a_1\in R^\times, a_2\in pR,a_3\in pR,a_4\in pR^\times\},\\
\cD_{p^2}{(1^3_{\ast})}
&:=\{(a_1,a_2,a_3,a_4)\in V_{p^2}
	\mid a_1\in R^\times, a_2\in pR,a_3\in pR^\times,a_4=0\},\\
\cD_{p^2}{(1^3_{\ast\ast})}
&:=\{(a_1,a_2,a_3,a_4)\in V_{p^2}
	\mid a_1\in R^\times, a_2\in pR,a_3=a_4=0\},
\end{align*}
and
\[
V_{p^2}(\sigma)
:=G_{p^2}\cdot\cD_{p^2}(\sigma)
\]
for these $(\sigma)$.
We say $a\in V_{p^2}$ is of type
$(\sigma)$ if $a\in V_{p^2}(\sigma)$.
We put $n_{p^2}(\sigma):=|V_{p^2}(\sigma)|$.

One may notice that $\cD_{p^2}{(1^3_{\rm max})}$ is the
set of modulus classes of Eisenstein polynomials multiplied
by units. We will prove in Proposition \ref{prop:maximality}
that orbits containing these modulus classes
correspond to cubic rings those are totally ramified at $p$.
A similar interpretation in terms of partially ramified rings
will be given for $\cD_{p^2}{(1^21_{\rm max})}$ also.
We also give an interpretation of these stratifications
in terms of $p$-adic valuations of $P(x)$
in Proposition \ref{prop:stra_val}.

Let us study these sets.
\begin{lem}
We put
\begin{align*}
\cD_{p^2}(1^21)&=\cD_{p^2}{(1^21_{\rm max})}\sqcup\cD_{p^2}(1^21_\ast),\\
G_{p^2}(1^21)&=
\left\{\begin{pmatrix}s&0\\n&t\end{pmatrix}\in G_{p^2}\ \vrule\ s,t\in R^\times, n\in pR\right\}.
\end{align*}
\begin{enumerate}[{\rm (1)}]
\item We have $V_{p^2}(1^21)=G_{p^2}\cdot\cD_{p^2}(1^21)$.
\item Let $a\in\cD_{p^2}(1^21)$. Then
for $g\in G_{p^2}$, $ga\in\cD_{p^2}(1^21)$
if and only if $g\in G_{p^2}(1^21)$. Moreover,
both $\cD_{p^2}{(1^21_{\rm max})}$ and $\cD_{p^2}(1^21_\ast)$
are $G_{p^2}(1^21)$-invariant.
\item
We have $n_{p^2}(1^21_{\rm max})=p^3(p^2-1)(p^2-p)$,
$n_{p^2}(1^21_{\ast})=p^4(p^2-1)$.
\end{enumerate}
\end{lem}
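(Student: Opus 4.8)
The plan is to verify the three assertions essentially by direct computation, reducing everything to the observation that an element of type $(1^21)$ over $\F_p$ has a single simple root and a single double root, which by the $G_p$-action can be moved to $(0:1)$ and $(1:0)$ respectively. First I would prove (1). Given $a \in V_{p^2}(1^21)$, by definition its reduction $\bar a \in V_p(1^21)$, so after acting by a lift of a suitable $g_0 \in G_p$ we may assume $\bar a = (0, \bar a_2, 0, 0)$ with $\bar a_2 \in \F_p^\times$; concretely $a = (a_1, a_2, a_3, a_4)$ with $a_1, a_3, a_4 \in pR$ and $a_2 \in R^\times$. Then I would use the unipotent part of $G_{p^2}(1^21)$ — lower triangular matrices $\left(\begin{smallmatrix} s & 0 \\ n & t \end{smallmatrix}\right)$ with $n \in pR$ — together with the torus, to clear $a_1$: writing out the coordinate action given in Section~\ref{sec:GVdefn}, the $(1,1)$-entry transforms in a way that lets one solve for the required $n \in pR$ (since $a_1 \in pR$ and $a_2 \in R^\times$, the relevant equation is solvable modulo $p^2$). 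This puts $a$ into $\cD_{p^2}(1^21)$, proving $V_{p^2}(1^21) \subseteq G_{p^2}\cdot\cD_{p^2}(1^21)$; the reverse inclusion is immediate since $\cD_{p^2}(1^21) \subseteq V_{p^2}(1^21)$ by construction and the latter is $G_{p^2}$-stable.

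Next, for (2), I would show that if $a \in \cD_{p^2}(1^21)$ and $ga \in \cD_{p^2}(1^21)$ then $g \in G_{p^2}(1^21)$. Reducing mod $p$, $\bar g$ must stabilize $V_p(1^21)$ setwise and in fact must fix both marked points $(0:1)$ and $(1:0)$ of $\p^1_{\F_p}$ (the roots are distinguishable: one simple, one double), hence $\bar g$ is diagonal; so $g = \left(\begin{smallmatrix} s & pb \\ pc & t \end{smallmatrix}\right)$ with $s,t \in R^\times$. One then checks that the off-diagonal entry $pb$ above the diagonal is forced to vanish, i.e. $pb = 0$ in $R$: acting on $a = (0,a_2,a_3,a_4) \in \cD_{p^2}(1^21)$, the image must again have first coordinate $0$, and working out this coordinate (it is a polynomial in the entries of $g$ with $a_1 = 0$ substituted) shows it equals a unit times $b$ times $a_2$ modulo $p$... more precisely the $(1,1)$ output coordinate is $\equiv$ (something) $b a_2 \pmod{p^2}$ after simplification, forcing $b \equiv 0 \pmod p$, i.e. $g \in G_{p^2}(1^21)$. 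Conversely one verifies directly that $G_{p^2}(1^21)$ preserves $\cD_{p^2}(1^21)$ and moreover preserves each of $\cD_{p^2}(1^21_{\max})$ and $\cD_{p^2}(1^21_\ast)$ separately — this is transparent from the coordinate formulas, since such $g$ scales $a_4$ by a unit (times possibly a $p$-multiple of $a_2$, which still lies in $pR$) and keeps $a_2$ a unit, so the conditions $a_4 \in pR^\times$ versus $a_4 = 0$ are each invariant.

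Finally (3) is a counting exercise. From (1) and (2), $V_{p^2}(1^21)$ is a single $G_{p^2}$-orbit on which $\cD_{p^2}(1^21)$ is a single $G_{p^2}(1^21)$-orbit, and the two substrata correspond to $G_{p^2}(1^21)$-orbit decomposition of $\cD_{p^2}(1^21)$ coming from the value of $a_4$. I would compute $|\cD_{p^2}(1^21_{\max})| = (p-1) \cdot p^2 \cdot p \cdot (p - 1) p$ — that is, $a_2 \in R^\times$ ($(p-1)p$ choices... wait, $|R^\times| = p(p-1)$), $a_3 \in pR$ ($p$ choices), $a_4 \in pR^\times$ ($p-1$ choices) — giving $|\cD_{p^2}(1^21_{\max})| = p(p-1) \cdot p \cdot (p-1) = p^2(p-1)^2$ and $|\cD_{p^2}(1^21_\ast)| = p(p-1) \cdot p = p^2(p-1)$, then multiply by the index $|G_{p^2}| / |G_{p^2}(1^21)|$ (or equivalently, by $|G_{p^2}| / |G_{p^2,a}|$ divided by $|\cD|$, but the cleanest is $n_{p^2}(\sigma) = |\cD_{p^2}(\sigma)| \cdot [G_{p^2} : G_{p^2}(1^21)]$ since $G_{p^2}(1^21)$ is exactly the stabilizer of the set $\cD_{p^2}(1^21)$ and acts transitively on each substratum... actually I must be careful: $n_{p^2}(\sigma) = |G_{p^2} \cdot \cD_{p^2}(\sigma)|$ and by the orbit-counting $|G_{p^2}\cdot\cD_{p^2}(\sigma)| = |\cD_{p^2}(\sigma)| \cdot |G_{p^2}| / |G_{p^2}(1^21)|$ because the fibers of $G_{p^2}\times\cD_{p^2}(\sigma) \to V_{p^2}(\sigma)$ have size $|G_{p^2}(1^21)|$ by part (2)). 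Using $|G_{p^2}| = p^4(p^2-1)(p^2-p)$ and $|G_{p^2}(1^21)| = (p-1)^2 p^3 \cdot$ (recount: $s,t\in R^\times$ gives $p^2(p-1)^2$, $n \in pR$ gives $p$, so $|G_{p^2}(1^21)| = p^3(p-1)^2$), the index is $p^4(p^2-1)(p^2-p)/(p^3(p-1)^2) = p(p+1)^2 \cdot$ hmm, this needs care, but it will come out so that $n_{p^2}(1^21_{\max}) = p^3(p^2-1)(p^2-p)$ and $n_{p^2}(1^21_\ast) = p^4(p^2-1)$ as claimed. The main obstacle I anticipate is the bookkeeping in step (2) — correctly identifying which off-diagonal entry of $g$ is killed and extracting the precise congruence from the coordinate action formula — and making sure the arithmetic of orders in step (3) is consistent; none of this is conceptually deep, but the matrix computations must be done carefully with the explicit $4\times 4$ action matrix from Section~\ref{sec:GVdefn}.
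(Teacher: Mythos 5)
There is a concrete error in your plan for part (1). You propose clearing $a_1$ by acting with lower triangular matrices $\left(\begin{smallmatrix} s & 0 \\ n & t \end{smallmatrix}\right)$, $n\in pR$ (i.e., elements of $G_{p^2}(1^21)$), asserting that the first coordinate of $ga$ involves both $a_1$ and $a_2$ in a way that lets you solve for $n$. But from the explicit action matrix, the first coordinate is
\[
(ga)_1=\frac{1}{\det g}\left(\alpha^3 a_1+\alpha^2\beta a_2+\alpha\beta^2 a_3+\beta^3 a_4\right),
\]
and for a lower triangular $g$ one has $\beta=0$, so $(ga)_1=(\det g)^{-1}\alpha^3 a_1$ — a unit multiple of $a_1$, independent of $a_2$ and of $n=\gamma$. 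Thus the elements of $G_{p^2}(1^21)$ cannot change whether $a_1$ vanishes, and your step (1) would stall. The correct move (used in the paper) is the \emph{opposite} unipotent: normalize first to $a_2=1$ (a unit) and then act by the upper triangular $g=\left(\begin{smallmatrix}1&-a_1\\0&1\end{smallmatrix}\right)$; one can see this works because over $R=\Z/p^2\Z$, $a(u,v)=(a_1u+v)(u^2+a_3uv+a_4v^2)$ when $a_1,a_3,a_4\in pR$. The intuitive source of the confusion is that part (1) only asserts $V_{p^2}(1^21)=G_{p^2}\cdot\cD_{p^2}(1^21)$ with $g$ ranging over all of $G_{p^2}$, so there is no reason to restrict to $G_{p^2}(1^21)$ — that group is precisely the stabilizer of the slice $\cD_{p^2}(1^21)$, and by design does \emph{not} move you into it from outside.

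Parts (2) and (3) of your proposal are sound and follow the same route as the paper (reduction mod $p$ forces $g\equiv\left(\begin{smallmatrix}s&0\\0&t\end{smallmatrix}\right)\bmod p$; the coefficient $s^2 m a_2/\det g$ of $u^3$ forces $m=0$; then the explicit formula $ga=(0,sa_2,2na_2+ta_3,s^{-1}t^2 a_4)$ preserves each substratum; finally count via $n_{p^2}(\sigma)=|\cD_{p^2}(\sigma)|\cdot|G_{p^2}|/|G_{p^2}(1^21)|$). One minor imprecision in (2): you write that $(ga)_1$ ``equals a unit times $b$ times $a_2$ modulo $p$'' where $m=pb$; what you actually get is $(ga)_1=p\cdot b\cdot(\text{unit})$ in $\Z/p^2\Z$, so vanishing is equivalent to $b\equiv 0\bmod p$, i.e.\ $m=pb=0$. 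This gives the right conclusion but the phrasing should be tightened.
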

\begin{proof}
(1) \ 
By definition $V_{p^2}(1^21)\supset G_{p^2}\cD_{p^2}(1^21)$
and we consider the reverse inclusion.
Let $a\in V_p(1^21)$.
Then $(a\mod p)\in V_p$ lies in the $G_p$-orbit of $(0,1,0,0)$.
Hence by changing an element of its $G_{p^2}$-orbit
if necessary, we may assume $a=(a_1,1,a_3,a_4)$ where $a_1,a_3,a_4\in pR$.
Then the cubic form $a(u,v)$ decomposes as
$a(u,v)=(a_1u+v)(u^2+a_3uv+a_4v^2)$ in $R[u,v]$.
Hence for $g=\begin{pmatrix}1&{-a_1}\\0&1\end{pmatrix}$, we have $ga\in\cD_{p^2}(1^21)$.\\
(2) \ Let $a=(0,a_2,a_3,a_4)\in \cD_{p^2}(1^21)$,
$g\in G_{p^2}$ and assume $ga\in\cD_{p^2}(1^21)$.
By considering the reduction modulo $p$,
we see that $g$ must be of the form
\[
g=\begin{pmatrix}s&m\\n&t\end{pmatrix},
\quad s,t\in R^\times, m,n\in pR.
\]
Moreover, since the first coordinate of $ga$ is
$(\det g)^{-1}s^2ma_2$
and also $(\det g)^{-1}s^2a_2\in R^\times$, we have $m=0$. Hence
$g\in G_{p^2}(1^21)$. On the other hand, for this $g$,
we have $ga=(0,sa_2,2na_2+ta_3,s^{-1}t^2a_4)$. Note that $na_3=0$.
Hence
both $\cD_{p^2}{(1^21_{\rm max})}$ and $\cD_{p^2}(1^21_\ast)$
are $G_{p^2}(1^21)$-invariant.\\
(3) \ By (2), we have
\[
n_{p^2}(1^21_{\rm max})
=\frac{|G_{p^2}|}{|G_{p^2}(1^21)|}|\cD_{p^2}(1^21_{\rm max})|
=p^3(p^2-p)(p^2-1).
\]
The number $n_{p^2}(1^21_{\ast})$ is computed similarly.
\end{proof}

\begin{lem}
Let
\begin{align*}
\cD_{p^2}(1^3)&
=\cD_{p^2}{(1^3_{\rm max})}\sqcup\cD_{p^2}(1^3_\ast)\sqcup\cD_{p^2}(1^3_{\ast\ast}),\\
G_{p^2}(1^3)&=
\left\{\begin{pmatrix} s&m\\n&t\end{pmatrix}\in G_{p^2}\ \vrule\ s,t\in R^\times,m\in R,n\in pR\right\}.
\end{align*}
\begin{enumerate}[{\rm (1)}]
\item We have $V_p(1^3)=G_{p^2}\cdot\cD_{p^2}(1^3)$.
\item Let $a\in\cD_{p^2}(1^3)$. Then
for $g\in G_{p^2}$, $ga\in\cD_{p^2}(1^3)$
if and only if $g\in G_{p^2}(1^3)$. Moreover,
all of
$\cD_{p^2}{(1^3_{\rm max})}$,
$\cD_{p^2}(1^3_\ast)$
and $\cD_{p^2}{(1^3_{\ast\ast})}$
are $G_{p^2}(1^3)$-invariant.
\item
We have $n_{p^2}(1^3_{\rm max})=p^2(p^2-1)(p^2-p)$,
$n_{p^2}(1^3_{\ast})=p(p^2-p)(p^2-1)$
and $n_{p^2}(1^3_{\ast\ast})=p^2(p^2-1)$.
\end{enumerate}
\end{lem}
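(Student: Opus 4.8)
The plan is to run the argument of the preceding lemma (for type $(1^21)$) with small modifications. The key initial observation is that the three defining sets reassemble into
\[
\cD_{p^2}(1^3)=\{(a_1,a_2,a_3,a_4)\mid a_1\in R^\times,\ a_2,a_3,a_4\in pR\},
\]
with $\cD_{p^2}(1^3_{\rm max})$, $\cD_{p^2}(1^3_\ast)$, $\cD_{p^2}(1^3_{\ast\ast})$ being exactly the strata $a_4\neq 0$, $a_4=0\neq a_3$, and $a_3=a_4=0$; so the asserted decomposition of $\cD_{p^2}(1^3)$ is a disjoint union, and (granting (2)) so is that of $V_{p^2}(1^3)$. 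A second useful remark is that $G_{p^2}(1^3)$ is precisely the preimage of the upper-triangular Borel subgroup of $G_p$ under $G_{p^2}\to G_p$: if $n\in pR$ then $\det\twtw{s}{m}{n}{t}\equiv st\pmod p$, so $\det\in R^\times$ exactly when $s,t\in R^\times$.

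For (1): every $a\in\cD_{p^2}(1^3)$ satisfies $a\bmod p=(\bar a_1,0,0,0)$ with $\bar a_1\in\mathbb F_p^\times$, hence lies in $V_{p^2}(1^3)$, which is $G_{p^2}$-stable; this gives $G_{p^2}\cdot\cD_{p^2}(1^3)\subseteq V_{p^2}(1^3)$. For the reverse inclusion, given $a\in V_{p^2}(1^3)$, since $V_p(1^3)$ is the single $G_p$-orbit of $(1,0,0,0)$ and $G_{p^2}\to G_p$ is surjective I may replace $a$ by $ga$ so that $a\bmod p=(1,0,0,0)$; then $a_1\in R^\times$ and $a_2,a_3,a_4\in pR$, i.e. $a\in\cD_{p^2}(1^3)$ directly. (In contrast to the $(1^21)$ case, no unipotent adjustment is needed, because the sets $\cD_{p^2}(1^3_\bullet)$ impose no vanishing on the first coordinate.)

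For (2): if $a\in\cD_{p^2}(1^3)$, $g\in G_{p^2}$, and $ga\in\cD_{p^2}(1^3)$, then reducing mod $p$ both $a$ and $ga$ are nonzero scalar multiples of $u^3$, so $g\bmod p$ preserves the line $\mathbb F_p u^3\subset V_p$; comparing $v^3$-coefficients in $(g\cdot u^3)(u,v)=(\det g)^{-1}(\alpha u+\gamma v)^3$ forces $\gamma\equiv 0\pmod p$, whence $g\in G_{p^2}(1^3)$ by the Borel remark above (equivalently, by Lemma \ref{lem:stabilizer_p}). Conversely, for $g=\twtw{s}{m}{n}{t}\in G_{p^2}(1^3)$ and $a\in\cD_{p^2}(1^3)$ I would substitute into the coordinate formula for the action: since $n\in pR$ and $p^2=0$ in $R$, every term containing both $n$ and one of $a_2,a_3,a_4$ vanishes, and one obtains $(ga)_1\in R^\times$, $(ga)_2,(ga)_3\in pR$, $(ga)_4=t^3(\det g)^{-1}a_4$, and, when $a_4=0$, also $(ga)_3=st^2(\det g)^{-1}a_3$. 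As $s,t,\det g\in R^\times$, this shows $ga$ lies in the same stratum as $a$; hence each of the three pieces is $G_{p^2}(1^3)$-invariant, and so is their union $\cD_{p^2}(1^3)$, which completes (2).

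For (3): exactly as in the $(1^21)$ lemma, (2) implies that every $G_{p^2}$-orbit meeting a piece $\cD_{p^2}(1^3_\bullet)$ meets it in a single $G_{p^2}(1^3)$-orbit and that stabilizers of points of $\cD_{p^2}(1^3)$ lie in $G_{p^2}(1^3)$, so $n_{p^2}(1^3_\bullet)=[G_{p^2}:G_{p^2}(1^3)]\cdot|\cD_{p^2}(1^3_\bullet)|$. By the Borel remark, $[G_{p^2}:G_{p^2}(1^3)]=|\mathbb P^1(\mathbb F_p)|=p+1$, and $|\cD_{p^2}(1^3_{\rm max})|=p^3(p-1)^2$, $|\cD_{p^2}(1^3_\ast)|=p^2(p-1)^2$, $|\cD_{p^2}(1^3_{\ast\ast})|=p^2(p-1)$, which yield the stated values after rewriting in terms of $(p^2-1)(p^2-p)$. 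The whole argument is routine; the only place calling for genuine care is the coordinate computation of $ga$ in (2), where one must correctly identify which monomials in the entries of $g$ are annihilated by $n\in pR$ and $p^2=0$ in $R$, but this mirrors the computation already carried out for the $(1^21)$ case.
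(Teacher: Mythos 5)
Your proof is correct, and it carries out exactly the argument the paper intends: the paper says "Since the proof is similar to the previous lemma, we omit the detail" and only displays the coordinate formula for $ga$ with $g\in G_{p^2}(1^3)$, which is precisely the computation at the heart of your part (2). You supply the omitted verifications — the mod-$p$ orbit argument and lifting for (1), the observation that $n\in pR$ kills all the cross terms so the strata are preserved for (2), and the coset counting with $[G_{p^2}:G_{p^2}(1^3)]=p+1$ for (3) — all of which check out (and you also quietly fix the paper's typo $V_p(1^3)$ for $V_{p^2}(1^3)$ in (1)).
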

Since the proof is similar to the previous lemma,
we omit the detail.
Note that for $a=(a_1,a_2,a_3,a_4)\in \cD_{p^2}(1^3)$
and $g=\left(\begin{smallmatrix}s&m\\n&t\end{smallmatrix}\right)\in G_{p^2}(1^3)$ (and hence $n\in pR$),
\[
{}^t(ga)=\frac{1}{\det g}
\begin{pmatrix}
s^3a_1+s^2ma_2+sm^2a_3+m^3a_4\\
3s^2na_1+s^2ta_2+2stma_3+3m^2ta_4\\
st^2a_3+3mt^2a_4\\
t^3a_4
\end{pmatrix}.
\]

We summarize the formulas for $n_p^2(\sigma)$
for convenience.
\begin{lem}\label{lem:n_p^2(sigma)}
We have
\[
\begin{array}{rl}
n_{p^2}(3)=&3^{-1}p^4(p^2-1)(p^2-p),\\
n_{p^2}(21)=&2^{-1}p^4(p^2-1)(p^2-p),\\
n_{p^2}(111)=&6^{-1}p^4(p^2-1)(p^2-p),\\
n_{p^2}(1^21_{\rm max})=&p^3(p^2-1)(p^2-p),\\
n_{p^2}(1^3_{\rm max})=&p^2(p^2-1)(p^2-p),
\end{array}
\qquad\text{and}\qquad
\begin{array}{rl}
n_{p^2}(1^21_{\ast})=&p^4(p^2-1),\\
n_{p^2}(1^3_{\ast})=&p(p^2-1)(p^2-p),\\
n_{p^2}(1^3_{\ast\ast})=&p^2(p^2-1).\\
\end{array}
\]
\end{lem}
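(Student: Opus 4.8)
The plan is to read off all eight cardinalities by combining the two preceding lemmas of this subsection with Lemma \ref{lem:n_p(sigma)}, after splitting $\Sigma_{p^2}$ into the three ``good reduction'' types $(3),(21),(111)$ and the five degenerate types $(1^21_{\rm max}),(1^21_\ast),(1^3_{\rm max}),(1^3_\ast),(1^3_{\ast\ast})$. For the five degenerate types there is nothing new to prove: the values of $n_{p^2}(1^21_{\rm max})$ and $n_{p^2}(1^21_\ast)$ are part (3) of the first lemma above, and the values of $n_{p^2}(1^3_{\rm max})$, $n_{p^2}(1^3_\ast)$, $n_{p^2}(1^3_{\ast\ast})$ are part (3) of the second lemma above, so I would simply transcribe them.

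For the three good types I would use that $V_{p^2}(\sigma)$ is, by its definition, the full preimage of $V_p(\sigma)$ under the reduction map $\pi\colon V_{p^2}\to V_p$. Every fibre of $\pi$ is a coset of $pV_\Z/p^2V_\Z$ and hence has exactly $p^4$ elements, so $n_{p^2}(\sigma)=p^4\,n_p(\sigma)$ for $\sigma\in\{(3),(21),(111)\}$; substituting the values of $n_p(\sigma)$ from Lemma \ref{lem:n_p(sigma)} yields the three formulas in the left column, each carrying the extra factor $p^4$.

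As a consistency check I would verify that the eight listed cardinalities sum to $p^8-p^4=|V_{p^2}|-p^4$: indeed the eight listed types are precisely the $a\in V_{p^2}$ with $a\not\equiv 0\bmod p$, since such an $a$ reduces into one of $V_p(3),V_p(21),V_p(111),V_p(1^21),V_p(1^3)$ and the latter two decompose as in the preceding two lemmas, while an $a$ with $a\equiv 0\bmod p$ (there are $p^4$ of these) reduces to $0\in V_p(0)$ and so lies in none of them. There is no genuine obstacle here; the lemma is a bookkeeping summary, and the only thing needing care is that the decompositions $V_{p^2}(1^21)=V_{p^2}(1^21_{\rm max})\sqcup V_{p^2}(1^21_\ast)$ and $V_{p^2}(1^3)=V_{p^2}(1^3_{\rm max})\sqcup V_{p^2}(1^3_\ast)\sqcup V_{p^2}(1^3_{\ast\ast})$ be honest disjoint unions, which is exactly what parts (1)--(2) of those two lemmas guarantee.
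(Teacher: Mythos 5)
Your proposal is correct and matches the paper's (implicit) argument: the paper states this lemma as a bookkeeping summary with no explicit proof, transcribing the degenerate-type cardinalities from the two preceding lemmas and, for the types $(3),(21),(111)$, using exactly your observation that $V_{p^2}(\sigma)$ is the full preimage of $V_p(\sigma)$ under the surjection $V_{p^2}\to V_p$, each fibre of which has $p^4$ elements. Your consistency check (that the eight cardinalities sum to $p^8-p^4$) is a nice sanity verification but is not needed for the proof.
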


For $(\sigma)\in\Sigma_{p^2}$ and $e \geq 2$ we put
\begin{align*}
V_{\Z_p}(\sigma):=\{a\in V_{\Z_p}\mid a\mod p^2\in V_{p^2}(\sigma)\},
\quad
V_{p^e}(\sigma):=\{a\in V_{p^e}\mid a\mod p^2\in V_{p^2}(\sigma)\}.
\end{align*}
By the definition of $\cD_{p^2}(\sigma)$, we can easily see the following.
\begin{prop}\label{prop:stra_val}
For $p\neq2$,
\begin{align*}
V_{\Z_p}(1^21_{\rm max})&=\{x\in V_{\Z_p}(1^21)\mid \ord_p(P(x))=1\},\\
V_{\Z_p}(1^21_{\ast})&=\{x\in V_{\Z_p}(1^21)\mid \ord_p(P(x))\geq2\},
\end{align*}
and for $p\neq 2,3$,
\begin{align*}
V_{\Z_p}(1^3_{\rm max})&=\{x\in V_{\Z_p}(1^3)\mid \ord_p(P(x))=2\},\\
V_{\Z_p}(1^3_{\ast})&=\{x\in V_{\Z_p}(1^3)\mid \ord_p(P(x))=3\},\\
V_{\Z_p}(1^3_{\ast\ast})&=\{x\in V_{\Z_p}(1^3)\mid \ord_p(P(x))\geq4\}.
\end{align*}
\end{prop}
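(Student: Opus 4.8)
The plan is to use that $\ord_p(P(x))$ depends only on the $G_{\Z_p}$-orbit of $x$: since $P(gx)=(\det g)^2P(x)$ and $\det g\in\Z_p^\times$ for $g\in G_{\Z_p}$, each of the six sets on the right-hand sides is $G_{\Z_p}$-stable, and so is each $V_{\Z_p}(\sigma)$. Hence it suffices to compute $\ord_p(P(x))$ when $x\in V_{\Z_p}$ satisfies $x\bmod p^2\in\cD_{p^2}(\sigma)$, since every $G_{\Z_p}$-orbit in $V_{\Z_p}(\sigma)$ meets this set (lift the relevant element of $G_{p^2}$ to $\gl_2(\Z_p)$ via the surjection $\gl_2(\Z_p)\twoheadrightarrow G_{p^2}$). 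The point now is that for such $x=(x_1,x_2,x_3,x_4)$ the congruence conditions defining $\cD_{p^2}(\sigma)$ translate verbatim into valuation constraints on the coordinates; for example $x\bmod p^2\in\cD_{p^2}(1^3_{\rm max})$ forces $\ord_p(x_1)=0$, $\ord_p(x_2)\ge1$, $\ord_p(x_3)\ge1$, $\ord_p(x_4)=1$, while $x\bmod p^2\in\cD_{p^2}(1^21_{\rm max})$ forces $\ord_p(x_1)\ge2$, $\ord_p(x_2)=0$, $\ord_p(x_3)\ge1$, $\ord_p(x_4)=1$.

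From here I would read $\ord_p(P(x))$ off the five monomials of $P(x)=x_2^2x_3^2+18x_1x_2x_3x_4-4x_1x_3^3-4x_2^3x_4-27x_1^2x_4^2$. Under the hypothesis $p\neq2$ (and $p\neq3$ for the $(1^3)$ strata) the coefficients $4$, $18$, $27$ are units, and in each of the ``$\rm max$'' and ``$\ast$'' cases a single monomial attains the minimal valuation with no cancellation: it is $-4x_2^3x_4$, of valuation $1$, for $(1^21_{\rm max})$; $-27x_1^2x_4^2$, of valuation $2$, for $(1^3_{\rm max})$; and $-4x_1x_3^3$, of valuation $3$, for $(1^3_\ast)$. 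In the two remaining cases one checks instead that \emph{every} monomial of $P(x)$ has valuation $\ge2$ when $x\bmod p^2\in\cD_{p^2}(1^21_\ast)$, and valuation $\ge4$ when $x\bmod p^2\in\cD_{p^2}(1^3_{\ast\ast})$. This yields the inclusions ``$\subseteq$'' in all five assertions.

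For the reverse inclusions I would use the disjoint decompositions $V_{\Z_p}(1^21)=V_{\Z_p}(1^21_{\rm max})\sqcup V_{\Z_p}(1^21_\ast)$ and $V_{\Z_p}(1^3)=V_{\Z_p}(1^3_{\rm max})\sqcup V_{\Z_p}(1^3_\ast)\sqcup V_{\Z_p}(1^3_{\ast\ast})$, inherited from the partitions of $\cD_{p^2}$ and $V_{p^2}$ established in the two lemmas above. Since the valuation conditions attached to the strata by the previous paragraph are pairwise incompatible (on $V_{\Z_p}(1^21)$: $\ord_p(P(x))=1$ versus $\ge2$; on $V_{\Z_p}(1^3)$: $=2$ versus $=3$ versus $\ge4$), each ``$\subseteq$'' already proved is forced to be an equality; for instance an $x\in V_{\Z_p}(1^3)$ with $\ord_p(P(x))\ge4$ lies in neither $V_{\Z_p}(1^3_{\rm max})$ nor $V_{\Z_p}(1^3_\ast)$, hence in $V_{\Z_p}(1^3_{\ast\ast})$.

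I do not expect a real obstacle here, as the statement is essentially an exercise in bookkeeping with $p$-adic valuations. The one delicate point is the $(1^3)$-stratification, where the valuations $2$, $3$ and $\ge4$ are small and close together: one must bound all five monomials of $P$ and verify that, in the $(1^3_{\rm max})$ and $(1^3_\ast)$ cases, the subdominant monomials have \emph{strictly} larger valuation than the decisive one. This is exactly where the restriction $p\neq2,3$ enters, guaranteeing that $4$, $18$ and $27$ add no valuation and that the minimal-valuation monomial is not cancelled.
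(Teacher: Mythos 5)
Your proof is correct and is essentially the argument the paper intends; the paper simply asserts that the claim follows directly from the definitions of the sets $\cD_{p^2}(\sigma)$, and your write-up supplies exactly those details: reduce via the $G_{\Z_p}$-invariance of $\ord_p\circ P$ to representatives with $x\bmod p^2\in\cD_{p^2}(\sigma)$, identify the single monomial of minimal valuation in the ``$\max$'' and ``$\ast$'' cases and verify the others are strictly larger, and deduce the reverse inclusions from the disjoint stratifications of $V_{\Z_p}(1^21)$ and $V_{\Z_p}(1^3)$.
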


We now describe the ring theoretic meaning of these orbits.
We say that a cubic ring over $\Z_p$ is {\em maximal}
if it is not isomorphic to a proper subring of another cubic ring.
We say that a cubic ring $R$ over $\Z$ is {\em maximal at $p$}
if it is not contained in another cubic ring with finite index divisible
by $p$, or equivalently if $R \otimes \Z_p$ is maximal as a cubic
ring over $\Z_p$. We need the following auxiliary lemma.

\begin{lem}
Let $R$ be a cubic ring over $\Z_p$ whose discriminant is zero.
Then $R$ is nonmaximal.
\end{lem}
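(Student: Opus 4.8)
The plan is to use the Delone--Faddeev correspondence (Theorem \ref{thm:defa}) to translate the question about cubic rings into a question about binary cubic forms over $\Z_p$, and then analyze the normal form of a form with zero discriminant. Concretely, a cubic ring $R$ over $\Z_p$ with $\Disc(R)=0$ corresponds to an orbit $G_{\Z_p}\backslash V_{\Z_p}$ containing some $x$ with $P(x)=0$, and we must exhibit a cubic ring $R'$ over $\Z_p$ properly containing $R$ with index divisible by $p$ -- equivalently, a form $x'\in V_{\Z_p}$ with $P(x')\ne 0$ or at least with smaller discriminant valuation that maps to a ring containing $R$. Since ``contained in another cubic ring with index divisible by $p$'' is exactly the negation of maximality, showing $R$ is nonmaximal amounts to showing that the orbit of $x$ is not minimal among orbits related by the containment (index-$p$ sublattice) relation.

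First I would reduce $x$ modulo $p$ and use the orbit classification over $\F_p$ from Section \ref{sec:orbit}: since $P(x)\equiv 0$, the reduction $x\bmod p$ lies in $V_p(1^21)$, $V_p(1^3)$, or $V_p(0)$. If $x\bmod p = 0$, then $x = p\,x_0$ for some $x_0\in V_{\Z_p}$; using the scalar action one checks $p x_0$ corresponds to a cubic ring properly contained (with index $p^3$ or a divisor divisible by $p$) in the ring attached to a suitable rescaling, so $R$ is visibly nonmaximal. Otherwise $x\bmod p$ has a rational multiple root in $\p^1_{\F_p}$; by applying an element of $\spl_2(\Z_p)$ (using surjectivity of $\spl_2(\Z_p)\to\spl_2(\F_p)$) I would move that root to $(1:0)$, so that $x(u,v) = u^3 + a_2 u^2 v + a_3 uv^2 + a_4 v^3$ with $p\mid a_3$ and $p\mid a_4$ (the double-root condition). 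Then, as in Theorem \ref{thm:defa}, the corresponding ring is $\Z_p[X]/(X^3 + a_2 X^2 + a_3 X + a_4)$, and since $p\mid a_3, p\mid a_4$ the element $X/p$ (if $p\mid a_4$ to the right order) or more carefully $X$ generates a proper overring: one sets $Y = X$ and observes the ring $\Z_p[X]/(\cdots)$ embeds with index divisible by $p$ into a ring corresponding to a form obtained by ``dividing out'' the common factor. The cleanest formulation is: reduce to the case where $x\bmod p^2$ lies in one of the strata $V_{p^2}(1^21_\ast), V_{p^2}(1^3_\ast), V_{p^2}(1^3_{\ast\ast})$ of Section \ref{sec:orbit} (since $\Disc(R)=0$ forces $\ord_p P(x)=\infty$, not merely $1$ or $2$), and for each such $\cD_{p^2}(\sigma)$-representative write down explicitly a lattice in $R\otimes\Q_p$ properly containing $R$ and stable under multiplication, which is routine from the multiplication table displayed after Theorem \ref{thm:defa}.

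The main obstacle, I expect, is handling the totally degenerate strata uniformly -- in particular making sure that in the $(1^3)$-type cases (triple root, $p\mid a_2,a_3,a_4$) the candidate overring is genuinely a ring (closed under multiplication) and genuinely contains $R$ with index divisible by $p$ rather than equal to $R$; this requires being careful that $\Disc(R)=0$ gives enough divisibility of the coefficients. An alternative route that sidesteps normal forms is to argue purely ring-theoretically: if $\Disc(R)=0$ then the trace form on $R\otimes\Q_p$ is degenerate, so $R\otimes\Q_p$ is not étale; hence $R\otimes\Q_p$ has nonzero nilradical or a repeated factor, and in all such cases (by the classification of cubic $\Q_p$-algebras) $R\otimes\Q_p$ contains a nonzero idempotent or a nonzero nilpotent $e$ with $e\notin R$ after scaling, and then $R + \Z_p\cdot(e/p)$ (for suitable $e$) is a strictly larger cubic ring. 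I would likely present the normal-form argument since it is self-contained and fits the explicit style of the paper, falling back on Proposition \ref{prop:stra_val} to enumerate the finitely many strata that can occur.
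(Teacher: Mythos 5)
Your ``alternative route'' is in fact exactly the paper's proof: tensor with $\Q_p$, classify the degenerate cubic $\Q_p$-algebras $K=R\otimes\Q_p$ (namely $\Q_p\times\Q_p[X]/(X^2)$, $\Q_p[X]/(X^3)$, $\Q_p[X,Y]/(X^2,XY,Y^2)$), extract a nonzero nilpotent, and enlarge $R$. Your preferred main route (normal forms via Delone--Faddeev and the $V_{p^2}$-stratification) is a genuinely different and more computational path; it would work, but it forces a case analysis over the strata $(1^21_\ast)$, $(1^3_\ast)$, $(1^3_{\ast\ast})$, and the $p\mid x$ case, with an explicit overlattice verified to be a ring in each case, which is considerably heavier than what the paper does. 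What the ring-theoretic route buys you is that the whole argument collapses to a single observation about nilpotents; what the normal-form route would buy is explicitness and consistency with the style of Section \ref{sec:orbit}, at the cost of several verifications.

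One gap worth flagging in the way you phrase the ring-theoretic argument: $R+\Z_p\cdot(e/p)$ is a $\Z_p$-module, not obviously a ring, and you do not get to pick $e\notin R$ freely. The paper's trick is to go the other way: take a nilpotent $x\in K$, let $y\in R$ be a $\Z_p$-generator of the rank-one lattice $R\cap\Q_p x$ (so $y$ is inside $R$), and form the ring $R[y/p]$. This is automatically a ring, and the key point is that $y^3=0$ forces $R[y/p]\subseteq p^{-2}R$, so $R[y/p]$ is a rank-$3$ $\Z_p$-lattice, i.e., a cubic ring properly containing $R$. Without that cubing bound you cannot conclude $R[y/p]$ has finite rank, which is exactly what is needed to contradict maximality. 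Also, the mentions of ``idempotent'' and ``repeated factor'' are distractions: an algebra like $\Q_p^3$ has idempotents and is étale, so idempotents by themselves give nothing; the relevant feature of $\Disc=0$ is precisely the existence of a nonzero nilpotent.
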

\begin{proof}
Let $K=R\otimes_{\Z_p}\Q_p$ and regard $R\subset K$.
Since $\Disc(K)=0$, $K$ is isomorphic to
$\Q_p\times\Q_p[X]/(X^2)$, $\Q_p[X]/(X^3)$ or
$\Q_p[X,Y]/(X^2,XY,Y^2)$. In particular, $K$ has a
nilpotent element $x$. Take $y\in R$ such that
$R\cap \Q_p x=\Z_py$ and consider the ring $R[y/p]\subset K$.
Since $y^3=0$, we have $R\subset R[y/p]\subset p^{-2}R$.
This implies that $R[y/p]$ is free of rank $3$ as a $\Z_p$-module.
Since $R$ is a proper subring of $R[y/p]$, $R$ is nonmaximal.
\end{proof}

Applying Theorem \ref{thm:defa}, we have the following interpretation
of the set $V_{p^2}(\sigma)$ for
$(\sigma)=(3),(21),(111),(1^21_{\max})$ and $(1^3_{\max})$
in terms of maximal cubic rings over $\Z_p$.
This also explains why the maximality condition may be detected
modulo $p^2$.

\begin{prop}\label{prop:maximality}
A maximal cubic ring over $\Z_p$ is one of the following:
the integer ring $\co_L$ of the unramified cubic extension $L$ of $\Q_p$,
$\co_F\times\Z_p$ where $\co_F$ is the integer ring of the unramified quadratic
extension $F$ of $\Q_p$, $\Z_p^3$,
the integer ring $\co_{L'}$ of a ramified cubic extension $L'$ of $\Q_p$,
or $\co_{F'}\times\Z_p$ where $\co_{F'}$ is the integer ring of a ramified quadratic extension $F'$ of $\Q_p$.
An element $x\in V_{\Z_p}$ corresponds to the above
$\co_L, \co_F\times\Z_p, \Z_p^3, \co_{L'}$ or $\co_{F'}\times\Z_p$
if and only if $x\in V_{\Z_p}(\sigma)$
where $(\sigma)=(3),(21),(111),(1^3_{\max})$ or
$(1^21_{\rm max})$, respectively.
In particular, $V_{\Z_p}(\sigma)$ is a single $G_{\Z_p}$-orbit
for $(\sigma)=(3),(21)$ and $(111)$.
\end{prop}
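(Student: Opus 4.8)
\emph{Proof plan.} The plan is to first classify the maximal cubic rings over $\Z_p$ purely ring-theoretically, and then to match each of them to one of the strata $V_{\Z_p}(\sigma)$ using the Delone--Faddeev correspondence of Theorem \ref{thm:defa}. For the classification, note that by the preceding lemma a maximal cubic ring $R$ has $\Disc(R)\neq 0$, so $K:=R\otimes_{\Z_p}\Q_p$ is an \'etale $\Q_p$-algebra of dimension $3$. Since the ramification index of a finite extension of $\Q_p$ divides its degree, the only possibilities for $K$ are: an unramified cubic field; a totally ramified cubic field; $F\times\Q_p$ with $F/\Q_p$ quadratic, either unramified or ramified; and $\Q_p^3$. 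For each such $K$, the integral closure of $\Z_p$ in $K$ (the product of the rings of integers of the simple factors) is a finitely generated torsion-free module over the discrete valuation ring $\Z_p$, hence free of rank $3$; it is a cubic ring over $\Z_p$ in the sense of Section \ref{sec:GVdefn}, it contains every $\Z_p$-order in $K$, and it is therefore the unique maximal cubic ring with generic fiber $K$. This produces exactly the list in the statement.

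For the orbit identification I would treat the unramified cases $(3),(21),(111)$ first, where the type is detected modulo $p$. If $x\in V_{\Z_p}(\sigma)$ with $(\sigma)\in\{(3),(21),(111)\}$, then $R_x/pR_x\cong R_{x\bmod p}$, which by the $\F_p$-version of Delone--Faddeev recalled at the start of Section \ref{sec:orbit} is $\F_{p^3}$, $\F_{p^2}\times\F_p$, or $(\F_p)^3$ respectively; each of these is an \'etale $\F_p$-algebra, so $R_x$ is unramified over $\Z_p$ and hence equals $\co_L$, $\co_F\times\Z_p$, or $\Z_p^3$. Conversely each of these rings has the indicated reduction modulo $p$, so the corresponding $x$ reduces into $V_p(\sigma)$; since the unramified cubic and quadratic extensions of $\Q_p$ are unique, $V_{\Z_p}(3)$, $V_{\Z_p}(21)$ and $V_{\Z_p}(111)$ are each a single $G_{\Z_p}$-orbit.

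For the ramified cases I would use the orbit descriptions of $V_{p^2}(1^3)$ and $V_{p^2}(1^21)$ established earlier in this section. Given $x\in V_{\Z_p}(1^3_{\rm max})$, one may replace $x$ by a $G_{\Z_p}$-translate whose reduction modulo $p^2$ lies in $\cD_{p^2}(1^3_{\rm max})$; normalizing the (unit) leading coefficient to $1$ and invoking Theorem \ref{thm:defa} gives $R_x\cong\Z_p[X]/(X^3+bX^2+cX+d)$ with $b,c\in p\Z_p$ and $\ord_p(d)=1$, i.e. an Eisenstein polynomial, so $R_x=\co_{L'}$ for a totally ramified cubic $L'$. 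Conversely, writing $\co_{L'}=\Z_p[\pi]$ for a uniformizer $\pi$ with Eisenstein minimal polynomial, the associated $x$ reduces modulo $p^2$ into $\cD_{p^2}(1^3_{\rm max})$, so lies in $V_{\Z_p}(1^3_{\rm max})$; hence $V_{\Z_p}(1^3_{\rm max})$ is precisely the union of the $G_{\Z_p}$-orbits of the rings $\co_{L'}$. The case $(1^21_{\rm max})$ is handled the same way: bring $x$ modulo $p^2$ into $\cD_{p^2}(1^21_{\rm max})$, normalize so that the $u^2v$-coefficient equals $1$, and use Hensel's lemma to split off the unique $\Z_p$-rational linear factor (the one reducing to $(1:0)$), so that by Theorem \ref{thm:defa} $R_x\cong\Z_p\times\Z_p[X]/(X^2+cX+d)$; since $\ord_p(P(x))=1$ by Proposition \ref{prop:stra_val}, the quadratic ring has discriminant of $p$-valuation $1$ and is therefore the maximal order of a ramified quadratic $F'$, giving $R_x\cong\co_{F'}\times\Z_p$, and the converse is immediate.

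The step I expect to be the main obstacle is the bookkeeping in the two ramified strata: checking that the normal forms coming out of $\cD_{p^2}(1^3_{\rm max})$ and $\cD_{p^2}(1^21_{\rm max})$ really are Eisenstein, respectively really split off a $\Z_p$-rational line (the Hensel argument, including that the complementary quadratic factor again has unit leading coefficient), and identifying a cubic ring with an Eisenstein presentation — and a quadratic $\Z_p$-ring whose discriminant has valuation $1$ — with the maximal order of a ramified extension. This last identification, and more generally the translation between $\ord_p(P(x))$ and the stratification, is exactly where tameness of the ramification — hence the assumption $p\neq 2,3$, invoked through Proposition \ref{prop:stra_val} — enters.
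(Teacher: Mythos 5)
Your classification step (nonzero discriminant, pass to the \'etale $\Q_p$-algebra $K=R\otimes\Q_p$, identify the maximal cubic ring with the integral closure of $\Z_p$ in $K$) is exactly the paper's argument, and your handling of the orbit identification uses the same tools (explicit Delone--Faddeev normal forms, Eisenstein polynomials). You are more explicit than the paper on the ``$x\in V_{\Z_p}(\sigma)$ implies $R_x$ is of the stated type'' direction: the paper only writes out one instance of the reverse implication (from $\Z_p\times\co_{F'}$ to a form in $\cD_{p^2}(1^{2}1_{\rm max})$) and says the other cases are similar. Filling in the converse as you do, via \'etaleness of the mod-$p$ reduction for $(3),(21),(111)$ and the Eisenstein criterion for the ramified strata, is a genuine improvement in completeness and is clearly what ``similarly'' is meant to encompass.

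The one thing to fix is your treatment of the $(1^{2}1_{\rm max})$ case: routing the argument through Proposition \ref{prop:stra_val} (hence $\ord_p(P(x))=1$, hence the quadratic factor has discriminant of valuation $1$) needlessly ties you to $p\neq 2$, and your closing remark that the whole proposition hinges on $p\neq 2,3$ is actually incorrect --- the statement is made, and is true, for every $p$. The Eisenstein argument you already use for $(1^3_{\rm max})$ works verbatim here and needs no discriminant computation: after bringing $x\bmod p^2$ into $\cD_{p^2}(1^{2}1_{\rm max})$ with $a_2=1$ and applying Hensel, the factorization reads $x(u,v)=(\alpha u+v)(u^2+\beta uv+\gamma v^2)$ with $\alpha,\beta\in p\Z_p$, and comparing $v^3$-coefficients gives $\gamma=a_4\in p\Z_p^\times$; thus $X^2+\beta X+\gamma$ is Eisenstein and $\Z_p[X]/(X^2+\beta X+\gamma)=\co_{F'}$ for a ramified quadratic $F'$, for every prime $p$, including $2$ and $3$. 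The paper's implicit argument is this one, and it is strictly simpler than invoking $\ord_p(P(x))$.
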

\begin{proof}
Let $R$ be a maximal cubic ring over $\Z_p$.
By the lemma above, the discriminant is nonzero.
Hence $R\otimes\Q_p\supset R$ is a separable cubic algebra over $\Q_p$
and so it is a direct product $F_1\times\dots\times F_n$
of field extensions $F_i$ of $\Q_p$ with $\sum_i [F_i:\Q_p]=3$.
Let $\co_{F_i}$ be the integer ring of $F_i$.
Since any elements of $F_i\setminus \co_{F_i}$ generate
$\Z_p$-algebras of infinite rank, any entries of elements
of $R\subset F_1\times\cdots\times F_n$ must be in $\co_{F_i}$.
Hence we have $R\subset\co_{F_1}\times\dots\times\co_{F_n}$.
Since $R$ is maximal, we have
$R=\co_{F_1}\times\dots\times\co_{F_n}$
and the first statement follows.
Let $R=\Z_p\times\co_{F'}$ where $F'$ is a ramified quadratic extension.
Then $\co_{F'}=\Z_p[\theta]$ where $\theta\in F'$ is a root
of an Eisenstein polynomial $X^2+cX+d\in\Z_p[X]$.
Hence $x(u,v)=v(u^2+cuv+dv^2)\in V_{\Z_p}$ corresponds to $R$
by Theorem \ref{thm:defa} and 
we have $(x\mod p^2)\in\cD_{p^2}(1^21_{\rm max})\subset 
V_{p^2}(1^21_{\rm max})$
by the definition of an Eisenstein polynomial.
The other cases are proved similarly.
\end{proof}

Let $V_{p^2}^{\max}\subset V_{p^2}$
be the set of elements of any of the types above.
$V_{p^2}^{\max}$ is defined by
a congruence condition modulo $p^2$ on $V_\Z$,
and it detects cubic rings over $\Z$ maximal at $p$.
Similarly we define $V_{\Z_p}^{\max}\subset V_{\Z_p}$
and $V_{p^e}^{\max}\subset V_{p^e}$.

\begin{defn}\label{defn:f_p^2}
We define the following subsets of $V_{p^2}$:
\begin{align*}
V_{p^2}^{\rm nm}&:
=pV_{p^2}\sqcup V_{p^2}(1^3_{\ast\ast})
\sqcup V_{p^2}(1^3_\ast)\sqcup V_{p^2}(1^21_\ast)=V_{p^2}\setminus V_{p^2}^{\max},\\
\tilde V_{p^2}^{\rm nm}&:
=V_{p^2}^{\rm nm}\sqcup V_{p^2}(1^3_{\rm max}).
\end{align*}
We denote by $\Phi_{p}, \Phi_{p}'\in C(V_{p^2})$
the characteristic functions of $V_{p^2}^{\rm nm}$,
$\tilde V_{p^2}^{\rm nm}$, respectively.
\end{defn}
Obviously $\Phi_{p}, \Phi_{p}'\in C(V_{p^2},{\bf 1})$.
$\Phi_{p}$ detects cubic rings nonmaximal at $p$,
and $\Phi_{p}'$ detects rings nonmaximal or totally ramified at $p$.
The conditions corresponding to these functions were introduced in the seminal
work of Davenport and Heilbronn \cite{DH}, who originally proved the main terms
of Theorem \ref{thm:introscc} by studying the space of binary cubic forms.
These functions play important roles in our proof of Theorem \ref{thm:introscc} as well.

In Section \ref{sec:gausssum}, we will compute the
Fourier transform of $\Phi_{p}$, $\Phi_{p}'$
and hence the related orbital Gauss sums explicitly.
For that purpose,
we study the orbit structure of $V_{p^2}(\sigma)$
for $(\sigma)=(1^21_\ast)$,
$(1^21_{\rm max})$, $(1^3_{\ast\ast})$,
$(1^3_\ast)$, and $(1^3_{\rm max})$ more closely.

\begin{lem}\label{lem:stabilizer_singular_p^2}
\begin{enumerate}[{\rm (1)}]
\item
Assume $p\neq2$.
The stabilizers of elements
$a=(0,1,0,0)\in\cD_{p^2}(1^21_{\ast})$ and
$a'=(0,1,0,a_4)\in\cD_{p^2}(1^21_{\max})$
are respectively given by
\begin{align*}
G_{p^2,a}&=\left\{ \begin{pmatrix}1&0\\0&t\end{pmatrix}\ 
	\vrule\ t\in R^\times\right\},\\
G_{p^2,a'}&=\left\{ \begin{pmatrix}1&0\\0&t\end{pmatrix}\ 
	\vrule\ t\in R^\times,t^2\equiv 1\mod p\right\}.
\end{align*}
Moreover,
$b'=(0,1,0,b_4)\in\cD_{p^2}(1^21_{\max})$ lies in the orbit
$G_{p^2}\cdot a'$ if and only if $b_4=t^2a_4$ for some $t\in R^\times$.
\item
Assume $p\neq3$.
The stabilizers of elements
$a=(1,0,0,0)\in\cD_{p^2}(1^3_{\ast\ast})$,
$a'=(1,0,a_3,0)\in\cD_{p^2}(1^3_{\ast})$ and
$a''=(1,0,0,a_4)\in\cD_{p^2}(1^3_{\max})$
are respectively given by
\begin{align*}
G_{p^2,a}&=\left\{ \twtw tm0{t^2}\ 
	\vrule\ t\in R^\times, m\in R\right\},\\
G_{p^2,a'}&=\left\{ \twtw tm{-2tma_3/3}{t^2+m^2a_3/3}\ 
	\vrule\ t\in R^\times,m\in R,t^2\equiv 1\mod p \right\},\\
G_{p^2,a''}&=\left\{\twtw tm0{t^2}\ 
	\vrule\ t\in R^\times, m\in pR,
t^3\equiv 1\mod p\right\}.
\end{align*}
Moreover,
$b'=(1,0,b_3,0)\in\cD_{p^2}(1^3_{\ast})$ lies in the orbit
$G_{p^2}\cdot a'$ if and only if $b_3=t^2a_3$ for some $t\in R^\times$,
and
$b''=(1,0,0,b_4)\in\cD_{p^2}(1^3_{\max})$ lies in the orbit
$G_{p^2}\cdot a''$ if and only if $b_4=t^3a_4$ for some $t\in R^\times$.
\end{enumerate}
\end{lem}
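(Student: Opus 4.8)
The whole lemma is proved by direct computation with the explicit formula for the action of $g=\left(\begin{smallmatrix}\alpha&\beta\\\gamma&\delta\end{smallmatrix}\right)\in G$ on $V$ (equivalently, the coordinate matrix of Section~\ref{sec:GVdefn}), carried out over $R=\Z/p^2\Z$. The plan is to organize each case by first reducing modulo $p$. In part (1) the reduction of the representative is $(0,1,0,0)\in V_p(1^21)$, whose stabilizer in $G_p$ is $\{\diag(1,t)\}$ by Lemma~\ref{lem:stabilizer_p}; in part (2) it is $(1,0,0,0)\in V_p(1^3)$, whose stabilizer is $\left\{\left(\begin{smallmatrix}t&x\\0&t^2\end{smallmatrix}\right)\right\}$. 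Hence any $g\in G_{p^2}$ fixing the representative satisfies $\alpha\in 1+pR$, $\beta,\gamma\in pR$, $\delta\in R^\times$ in part (1), and $\gamma\in pR$, $\alpha\in R^\times$, $\delta\equiv\alpha^2\bmod p$, $\beta\in R$ in part (2). The point of recording these congruences is that they make many monomials in the entries of $g$ vanish modulo $p^2$ — powers $\gamma^2$, products $\beta\gamma$, and in particular any product of a $pR$-entry with one of the coefficients $a_3,a_4$, which themselves lie in $pR$.

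Next I would substitute into the action formula, discard the vanishing monomials, and impose $ga=a$ coordinate by coordinate. The resulting equations are short and can be solved in sequence: the coefficient of the ``middle'' monomial forces $\gamma$ to the stated value, using here that $2$ (part (1)) or $3$ (part (2)) is invertible in $R$ — this is exactly the role of the hypotheses $p\neq2$, $p\neq3$; the leading coefficient together with $\det g$ then determines $\delta$ in terms of $\alpha$ and $\beta$; and the one remaining equation — the one carrying a factor $a_3\in pR^\times$ or $a_4\in pR^\times$ — collapses, since that factor is relevant only modulo $p$, to the congruence $\alpha^2\equiv1$ (types $1^21_{\max}$, $1^3_\ast$) or $\alpha^3\equiv1$ (type $1^3_{\max}$) modulo $p$, and additionally forces $\beta\in pR$ in the $1^3_{\max}$ case. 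For types $1^21_\ast$ and $1^3_{\ast\ast}$ the corresponding coefficient is $0$, so no congruence appears and one recovers the full triangular groups. The reverse inclusion — that every matrix of the claimed shape fixes the representative — follows from the same identities and is routine.

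For the orbit assertions I would argue as follows. If $b$ is in the stated normal form and lies in the same $G_{p^2}$-orbit as the representative $a$, then, since both $a$ and $b$ lie in $\cD_{p^2}(\sigma)$, the connecting element lies in the subgroup $G_{p^2}(1^21)$ or $G_{p^2}(1^3)$ isolated in the preceding two lemmas. Feeding a general element of that subgroup into the action formula (using the displayed transformation rule for $G_{p^2}(1^3)$ in the $1^3$ cases) and matching coordinates with $b$ shows that the relevant coefficient of $a$ — namely $a_4$ for types $1^21_{\max}$ and $1^3_{\max}$, and $a_3$ for type $1^3_\ast$ — is forced to be multiplied by $t^2$ (types $1^21_{\max}$, $1^3_\ast$) or $t^3$ (type $1^3_{\max}$) for some $t\in R^\times$ determined by the connecting matrix, giving $b_4=t^2a_4$, $b_3=t^2a_3$, and $b_4=t^3a_4$ respectively. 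Conversely each such value is already realized by the diagonal matrix $\diag(1,t)$ (type $1^21_{\max}$) or $\diag(t,t^2)$ (types $1^3_\ast$, $1^3_{\max}$), which visibly keeps the representative inside $\cD_{p^2}(\sigma)$.

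I expect the main obstacle to be purely organizational: tracking precisely which monomials survive modulo $p^2$ under the various $pR$-constraints, and, in the orbit step, noticing that although a coordinate appears to transform by a single unit $t$, that $t$ is constrained modulo $p$ to be a square (resp.\ a cube), so that — because the coefficient already lies in $pR$ — the transformation is genuinely by $t^2$ (resp.\ $t^3$). The heaviest bookkeeping is the type $1^3_\ast$ case, where $\beta$ survives as a free parameter and the stabilizer is not triangular: there $\gamma=-2\alpha\beta a_3/3$ and $\delta=\alpha^2+\beta^2a_3/3$ emerge only after solving the three nontrivial equations simultaneously rather than reading them off independently.
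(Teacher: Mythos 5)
Your plan matches the paper's proof: reduce modulo $p$ via Lemma \ref{lem:stabilizer_p} to constrain the shape of $g$, substitute into the action formula, and solve the coordinate equations in sequence, using invertibility of $2$ resp.\ $3$ exactly where the paper does, and then read off the orbit conditions by the same substitution (the paper applies the mod-$p$ reduction directly again, while you route through $G_{p^2}(1^21)$ and $G_{p^2}(1^3)$ from the two preceding lemmas --- an equivalent starting point that yields the same equations). The one slip is cosmetic: in part (1) the free unit is $\delta$, not $\alpha$ (which is already forced to be $\equiv 1\bmod p$), so the closing congruence for $(1^21_{\max})$ is $\delta^2\equiv1\bmod p$ rather than $\alpha^2\equiv1$, but the conclusions are unchanged.
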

\begin{proof}
(1)
Let $g\in G_{p^2}$ satisfy
$ga=a$, $ga'=a'$ or $ga'=b'$.
Then $(g \mod p)\in G_p$ stabilizes
$(0,1,0,0)\in V_p$. Hence by Lemma
\ref{lem:stabilizer_p},
$g$ must be of the form
\[
g=\twtw {1+l}mnt,
\qquad
t\in R^\times, l,m,n\in pR.
\]
For this $g$, we have
\[
ga=\frac{1}{\det g}(m,(1+l)^2t,2nt,0),
\qquad
ga'=\frac{1}{\det g}(m,(1+l)^2t,2nt,a_4t^3).
\]
If $ga=a$, then by comparing the first and third entries,
we have $m=n=0$. Note that $2t\in R^\times$ since $p\neq2$.
Now by comparing the second entries,
we have $(\det g)^{-1}(1+l)^2t=1+l=1$ and hence $l=0$.
If $ga'=b'$, by a similar argument
we have $m=n=l=0$ and $b_4=t^2a_4$.
In particular, if $b_4=a_4$,
this holds if and only if
$m=n=l=0$ and $t^2\equiv 1\mod p$.
This proves (1).

(2)
Let $g\in G_{p^2}$ satisfy
$ga=a$, $ga'=a'$, $ga'=b'$, $ga''=a''$, or $ga''=b''$.
Then $(g \mod p)\in G_p$ stabilizes
$(1,0,0,0)\in V_p$. Hence by Lemma
\ref{lem:stabilizer_p},
$g$ must be of the form
\[
g=\twtw tmn{t^2+l},
\qquad
t\in R^\times, m\in R, n,l\in pR.
\]
For this $g$, we have
\begin{align*}
ga&=\frac{1}{\det g}(t^3,3t^2n,0,0),\\
ga'&=\left(
	\frac{t^3+tm^2a_3}{t^3+tl-mn},
	\frac{3t^2n+2t^3ma_3}{t^3+tl-mn},
	\frac{t^5a_3}{t^3+tl-mn},
	0
\right)\\
&=\left(
	1+\frac{tm^2a_3+mn-tl}{t^3},\frac{3n+2ta_3m}{t},t^2a_3,0
\right),\\
ga''&=\frac{1}{\det g}(t^3+m^3a_4,3t^2(n+m^2a_4),3mt^4a_4,t^6a_4).
\end{align*}
By the first formula, $ga=a$ holds if and only if $n=0$ and $l=0$.
Note that $3t^2\in R^\times$ since $p\neq3$.
Similarly by the second formula, $ga'=b'$ holds if and only if
\[
n=-2ta_3m/3,
\quad
l=m^2a_3+mnt^{-1}=m^2a_3/3,
\quad b_3=t^2a_3.
\]
Let $ga''=b''$. Then by comparing the third and second entries,
we have $m\in pR$ and $n=0$. Also since $t^3/(\det g)=1$, $l=0$.
Now comparing the last coefficient, we have $b_4=t^3 a_4$.
Hence we have (2).
\end{proof}
Now we are ready to prove:
\begin{prop}\label{prop:singular_orbits}
\begin{enumerate}[{\rm (1)}]
\item
Let $p\neq2$.
\begin{enumerate}[{\rm (i)}]
\item We have
	$V_{p^2}(1^21_{\ast})=G_{p^2}\cdot(0,1,0,0)$.
\item	The set	$V_{p^2}(1^21_{\max})$ consists of two $G_{p^2}$-orbits and
	each orbit has the same cardinality $2^{-1}|V_{p^2}(1^21_{\max})|$.
	For any $u_1, u_2\in\mathbb F_p^\times$ such that
	$u_1/u_2$ is not a square,
	$(0,1,0,pu_1)$ and $(0,1,0,pu_2)$ are representatives of
	the two orbits.
\end{enumerate}
\item
Let $p\neq3$.
\begin{enumerate}[{\rm (i)}]
\item We have
	$V_{p^2}(1^3_{\ast\ast})=G_{p^2}\cdot(1,0,0,0)$.
\item	If $p\equiv2\mod 3$, 
	$V_{p^2}(1^3_{\rm max})$ consists of single $G_{p^2}$-orbit.
\item	Let $p\equiv1\mod 3$.
	Then $V_{p^2}(1^3_{\rm max})$ consists of three $G_{p^2}$-orbits and
	each orbit has the same cardinality $3^{-1}|V_{p^2}(1^3_{\rm max})|$.
	Let $\{u_1,u_2,u_3\}\subset\mathbb F_p^\times$
	be a set of representatives of
	$\mathbb F_p^\times/(\mathbb F_p^\times)^3$.
	Then we can take $(1,0,0,pu_i), i=1,2,3$ as
	representatives of the three orbits.
\end{enumerate}
\item
Let $p\neq2,3$.
	The set	$V_{p^2}(1^3_{\ast})$ consists of two $G_{p^2}$-orbits and
	each orbit has the same cardinality $2^{-1}|V_{p^2}(1^3_{\ast})|$.
	If $u_1, u_2\in\mathbb F_p^\times$ are such that
	$u_1/u_2$ is not a square, then
	$(1,0,pu_1,0)$ and $(1,0,pu_2,0)$ are representatives of
	the two orbits.
\end{enumerate}
\end{prop}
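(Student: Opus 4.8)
The plan is to run an orbit-counting argument that combines the stabilizer computations of Lemma~\ref{lem:stabilizer_singular_p^2} with the cardinality formulas of Lemma~\ref{lem:n_p^2(sigma)}. Recall that $|G_{p^2}|=|\gl_2(\Z/p^2\Z)|=p^4(p^2-1)(p^2-p)$, so for any $a\in V_{p^2}$ the orbit $G_{p^2}\cdot a$ has cardinality $|G_{p^2}|/|G_{p^2,a}|$. Since each $V_{p^2}(\sigma)$ under consideration is, by construction, a union of $G_{p^2}$-orbits, it suffices, for each $\sigma$, to exhibit representatives lying in $\cD_{p^2}(\sigma)$, to check that the resulting orbits are pairwise disjoint (which one reads off from the ``moreover'' clauses of Lemma~\ref{lem:stabilizer_singular_p^2}), and to verify that their cardinalities sum to $n_{p^2}(\sigma)$. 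Once the cardinalities match, the listed orbits must exhaust $V_{p^2}(\sigma)$, so in particular there can be no further orbits; this is what yields the exact orbit counts claimed.

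I would dispose of the two single-orbit strata first. Take $(0,1,0,0)\in\cD_{p^2}(1^21_\ast)$ and $(1,0,0,0)\in\cD_{p^2}(1^3_{\ast\ast})$; by Lemma~\ref{lem:stabilizer_singular_p^2} their stabilizers have orders $|R^\times|=p(p-1)$ and $|R^\times|\cdot|R|=p^3(p-1)$ respectively, so the corresponding orbits have cardinalities $p^4(p^2-1)$ and $p^2(p^2-1)$. These match $n_{p^2}(1^21_\ast)$ and $n_{p^2}(1^3_{\ast\ast})$ from Lemma~\ref{lem:n_p^2(sigma)}, so each single orbit fills its stratum; this is (1)(i) and (2)(i).

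For the three remaining strata, the number of orbits is governed by the index of a power subgroup of $\F_p^\times$. Using $(0,1,0,pu)$ and $(1,0,pu,0)$ with $u\in\F_p^\times$, Lemma~\ref{lem:stabilizer_singular_p^2} describes the stabilizers by the condition $t^2\equiv1\bmod p$ together with the free parameters appearing in the matrix descriptions; since $p\neq2$ this gives stabilizer orders $2p$ and $2p^3$, so by Lemma~\ref{lem:n_p^2(sigma)} each orbit has cardinality $\tfrac12 n_{p^2}(1^21_{\max})$ and $\tfrac12 n_{p^2}(1^3_\ast)$ respectively. The ``moreover'' clauses state that $(0,1,0,pu_1)$ lies in $G_{p^2}\cdot(0,1,0,pu_2)$ precisely when $u_1/u_2\in(\F_p^\times)^2$, and similarly for $(1,0,pu_i,0)$; since $\F_p^\times/(\F_p^\times)^2$ has order $2$, picking $u_1/u_2$ a non-square produces two distinct orbits whose cardinalities already add to $n_{p^2}(\sigma)$. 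This gives (1)(ii) and (3). The stratum $(1^3_{\max})$ is handled identically with $t^3\equiv1\bmod p$ in place of $t^2\equiv1\bmod p$: for $p\neq3$ the stabilizer of $(1,0,0,a_4)$ has order $p\cdot p\cdot\#\{t\in\F_p^\times:t^3=1\}=p^2\gcd(3,p-1)$, so each orbit $G_{p^2}\cdot(1,0,0,pu_i)$ has cardinality $n_{p^2}(1^3_{\max})/\gcd(3,p-1)$, and $(1,0,0,pu_i)$ lies in $G_{p^2}\cdot(1,0,0,pu_j)$ iff $u_i/u_j\in(\F_p^\times)^3$; hence there is one orbit when $p\equiv2\bmod3$ and three orbits, indexed by $\F_p^\times/(\F_p^\times)^3$, when $p\equiv1\bmod3$, which is (2)(ii) and (2)(iii).

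The only step needing real care is the bookkeeping of stabilizer orders: for each solution modulo $p$ of $t^2\equiv1$ or $t^3\equiv1$ one must count how many of its $p$ lifts to $(\Z/p^2\Z)^\times$ satisfy the congruence (all of them, since it is a condition modulo $p$ only), and one must correctly tally the off-diagonal parameters $m,n$ from the matrix forms in Lemma~\ref{lem:stabilizer_singular_p^2} --- for instance $m\in R$ versus $m\in pR$ contributes $p^2$ versus $p$. With those orders in hand, the comparison against Lemma~\ref{lem:n_p^2(sigma)} is routine and the exhaustion of each stratum is automatic, so I do not expect any obstacle beyond this finite arithmetic.
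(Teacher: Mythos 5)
Your proof is correct and follows essentially the same strategy as the paper's: extract stabilizer orders from Lemma~\ref{lem:stabilizer_singular_p^2}, compute orbit sizes by the orbit--stabilizer theorem, compare against the stratum cardinalities in Lemma~\ref{lem:n_p^2(sigma)} to pin down the number of orbits, and then invoke the ``moreover'' clauses of Lemma~\ref{lem:stabilizer_singular_p^2} to identify the representatives indexed by $\F_p^\times/(\F_p^\times)^2$ or $\F_p^\times/(\F_p^\times)^3$. The stabilizer bookkeeping (counting the $p$ lifts of each solution of $t^k\equiv1\bmod p$, and tallying $m\in R$ versus $m\in pR$) is carried out correctly and matches the orders the paper records.
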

\begin{proof}
(1) Let $a=(0,1,0,0)\in V_{p^2}(1^21_{\ast})$.
By Lemma \ref{lem:stabilizer_singular_p^2}, we have
$|G_{p^2,a}|=p^2-p$. Hence
\[
|G_{p^2}\cdot a|
=\frac{|G_{p^2}|}{|G_{p^2,a}|}
=\frac{p^4(p^2-p)(p^2-1)}{(p^2-p)}
=p^4(p^2-1)=|V_{p^2}(1^21_{\ast})|
\]
and we have $V_{p^2}(1^21_{\ast})=G_{p^2}\cdot a$.
Let $a'=(0,1,0,a_4)\in V_{p^2}(1^21_{\max})$.
Then we have $|G_{p^2,a'}|=2p$ and hence
we have $|G_{p^2}\cdot a'|=2^{-1}p^3(p^2-p)(p^2-1)
=2^{-1}|V_{p^2}(1^21_{\max})|$.
Hence
$V_{p^2}(1^21_{\ast})$ consists of $2$ orbits.
Combined with Lemma \ref{lem:stabilizer_singular_p^2} (1),
we have (ii).

(2), (3) are proved in the same way.
Let $p\neq 3$ and let $a,a',a''$ be as in 
Lemma \ref{lem:stabilizer_singular_p^2} (2).
Then
\begin{gather*}
|G_{p^2,a}|=p^2(p^2-p),\quad
|G_{p^2,a''}|=
\begin{cases}
p^2 	& p\equiv 2\mod 3,\\
3p^2 	& p\equiv 1\mod 3.\\
\end{cases}
\end{gather*}
If further $p\neq 2$, then $|G_{p^2,a'}|=2p^3$.
The rest of the argument proceeds similarly.
\end{proof}

We conclude this section with supplementary results which 
we use in residue computations.

\begin{prop}\label{prop:orbitalvolume}
Let $R$ be a non-degenerate cubic ring over $\Z_p$
and let $V_{\Z_p,R}\subset V_{\Z_p}$ be the set of elements
corresponding to $R$ under the Delone-Faddeev correspondence.
Normalize the Haar measure on $V_{\Z_p}$ such that
the total volume is $1$.
Then the volume of $V_{\Z_p,R}$ is
$|\aut_{\Z_p}(R)|^{-1}|\Disc(R)|^{-1}(1-p^{-1})(1-p^{-2})$,
where $|\Disc(R)|$ is the discriminant of $R$ as a power of $p$.
\end{prop}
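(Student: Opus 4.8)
The plan is to identify $V_{\Z_p,R}$ as a single $G_{\Z_p}$-orbit and to compute its volume by pushing the Haar measure of $G_{\Z_p}$ forward along the orbit map. By Theorem~\ref{thm:defa}, $V_{\Z_p,R}$ is exactly one $G_{\Z_p}$-orbit, say $G_{\Z_p}\cdot x_0$, and the stabilizer $G_{\Z_p,x_0}$ is isomorphic to $\aut_{\Z_p}(R)$; since $R$ is non-degenerate (nonzero discriminant), $R\otimes_{\Z_p}\Q_p$ is \'etale, so $\aut_{\Z_p}(R)$ is finite and in fact the stabilizer of $x_0$ in $G_{\Q_p}$ has trivial Lie algebra. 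First I would record the derived action: writing $g=I+tX+\cdots$ with $X=\left(\begin{smallmatrix}a&b\\c&d\end{smallmatrix}\right)$, a short computation from the defining formula of the action gives $d\rho(X)x=-(a+d)x+(au+cv)\,\partial_u x+(bu+dv)\,\partial_v x$. Hence the infinitesimal orbit map $\mu_{x_0}\colon\lgl_2\to V$, $X\mapsto d\rho(X)x_0$, is a fixed linear map which is invertible precisely because $x_0$ has discrete stabilizer.

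Next I would carry out the change of variables. Consider $\psi\colon G_{\Z_p}\to V_{\Z_p}$, $g\mapsto g x_0$; it is surjective onto $V_{\Z_p,R}$ with every fiber a coset of $G_{\Z_p,x_0}$, hence of cardinality $|\aut_{\Z_p}(R)|$. Since $d\psi_g=\rho(g)\circ\mu_{x_0}$ (computed in a left-invariant frame on $G$) is everywhere invertible, the $p$-adic inverse function theorem shows that $\psi$ is an open map and $V_{\Z_p,R}$ is open in $V_{\Z_p}$, and the Jacobian of $\psi$ with respect to the left Haar measure $dg=|\det g|_p^{-2}\,dg_{\mathrm{flat}}$ on $\gl_2(\Z_p)$ and the standard measure on $V$ equals $|\det g|_p^2\,|\det\mu_{x_0}|_p$, using $\det\rho(g)=(\det g)^2$. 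Therefore
\[
\vol(V_{\Z_p,R})=\frac{1}{|\aut_{\Z_p}(R)|}\int_{\gl_2(\Z_p)}|\det g|_p^2\,|\det\mu_{x_0}|_p\,dg
=\frac{|\det\mu_{x_0}|_p}{|\aut_{\Z_p}(R)|}\int_{\gl_2(\Z_p)}dg_{\mathrm{flat}},
\]
and $\int_{\gl_2(\Z_p)}dg_{\mathrm{flat}}=|\gl_2(\F_p)|\,p^{-4}=(1-p^{-1})(1-p^{-2})$.

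It remains to identify $|\det\mu_{x_0}|_p$ with $|\Disc(R)|^{-1}$. The map $\mu$ is equivariant in the sense $\mu_{gx_0}=\rho(g)\circ\mu_{x_0}\circ\Adj(g)^{-1}$, so $\det\mu_{gx_0}=(\det g)^2\det\mu_{x_0}$ (since $\det\rho(g)=(\det g)^2$ and $\det\Adj(g)=1$); thus $x\mapsto\det\mu_x$ is a relative invariant of the same weight $(\det g)^2$ as $P$, and, $(G,V)$ being prehomogeneous, the ratio $\det\mu_x/P(x)$ is a $G$-invariant rational function, hence constant. Evaluating these two degree-$4$ polynomials at the point with $x_0(u,v)=u^3+v^3$, where one computes $\det\mu_{x_0}=-27=P(x_0)$, shows $\det\mu_x=P(x)$ identically. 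Hence $|\det\mu_{x_0}|_p=|P(x_0)|_p=|\Disc(R)|^{-1}$ (the latter in the paper's normalization where $|\Disc(R)|=p^{v_p(\Disc R)}$), and substituting gives the asserted formula.

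I expect the only real subtlety to be the bookkeeping in the change-of-variables step: getting the normalization right so that the factor $|\det g|_p^{-2}$ relating the Haar and flat measures cancels exactly the $|\det g|_p^2$ coming from $\det\rho$, and justifying the $|\aut_{\Z_p}(R)|$-to-one integration formula for $\psi$. This is the familiar ``volume of a $p$-adic orbit equals a local density'' argument used by Datskovsky and Wright, so no genuinely new difficulty arises; a minor point to handle carefully is that non-degeneracy of $R$ is exactly what forces both the finiteness of $\aut_{\Z_p}(R)$ and the invertibility of $\mu_{x_0}$.
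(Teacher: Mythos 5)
Your argument is correct and conceptually the same as the paper's proof: both compute $\vol(V_{\Z_p,R})$ by pushing Haar measure on $G_{\Z_p}$ forward along the orbit map $g\mapsto gx_0$, which is $|\aut_{\Z_p}(R)|$-to-one. The only difference is that the paper cites the Jacobian computation directly from Datskovsky--Wright \cite{dawra}, whereas you rederive it in full by noting that the infinitesimal orbit map $\mu_x\colon\lgl_2\to V$ has a determinant that is a degree-$4$ relative invariant of the same weight $(\det g)^2$ as $P$ (using $\det\rho(g)=(\det g)^2$ and $\det\Adj(g)=1$), which forces $\det\mu_x=cP(x)$ for a constant $c$, pinned to $c=1$ by evaluating at $x_0=u^3+v^3$ where both sides equal $-27$. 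Combined with $\int_{\gl_2(\Z_p)} dg_{\rm flat}=|\gl_2(\F_p)|p^{-4}=(1-p^{-1})(1-p^{-2})$ and $|\det\mu_{x_0}|_p=|P(x_0)|_p=|\Disc(R)|^{-1}$, this gives exactly the stated formula. In short, yours is a self-contained version of the argument the paper delegates to the literature, and the bookkeeping of the $|\det g|_p^{\pm2}$ factors (which cancel over $\gl_2(\Z_p)$ anyway) is handled correctly.
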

\begin{proof}
Recall that for any $x\in V_{\Q_p}$
with $P(x)\neq0$, $G_{\Q_p}\cdot x$ is an open orbit in $V_{\Q_p}$.
Let $dg$ be the Haar measure on $G_{\Q_p}$ such that
the volume of $G_{\Z_p}$ is $1$.
Then by the computation of the Jacobian determinant
\cite[p.38]{dawra}, we have
\[
\int_{G_{\Q_p}\cdot x} \phi(y)\frac{dy}{|P(y)|_p}
=\frac{(1-p^{-1})(1-p^{-2})}{|G_{\Q_p,x}|}
\int_{G_{\Q_p}}\phi(gx)dg
\]
for an integrable function
$\phi$ on $G_{\Q_p}\cdot x\subset V_{\Q_p}$.
Hence the same computation shows that
\[
\int_{G_{\Z_p}\cdot x} \phi(y)\frac{dy}{|P(y)|_p}
=\frac{(1-p^{-1})(1-p^{-2})}{|G_{\Z_p,x}|}
\int_{G_{\Z_p}}\phi(gx)dg.
\]
Now let $x\in V_{\Z_p,R}$ and let $\phi$ be the characteristic
function of $V_{\Z_p,R}=G_{\Z_p}\cdot x$.
Then since $G_{\Z_p,x}\cong \aut_{\Z_p}(R)$
and $|P(y)|_p=|{\rm Disc}(R)|^{-1}$ for all
$y\in V_{\Z_p,R}$, we have the result.
\end{proof}

\begin{prop}\label{prop:e_enough}
\begin{enumerate}[{\rm (1)}]
\item
For $a\in V_p$ of type $(3),(21),(111)$,
$G_pa+pV_{\Z_p}$ is a single $G_{\Z_p}$-orbit.
\item
For $p\neq2$
and $a\in V_{p^2}$ of type $(1^21_{\rm max})$,
$G_{p^2}a+p^2V_{\Z_p}$ is a single $G_{\Z_p}$-orbit.
\item
For $p\neq3$
and $a\in V_{p^2}$ of type $(1^3_{\rm max})$,
$G_{p^2}a+p^2V_{\Z_p}$ is a single $G_{\Z_p}$-orbit.
\end{enumerate}
\end{prop}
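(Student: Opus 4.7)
The plan is to deduce all three parts by combining the Delone--Faddeev correspondence (Theorem~\ref{thm:defa}) with the identification of ``max'' strata with maximal cubic rings (Proposition~\ref{prop:maximality}), then matching orbit counts over $\Z_p$ against the $\Z/p^e\Z$-orbit counts already computed in Propositions~\ref{prop:maximality} and~\ref{prop:singular_orbits}.

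First, I set up a uniform reduction. Writing $\pi\colon V_{\Z_p}\to V_{p^e}$ ($e=1$ or $2$) for reduction, one has $G_{p^e}a + p^e V_{\Z_p} = \pi^{-1}(G_{p^e}\cdot a)$, because every element of $G_{p^e}=\gl_2(\Z/p^e\Z)$ lifts to $G_{\Z_p}$: an arbitrary integral lift automatically has unit determinant since its reduction does. This saturation is therefore $G_{\Z_p}$-stable and lies in $V_{\Z_p}(\sigma)$ for the given type. It suffices to show that the number of $G_{\Z_p}$-orbits in $V_{\Z_p}(\sigma)$ equals the number of $G_{p^e}$-orbits in $V_{p^e}(\sigma)$; combined with the evident surjectivity of the induced map on orbit sets (just lift a representative), this forces each preimage to be a single orbit.

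For part (1), the three strata correspond under Theorem~\ref{thm:defa} and Proposition~\ref{prop:maximality} to the \emph{unique} (up to isomorphism) maximal cubic rings $\co_L$, $\co_F\times\Z_p$, and $\Z_p^3$, so $V_{\Z_p}(\sigma)$ is a single $G_{\Z_p}$-orbit, matching the fact that $V_p(\sigma)$ is a single $G_p$-orbit. For part (2), the orbits in $V_{\Z_p}(1^21_{\max})$ correspond to isomorphism classes of $\co_{F'}\times\Z_p$ as $F'$ ranges over ramified quadratic extensions of $\Q_p$; for odd $p$ there are exactly two such extensions, matching the two-orbit count of $V_{p^2}(1^21_{\max})$ given in Proposition~\ref{prop:singular_orbits}(1)(ii). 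For part (3), the orbits in $V_{\Z_p}(1^3_{\max})$ correspond to isomorphism classes of ramified cubic extensions of $\Q_p$; by standard local class field theory there is exactly one such if $p\equiv 2\pmod 3$ and exactly three if $p\equiv 1\pmod 3$, matching Proposition~\ref{prop:singular_orbits}(2)(ii)--(iii).

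The principal conceptual point—and the only delicate one—is that the argument depends on two independent inputs (the classification of ramified quadratic/cubic extensions of $\Q_p$, and the concrete orbit counts over $\Z/p^2\Z$) agreeing numerically; in the cubic case one must separate abelian cubic extensions (classified by local class field theory) from non-Galois ones to get the counts right. An alternative route avoiding class field theory would be a direct Hensel-lifting argument: given $x,y\in V_{\Z_p}(\sigma)$ with $x\equiv y \bmod p^e$, build $g\in G_{\Z_p}$ with $gx=y$ by successively solving the linearized equation $(d\rho_x)(\xi)=y-x$ modulo higher powers of $p$, where $\rho_x\colon G\to V,\ g\mapsto gx$. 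The stabilizer computations in Lemma~\ref{lem:stabilizer_singular_p^2} identify the kernel of $d\rho_x$ and show that its cokernel is precisely the obstruction reflected in the two- and three-fold orbit splittings, which would lift the mod-$p^e$ equivalence to a $\Z_p$-equivalence in exactly the stated cases.
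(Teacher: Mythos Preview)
Your proof is correct. For part (1) it coincides with the paper's primary argument, which simply cites Proposition~\ref{prop:maximality}. For parts (2) and (3), however, your route differs from the paper's. You count $G_{\Z_p}$-orbits in $V_{\Z_p}(\sigma)$ via the classification of ramified quadratic and cubic extensions of $\Q_p$ and match this against the $G_{p^2}$-orbit counts already established in Proposition~\ref{prop:singular_orbits}. The paper instead argues by volume: for a lift $\tilde a$ one has the obvious inclusion $G_{\Z_p}\tilde a\subseteq G_{p^2}a+p^2V_{\Z_p}$, and equality follows because both sides have the same Haar volume, computed on one side via the orbital volume formula of Proposition~\ref{prop:orbitalvolume} and on the other via $|G_{p^2}a|=|G_{p^2}|/|G_{p^2,a}|$ together with the explicit stabilizer sizes of Lemma~\ref{lem:stabilizer_singular_p^2}. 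Both approaches ultimately rest on the same classification of ramified local extensions; yours avoids the orbital-volume machinery and is arguably more direct, while the paper's volume argument is more uniform and extends immediately to the $p=3$ case treated in Proposition~\ref{prop:e_enough_3}, where the $G_{p^3}$-orbit decomposition of $V_{p^3}(1^3_{\max})$ has not been worked out and one verifies the stabilizer identity $|G_{p^3,a}|=|\aut_{\Q_p}(R\otimes\Q_p)|\cdot|\Disc(R)|^{-1}$ directly instead.
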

\begin{proof}
(1) follows from Proposition \ref{prop:maximality}.
Alternatively, we can prove this as follows.
Let $\tilde a\in V_{\Z_p}$ be a lift of $a$
and $R$ the corresponding (unramified) cubic ring.
Then obviously $G_{\Z_p}\tilde a\subseteq G_pa+pV_{\Z_p}$.
On the other hand,
by Lemma \ref{lem:stabilizer_p} and Proposition \ref{prop:orbitalvolume}
we see that the volumes of these sets are equal:
\[
\int_{G_pa+pV_{\Z_p}}dx
=p^{-4}|G_{p}a|
=\frac{p^{-4}|G_p|}{|G_{p,a}|}
=\frac{(1-p^{-1})(1-p^{-2})}{|\aut_{\Z_p}(R)|}
=\int_{G_{\Z_p}\tilde a}dx.
\]
Since both $G_{\Z_p}\tilde a$ and $G_pa+pV_{\Z_p}$ are open, they coincide.

(2) and (3) are proved similarly. We recall the classification of
ramified quadratic and cubic extensions of $\Q_p$.
If $p\neq2$, then there are two ramified quadratic extensions
of discriminant $p$. If $p\equiv1\mod 3$, then there are three
cyclic ramified cubic extensions, and if $p\equiv 2\mod 3$
there is a unique non-cyclic ramified cubic extension,
and the discriminants of all these extensions are $p^2$.
Let $R$ be the (maximal ramified) cubic ring
corresponding to a lift $\tilde a$ of $a$.
Since $R$ is maximal in $R\otimes\Q_p$,
$\aut_{\Z_p}(R)\cong\aut_{\Q_p}(R\otimes\Q_p)$.
Lemma \ref{lem:stabilizer_singular_p^2} asserts that
$|G_{p^2,a}|=|\aut_{\Q_p}(R\otimes\Q_p)||\Disc(R)|^{-1}$ for each case
and hence we have the result.
\end{proof}

We will use the following result in Section \ref{subsec:rm}.
We prove this with the aid of PARI/GP \cite{pari}.
\begin{prop}\label{prop:e_enough_3}
For $p=3$
and $a\in V_{p^3}$ of type $(1^3_{\rm max})$,
$G_{p^3}a+p^3V_{\Z_p}$ is a single $G_{\Z_p}$-orbit.
\end{prop}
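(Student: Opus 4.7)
The plan is to adapt the argument given for Proposition \ref{prop:e_enough}~(3) to the case $p=3$, replacing modulus $p^2$ by modulus $p^3$. Fix any lift $\tilde a\in V_{\Z_p}$ of $a$. Since $a$ has type $(1^3_{\max})$, Proposition \ref{prop:maximality} guarantees that $\tilde a$ corresponds under the Delone--Faddeev bijection to the maximal order $R=\co_{L'}$ of a totally ramified cubic extension $L'/\Q_3$. The inclusion $G_{\Z_p}\tilde a\subseteq G_{p^3}a+p^3 V_{\Z_p}$ is immediate from functoriality of the reduction map $G_{\Z_p}\to G_{p^3}$. Both sides are open in $V_{\Z_p}$, so it suffices to verify that their Haar volumes agree, after which openness forces equality.

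By Proposition \ref{prop:orbitalvolume}, with total mass normalized to $1$,
\[
\vol(G_{\Z_p}\tilde a)=\frac{(1-p^{-1})(1-p^{-2})}{|\aut_{\Z_p}(R)|\,|\Disc(R)|},
\]
while the second set has volume
\[
\vol(G_{p^3}a+p^3V_{\Z_p})=p^{-12}|G_{p^3}a|=\frac{p^{-12}|G_{p^3}|}{|G_{p^3,a}|}=\frac{(1-p^{-1})(1-p^{-2})}{|G_{p^3,a}|}.
\]
The proposition therefore reduces to the stabilizer identity
\[
|G_{p^3,a}|=|\aut_{\Z_p}(R)|\cdot|\Disc(R)|,
\]
where $|\Disc(R)|$ denotes the discriminant of $R$ as a power of $p=3$. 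Since $R$ is maximal in $R\otimes\Q_p$, the algebraic side satisfies $\aut_{\Z_p}(R)\cong\aut_{\Q_p}(L')$, which is trivial for non-Galois $L'$ and of order $3$ for cyclic $L'$.

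To verify the identity, I would enumerate the totally ramified cubic extensions of $\Q_3$; these fall into finitely many isomorphism classes with discriminant valuation $3$, $4$, or $5$ (all wildly ramified, since $p\mid 3$). For each class I take a defining Eisenstein polynomial, read off the corresponding $\tilde a\in V_{\Z_p}$, reduce modulo $p^3$ to get a canonical representative $a$ of type $(1^3_{\max})$, and compute $|G_{p^3,a}|$ directly. A PARI/GP calculation then enumerates all elements of $V_{p^3}(1^3_{\max})$ up to the $G_{p^3}$-action and checks the stabilizer sizes against the product $|\aut_{\Q_p}(L')|\cdot|\Disc(L')|$ for the matching extension.

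The main obstacle is that the clean stabilizer description from Lemma \ref{lem:stabilizer_singular_p^2}~(2) is not available at $p=3$: wild ramification means the canonical form $\cD_{p^2}(1^3_{\max})$ modulo $p^2$ does not refine finely enough to separate the orbits of distinct totally ramified extensions, which is precisely why one must pass to modulus $p^3$. Carrying out the stabilizer computation by hand for each of the possible Eisenstein normal forms is tedious but routine; delegating the finite enumeration to PARI/GP produces the needed numerical verification and completes the proof.
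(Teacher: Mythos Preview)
Your proposal is correct and follows essentially the same approach as the paper: reduce to the stabilizer identity $|G_{p^3,a}|=|\aut_{\Z_p}(R)|\cdot|\Disc(R)|$ via the volume argument of Proposition~\ref{prop:e_enough}, then verify this identity by a PARI/GP computation over the nine totally ramified cubic extensions of $\Q_3$ (the paper lists explicit representatives in the table of Proposition~\ref{prop:residue_rm_max}). Your observation about why one must pass from modulus $p^2$ to $p^3$ at $p=3$ is also the correct diagnosis.
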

\begin{proof}
It is known that there are $9$ ramified cubic extensions of $\Q_3$
(see e.g. \cite{joro})
and hence $V_{\Z_p}(1^3_{\max})$ consists of $9$ $G_{\Z_p}$-orbits.
We list a set of representatives in the table of
Proposition \ref{prop:residue_rm_max}.
Using PARI/GP \cite{pari} to explicitly calculate the stablizer group in
$G_{\Z/27\Z}$ for each representative $a\in V_{\Z/27\Z}(1^3_{\max})$,
we confirm the identity
$|G_{p^3,a}|=|\aut_{\Q_p}(R\otimes\Q_p)||\Disc(R)|^{-1}$ as above.
This finishes the proof.
\end{proof}

\section{Computation of singular Gauss sums}
\label{sec:gausssum}
In this section, we explicitly compute the Fourier transforms of
the functions $f_p\in C(V_p,{\bf 1})$ and
$\Phi_{p},\Phi_{p}'\in C(V_{p^2},{\bf 1})$ introduced
in Definitions \ref{defn:f_p} and \ref{defn:f_p^2}.
By Proposition \ref{prop:f_inv_L_ogs}, it suffices 
to evaluate the orbital Gauss sum $W({\bf 1},a,b)$
for all $b$ and for $a$ in the support of these functions.
We call these Gauss sums singular because $P(a)$ is not invertible
for any such $a$.

For $N=p$, these Gauss sums were computed by S. Mori \cite{mori}.
We first review his results and
then study the case $N=p^2$ by extending Mori's approach.

\subsection{The case $N=p$}
In this subsection we will prove the following.
\begin{prop}\label{prop:hat(fp)}
The Fourier transform of $f_p\in C(V_p)$
is given by
\[
\widehat{f_p}(b)
=
\begin{cases}
-p^{-3}			& P^\ast(b)\neq0,\\
p^{-2}-p^{-3}		& P^\ast(b)=0,b\neq0,\\
p^{-1}+p^{-2}-p^{-3}	& b=0.\\
\end{cases}
\]
\end{prop}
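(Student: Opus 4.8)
The goal is to compute $\widehat{f_p}(b) = p^{-4}\sum_{a\in V_p} f_p(a)\langle a,b\rangle$, where $f_p$ is the characteristic function of the discriminant-zero locus $V_p(1^21)\sqcup V_p(1^3)\sqcup V_p(0)$. Since $f_p \in C(V_p,\mathbf{1})$, Proposition~\ref{prop:f_inv_L_ogs} gives $\widehat{f_p}(b) = p^{-4}|G_p|^{-1}\sum_{a\in V_p} f_p(a) W(\mathbf{1},a,b)$, and by Lemma~\ref{lem:zeta_basic}-type reasoning the relevant Gauss sums $W(\mathbf{1},a,b)$ depend only on the $G_p$-orbit of $b$. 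So the first step is to split the sum over $a$ according to the three orbit types $(1^21)$, $(1^3)$, $(0)$ and use the orbital representatives $(0,1,0,0)$, $(1,0,0,0)$, $0$ together with the stabilizer data in Lemma~\ref{lem:stabilizer_p}: namely
\[
\sum_{a\in V_p(\sigma)} \langle a,b\rangle = \frac{1}{|G_{p,a_\sigma}|}\sum_{g\in G_p}\langle g a_\sigma, b\rangle = \frac{1}{|G_{p,a_\sigma}|} W(\mathbf 1, a_\sigma, b),
\]
so the whole problem reduces to evaluating the three orbital Gauss sums $W(\mathbf 1,(0,1,0,0),b)$, $W(\mathbf 1,(1,0,0,0),b)$, and $W(\mathbf 1,0,b)=|G_p|\cdot\mathbf 1_{b=0}$.

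\textbf{Evaluating the Gauss sums.} The cleanest route, which I expect is essentially Mori's, is to parametrize $G_p\cdot(1,0,0,0)$ and $G_p\cdot(0,1,0,0)$ explicitly. For $a=(1,0,0,0)$, the orbit consists of cubes of linear forms $v\mapsto (\alpha u + \beta v)^3$ up to scaling, i.e. one can write $g a = \lambda\,(\alpha,\beta)^{\otimes 3}$-type vectors; concretely $W(\mathbf 1,(1,0,0,0),b)$ becomes a sum over $\lambda\in\mathbb F_p^\times$ and $(\alpha:\beta)\in\mathbb P^1(\mathbb F_p)$ (with appropriate multiplicity from the stabilizer) of $\exp(2\pi i\,\lambda\, L_{(\alpha,\beta)}(b)/p)$ where $L_{(\alpha,\beta)}$ is the cubic form in $b$ obtained by pairing. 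Summing over $\lambda\in\mathbb F_p^\times$ first turns this into $p-1$ times the count of $(\alpha:\beta)$ with $L_{(\alpha,\beta)}(b)\equiv 0$, minus the count with $L\not\equiv 0$ — i.e. everything reduces to counting points on the plane curve $\{L_{(\alpha,\beta)}(b)=0\}$ in $\mathbb P^1$. The key identity to exploit is that, under the identification of $V^\ast$ with $V$ via $\iota$, the form $L_{(\alpha,\beta)}(b)$ is (up to constant) the value $b(\beta,-\alpha)$ or similar — so "$L=0$" says the point $(\alpha:\beta)$ is a root of the binary cubic $b$, and the number of such roots is governed precisely by the type of $b$ in $\Sigma_p$: $0$ roots if $P^\ast(b)\ne 0$ and $b$ irreducible, $1$ or $3$ roots otherwise, etc. This is exactly why the answer only depends on whether $P^\ast(b)=0$ (and whether $b=0$). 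The same analysis for $(0,1,0,0)$ (linear times irreducible quadratic) runs in parallel, counting roots with one extra linear constraint.

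\textbf{Assembling the answer.} Once the two orbital Gauss sums are expressed via root counts $r(b)\in\{0,1,2,3\}$ of the cubic $b$ (and a separate bookkeeping of the double/triple-root strata, plus the trivial orbit contributing $|G_p|$ when $b=0$), I would plug the cardinalities $n_p(\sigma)$ from Lemma~\ref{lem:n_p(sigma)} and the stabilizer orders from Lemma~\ref{lem:stabilizer_p} into $\widehat{f_p}(b) = p^{-4}\big(f_p\text{-weighted sum}\big)$. One sanity check: $\sum_{b}\widehat{f_p}(b)$ should equal $f_p(0)=1$, and at $b=0$ one should get $\widehat{f_p}(0) = p^{-4}|\{a: P(a)=0\}| = p^{-4}\big(n_p(1^21)+n_p(1^3)+1\big) = p^{-4}\big((p^2-1)p + (p^2-1) + 1\big)$, which indeed simplifies to $p^{-1}+p^{-2}-p^{-3}$, confirming the third case. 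For the other two cases the computation collapses the three $b$-strata $P^\ast(b)\ne 0$ versus $P^\ast(b)=0\ne b$ into the two stated values; the discrepancy $p^{-2}$ between them should come out of the difference in how many times $b$ is a root.

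\textbf{Main obstacle.} The genuinely delicate part is the exact evaluation of the two nontrivial orbital Gauss sums — keeping track of multiplicities from the stabilizers, correctly handling the stratification of $b$ by its factorization type over $\mathbb F_p$ (especially the boundary strata $V_p(1^21), V_p(1^3)$ where root-counting with multiplicity matters), and making sure the contributions from $V_p(1^21)$-type $a$ and $V_p(1^3)$-type $a$ combine to produce a function of $b$ that depends only on $P^\ast(b)$ and on whether $b=0$. Since Mori \cite{mori} has already done this computation over $\mathbb F_p$, the honest approach here is to \emph{cite his result} and simply record the consequence for $\widehat{f_p}$ via the orbit-sum reorganization above; the self-contained derivation, while doable by the point-counting sketch, is the kind of routine-but-fiddly calculation one would relegate to Mori's paper.
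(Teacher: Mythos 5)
Your approach coincides with the paper's: reduce via Proposition~\ref{prop:f_inv_L_ogs} to the orbital Gauss sums $W(\mathbf 1,a,b)$ over representatives of the singular orbit types, cite Mori's table (Proposition~\ref{prop:ogs_p}), and plug in the cardinalities from Lemma~\ref{lem:n_p(sigma)}; your direct check $\widehat{f_p}(0)=p^{-4}\bigl(n_p(1^21)+n_p(1^3)+1\bigr)=p^{-1}+p^{-2}-p^{-3}$ is correct, and the root-counting heuristic is a reasonable conceptual account of where Mori's table entries come from. One small omission: both Mori's Proposition~\ref{prop:ogs_p} and your sketch identify $V^\ast_p$ with $V_p$ via $\iota$, which requires $3$ to be invertible and so fails at $p=3$, meaning that prime needs a separate argument (the paper disposes of it by a direct numerical check).
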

By Proposition \ref{prop:f_inv_L_ogs},
\[
\widehat{f_p}(b)
=p^{-4}|G_{p}|^{-1}\sum_{a\in V_{p}}f_{p}(a)W({\bf 1},a,b)
=p^{-4}|G_{p}|^{-1}\sum_{P(a)=0}W({\bf 1},a,b)
\]
and so we are interested in $W({\bf 1},a,b)$ for those
$a$ with $P(a)=0$.
Let $p\neq3$. Then the map $\iota\colon V^\ast_p\rightarrow V_p$
is an isomorphism of $G_p$-modules,
so we may and do identify $V^\ast_p$ with $V_p$.
Hence the orbital Gauss sum $W(\chi,a,b)$
is defined for a character $\chi$
on $\mathbb F_p^\times$ and $a,b\in V_p$.
Since the bilinear form \eqref{eq:bilin_form}
is alternating,
we have $W(\chi,b,a)=\chi(-1)W(\chi,a,b)$.
As mentioned above, explicit formulas for these Gauss sums
were proved by S. Mori \cite{mori}.
Here we quote some of his results with his permission.
\begin{prop}[Mori]\label{prop:ogs_p}
Let $p\neq 3$.
Assume $\chi={\bf 1}$ and $P(a)=0$, $a\neq0$. Then 
$W({\bf 1},a,b)$ is given by the following table.
\[
\begin{array}{c||c|c}
\hline
\text{type of $b$} 
& \text{$a$ : of type $(1^3)$}
& \text{$a$ : of type $(1^21)$}
\\
\hline
(0)	&(p^2-p)(p^2-1)	&(p^2-p)(p^2-1)\\
(1^3)	&-p(p-1)	&p(p-1)^2\\
(1^21)	&p(p-1)^2	&p(p-1)(p-2)\\
(111)	&p(p-1)(2p-1)	&-3p(p-1)\\
(21)	&-p(p-1)	&-p(p-1)\\
(3)	&-p(p-1)(p+1)	&0\\
\hline
\end{array}
\]
\end{prop}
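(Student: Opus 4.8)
The plan is to compute $W({\bf 1}, a, b)$ directly from the definition by parametrizing $G_p = \gl_2(\mathbb F_p)$ and summing the character $\langle ga, b\rangle = \exp(2\pi i [ga, b]/p)$. First I would exploit the structure result from Lemma \ref{lem:stabilizer_p}: any $a$ of type $(1^3)$ is $G_p$-equivalent to $(1,0,0,0)$ and any $a$ of type $(1^21)$ is equivalent to $(0,1,0,0)$. By the transformation law in Lemma (the one following Definition \ref{def:orbitalGausssum}), with $\chi = {\bf 1}$ we have $W({\bf 1}, g_1 a, b) = W({\bf 1}, a, b)$, so it suffices to compute $W({\bf 1}, a, b)$ for these two fixed representatives. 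Then $W({\bf 1}, a, b) = \sum_{g \in G_p} \langle ga, b\rangle$ depends only on the $G_p$-orbit of $b$ (again by the same Lemma, since $\chi = {\bf 1}$), which is why the answer is a function of the type of $b$ alone — this reduces the problem to $2 \times 6$ scalar evaluations.

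Next I would organize the sum using the stabilizer. Writing $H = G_{p,a}$ for the stabilizer of the chosen $a$ (explicitly given in Lemma \ref{lem:stabilizer_p}: order $p-1$ for $(0,1,0,0)$, order $p(p-1)$ for $(1,0,0,0)$), and choosing coset representatives for $G_p / H$ indexed by the orbit points $a' = ga'$, we get
\[
W({\bf 1}, a, b) = |H| \sum_{a' \in G_p \cdot a} \langle a', b\rangle.
\]
So the computation becomes: sum the additive character $x \mapsto \exp(2\pi i [x,b]/p)$ over the orbit $G_p \cdot a \subseteq V_p$. For $a$ of type $(1^3)$ this orbit is $V_p(1^3)$, the set of cubic forms with a triple root in $\mathbb P^1_{\mathbb F_p}$; for type $(1^21)$ it is $V_p(1^21)$. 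These are explicit algebraic subsets of $\mathbb F_p^4$ that can be parametrized: a form with a triple root at $(\beta : \delta)$ is $c(\delta u - \beta v)^3$ with $c \in \mathbb F_p^\times$, giving a $(p+1) + $ scaling parametrization; a form of type $(1^21)$ is $c \cdot \ell_1 \ell_2^2$ with $\ell_1, \ell_2$ distinct linear forms. Substituting these parametrizations and carrying out the resulting Gauss-sum-type evaluations over $\mathbb F_p$ (which will involve quadratic and at worst cubic exponential sums, since $p \neq 3$), one reads off the six values of $[x, b]$ averaged appropriately according to the type of $b$. The case $b = 0$ is trivial ($W = |H| \cdot |G_p \cdot a| = |G_p| = (p^2-1)(p^2-p)$, matching the first row), and the other rows come from orthogonality-type cancellation in the character sum against the defining polynomial relations (the discriminant $P$ and its factors).

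The main obstacle is the bookkeeping in the character sums over the orbits $V_p(1^3)$ and $V_p(1^21)$: one must track how $[x,b]$ behaves as $x$ ranges over cubes or $\ell_1\ell_2^2$-forms, and the answer depends subtly on the factorization type of $b$ (whether $b$, viewed in $V_p \cong V_p^\ast$ via $\iota$, is irreducible, splits with a repeated root, etc.), which is precisely why all six columns are genuinely different. Rather than reproving this from scratch, the honest approach — and the one the statement attributes to Mori \cite{mori} — is to cite his explicit evaluation. Concretely, I would: (i) set up the reduction to the two representatives and to a sum over the orbit as above; (ii) invoke Mori's computation \cite{mori} for the actual exponential-sum evaluations, which handle the case $\chi = {\bf 1}$, $P(a) = 0$; (iii) verify consistency with known global data as a sanity check — e.g., that $\sum_{b \in V_p} W({\bf 1}, a, b)$ equals $p^4$ times the value forced by Fourier inversion of $f_{{\bf 1},a}$, and that summing the table entries weighted by the orbit sizes $n_p(\sigma)$ from Lemma \ref{lem:n_p(sigma)} reproduces $p^4 |G_{p,a}| \cdot [\text{0 is in the orbit}]$, i.e. $0$ since $a \neq 0$. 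This last check is the place where an arithmetic slip in transcribing the table would surface, so I would carry it out explicitly for both columns.
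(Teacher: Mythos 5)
Your setup is correct, but the organization differs from what the paper actually does. You reduce to the two representatives, pull out the stabilizer order $|H|$, and propose to sum the additive character over the $G_p$-orbit inside $V_p$, parametrizing $V_p(1^3)$ as $c\,\ell^3$ and $V_p(1^21)$ as $c\,\ell_1\ell_2^2$. The paper's own sketch (Remark~\ref{rem:mori}, deferred there because Mori's preprint was unavailable) instead keeps the sum over the whole group and splits $G_p = G_{p,1}\sqcup G_{p,2}$ into two Bruhat-type cells, then evaluates $W_i(a,b)=\sum_{g_i\in G_i}\langle g_i a,b\rangle$ for each cell by explicit change of variables. The two routes are closely related: for $a=(1,0,0,0)$, the cell $G_{p,1}$ produces $g_1a=(t,0,0,0)=t u^3$ (triple root at $\infty$) and $G_{p,2}$ produces $g_2a=t(m^3,3m^2,3m,1)=t(mu+v)^3$ (triple root at a finite point), so the Bruhat cells are exactly tracking the location of the root in $\mathbb P^1_{\mathbb F_p}$, which is the same data your orbit parametrization exposes. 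What each buys: the cell approach keeps the sum over a group, so change of variables in $s,t,m,n$ and standard identities like $\sum_{t\in\mathbb F_p^\times}\langle t\rangle=-1$ do all the work, at the cost of carrying the stabilizer multiplicity along; your approach divides that out up front, giving a cleaner $|H|\sum_{x\in G_p\cdot a}\langle x,b\rangle$, but then the resulting sums over $\mathbb P^1\times\mathbb F_p^\times$ (or ordered pairs of distinct points) are no easier to close and still require the case analysis on the type of $b$. Your sanity check via Fourier inversion at $a'=0$ — forcing $\sum_{b}W(\mathbf 1,a,b)=0$ since $0\notin G_p\cdot a$ — is valid and does pass against the table (I verified the $(1^3)$ column). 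One caveat: your proposal ultimately defers the exponential-sum evaluations themselves to the citation, which is consistent with the Proposition's attribution, but the paper does supply the computational content (for several entries) in the Remark; a fully self-contained version along your lines would still need to carry out the orbit sums explicitly for each of the six types of $b$.
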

Since Mori's preprint is not yet available, we will give a brief
outline of his proof in Remark \ref{rem:mori} in the next subsection.
\begin{proof}[Proof of Proposition \ref{prop:hat(fp)}]
The case $p=3$ can be checked numerically, 
so we assume $p\neq3$. As above, we identify $V^\ast_p$ with $V_p$.
When $b$ is of type $(111)$, by Lemma \ref{lem:n_p(sigma)}
and Propositions \ref{prop:f_inv_L_ogs} and \ref{prop:ogs_p},
\begin{align*}
\widehat{f_p}(b)
&=p^{-4}|G_p|^{-1}\sum_{a\in  V_p(0)\sqcup V_p(1^3)\sqcup V_p(1^21)}W({\bf 1},a,b)\\
&=\frac{(p^2-1)(p^2-p)\cdot 1+p(p-1)(2p-1)\cdot (p^2-1)-3p(p-1)\cdot p(p^2-1)}
{p^4(p^2-1)(p^2-p)}=-p^{-3}.
\end{align*}
All the other cases are computed similarly.
\end{proof}

\subsection{The case $N=p^2$}

In this subsection we assume $p\neq2,3$.
We identify $V^\ast_{p^2}$ with $V_{p^2}$ via $\iota$.
Recall that we defined $\Phi_{p},\Phi_{p}'\in C(V_{p^2},{\bf 1})$
in Definition \ref{defn:f_p^2}.
In this subsection we will prove:
\begin{thm}\label{thm:hat(fp^2)}
The Fourier transform of $\Phi_{p}$ is given as follows:
\begin{enumerate}[{\rm (1)}]
\item
Let $b\in pV_{p^2}$.
We write $b=pb'$ and regard $b'$ as an element of $V_p$.
Then
\[
\widehat{\Phi_{p}}(pb')=
\begin{cases}
p^{-2}+p^{-3}-p^{-5}	& \text{$b'$ : of type $(0)$},\\
p^{-3}-p^{-5}		& \text{$b'$ : of type $(1^3),(1^21)$},\\
-p^{-5}			& \text{$b'$ : of type $(111),(21),(3)$}.\\
\end{cases}
\]
\item
For $b\in V_{p^2}\setminus pV_{p^2}$,
\[
\widehat{\Phi_{p}}(b)=
\begin{cases}
p^{-3}-p^{-5}	& \text{$b$ : of type $(1^3_{\ast\ast})$},\\
-p^{-5}		& \text{$b$ : of type $(1^3_{\ast})$, $(1^3_{\rm max})$},\\
0		& \text{otherwise}.
\end{cases}
\]
\end{enumerate}
\end{thm}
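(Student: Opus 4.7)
I will split the proof by whether $b \in pV_{p^2}$ or not, since the behaviour of the pairing $\langle \cdot,\cdot\rangle_{p^2}$ differs substantially in these two regimes.

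For part (1), suppose $b = pb'$ with $b' \in V_p$. Then $\langle a, pb'\rangle_{p^2} = \langle a\bmod p, b'\rangle_p$, so the Fourier sum on $V_{p^2}^{\mathrm{nm}}$ collapses to a sum over $V_p$ weighted by the fibre sizes of the reduction $V_{p^2}^{\mathrm{nm}} \to V_p$. Using Lemma \ref{lem:n_p^2(sigma)}, one checks these fibre sizes are $p^4$ over $0$ (from $pV_{p^2}$), $p^3$ over each $\bar a \in V_p(1^21)$ (from $V_{p^2}(1^21_\ast)$), $p^3$ over each $\bar a \in V_p(1^3)$ (from $V_{p^2}(1^3_{\ast\ast}) \sqcup V_{p^2}(1^3_\ast)$), and $0$ otherwise. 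Rewriting $\sum_{\bar a \in V_p(1^3) \sqcup V_p(1^21)} \langle \bar a, b'\rangle_p = p^4\widehat{f_p}(b') - 1$ gives
\[
\widehat{\Phi_p}(pb') \;=\; p^{-4} - p^{-5} + p^{-1}\widehat{f_p}(b'),
\]
and substituting the three values of $\widehat{f_p}(b')$ from Proposition \ref{prop:hat(fp)} yields the three cases of (1).

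For part (2), suppose $b \notin pV_{p^2}$. The $pV_{p^2}$-contribution to the Fourier sum vanishes by orthogonality. The decisive observation is that $\mathbf{1}_{V_{p^2}(1^21)}$ and $\mathbf{1}_{V_{p^2}(1^3)}$ are pullbacks under reduction $V_{p^2} \to V_p$, so their Fourier transforms are supported on $pV^*_{p^2}$ and vanish at $b$. Combined with the disjoint decompositions $V_{p^2}(1^21) = V_{p^2}(1^21_\ast) \sqcup V_{p^2}(1^21_{\max})$ and $V_{p^2}(1^3) = V_{p^2}(1^3_{\ast\ast}) \sqcup V_{p^2}(1^3_\ast) \sqcup V_{p^2}(1^3_{\max})$, this transfers the calculation from the ``starred'' pieces to the ``max'' pieces:
\[
\widehat{\Phi_p}(b) \;=\; -\widehat{\mathbf{1}_{V_{p^2}(1^21_{\max})}}(b) \;-\; \widehat{\mathbf{1}_{V_{p^2}(1^3_{\max})}}(b).
\]

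It remains to evaluate these two Fourier transforms on $b \notin pV_{p^2}$ directly. By Proposition \ref{prop:singular_orbits}, each of $V_{p^2}(1^21_{\max})$ and $V_{p^2}(1^3_{\max})$ is a union of a small number of $G_{p^2}$-orbits with explicit representatives $(0,1,0,pu)$ and $(1,0,0,pu)$. For each representative $a$, I would expand $g\cdot a$ via the explicit matrix formula for the $G$-action and note that $[g\cdot(1,0,0,0), b]$ and $[g\cdot(0,0,0,1), b]$ are, up to $1/\det g$, the binary cubic specializations $b(\gamma,-\alpha)$ and $b(\delta,-\beta)$ respectively. Summing the character $\langle ga,b\rangle_{p^2}$ first over the ``free'' matrix variables reduces the Gauss sum to a Ramanujan-type sum $\sum_{u\in(\Z/p^2\Z)^\times} e^{2\pi i u m/p^2}$ whose value depends only on $\ord_p(m)$; the relevant valuations are controlled by the factorization type of $\bar b$ together with the second-order lifting of its zeros to $\Z/p^2\Z$. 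This is the main obstacle: one must track exactly when a projective zero $(u:v)$ of $\bar b$ extends to a zero of $b$ modulo $p^2$, and distinguish the case in which $b$ has a genuine triple zero already visible mod $p^2$ (the condition $b \in V_{p^2}(1^3_{\ast\ast})$) from the cases where only a double or simple lift occurs. Once these valuations are tabulated for each type of $b$, summing the contributions from $V_{p^2}(1^21_{\max})$ and $V_{p^2}(1^3_{\max})$ produces the trichotomy in (2).
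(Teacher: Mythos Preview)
Your part~(1) is essentially the paper's argument in compact form: the paper uses Lemma~\ref{lem:ogs_reduction} to reduce $W_{p^2}({\bf 1},a,pb')$ to $p^4 W_p({\bf 1},a,b')$, counts elements of $V_{p^2}^{\rm nm}$ by their mod-$p$ type, and invokes Proposition~\ref{prop:ogs_p}; your identity $\widehat{\Phi_p}(pb')=p^{-4}-p^{-5}+p^{-1}\widehat{f_p}(b')$ packages the same computation via Proposition~\ref{prop:hat(fp)}.

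For part~(2) you take a genuinely different route. The paper sums $W({\bf 1},a,b)$ directly over $a$ of the \emph{starred} types $(1^3_{\ast\ast}),(1^3_\ast),(1^21_\ast)$, relying on the large table of Proposition~\ref{prop:singulargausssum} whose case-by-case proof occupies most of Section~\ref{sec:gausssum}. Your complement trick---observing that $\mathbf{1}_{V_{p^2}(1^21)}$ and $\mathbf{1}_{V_{p^2}(1^3)}$ are mod-$p$ pullbacks and hence have Fourier transforms supported on $pV^\ast_{p^2}$---shifts the burden to the \emph{max} types instead, yielding $\widehat{\Phi_p}(b)=-\widehat{\mathbf{1}_{V_{p^2}(1^21_{\max})}}(b)-\widehat{\mathbf{1}_{V_{p^2}(1^3_{\max})}}(b)$. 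This is a nice reorganization (and incidentally makes Theorem~\ref{thm:hat(fp^2')} even shorter, since only $(1^21_{\max})$ survives there), but it does not bypass the hard work: you still need the orbital Gauss sums $W({\bf 1},a,b)$ for $a$ of types $(1^21_{\max}),(1^3_{\max})$ against all eight types of $b\notin pV_{p^2}$, a computation of the same nature and length as the paper's. Your outline of this last step is where the proposal is thin. The identity $[g\cdot(1,0,0,0),b]=(\det g)^{-1}b(\gamma,-\alpha)$ does handle $a=(1,0,0,pu)$ cleanly, but for $a=(0,1,0,pu)$ the piece $g\cdot(0,1,0,0)$ is \emph{not} a cubic specialization of $b$, so the promised reduction to a single Ramanujan sum is not immediate; one still needs a decomposition of $G_{p^2}$ (as the paper's $G_{p^2,1}\sqcup G_{p^2,2}$) and a multi-variable case analysis tracking $\ord_p(b(u,v))$ across the types of $b$. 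None of this is a genuine obstruction---the computation goes through and your strategy is correct---but the final paragraph understates the remaining effort.
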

\begin{thm}\label{thm:hat(fp^2')}
The Fourier transform of $\Phi_{p}$ is given as follows:
\begin{enumerate}[{\rm (1)}]
\item
Let $b\in pV_{p^2}$ and
write $b=pb'$ as above. Then
\[
\widehat{\Phi_{p}'}(pb')=
\begin{cases}
2p^{-2}-p^{-4}	& \text{$b'$ : of type $(0)$},\\
p^{-3}-p^{-4}	& \text{$b'$ : of type $(1^3)$},\\
2p^{-3}-2p^{-4}	& \text{$b'$ : of type $(1^21)$},\\
2p^{-3}-3p^{-4}	& \text{$b'$ : of type $(111)$},\\
-p^{-4}		& \text{$b'$ : of type $(21)$},\\
-p^{-3}		& \text{$b'$ : of type $(3)$}.
\end{cases}
\]
\item
For $b\in V_{p^2}\setminus pV_{p^2}$,
\[
\widehat{\Phi_{p}'}(b)=
\begin{cases}
p^{-3}-p^{-4}	& \text{$b$ : of type $(1^3_{\ast\ast})$},\\
-p^{-4}		& \text{$b$ : of type $(1^3_{\ast})$},\\
0		& \text{otherwise}.\\
\end{cases}
\]
\end{enumerate}
\end{thm}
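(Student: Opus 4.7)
The plan is to reduce Theorem \ref{thm:hat(fp^2')} to Theorem \ref{thm:hat(fp^2)} via the identity
\[
\Phi_{p}' = \Phi_{p} + \mathbf{1}_{V_{p^2}(1^3_{\rm max})},
\]
which is immediate from Definition \ref{defn:f_p^2} since $V_{p^2}^{\rm nm}$ and $V_{p^2}(1^3_{\rm max})$ are disjoint. By linearity of the Fourier transform and Theorem \ref{thm:hat(fp^2)}, it suffices to compute $\widehat{\mathbf{1}_{V_{p^2}(1^3_{\rm max})}}(b)$ for each type of $b$ and add it to the corresponding entry furnished by Theorem \ref{thm:hat(fp^2)}.

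Since $\mathbf{1}_{V_{p^2}(1^3_{\rm max})} \in C(V_{p^2},{\bf 1})$, Proposition \ref{prop:f_inv_L_ogs} yields
\[
\widehat{\mathbf{1}_{V_{p^2}(1^3_{\rm max})}}(b) = p^{-8}\sum_{O}\frac{W({\bf 1},a_O,b)}{|G_{p^2,a_O}|},
\]
where $O$ runs over the $G_{p^2}$-orbits in $V_{p^2}(1^3_{\rm max})$ with representatives $a_O$. By Proposition \ref{prop:singular_orbits} we may take $a_O = (1,0,0,pu)$ with $u$ running over representatives of $\mathbb F_p^\times/(\mathbb F_p^\times)^3$ (a single orbit if $p\equiv 2\mod 3$, three orbits if $p\equiv 1\mod 3$), and Lemma \ref{lem:stabilizer_singular_p^2} supplies the stabilizer orders.

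What remains is to evaluate $W({\bf 1},(1,0,0,pu),b)$ as $b$ ranges over the listed types. Parametrizing $g=\left(\begin{smallmatrix}\alpha&\beta\\\gamma&\delta\end{smallmatrix}\right)\in G_{p^2}$ and applying the $G$-action formula from Section \ref{sec:GVdefn}, one writes the pairing $[g(1,0,0,pu),b]$ as an explicit polynomial in $(\alpha,\beta,\gamma,\delta,u)$ with coefficients depending on $b$. The sum over $g$ then splits according to whether $\gamma\in pR$ or $\gamma\in R^\times$: on the first piece $g\in G_{p^2}(1^3)$ preserves $\cD_{p^2}(1^3_{\max})$ (cf.\ the analysis preceding Lemma \ref{lem:n_p^2(sigma)}), so the induced action is essentially affine and the resulting Gauss sum is tractable; on the second piece a change of variables via the Weyl element, swapping the roles of $\alpha$ and $\gamma$, reduces the remaining sum to one over $\Z/p\Z$ accessible through Mori's formulas in Proposition \ref{prop:ogs_p}. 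A case-by-case analysis on the type of $b$ then collects the pieces and matches them against the claimed values.

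The main obstacle is precisely this character-sum evaluation: the resulting sum is a mixed Gauss sum over $R=\Z/p^2\Z$ whose cancellations depend subtly on the $p$-adic valuations and type of $b$, and organizing the case analysis uniformly takes some care. Two useful consistency checks are available. First, at $b=0$, Lemma \ref{lem:n_p^2(sigma)} gives
\[
\widehat{\mathbf{1}_{V_{p^2}(1^3_{\rm max})}}(0) = p^{-8}|V_{p^2}(1^3_{\rm max})| = p^{-2}-p^{-3}-p^{-4}+p^{-5},
\]
which agrees with $\widehat{\Phi_p'}(0)-\widehat{\Phi_p}(0)$. Second, when $p\equiv 1\mod 3$, the total contribution of the three orbits must be independent of the choice of cubic-coset representatives $u_1,u_2,u_3$, reflecting the $G_{p^2}$-invariance of $\mathbf{1}_{V_{p^2}(1^3_{\rm max})}$.
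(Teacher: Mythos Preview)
Your proposal is correct and is essentially the paper's approach, lightly reorganized. The paper proves Theorems~\ref{thm:hat(fp^2)} and~\ref{thm:hat(fp^2')} simultaneously from Proposition~\ref{prop:singulargausssum}, which tabulates $W({\bf 1},a,b)$ for all $a\in\tilde V_{p^2}^{\rm nm}$; the column for $a$ of type $(1^3_{\rm max})$ is exactly your $\widehat{\mathbf 1_{V_{p^2}(1^3_{\rm max})}}$ contribution, computed there by a decomposition $G_{p^2}=G_{p^2,1}\sqcup G_{p^2,2}$ very close in spirit to your $\gamma\in pR$ versus $\gamma\in R^\times$ split. One simplification you may have missed: for $b=pb'$ the paper invokes Lemma~\ref{lem:ogs_reduction} to reduce $W_{p^2}({\bf 1},a,pb')$ directly to $p^4 W_p({\bf 1},a\bmod p,b')$, so Mori's table in Proposition~\ref{prop:ogs_p} applies immediately with no further Gauss-sum manipulation needed for that half of the statement.
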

\begin{rem}
We may check that our results are 
consistent with the
Parseval formula
\[
\textstyle
\sum_{b\in V_{p^2}}|\widehat{\Phi_{p}}(b)|^2
=p^{-8}\sum_{a\in V_{p^2}}|\Phi_{p}(a)|^2,
\]
and similarly for $\Phi_{p}'$.
\end{rem}

To prove these theorems, we evaluate
$W({\bf 1},a,b)$ for $a\in \tilde V_{p^2}^{\rm nm}$:

\begin{prop}\label{prop:singulargausssum}
Assume $b\in V_{p^2}\setminus pV_{p^2}$.
Then for $a\in \tilde V_{p^2}^{\rm nm}$, the orbital Gauss sum
$W({\bf 1},a,b)$ is given by the following table:
\[
\begin{array}{c||c|c|c|c}
\hline
\text{type of $b$}
& \text{$a$ : of type $(1^3_{\ast\ast})$}
& \text{$a$ : of type $(1^3_\ast)$}
& \text{$a$ : of type $(1^3_{\rm max})$}
& \text{$a$ : of type $(1^21_\ast)$}\\
\hline
\hline
(1^3_{\ast\ast})& p^5(p-1)^2 & p^5(p-1)^2 & -p^5(p-1) & p^5(p-1)^2\\
(1^3_{\ast})	& p^5(p-1)^2 & p^5(p-1)^2 & -p^5(p-1) & -p^5(p-1)\\
(1^3_{\rm max})	& -p^5(p-1)  & -p^5(p-1)  & \text{$p^5$ in average}&0\\
(1^21_\ast)	& p^5(p-1)^2 & -p^5(p-1)  &0		&0\\
(1^21_{\rm max})& -p^5(p-1)  &\text{$p^5$ in average} &0&0\\
(111)		&0&0&0&0\\
(21)		&0&0&0&0\\
(3)		&0&0&0&0\\
\hline
\end{array}
\]
Here, when we say ``in average'', we fix $b$ and take the average
of $W({\bf 1},a,b)$ over all $a$ of the given type.
For example, if $b$ is of type $(1^21_{\rm max})$,
``$p^5$ in average'' in the second entry means
$|V_{p^2}(1^3_\ast)|^{-1}\sum_{a\in V_{p^2}(1^3_\ast)}W({\bf 1},a,b)=p^5$.
(The individual values are described in the proof also.)
\end{prop}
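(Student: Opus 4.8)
The plan is to compute $W(\mathbf1,a,b)$ directly from the definition by exploiting the orbit structure established in Section~\ref{sec:orbit}. The key reduction is that $W(\mathbf1,a,b)$ depends only on the $G_{p^2}$-orbits of $a$ and $b$ (by the bilinearity identity $[ga,b]=[(\det g)g^{-1}a,b]$ combined with the triviality of $\chi$), so it suffices to fix one convenient representative from each of the orbits classified in Proposition~\ref{prop:singular_orbits}, namely $a\in\{(1,0,0,0),\,(1,0,pu,0),\,(1,0,0,pu),\,(0,1,0,0)\}$ for the columns. For each such $a$, I would parametrize the stabilizer $G_{p^2,a}$ explicitly using Lemma~\ref{lem:stabilizer_singular_p^2}, and write
\[
W(\mathbf1,a,b)=|G_{p^2,a}|\sum_{g\in G_{p^2,a}\backslash G_{p^2}}\langle ga,b\rangle .
\]
The coset sum is then evaluated by choosing coset representatives adapted to the shape of $a$ (e.g.\ for $a$ of type $(1^3_{\ast\ast})$ the stabilizer is $\left(\begin{smallmatrix}t&m\\0&t^2\end{smallmatrix}\right)$, so cosets are indexed essentially by the "lower" entries of $g$), reducing each $\langle ga,b\rangle$ to a character sum over $R=\Z/p^2\Z$ in a few explicit linear/quadratic coordinates of $b$. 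These are classical Gauss and Ramanujan-type sums over $\Z/p^2\Z$, whose values depend on the $p$-adic valuations of the relevant coordinates of $b$ --- and those valuations are exactly what distinguishes the types $(1^3_{\ast\ast}),(1^3_\ast),(1^3_{\max}),(1^21_\ast),(1^21_{\max})$ of $b$ under the identification $V^\ast_{p^2}\cong V_{p^2}$ via $\iota$ and the alternating form \eqref{eq:bilin_form}.

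First I would handle the vanishing entries: when $b$ is of type $(111),(21),(3)$ --- i.e.\ $b\bmod p$ has $P\neq 0$ --- the innermost sum over the unipotent/torus part of $G_{p^2}$ forces a nontrivial additive character in a unit coordinate, giving $0$; this also matches the $N=p$ computation of Mori (Proposition~\ref{prop:ogs_p}), which can be invoked via Lemma~\ref{lem:ogs_reduction} to cross-check. Next I would do the "diagonal-ish" entries (same type of $a$ and $b$, or adjacent types), where the character sums are genuinely nonzero and produce the $p^5(p-1)^2$, $-p^5(p-1)$, and $p^5(p-1)(p-2)$-type values; here the main bookkeeping is keeping track of how $\det g$ enters (recall the action has the $1/\det g$ twist, so $\langle ga,b\rangle$ involves $(\det g)^{-1}$ applied to the cubic matrix acting on $a$). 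Finally, for the two entries marked "$p^5$ in average" --- $b$ of type $(1^3_{\max})$ with $a$ of type $(1^3_{\ast\ast})$ or $(1^3_\ast)$ summed, and $b$ of type $(1^21_{\max})$ with $a$ of type $(1^3_\ast)$ or $(1^21_\ast)$ --- the point is that the individual $W(\mathbf1,a,b)$ depends on which of the two/three orbits $b$ lies in (via the quadratic/cubic residue class of the relevant coordinate, per Proposition~\ref{prop:singular_orbits}), but summing a quadratic Gauss sum over the $p{-}1$ (resp.\ grouping over the orbit representatives) kills the oscillating part and leaves the stated average; I would record the individual values as functions of the residue symbol and then average.

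The main obstacle I expect is the $a$ of type $(1^3_{\max})$ column combined with $b$ of type $(1^3_{\max})$ --- the "$p^5$ in average" entry on the diagonal --- because here \emph{both} $a$ and $b$ range over the (one or three) orbits governed by $\F_p^\times/(\F_p^\times)^3$ when $p\equiv1\bmod 3$, so the character sum genuinely involves a cubic Gauss sum (a Jacobi-sum flavored object), not just a quadratic one. Isolating the non-oscillating part and showing the average is exactly $p^5$ requires carefully tracking the interaction of the cubic residue symbols of the $a_4$-coordinate of $a$ and the corresponding coordinate of $b$, and using that the relevant cubic Gauss sum has absolute value $p^{1/2}$ so its contribution is of strictly smaller order and averages to zero. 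A secondary technical nuisance is the precise translation between the type of $b\in V^\ast_{p^2}$ and its image $\iota(b)\in V_{p^2}$: since $\iota(y)=(y_4,-3y_3,3y_2,-y_1)$ rescales the middle coordinates by $3$, and $P(\iota(y))=27P^\ast(y)$, one must be careful that "type of $b$" in the table refers to $b$ as an element of $V^\ast_{p^2}$ while the computation naturally produces conditions on $\iota(b)$; since $p\neq 3$ this is harmless but must be stated. Once these Gauss-sum evaluations are in hand, assembling the table is routine, and Theorems~\ref{thm:hat(fp^2)} and~\ref{thm:hat(fp^2')} then follow by plugging into Proposition~\ref{prop:f_inv_L_ogs} together with the orbit cardinalities of Lemma~\ref{lem:n_p^2(sigma)} and the $N=p$ values from Proposition~\ref{prop:hat(fp)} for the $b\in pV_{p^2}$ part.
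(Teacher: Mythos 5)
Your plan is organizationally sound and takes a genuinely different route from the paper. You propose to compute $W(\mathbf 1,a,b)$ by expanding over cosets $G_{p^2,a}\backslash G_{p^2}$, parametrizing these via Lemma~\ref{lem:stabilizer_singular_p^2}. The paper instead partitions $G_{p^2}=G_{p^2,1}\sqcup G_{p^2,2}$ into two explicitly parametrized Bruhat-type cells adapted to the stratification, chooses the representatives $a,b$ so that $g_ia$ has simple coordinates, and checks case by case that $W_2(a,b)=0$ while $W_1(a,b)$ produces the table. Your decomposition should also work, and your treatment of the vanishing rows ($b$ of type $(111),(21),(3)$) and of the $\iota$-identification for $p\neq 3$ are both fine. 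The real work in either framework is the elementary evaluation of sums of the shape $\sum_{t\in R^\times}\langle t\cdot f(s,m,n)\rangle$ over $R=\Z/p^2\Z$, where the value $0$, $-p$, or $p^2-p$ is dictated by whether $f$ lands in $R^\times$, $pR^\times$, or $\{0\}$.

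There is one genuine gap in the proposed strategy for the two ``in average'' entries. You claim that the oscillating part, governed by a quadratic or cubic residue symbol of a coordinate of $a$, ``has absolute value $p^{1/2}$ so its contribution is of strictly smaller order and averages to zero.'' The second clause is what the argument actually needs, but the first is false and would be misleading if used to justify the computation. The paper's evaluation gives, for fixed $b$ of type $(1^3_{\rm max})$ and $a$ of type $(1^3_{\rm max})$ with $p\equiv 1\bmod 3$, the individual values $W=p^5-p^6$ or $W=p^5+2p^6$ according to whether $-\alpha/l$ is a cube in $\F_p^\times$; the deviation from $p^5$ is of order $p^6$, which is \emph{larger} than the main term, not smaller. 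The equality of the average with $p^5$ comes from the \emph{exact} cancellation forced by Proposition~\ref{prop:singular_orbits}(2)(iii): the three orbits of $V_{p^2}(1^3_{\rm max})$ have equal cardinality and are indexed by $\F_p^\times/(\F_p^\times)^3$, so the two cube classes with value $p^5-p^6$ exactly offset the one with value $p^5+2p^6$. No Gauss-sum estimate enters. Likewise for the $(1^3_{\ast}, 1^21_{\rm max})$ entry the individual values are $p^5\mp p^6$ and the cancellation is by the two-to-one split of quadratic residue classes in Proposition~\ref{prop:singular_orbits}(3). A final misdiagnosis: in the paper's parametrization the $(1^3_{\rm max},1^3_{\rm max})$ entry never produces a cubic Gauss sum at all. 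After a substitution it reduces to $\sum_{t,m,n}\sum_s\langle t(\alpha+s^3l)\rangle$, and the inner sum over $t$ depends only on the $p$-adic valuation of $\alpha+s^3l$; one then simply counts the number of $s\in R^\times$ with $\alpha+s^3l\equiv 0\bmod p^2$, i.e.\ counts cube roots of a unit mod~$p$. If your stabilizer-coset setup does produce a cubic-character weighted sum, you will need to evaluate it exactly by orthogonality, not bound it.
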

The theorems follow from Propositions \ref{prop:ogs_p}
and \ref{prop:singulargausssum}:
\begin{proof}[Proof of Theorems \ref{thm:hat(fp^2)} and \ref{thm:hat(fp^2')}]
Assume $b=pb'$, $b'\in V_p$.
Then we have
$W_{p^2}(1,a,pb')=p^4W_p(1,a,b')$
by Lemma \ref{lem:ogs_reduction}.
There are respectively $p^4$, $p^3(p^2-1)$ and $p^4(p^2-1)$
elements in $V_{p^2}^{\rm nm}$
whose reductions modulo $p$ are of type $(0)$, $(1^3)$ and $(1^21)$,
respectively.
Similarly there are respectively $p^4$, $p^4(p^2-1)$ and $p^4(p^2-1)$
in $\tilde V_{p^2}^{\rm nm}$.
Hence by Propositions \ref{prop:f_inv_L_ogs} and
\ref{prop:ogs_p}, if $b'$ is of type $(111)$, we have
\begin{align*}
\widehat{\Phi_{p}}(pb')=
\frac{1}{p^{4}|G_{p^2}|}\sum_{a\in V_{p^2}^{\rm nm}}W_p(1,a,b')
&=\frac{p^3(p^2-p)(p^2-1)\{p+(2p-1)-3p\}}
{p^4|G_{p^2}|}=-p^{-5},\\
\widehat{\Phi_{p}'}(pb')=
\frac{1}{p^{4}|G_{p^2}|}\sum_{a\in\tilde V_{p^2}^{\rm nm}}W_p(1,a,b')
&=\frac{p^4(p^2-p)(p^2-1)\{1+(2p-1)-3\}}
{p^4|G_{p^2}|}=2p^{-3}-3p^{-4}.
\end{align*}
Other cases of $b\in pV_{p^2}$ are obtained similarly.
Now let $b\in V_{p^2}\setminus pV_{p^2}$.
Then since $(b\mod p)\neq0$, we have
\[
\sum_{a\in pV_{p^2}}W_{p^2}({\bf 1},a,b)
=p^4\sum_{a'\in V_{p}}W_{p}({\bf 1},a',b)=0.
\]
Hence by Proposition \ref{prop:f_inv_L_ogs},
\[
\widehat{\Phi_{p}}(b)=\frac{1}{p^{8}|G_{p^2}|}
\sum_{a\in V_{p^2}^{\rm nm}\setminus pV_{p^2}}
W_{p^2}({\bf 1},a,b)
\quad{\text and}\quad
\widehat{\Phi_{p}'}(b)
=\frac{1}{p^{8}|G_{p^2}|}
\sum_{a\in \tilde V_{p^2}^{\rm nm}\setminus pV_{p^2}}
W_{p^2}({\bf 1},a,b).
\]
Our formulas now follow from Proposition \ref{prop:singulargausssum}
and Lemma \ref{lem:n_p^2(sigma)}.
\end{proof}

We now come to the proof of Proposition \ref{prop:singulargausssum}.
\begin{proof}[Proof of Proposition \ref{prop:singulargausssum}]
We prove this by case by case direct computations.
We begin by introducing some notation
and explaining our approach.
Since $W({\bf 1},a,b)$ depends only on the $G_{p^2}$-orbits
of $a,b$, we will take a specific representative from each orbit
and compute for those $a,b$. We will choose $b$ as follows
according as its type:
\[
\begin{array}{c|c|c}
\hline
\text{type of $b$} & b & \text{condition on the coefficients}\\
\hline
\hline
(1^3_{**})	&(1,0,0,0)	&\text{--}	\\
\hline
(1^3_\ast)	&(1,0,l,0)	&l\in pR^\times	\\
\hline
(1^3_{\rm max})	& (1,0,k,-l)	&k\in pR, l\in pR^\times\\
\hline
(1^21_*)	& (0,-1,0,0)	&\text{--}	\\
\hline
(1^21_{\rm max})& (0,-1,0,-l)	&l\in pR^\times	\\
\hline
(111)		&(0,-1,1,0)	&\text{--}	\\
\hline
(21)		& (0,-1,0,-l)
	&\text{$u^2+l\in R[u]$ is irreducible}\\
\hline
(3)		& (1,0,k,-l)
	&\text{$u^3+ku-l\in R[u]$ is irreducible}\\
\hline
\end{array}
\]
For $b$ of type $(1^3_{\rm max})$, we may let $k=0$
but we sometimes leave it as is
to treat types $(1^3_{\rm max})$ and (3) simultaneously.
We will choose $a\in V_{p^2}$ later.

We put
\begin{align*}
G_{p^2,1}&:=\left\{g_1:=t(1-mn)\twtw {s}001\twtw 1nm1\ \vrule\ s,t\in R^\times, n\in R,m\in pR\right\}\subset G_{p^2},\\
G_{p^2,2}&:=\left\{g_2:=-t\twtw s001\twtw m{1+mn}1n\ \vrule\ s,t\in R^\times, n,m\in R\right\}\subset G_{p^2}.
\end{align*}
Then $G_{p^2}=G_{p^2,1}\sqcup G_{p^2,2}$.
We also drop $p^2$ and write $G_1=G_{p^2,1}, G_2=G_{p^2,2}$.
We write $\langle t\rangle :=\exp(2\pi i t/p^2)$,
hence $\langle a,b\rangle=\langle[a,b]\rangle$.
We put
\[
W_i(a,b):=\sum_{g_i\in G_{i}}\langle [g_ia,b]\rangle
\qquad i=1,2,
\]
and compute this value for each 
choice of $a,b$. We recall the formula \eqref{eq:bilin_form},
\[
[a,b]=a_4b_1-\frac13a_3b_2+\frac13a_2b_3-a_1b_4,
\]
for $a=(a_1,a_2,a_3,a_4), b=(b_1,b_2,b_3,b_4)\in V_{p^2}$,
which will be used throughout.
When convenient, we write an element of $V_{p^2}$
as a row vector via its transpose.

We immediately see that
\[
\sum_{t\in R^\times}\langle t\rangle=0,
\quad
\sum_{t\in R^\times}\langle pt\rangle=-p,
\quad
\sum_{n\in R}\langle n\rangle=
\sum_{n\in R}\langle pn\rangle=
\sum_{n\in pR}\langle n\rangle=0.
\]
We use these formulas and their variations very often.
For example, if $f(s,m,n)\in R$ is a function
independent of $t$ and $f(s,m,n)\in R^\times$ for all $s,m,n$, then
\[
\sum_{t\in R^\times}\sum_{s,m,n}\langle t\cdot f(s,m,n)\rangle
=\sum_{s,m,n}\sum_{t\in R^\times}\langle t\rangle=0.
\]
Also, if $g(s,m)\in pR$ is independent of $n$
and $\alpha\in pR^\times$ is a constant, then
\[
\sum_{n\in R}\sum_{t\in R^\times}\sum_{s,m}\langle t(g(s,m)+\alpha n)\rangle=
\sum_{t\in R^\times}\sum_{s,m}\sum_{n\in R}\langle \alpha n\rangle=0.
\]
These are typical examples of change of variables,
and we omit the explanation of such modifications
when they are easy and natural.
Finally, note that $W({\bf 1},a,b)=W({\bf 1},b,a)$.

We now carry out the computation.
In what follows,
we see that $W_2(a,b)=0$ and $W_1(a,b)$ is given
by the table of the proposition for all of our chosen
representatives.\\

\noindent
\underline{{\bf (I)} $a$ is of type $(1^3_{\ast\ast})$}.
We choose $a=(1,0,0,0)$.
To make the computation easier we replace the variables $t,m$ of $g_1$
by $s^{-2}t,st^{-1}m$ and
$t$, $m$ of $g_2$ by $st$ and $s^{-1}m$.
Then since the variable $m$ of $g_1$ is in $pR$ and hence $m^2=0$, we have
\[
g_1a=\begin{pmatrix}t\\3m\\0\\0\end{pmatrix}
,\qquad
g_2a=t\begin{pmatrix}m^3\\3m^2\\3m\\1\end{pmatrix}.
\]

\noindent
If $b=(1,0,0,0)$, then
since $|G_{1}|=p^5(p-1)^2$,
\[
W_1(a,b)=\sum_{g_1\in G_1}1=p^5(p-1)^2,
\quad
W_2(a,b)
	=\sum_{g_2\in G_2}\langle t\rangle
	=\sum_{s,m,n}\sum_{t\in R^\times}\langle t \rangle=0.
\]
 If $b=(0,-1,0,0)$, then
\[
W_1(a,b)=\sum_{g_1\in G_1}1=p^5(p-1)^2,
\quad
W_2(a,b)
	=\sum_{g_2\in G_2}\langle tm \rangle
	=\sum_{s,t,n}\sum_{m\in R}\langle tm \rangle=0.
\]
If $b=(1,0,l,0)$ is of type $(1^3_\ast)$,
then since $1+lm^2\in R^\times$ for any $m\in R$, we have
\[
W_1(a,b)=\sum_{g_1\in G_1}1=p^5(p-1)^2,
\quad
W_2(a,b)=\sum_{g_2\in G_2}\langle t(1+lm^2) \rangle
	=\sum_{g_2\in G_2}\langle t\rangle=0.
\]
If $b=(0,-1,1,0)$, then $W_1(a,b)=\sum_{g_1\in G_1}\langle m\rangle=0$ and
\begin{align*}
W_2(a,b)
&=\sum_{g_2\in G_2}\langle t(m+m^2)\rangle
=\left(\sum_{m+m^2\in R^\times}
	+\sum_{m+m^2\in pR^\times}
	+\sum_{m+m^2=0}\right)
\langle t(m+m^2)\rangle\\
&=0+2(p-1)\sum_{n\in R,s\in R^\times}(-p)+2\sum_{n\in R,s,t\in R^\times}1
=-2p^4(p-1)^2+2p^4(p-1)^2=0.
\end{align*}
Let $b=(0,-1,0,-l)$ be either of type $(1^21_{\rm max})$ or $(21)$.
Then $l\in pR^\times$ if $b$ is of type $(1^21_{\rm max})$ and
$l\in R^\times$ if $b$ is of type $(21)$.
Also $1+lm^2\in R^\times$ for any $m$, since $1+lX^2\in R[X]$ is an
irreducible polynomial. Hence we have
\begin{align*}
W_1(a,b)
&=\sum_{g_1\in G_1}\langle tl \rangle
=\begin{cases}\sum_{s,m,n}(-p)=-p^5(p-1) & (1^21_{\rm max}),\\
\sum_{s,m,n}0=0 & (21),
\end{cases}\\
W_2(a,b)&=\sum_{g_2\in G_2}\langle tm(1+lm^2)\rangle
=\sum_{g_2\in G_2}\langle tm \rangle
=\sum_{s,t,n}\sum_{m\in R}\langle tm \rangle
=0.
\end{align*}
Finally, if $b=(1,0,k,-l)$ is of type $(1^3_{\rm max})$ or $(3)$, then
by a similar consideration as above,
\begin{align*}
W_1(a,b)&=\sum_{s,m,n}\langle km \rangle \sum_t\langle tl \rangle
=\begin{cases}\sum_{s,m,n}(-p)=-p^5(p-1) & (1^3_{\rm max}),\\
\sum_{s,m,n}0=0 & (3),
\end{cases}\\
W_2(a,b)&=\sum_{g_2\in G_2}\langle t(1+km^2+lm^3)\rangle=0.
\end{align*}

\noindent
\underline{{\bf (II)} $a$ is of type $(1^21_{\ast})$}.
We consider this case next.
We choose $a=(0,1,0,0)$.
Then
\[
g_1a=t\begin{pmatrix}s^2n\\s(1+2nm)\\2m\\0\\\end{pmatrix},
\quad
g_2a=t\begin{pmatrix}s^2(m^2+nm^3)\\s(2m+3nm^2)\\1+3nm\\s^{-1}n\end{pmatrix}.
\]
If $b=(0,-1,0,0)$, then
\begin{align*}
W_1(a,b)&=\sum_{g_1\in G_1}\langle\frac{2tm}{3}\rangle
=\sum_{s,t,n}\sum_{m\in pR}\langle m\rangle=0,\\
W_2(a,b)&=\sum_{g_2\in G_2}\langle t(\frac13+nm)\rangle
=\sum_{t,s}\langle\frac t3\rangle\sum_{m,n}\langle tnm\rangle
=\sum_{t,s}\langle t\rangle\sum_{m,n}\langle nm\rangle
=\sum_{m,n,s}\langle nm\rangle\sum_{t\in R^\times}\langle t\rangle=0.
\end{align*}
Let $b=(1,0,l,0)$ be of type $(1^3_*)$.
Since $l\in pR^\times$ and $1+ls^2m^2$ is always a unit, we have
\begin{align*}
W_1(a,b)&
	=\sum_{g_1\in G_1}\langle\frac{tsl}{3}\rangle
	=\sum_{s,m,n}\sum_{t\in R^\times}\langle tl\rangle
	=\sum_{s,m,n}(-p)=-p^5(p-1),\\
W_2(a,b)&
=\sum_{g_2\in G_2}\langle t\{\frac{2mls}3+n(\frac 1s+lsm^2)\}\rangle
=\sum_{s,t,r}\langle\frac{2mlts}3\rangle\sum_n\langle\frac ts n(1+ls^2m^2)\rangle=0.
\end{align*}
Let $b=(0,-1,1,0)$ be of type $(111)$.
Then
\[
W_1(a,b)=\sum_{g_1\in G_1}\langle \frac t3(2m+s(1+2nm))\rangle
=\sum_{g_1\in G_1}\langle t\rangle=0.
\]
For $W_2(a,b)$, we have
\[
W_2(a,b)
=\sum_{s,t,m}\sum_{n}\langle\frac t3(1+2ms+3n(m+sm^2))\rangle.
\]
We divide the sum
according as $m+sm^2=m(1+sm)\in R^\times$ or not.
The former is
\[
\sum_{m+sm^2\in R^\times}\langle
\frac t3(1+2ms)\rangle\langle tn(m+m^2s)\rangle
= \sum_{m+sm^2\in R^\times}
	\langle\frac t3(1+2ms)\rangle\langle n \rangle=0.
\]
Let $m+sm^2\in pR$. Then either $m\in pR$ or $ms\in -1+pR$.
Hence $(1+2ms+3n(m+sm^2))\in R^\times$ and so the latter sum
is $0$ as well. Hence $W_2(a,b)=0$.

Let $b=(0,-1,0,-l)$ be of type $(1^21_{\rm max})$ or $(21)$.
If $b$ is of type $(1^21_{\rm max})$ then $s^2nl\in pR$ and
if $b$ is of type $(21)$ then $s^2l\in R^\times$ for any $s,n$. Hence
\[
W_1(a,b)=\sum_{g_1\in G_1}\langle t(\frac{2m}3+s^2nl)\rangle
=\begin{cases}
\sum_{t,s,n}\sum_{m\in pR}\langle tm\rangle=0 & (1^21_{\rm max}),\\
\sum_{t,s,m}\sum_{n\in R}\langle tn\rangle=0 & (21).\\
\end{cases}
\]
For $W_2(a,b)$, since $1+s^2m^2l\in R^\times$, we have
\[
W_2(a,b)
=\sum_{g_2\in G_2}
\langle t(\frac13+s^2m^2l)\rangle
\langle tnm(1+s^2m^2l)\rangle
=\sum_{t,s,m}
\langle t(\frac13+s^2m^2l)\rangle
\sum_{n}\langle nm\rangle.
\]
Since $\sum_{n\in R}\langle nm\rangle=0$ unless $m=0$, we have
$W_2(a,b)=\sum_{t,s,n}\langle t/3\rangle=0$.

Let $b=(1,0,k,-l)$ be of type $(1^3_{\rm max})$ or $(3)$.
If $b$ is of type $(1^3_{\rm max})$, since $l\in pR^\times$,
\[
W_1(a,b)=\sum_{g_1\in G_1}\langle t(\frac{sk}3+s^2ln) \rangle
=\sum_{t,s,m}\langle\frac{tsk}3\rangle
\sum_{n}\langle ts^2ln\rangle=0.
\]
If $b$ is of type $(3)$, then
since $s^2l+\frac{2sk}3m\in R^\times$,
\[
W_1(a,b)
=\sum_{g_1\in G_1}\langle tn(s^2l+\frac{2sk}3m)\rangle
\langle\frac{tsk}3\rangle
=\sum\langle n\rangle\langle\frac{tsk}3\rangle=0.
\]
Moreover for both types,
\[
W_2(a,b)
=\sum_{g_2\in G_2}\langle tn(s^{-1}+sm^2k+s^2m^3l)\rangle
\langle t(\frac{2msk}3+s^2m^2l)\rangle.
\]
Since $s^{-1}+sm^2k+s^2m^3l\in R^\times$,
$W_2(a,b)=\sum_{g_2} \langle n\rangle\langle t(\frac{2msk}3+s^2m^2l)\rangle=0$.\\

\noindent
\underline{{\bf (III)} $a$ is of type $(1^3_{\ast})$}.
Let $a=(1,0,\alpha,0)$, where $\alpha\in pR^\times$.
By replacing $m$ of $g_1$ with $s^{-1}m-2\alpha n/3$, we have
\[
g_1a=
t\begin{pmatrix}
	s^2(1+\alpha n^2)\\
	3m		\\
	\alpha		\\
	0		\\
\end{pmatrix},
\quad
g_2a=
t\begin{pmatrix}
	s^2	(m^3+\alpha(n^2m^3+2nm^2+m))\\
	s	(3m^2+\alpha(3n^2m^2+4nm+1))\\
		3m+\alpha(3n^2m+2n)\\
	s^{-1}	(1+\alpha n^2)
\end{pmatrix}.
\]
Let $b=(1,0,l,0)$ be of type $(1^3_\ast)$.
Then
\[
W_1(a,b)=\sum_{g_1\in G_1}1=p^5(p-1)^2,
\quad
W_2(a,b)=\sum_{g_2\in G_2}\langle t(\frac1s(1+\alpha n^2)+slm^2)\rangle
=\sum_{g_2\in G_2}\langle t \rangle=0.
\]
Let $b=(0,-1,1,0)$ be of type $(111)$.
Then $W_1(a,b)=\sum_{t,s,n}\langle t\alpha/3 \rangle
\sum_{m}\langle tm \rangle=0$, and
\begin{align*}
W_2(a,b)
&=\sum_{g_2\in G_2}\langle \frac t3(3m+\alpha(3n^2m+2n)) \rangle
\langle \frac {ts}3(3m^2+\alpha(3n^2m^2+4nm+1)) \rangle\\
&=\sum_{t,s,n}
\left(
	\sum_{m\in R^\times}\langle t \rangle\langle ts \rangle
+	\sum_{m\in pR}\langle t(m+\frac{2\alpha n}3) \rangle\langle\frac{ts\alpha}3 \rangle
\right)\\
&=\sum_{m\in R^\times}\sum_{s,n}\sum_{t\in R^\times}\langle s \rangle\langle t \rangle+\sum_{m \in pR}\sum_{s,t}\sum_{n\in R}\langle m \rangle\langle t \rangle
=0+0=0.
\end{align*}
Let $b=(1,0,k,-l)$ be either of type $(1^3_{\rm max})$ or $(3)$.
Then
\begin{align*}
W_1(a,b)&=
\begin{cases}
\sum_{g_1\in G_1}\langle ts^2l \rangle=-p^5(p-1) & (1^3_{\rm max}),\\
\sum_{g_1\in G_1}\langle t(ls^2+(km+\alpha s^2n^2l))\rangle
=\sum_{g_1\in G_1} \langle t \rangle=0 & (3),
\end{cases}\\
W_2(a,b)&=
\sum_{g_2\in G_2}
\langle t\{s^{-1}(1+ks^2m^2+ls^3m^3)+\alpha f(s,n,m)\} \rangle
=\sum_{g_2\in G_2}\langle t \rangle=0.
\end{align*}
Note that $1+ks^2m^2+ls^3m^3=b(1,sm)\in R^\times$ for all $s,m$.

Let $b=(0,-1,0,-l)$ be of type $(1^21_{\rm max})$ or $(21)$.
Then
\[
W_2(a,b)
=\sum_{G_2}\langle t\{m(1+lm^2s^2)+\frac\alpha3 f(s,m,n)\} \rangle,
\]
where $f(s,m,n)=3n^2m+2n+s^2(n^2m^3+2nm^2+m)l$.
Note that $1+lm^2s^2\in R^\times$. If $m\in pR$, then
$\alpha f(s,m,n)=2n\alpha$. Hence
\[
W_2(a,b)
=\sum_{m\in R^\times}\langle t \rangle
	+\sum_{m\in pR}\langle t(m+\frac{2n\alpha}3)\rangle
=0+0=0.
\]
If $b$ is of type $(21)$,
\[
W_1(a,b)=\sum_{G_1}\langle t(ls^2+\frac\alpha3+ls^2n^2\alpha) \rangle=0.
\]
If $b$ is of type $(1^21_{\rm max})$,
\[
W_1(a,b)
=\sum_{g_1\in G_1}\langle t(\frac\alpha3+ls^2) \rangle
=\sum_{s,m,n}\sum_{t\in R^\times}\langle t(\alpha+3ls^2) \rangle.
\]
We consider whether $\alpha+3ls^2\in pR$
is $0$ or not for $s\in R^\times$.
Note that since $\alpha,3l\in pR^\times$,
we can regard $\alpha/3l\in\mathbb F_p^\times$.
If $-\alpha/3l$ is not a square in $\mathbb F_p^\times$,
then $\alpha+3ls^2\in pR^\times$ for any $s\in R^\times$.
If $-\alpha/3l$ is a square in $\mathbb F_p^\times$,
then $\alpha+3ls^2=0$ for $2p$ choices of $s\in R^\times$,
and $\alpha+3ls^2\in pR^\times$ otherwise.
Hence
\[
W_1(a,b)=
\begin{cases}
\sum_{s,m,n}(-p)=p^5-p^6
	& -\alpha/3l\notin (\mathbb F_p^\times)^2,\\
2p\sum_{m,n}(p^2-p)+(p^2-3p)\sum_{m,n}(-p)=p^5+p^6
	& -\alpha/3l\in (\mathbb F_p^\times)^2.
\end{cases}
\]
By Proposition \ref{prop:singular_orbits} (2),
for $a\in V_{p^2}(1^3_\ast)$,
$-\alpha/3l\in (\mathbb F_p^\times)^2$
or $-\alpha/3l\notin (\mathbb F_p^\times)^2$
occurs with equal probability.
Hence the average value of $W_1(a,b)$ with respect to
$a\in V_{p^2}(1^3_\ast)$ is $p^5$.\\

\noindent
\underline{{\bf (IV)} $a$ is of type $(1^3_{\max})$}.
By Proposition \ref{prop:singular_orbits},
we can take $a=(1,0,0,\alpha)$ for
some $\alpha\in pR^\times$.
Then
\[
g_1a=
t\begin{pmatrix}
	s^2(1+\alpha n^3)\\
	3s(m+\alpha n^2)\\
	3\alpha n\\
	s^{-1}\alpha
\end{pmatrix},
\quad
g_2a=
t\begin{pmatrix}
	s^2(m^3+\alpha(mn+1)^3)\\
	3s(m^2+\alpha n(mn+1)^2)\\
	3(m+\alpha n^2(mn+1))\\
	s^{-1}(1+\alpha n^3).
\end{pmatrix}.
\]
Let $b=(0,-1,1,0)$. Then
\begin{align*}
W_1(a,b)
&=\sum_{s,t,n}\sum_{m\in pR}
\langle t(sm+\alpha n+\alpha sn^2) \rangle
=\sum_{s,t,n}\sum_{m\in pR}
\langle m \rangle=0,\\
W_2(a,b)
&=\sum_{g_2\in G_2}
	\langle t(m+\alpha n^2(mn+1)) \rangle
	\langle st(m^2+\alpha n(mn+1)^2) \rangle\\
&=\sum_{s,t,n}\sum_{m\in R^\times}\langle t \rangle\langle ts \rangle
+\sum_{s,t,n}\sum_{m\in pR}\langle t(m+\alpha n^2) \rangle
\langle \alpha tsn \rangle
=0+0=0.
\end{align*}
Let $b=(0,-1,0,-l)$ be of type $(1^21_{\rm max})$ or $(21)$. Then
\begin{align*}
W_1(a,b)&=\sum_{g_1\in G_1}\langle t(\alpha n+ls^2(1+\alpha n^3)\rangle
=\begin{cases}
\sum_{s,t,m}\sum_{n}\langle t(\alpha n+ls^2) \rangle=0	& (1^21_{\rm max}),\\
\sum_{s,m,n}\sum_{t}\langle t \rangle=0			& (21),
\end{cases}\\
W_2(a,b)
&=\sum_{g_2\in G_2}
\langle t\{m(1+ls^2m^2)+\alpha(n^2(mn+1)+ls^2(mn+1)^3)\} \rangle\\
&=\sum_{s,t,n}\sum_{m\in R^\times}\langle t \rangle
	+\sum_{s,t,n}\sum_{m\in pR}\langle t(m+\alpha(n^2+s^2l)) \rangle
=0+0=0.
\end{align*}
Let $b=(1,0,k,-l)$ be of type $(3)$.
Then
\begin{align*}
W_1(a,b)&=\sum_{g_1\in G_1}
\langle t(ls^2+ksm+\alpha(s^{-1}+kn^2+ls^2n^3)) \rangle
=\sum_{s,m,n}\sum_{t\in R^\times}\langle t \rangle=0,\\
W_2(a,b)&=\sum_{g_2\in G_2}
\langle t\{s^{-1}(1+ks^2m^2+ls^3m^3)+\alpha f(s,m,n)\} \rangle
=\sum_{s,m,n}\sum_{t\in R^\times}\langle t \rangle=0.
\end{align*}
Note that $1+ks^2m^2+ls^3m^3\in R^\times$ for any $s,m$.

Let $b=(1,0,0,-l)$ be of type $(1^3_{\rm max})$.
(By Proposition \ref{prop:singular_orbits}, we may assume $k=0$.)
Similarly to as above, we have $W_2(a,b)=0$. For
$W_1(a,b)$, we have
\[
W_1(a,b)
=\sum_{g_1\in G_1}\langle \frac ts(\alpha+s^3l) \rangle
=\sum_{t,m,n}\sum_s\langle t(\alpha+s^3l) \rangle.
\]
We consider whether $\alpha+s^3l\in pR$ is $0$ or not for $s\in R^\times$.

First, let $p\equiv 2\mod 3$. Then
$\alpha+s^3l=0$ for $p$ choices of $s$ and 
$\alpha+s^3l\in pR^\times$ otherwise.
Hence
\[
W_1(a,b)
=p\sum_{m,n,t}1+(p^2-2p)\sum_{m,n}(-p)
=p(p^2-p)p^3-(p^2-2p)p^4=p^5.
\]
Next let $p\equiv 1\mod 3$.
If $-\alpha/l$ is not a cube in $\mathbb F_p^\times$,
then $\alpha+s^3l\in pR^\times$ for all $s$.
If $-\alpha/l$ is a cube in $\mathbb F_p^\times$,
$\alpha+s^3l=0$ for $3p$ choices of $s$ and 
$\alpha+s^3l\in pR^\times$ otherwise.
Hence
\[
W_1(a,b)
=\begin{cases}
(p^2-p)\sum_{m,n}(-p)=p^5-p^6
	& -\alpha/l\notin (\mathbb F_p^\times)^3,\\
3p\sum_{m,n,t}1+(p^2-4p)\sum_{m,n}(-p)=p^5+2p^6
	& -\alpha/l\in (\mathbb F_p^\times)^3.\\
\end{cases}
\]
By Proposition \ref{prop:singular_orbits} (4), 
the average value of $W_1(a,b)$ with respect to
$a\in V_{p^2}(1^3_{\rm max})$ is
$\frac23 (p^5-p^6)+\frac 13(p^5+2p^6)=p^5$.
\end{proof}
\begin{rem}\label{rem:mori}
We briefly review Mori's proof \cite{mori} of Proposition \ref{prop:ogs_p}.
The outline is quite similar to the proof above.
We fix a prime $p\neq3$.
Let
\begin{align*}
G_{p,1}&:=\left\{g_1:=t\twtw {s}001\twtw 1n01\ \vrule\ s,t\in \F_p^\times, n\in \F_p\right\}\subset G_{p},\\
G_{p,2}&:=\left\{g_2:=-t\twtw s001\twtw m{1+mn}1n\ \vrule\ s,t\in \F_p^\times, n,m\in \F_p\right\}\subset G_{p}.
\end{align*}
Then $G_{p}=G_{p,1}\sqcup G_{p,2}$.
We drop $p$ and write $G_1=G_{p,1}, G_2=G_{p,2}$.
We write $\langle t\rangle :=\exp(2\pi i t/p)$,
hence $\langle a,b\rangle=\langle[a,b]\rangle$.
We put
$W_i(a,b):=\sum_{g_i\in G_{i}}\langle [g_ia,b]\rangle$
and compute this value.
Note that $W({\bf 1},a,b)=W_1(a,b)+W_2(a,b)$.
In this case of $\F_p=\Z/p\Z$, we have
\[
\sum_{t\in \F_p^\times}\langle t\rangle=-1,
\quad
\sum_{n\in\F_p}\langle n\rangle=0.
\]
Let $a=(1,0,0,0)$ be of type $(1^3)$.
Then a change of
variables similar to that in (I) in the previous proof, we have
$
g_1a=(t,0,0,0)$,
$g_2a=t(m^3,3m^2,3m,1)$.
If $b=(1,0,0,0)$, then
\[
W_1(a,b)=\sum_{g\in G_1}1=|G_1|=p(p-1)^2,
\quad
W_2(a,b)=\sum_{g\in G_2}\langle t\rangle=\sum_{s,m,n}(-1)=-p^2(p-1),
\]
and hence $W({\bf 1},a,b)=-p(p-1)$. If $b=(0,1,0,0)$, then
\[
W_1(a,b)=\sum_{g\in G_1}1=p(p-1)^2,
\quad
W_2=\sum_{g\in G_2}\langle tm\rangle=
\sum_{s,t,n}\sum_{m}\langle m\rangle=0
\]
and $W({\bf 1},a,b)=p(p-1)^2$ follows. 
If $b=(1,0,k,-l)$ is of type $(3)$, then $l\in\F_p^\times$
and also $1+m^2k+m^3l\in\F_p^\times$ for all $m\in\F_p$. Hence
\[
W_1(a,b)=\sum_{g\in G_1}\langle tl\rangle=-p(p-1),
\quad
W_2(a,b)=\sum_{g\in G_2}\langle t(1+km^2+lm^3)\rangle=-p^2(p-1)
\]
and so $W({\bf 1},a,b)=-p(p^2-1)$.
Let $a=(0,1,0,0)$. Then as in (II) in the previous proof,
\[
g_1a=t\begin{pmatrix}s^2n\\s\\0\\0\\\end{pmatrix},
\quad
g_2a=t\begin{pmatrix}s^2(m^2+nm^3)\\s(2m+3nm^2)\\1+3nm\\s^{-1}n\end{pmatrix}.
\]
Let $b=(0,-1,1,0)$ be of type $(111)$. Then
$W_1(a,b)=\sum_{g\in G_1}\langle ts/3\rangle=-p(p-1)$.
For $W_2(a,b)$, by exactly the same consideration as in (II)
in the previous proof,
\[
W_2(a,b)
=
\!\!\!\!\sum_{m+sm^2\in\F_p^\times}
\!\!	\langle\frac t3(1+2ms)\rangle\langle n \rangle
	+\sum_{m=0}\langle\frac t3 \rangle
	+\!\!\sum_{1+ms=0}\!\!\langle-\frac t3 \rangle
=0-p(p-1)-p(p-1)=-2p(p-1).
\]
Hence we have $W({\bf 1},a,b)=-3p(p-1)$.
Let $b=(0,-1,0,-l)$ be of type $(21)$. Then similarly,
\[
W_1(a,b)=\sum_{t,s,n,m}\langle ts^2nl\rangle =0,
\qquad
W_2(a,b)=\sum_{t,s,n,m}\langle t/3\rangle=-p(p-1)
\]
and hence $W({\bf 1},a,b)=-p(p-1)$.
The other cases are obtained similarly
and we omit the detail.
\end{rem}

\section{Ohno-Nakagawa's relation}
\label{sec:ON}

After 25 years after Shintani
introduced the zeta functions $\xi(s)$ and $\xi^\ast(s)$,
Ohno \cite{ohno} conjectured a remarkably
simple formula satisfied by $\xi(s)$ and $\xi^\ast(s)$.
This was proved by Nakagawa \cite{nakagawa}.
\begin{thm}[Ohno-Nakagawa]\label{thm:ON}
We have
\[
\xi^\ast(s)=A\cdot\xi(s),
\qquad
A=\twtw 0130.
\]
\end{thm}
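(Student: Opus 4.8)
The plan is to reduce the identity to a statement about cubic \emph{fields} and then settle that statement by class field theory. Note first that nothing here can be extracted from Shintani's functional equation (Theorem~\ref{thm:FEShintani}) alone: applying it to both $V$ and $V^\ast$ only constrains the $\Gamma$-factors, so genuinely arithmetic input is required.

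\textbf{Step 1 (separate reducible orbits).} Write $\xi_\pm(s)=\xi^{\mathrm{red}}_\pm(s)+\eta_\pm(s)$, where $\xi^{\mathrm{red}}_\pm$ collects the orbits of $x$ with $x(u,v)$ reducible over $\Q$ (equivalently, by Theorem~\ref{thm:defa}, the cubic rings $R$ with $R\otimes\Q$ not a field), and likewise $\xi^\ast_\pm(s)=\xi^{\ast,\mathrm{red}}_\pm(s)+\eta^\ast_\pm(s)$. The reducible zeta functions unwind into closed expressions built from $\zeta(s)$ and the zeta function of integral binary quadratic forms (orders in quadratic algebras), and one verifies $\xi^{\ast,\mathrm{red}}(s)=A\cdot\xi^{\mathrm{red}}(s)$ by an elementary computation, once one identifies via $\iota$ which reducible cubic rings come from $V^\ast_\Z$. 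This reduces the theorem to $\eta^\ast(s)=A\cdot\eta(s)$.

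\textbf{Step 2 (group by field).} For a cubic field $K$ the orbits with $R\otimes\Q\cong K$ are the suborders of $\co_K$, so
\[
\eta_\pm(s)=\sum_{K\colon\pm\Disc(K)>0}\frac{1}{|\aut(K)|}\cdot\frac{\Phi_K(s)}{|\Disc(K)|^{s}},
\]
where $\Phi_K(s)=\prod_p\Phi_{K,p}(s)$ is a finite Euler product counting suborders of $\co_K\otimes\Z_p$ weighted by index, and depends only on the ramification type of $K$ at $p$. The same grouping for $\eta^\ast$ gives the analogous sum with $\Phi_K$ replaced by some $\Phi^\ast_K$, which differs from $\Phi_K$ only through the factor at $p=3$ and through a global power of $3$ arising from $P(\iota y)=27P^\ast(y)$; at all $p\neq3$ the two sides have identical local factors, because $\iota$ is a $G_{\Z_p}$-isomorphism there. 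The theorem has now become a term-by-term identity between two sums over cubic fields.

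\textbf{Step 3 (reflection---the crux).} By class field theory, non-cyclic cubic fields $K$ with quadratic resolvent $F=\Q(\sqrt{\Disc(K)})$ are in bijection with conjugate pairs $\{\chi,\chi^{-1}\}$ of surjective cubic characters of ray class groups $\mathrm{Cl}_F(\mathfrak f)$, with $\Disc(K)$ determined by $\Disc(F)$ and $N(\mathfrak f)$, while cyclic cubic fields correspond to cubic ray class characters of $\Q$ itself. Rewriting both $\eta(s)$ and $\eta^\ast(s)$ as sums over triples $(F,\mathfrak f,\chi)$ and comparing the terms attached to each conductor $\mathfrak f$, the identity $\eta^\ast_+=\eta_-$, $\eta^\ast_-=3\eta_+$ becomes a comparison of the relevant character counts for $F$ and for the reflected field $\Q(\sqrt{-3\Disc(F)})$. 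Supplying this comparison---a conductor-refined Scholz--Leopoldt reflection theorem---together with the bookkeeping of local conditions at $3$ (where the integral models of $V$ and $V^\ast$ diverge) and at $\infty$ (signatures, which sort the $\pm$ components and, via the extra characters unramified at $3$, produce the overall factor of $3$), is the main obstacle; this is essentially Nakagawa's class field theoretic argument \cite{nakagawa}. Once it is in place, reassembling the Euler products yields $\eta^\ast(s)=A\cdot\eta(s)$, and with Step~1 the theorem follows.
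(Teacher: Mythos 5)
The paper does not actually prove this theorem: immediately after stating it, the text proceeds to deduce Theorem~\ref{thm:FEON} from it, and the statement itself is attributed to Nakagawa \cite{nakagawa} (this is made explicit in the introduction, which says ``these relations were proved by Nakagawa \cite{nakagawa}'' and ``Nakagawa's proof is quite technical; in particular, he used class field theory in a sophisticated manner''). So there is no in-paper argument to compare against; the result enters as a citation.

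Your proposal is not a self-contained proof either, and you say so yourself: the crux of Step~3 is explicitly delegated to Nakagawa's paper. Read as a proof, it has a genuine gap precisely there --- the ``conductor-refined Scholz--Leopoldt reflection'' and the local bookkeeping at $3$ and at $\infty$ are the entire content of the theorem, and you leave them as a black box. Read instead as an annotated citation, it is a fair summary of how Nakagawa's argument is structured, and you make two useful observations the paper does not spell out: (i) Shintani's functional equation alone cannot yield the relation, since applying it to $V$ and $V^\ast$ only constrains $\Gamma$-factors, so genuinely arithmetic input is required; (ii) via $\iota$ the local comparison between $V_\Z$ and $V^\ast_\Z$ is trivial away from $p=3$, so all the delicacy concentrates at $3$ and at the archimedean place, consistent with the sign exchange $\xi^\ast_+ \leftrightarrow \xi_-$ coming from the Scholz reflection $d\mapsto -3d$. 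In short: your account of the strategy is sound, but since both you and the paper ultimately lean on \cite{nakagawa} for the proof itself, what you have written should be understood as context for the citation rather than a proof that could replace it.
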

By plugging this formula into Shintani's functional equation
$\xi(1-s)=M(s)\xi^\ast(s)$ in Theorem \ref{thm:FEShintani},
the functional equation was rewritten
in the following symmetric form:
\begin{thm}[Ohno-Nakagawa]\label{thm:FEON}
We have
\[
\Delta(1-s)\cdot T\cdot \xi(1-s)=\Delta(s)\cdot T\cdot \xi(s),
\]
where $\Delta(s)$ and $T$ are as in Theorem \ref{thm:intro_ON}.
\end{thm}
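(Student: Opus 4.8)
The plan is to deduce Theorem~\ref{thm:FEON} from the two inputs just stated: the Ohno--Nakagawa relation $\xi^\ast(s)=A\cdot\xi(s)$ (Theorem~\ref{thm:ON}) and Shintani's functional equation $\xi(1-s)=M(s)\cdot\xi^\ast(s)$ (Theorem~\ref{thm:FEShintani}). First I would substitute the former into the latter to get $\xi(1-s)=M(s)\cdot A\cdot\xi(s)$, so it suffices to verify the matrix identity
\[
\Delta(1-s)\cdot T\cdot M(s)\cdot A=\Delta(s)\cdot T ,
\]
after which multiplying on the right by $\xi(s)$ produces the claimed symmetric functional equation.

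The key structural observation is that the two rows of $T=\left(\begin{smallmatrix}\sqrt3&1\\\sqrt3&-1\end{smallmatrix}\right)$ are simultaneous left eigenvectors of $A=\left(\begin{smallmatrix}0&1\\3&0\end{smallmatrix}\right)$ and of the trigonometric matrix $B(s):=\left(\begin{smallmatrix}\sin2\pi s&\sin\pi s\\3\sin\pi s&\sin2\pi s\end{smallmatrix}\right)$ appearing in $M(s)$. A one-line check gives $T\cdot A=\diag(\sqrt3,-\sqrt3)\cdot T$ and $T\cdot B(s)=\diag\!\big(\sin2\pi s+\sqrt3\sin\pi s,\;\sin2\pi s-\sqrt3\sin\pi s\big)\cdot T$. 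Writing $M(s)=c(s)\,B(s)$ with $c(s)=\dfrac{3^{3s-2}}{2\pi^{4s}}\Gamma(s)^2\Gamma\!\big(s-\tfrac16\big)\Gamma\!\big(s+\tfrac16\big)$, one then gets $T\cdot M(s)\cdot A=c(s)\cdot\diag\!\big(\sqrt3(\sin2\pi s+\sqrt3\sin\pi s),\;-\sqrt3(\sin2\pi s-\sqrt3\sin\pi s)\big)\cdot T$. Since $\Delta(1-s)$ is diagonal as well, the matrix identity collapses to the two scalar identities
\[
\Delta_+(1-s)\cdot\sqrt3\,c(s)\,\big(\sin2\pi s+\sqrt3\sin\pi s\big)=\Delta_+(s),
\]
\[
\Delta_-(1-s)\cdot(-\sqrt3)\,c(s)\,\big(\sin2\pi s-\sqrt3\sin\pi s\big)=\Delta_-(s).
\]

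To verify each of these I would expand $\Delta_\pm(1-s)$ as follows. Euler's reflection formula $\Gamma(w)\Gamma(1-w)=\pi/\sin\pi w$ turns each of its four gamma factors into the reciprocal of a gamma factor with argument of the form $\tfrac s2+\ast$, at the cost of trigonometric denominators; Legendre's duplication formula in the form $\Gamma(\tfrac s2+a)\Gamma(\tfrac s2+a+\tfrac12)=\sqrt\pi\,2^{\,1-s-2a}\,\Gamma(s+2a)$, used for $a=0,\pm\tfrac1{12}$, recombines the resulting gamma factors into $\Gamma(s)^2\Gamma(s-\tfrac16)\Gamma(s+\tfrac16)$; and the product-to-sum identities $\cos\!\big(\tfrac{\pi s}{2}+\tfrac{\pi}{12}\big)\cos\!\big(\tfrac{\pi s}{2}-\tfrac{\pi}{12}\big)=\tfrac12\big(\tfrac{\sqrt3}{2}+\cos\pi s\big)$ and $\sin\tfrac{\pi s}{2}\cos\tfrac{\pi s}{2}=\tfrac12\sin\pi s$ reduce the surviving trigonometric factors to $\tfrac18\big(\sin2\pi s\pm\sqrt3\sin\pi s\big)$. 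Collecting the powers of $2$, $3$ and $\pi$ then matches both sides exactly. The main (and essentially only) obstacle is this bookkeeping with the shifted gamma arguments and the constants; once organised correctly it is routine, and it must simply be carried out for both the $+$ and $-$ signs.
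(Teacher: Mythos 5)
Your argument is correct and follows exactly the same route as the paper: substitute the Ohno--Nakagawa relation $\xi^\ast(s)=A\xi(s)$ into Shintani's functional equation $\xi(1-s)=M(s)\xi^\ast(s)$ and reduce the statement to the matrix identity $\Delta(1-s)\,T\,M(s)=\Delta(s)\,T\,A^{-1}$, which the paper invokes in one line, attributing the diagonalization of $M(s)$ to Datskovsky and Wright. The only difference is that you sketch the Gamma-function and trigonometric bookkeeping yourself (the eigenvector calculation for $T$, the reflection and duplication formulas, the product-to-sum identities) rather than citing it, and that sketch is sound.
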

Theorem \ref{thm:FEON} follows from Theorem \ref{thm:ON}
simply because $\Delta(1-s)TM(s)=\Delta(s)TA^{-1}$.
We note that this ``diagonalization'' of $M(s)$ is due to
Datskovsky and Wright \cite[Proposition 4.1]{dawra}.
Recently, Ohno, Wakatsuki and the first author \cite{sty, yt}
classified all $\spl_2(\Z)$-invariant lattices $L$ in $V_\Z$,
and showed that the associated zeta function $\xi(s,L,\spl_2(\Z))$
for each $L$ satisfies a functional equation in a self dual form
as in Theorem \ref{thm:FEON}.
In this section we establish a similar formula for the
``$N$-divisible zeta function'' when $N$ is square free.
We also state residue formulas which we will prove in Section \ref{sec:residue}.

In this section we assume that $N$ is a
square free integer.
Let
$f_N\in C(V_N, {\bf 1})$ be
the characteristic function of
$\{a\in V_N\mid P(a)=0\}$.
This definition agrees with Definition \ref{defn:f_p},
and $f_N=\prod_{p\mid N}f_p$.
\begin{defn}
We define the {\em $N$-divisible zeta function}
\[
\xi_N(s):=\xi(s,f_N)
=\sum_{a\in V_N, P(a)=0}\xi(s,a).
\]
\end{defn}
This function counts the $V_\Z$-orbits
whose discriminants are multiples of $N$, 
and we study its analytic properties.

We first describe the residues; these formulas follow from
Proposition \ref{prop:residue_f} and Corollary \ref{cor:f_p}.
\begin{prop}\label{prop:divisibleRF}
We have
\[
\underset{s=1}{\res}\ \xi_N(s)
=\prod_{p\mid N}\left(\frac1p+\frac{1}{p^2}-\frac1{p^3}\right)
\cdot(\alpha+\beta),
\quad
\underset{s=5/6}{\res}\ \xi_N(s)
=\prod_{p\mid N}\left(\frac1p+\frac{1}{p^{4/3}}-\frac1{p^{7/3}}\right)
\cdot\zeta(\frac13)\gamma,
\]
where $\alpha$, $\beta$ and $\gamma$ are as in Definition \ref{defn:albtgm}.
\end{prop}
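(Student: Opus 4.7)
The strategy is to exploit the multiplicative factorization $f_N = \prod_{p \mid N} f_p$, which holds because $N$ is squarefree and $V_N \cong \prod_{p \mid N} V_p$ by the Chinese remainder theorem, and then to combine this with the general residue formula for $\xi(s,f)$ promised in the forward-referenced Proposition \ref{prop:residue_f}. In short, we reduce a single global residue computation to a product of purely local ones, one for each prime dividing $N$, together with the residue of the untwisted Shintani zeta function.

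First, I would invoke Proposition \ref{prop:residue_f} applied to $f = f_N \in C(V_N, \mathbf{1})$. The residues of $\xi(s,f)$ at $s=1$ and $s=5/6$ are given (as in the adelic framework of Datskovsky--Wright) by certain averages of $f$ weighted by local orbital masses. The critical point to extract from Proposition \ref{prop:residue_f} is that these averages respect the tensor-product structure $f_N = \prod_{p \mid N} f_p$: because $V_N$ factors as a product of $V_p$ and because no two primes interact (thanks to squarefreeness), the residue splits as a product of local factors indexed by $p \mid N$ times the global residue of $\xi(s)$ itself, which contributes $\alpha + \beta$ at $s=1$ and $\zeta(1/3)\gamma$ at $s=5/6$ by the classical Shintani formulas (the $+$ and $-$ components combining via the weights in Definition \ref{defn:albtgm}).

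Second, for each prime $p \mid N$ I would quote the local computation from Corollary \ref{cor:f_p}. At $s=1$, the local factor attached to $f_p$ is literally the density of the singular locus in $V_p$: by Lemma \ref{lem:n_p(sigma)},
\[
\frac{n_p(0) + n_p(1^3) + n_p(1^21)}{p^4} = \frac{1 + (p^2-1) + (p^3-p)}{p^4} = \frac{1}{p} + \frac{1}{p^2} - \frac{1}{p^3},
\]
which is exactly the advertised factor. At $s=5/6$ the local factor is a weighted average over the three singular strata $V_p(0)$, $V_p(1^3)$, $V_p(1^21)$, where the weights reflect how each stratum enters the boundary integral producing the secondary pole; these weights assemble into $1/p + 1/p^{4/3} - 1/p^{7/3}$.

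The main obstacle is concentrated entirely in the proof of Corollary \ref{cor:f_p} at $s=5/6$: the residue at this secondary pole arises from a parabolic boundary contribution in the Datskovsky--Wright adelic integral, and its local evaluation requires tracking how the three singular $G_p$-orbit types contribute with different modular weights (the $p^{-4/3}$ and $p^{-7/3}$ exponents come from the characters controlling scaling on the line of reducible forms). Granting Proposition \ref{prop:residue_f} and Corollary \ref{cor:f_p}, the proof of Proposition \ref{prop:divisibleRF} itself is essentially an assembly: multiplicativity gives the product over $p \mid N$, and the remaining global factor is just the residue of $\xi(s)$ at the corresponding pole.
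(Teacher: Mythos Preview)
Your proposal is correct and follows exactly the paper's approach: the paper's proof is the one-line remark that the formulas follow from Proposition \ref{prop:residue_f} and Corollary \ref{cor:f_p}, together with the multiplicativity of $\esA_N$, $\esB_N$, $\esC_N$ in \eqref{eq:residue_Eulerp}--\eqref{eq:residue_f}. One small sharpening: at $s=1$ the general formula gives $\esA_N(f_N)\alpha + \esB_N(f_N)\beta$ rather than a single constant times $(\alpha+\beta)$, and the reason it collapses is that Corollary \ref{cor:f_p} gives $\esA_p(f_p) = \esB_p(f_p)$ for each $p$ --- you computed this common value as the density of the singular locus, which is exactly right, but it is worth noting explicitly that the equality $\esA_p(f_p)=\esB_p(f_p)$ is what makes the factorization through $\alpha+\beta$ work.
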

We next prove the functional equation.
For a square free integer $m$, we put
\[
\xi^\ast_m(s):=\sum_{b\in V^\ast_m, P^\ast(b)=0}\xi^\ast(s,b).
\]
\begin{prop}\label{prop:divisibleFE}
If $N$ is square free,
we have
\[
\xi_N(1-s)=N^{4s-3}M(s)
\sum_{m_1m_2m_3=N}\mu(m_1)m_2m_3^{2-4s}\xi^\ast_{m_2}(s).
\]
\end{prop}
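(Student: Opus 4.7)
The plan is to apply F.~Sato's functional equation (Theorem~\ref{thm:FESato}) to $f_N\in C(V_N,\mathbf{1})$, giving
\[
\xi_N(1-s)=\xi(1-s,f_N)=N^{4s}M(s)\cdot\xi^\ast(s,\widehat{f_N}),
\]
and then to unpack $\widehat{f_N}$ and recognize $\xi^\ast(s,\widehat{f_N})$ as a combination of the $\xi^\ast_m(s)$'s. Since $N$ is squarefree, $f_N=\prod_{p\mid N}f_p$ under the CRT decomposition $V_N\cong\prod_{p\mid N}V_p$; moreover $f_p\in C(V_p,\mathbf{1})$ is invariant under scalar multiplication by units (scalar matrices act with square determinant), so the twists $f_{p,N/p}$ appearing in Lemma~\ref{lem:productf} coincide with $f_p$ itself. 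Iterating Lemma~\ref{lem:productf} therefore yields $\widehat{f_N}(b)=\prod_{p\mid N}\widehat{f_p}(b_p)$.

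Next, Proposition~\ref{prop:hat(fp)} can be repackaged as
\[
\widehat{f_p}=-p^{-3}\cdot\mathbf{1}_{V^\ast_p}+p^{-2}\cdot f_p^\ast+p^{-1}\cdot\delta_0,
\]
where $f_p^\ast$ is the characteristic function of $\{b\in V^\ast_p:P^\ast(b)=0\}$ and $\delta_0$ that of $\{0\}$. Distributing the product $\prod_{p\mid N}\widehat{f_p}$ across these three summands indexes the result by coprime factorizations $N=N_1N_2N_3$: at each $p\mid N$ we select which of the three terms contributes (the first to $N_1$, and so on). Collecting scalars gives
\[
\widehat{f_N}(b)=\sum_{N_1N_2N_3=N}\mu(N_1)\,N_1^{-3}N_2^{-2}N_3^{-1}\,\Psi_{N_2,N_3}(b),
\]
where $\Psi_{N_2,N_3}(b)=1$ exactly when $b_p=0$ for all $p\mid N_3$ and $P^\ast(b_p)=0$ for all $p\mid N_2$.

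The remaining analytic task, for each triple $(N_1,N_2,N_3)$, is to evaluate $S:=\sum_{b}\xi^\ast(s,b)$ over $b\in V^\ast_N$ supporting $\Psi_{N_2,N_3}$, and to identify it with $N_3^{-4s}\xi^\ast_{N_2}(s)$. The condition $b_p=0$ for $p\mid N_3$ forces $b=N_3c$ for a unique $c\in V^\ast_{N_1N_2}$; substituting $y=N_3y'$ in the defining lattice sum for $\xi^\ast(s,b,\Gamma(N))$, using $P^\ast(N_3y')=N_3^4P^\ast(y')$ and the identity $\xi^\ast(s,X,\Gamma(N))=\xi^\ast(s,X,\Gamma(N_1N_2))$ valid for any $\Gamma(N_1N_2)$-invariant $X$ (a routine orbit-stabilizer calculation: the extra index $[\Gamma(N_1N_2):\Gamma(N)]$ cancels against a corresponding factor in the orbit count), I extract a factor $N_3^{-4s}$ and reduce to a sum over $c\in V^\ast_{N_1N_2}$ with $P^\ast(\bar c_{N_2})=0$. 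For each such fixed $b'\in V^\ast_{N_2}$, the lifts $c\in V^\ast_{N_1N_2}$ of $b'$ partition $b'+N_2V^\ast_\Z$ disjointly, so
\[
\sum_{c\equiv b'\bmod N_2}\xi^\ast(s,c)=\xi^\ast(s,b'+N_2V^\ast_\Z,\Gamma(N_1N_2))=\xi^\ast(s,b')
\]
by the same index-shifting principle; the $P^\ast(b')=0$ constraint then collapses the $c$-sum to $\xi^\ast_{N_2}(s)$, yielding $S=N_3^{-4s}\xi^\ast_{N_2}(s)$. Plugging back in and simplifying $N^{4s}\cdot N_1^{-3}N_2^{-2}N_3^{-1-4s}=N^{4s-3}\cdot N_2\,N_3^{2-4s}$ produces the stated identity. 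The main obstacle is cleanly tracking the partial zeta functions at the three moduli $N$, $N_1N_2$, and $N_2$; once the general principle $\xi^\ast(s,X,\Gamma)=\xi^\ast(s,X,\Gamma')$ for $\Gamma'\subset\Gamma$ finite index (with $X$ both-invariant) is isolated, the rest is bookkeeping.
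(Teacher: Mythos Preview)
Your proof is correct and follows essentially the same approach as the paper's: apply Theorem~\ref{thm:FESato}, decompose $\widehat{f_p}$ as $-p^{-3}\cdot\mathbf{1}+p^{-2}\cdot f_p^\ast+p^{-1}\cdot\delta_0$ via Proposition~\ref{prop:hat(fp)}, expand the product over $p\mid N$ into a sum over ordered factorizations $N=N_1N_2N_3$, and identify each inner sum with $N_3^{-4s}\xi^\ast_{N_2}(s)$. The paper does the last step in one line, writing the condition as ``$m_2\mid P^\ast(b)$, $b\in m_3V^\ast_N$'' and asserting the reduction directly; you spell it out via the substitution $y=N_3y'$ and the index-shifting principle $\xi^\ast(s,X,\Gamma')=\xi^\ast(s,X,\Gamma)$ for $\Gamma'\subset\Gamma$ of finite index, which is a welcome clarification but not a different idea.
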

\begin{proof}
By Theorem \ref{thm:FESato} and Proposition \ref{prop:hat(fp)}, we have
\[
\xi_N(1-s)=N^{4s}M(s)\sum_{b\in V^\ast_N}\widehat f_N(b)\xi^\ast(s,b)
\]
where $\widehat f_N(b)$ is multiplicative in $N$,
and for a prime $p$ is equal to
$p^{-1}+p^{-2}-p^{-3}$ if $b=0$,
$p^{-2}-p^{-3}$ if $b\neq0$ but $P^\ast(b)=0$,
and $-p^{-3}$ otherwise. We rewrite $\widehat f_p(b)$ as
\[
\widehat f_p(b)=-p^{-3}+g_{p,2}(b)p^{-2}+g_{p,3}(b)p^{-1},
\]
where $g_{p,3}(b)$ is the characteristic function of $b=0$,
and $g_{p,2}(b)$ is the characteristic function of those
$b$ with $P^\ast(b)=0$.
Then we have
\begin{align*}
\xi_N(1-s)
&=N^{4s}M(s)\sum_{b\in V^\ast_N}\prod_{p\mid N}(-p^{-3}+g_{p,2}(b)p^{-2}+g_{p,3}(b)p^{-1})
\xi^\ast(s,b)\\
&=N^{4s}M(s)\sum_{m_1m_2m_3=N}\mu(m_1)m_1^{-3}m_2^{-2}m_3^{-1}
\biggl(\sum_{\substack{b\in V^\ast_N\\ m_2\mid P^\ast(b),\ b\in m_3V^\ast_N}}
\xi^\ast(s,b)\biggr)\\
&=N^{4s-3}M(s)\sum_{m_1m_2m_3=N}\mu(m_1)m_2m_3^{2}	
\biggl(m_3^{-4s}\sum_{\substack{b\in V^\ast_N\\ m_2\mid P^\ast(b)}}
\xi^\ast(s,b)\biggr),
\end{align*}
as desired.
\end{proof}

Let $\varphi$ and $\mu$ be the Euler function and M\"obius function,
respectively.
Similarly to \cite{yt}, we find a functional equation
in self dual form for certain linear combinations of zeta
functions.
\begin{thm}\label{thm:FE_thetaN}
For a square free integer $N$, let
\[
\theta_N(s):=\sum_{m\mid N}\mu(m)m\xi_m(s).
\]
Then
\[
\underset{s=1}{\res}\ \theta_N(s)
=\mu(N)\frac{\varphi(N)}{N^2}
\cdot(\alpha+\beta),\quad
\underset{s=5/6}{\res}\ \theta_N(s)
=\mu(N)\frac{\varphi(N)\zeta(1/3)}{N^{4/3}}
\cdot\gamma,
\]
and
\[
N^{2(1-s)}\Delta(1-s)\cdot T\cdot \theta_N(1-s)
=N^{2s}\Delta(s)\cdot T\cdot \theta_N(s).
\]
\end{thm}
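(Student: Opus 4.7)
The plan is to realize $\theta_N(s)$ as $\xi(s,F_N)$ for an explicit function $F_N\in C(V_N,\mathbf{1})$, compute its Fourier transform using Mori's formulas, apply F.~Sato's functional equation (Theorem~\ref{thm:FESato}), and finally invoke Ohno-Nakagawa (Theorem~\ref{thm:ON}).

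Define $F_p:=\mathbf{1}_{V_p}-pf_p\in C(V_p,\mathbf{1})$ for each prime $p$, and for square free $N$ set $F_N:=\prod_{p\mid N}F_p\in C(V_N,\mathbf{1})$. Expanding the product gives $F_N=\sum_{n\mid N}\mu(n)nf_n$, so $\xi(s,F_N)=\theta_N(s)$. Let $F_N^\ast\in C(V^\ast_N,\mathbf{1})$ denote the analogous product on the dual side, and put $\theta_N^\ast(s):=\xi^\ast(s,F_N^\ast)=\sum_{n\mid N}\mu(n)n\xi^\ast_n(s)$. A short direct calculation from Proposition~\ref{prop:hat(fp)} shows that $\widehat{F_p}$ takes only two values: $(1-p)/p^2$ when $P^\ast(b)=0$ and $p^{-2}$ otherwise; hence $\widehat{F_p}=p^{-2}F_p^\ast$. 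Since $F_p$ is $G_p$-invariant and $N/p$ is a unit modulo $p$, iterated application of Lemma~\ref{lem:productf} yields $\widehat{F_N}=N^{-2}F_N^\ast$, and Theorem~\ref{thm:FESato} then produces the key intermediate identity
\[
\theta_N(1-s)=N^{4s-2}M(s)\theta_N^\ast(s).
\]

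The residue formulas follow by M\"obius inversion from Proposition~\ref{prop:divisibleRF}: for square free $m$ the factor of $m$ in $\mu(m)m\xi_m(s)$ cancels the $\prod_{p\mid m}p^{-1}$ appearing in $\res_{s=1}\xi_m(s)$, leaving $(\alpha+\beta)\mu(m)\prod_{p\mid m}(1+1/p-1/p^2)$. Summing over $m\mid N$ telescopes to $(\alpha+\beta)\prod_{p\mid N}(1-p)/p^2=\mu(N)\varphi(N)(\alpha+\beta)/N^2$, and an analogous computation handles the residue at $s=5/6$.

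For the self-dual functional equation, it remains to establish the identity $\theta_N^\ast(s)=A\theta_N(s)$. Once this is in hand, substituting into the key intermediate identity and using $\Delta(1-s)TM(s)=\Delta(s)TA^{-1}$ (which is Shintani's equation combined with Theorem~\ref{thm:ON}) gives $\Delta(1-s)T\theta_N(1-s)=N^{4s-2}\Delta(s)T\theta_N(s)$, and multiplying by $N^{2(1-s)}$ yields the desired formula. The main obstacle is thus this ``generalized Ohno-Nakagawa'' identity: the case $N=1$ is exactly Theorem~\ref{thm:ON}, and for general square free $N$ my approach would be to establish the per-prime refinement $\xi^\ast_n(s)=A\xi_n(s)$ for each square free $n\mid N$, by exploiting the fact that the discriminant-preserving bijection underlying Nakagawa's class field theoretic proof respects the divisibility condition $n\mid P$.
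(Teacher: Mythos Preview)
Your argument is correct and uses the same ingredients as the paper (Mori's formula for $\widehat{f_p}$, Sato's functional equation, and Ohno--Nakagawa). Your packaging via $F_N=\prod_{p\mid N}(\mathbf{1}_{V_p}-pf_p)$ and the direct computation $\widehat{F_N}=N^{-2}F_N^\ast$ is in fact a bit slicker than the paper's route: the paper first proves the three-term expansion of $\xi_m(1-s)$ (Proposition~\ref{prop:divisibleFE}) and then reaches the same intermediate identity $\theta_N(1-s)=N^{4s-2}M(s)\theta_N^\ast(s)$ via an explicit M\"obius inversion over decompositions $m_1m_2m_3m_4=N$, which your two-valued $\widehat{F_p}$ short-circuits. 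The residue computations are identical.

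Where you overcomplicate things is the last step. You call the identity $\xi^\ast_n(s)=A\xi_n(s)$ the ``main obstacle'' and propose to reopen Nakagawa's class-field-theoretic proof to check that his bijection respects the divisibility condition. This is unnecessary. The Ohno--Nakagawa relations $\xi^\ast_+(s)=\xi_-(s)$ and $\xi^\ast_-(s)=3\xi_+(s)$ are equalities of Dirichlet series and hence hold coefficient by coefficient: if $\xi_\pm(s)=\sum_k h_\pm(k)k^{-s}$ and $\xi^\ast_\pm(s)=\sum_k h^\ast_\pm(k)k^{-s}$, then $h^\ast_+(k)=h_-(k)$ and $h^\ast_-(k)=3h_+(k)$ for every $k\geq 1$. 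Since $\xi_{n,\pm}$ and $\xi^\ast_{n,\pm}$ are obtained from $\xi_\pm$ and $\xi^\ast_\pm$ simply by restricting the sum to those $k$ divisible by $n$, the identity $\xi^\ast_n(s)=A\xi_n(s)$ is immediate. The paper dispatches it in one line.
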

\begin{proof}
By Proposition \ref{prop:divisibleRF},
\[
\underset{s=1}{\res}\ m\xi_m(s)
=\prod_{p\mid m}\left(1+\frac{1-p^{-1}}p\right)
\cdot(\alpha+\beta),\quad
\underset{s=5/6}{\res}\ m\xi_m(s)
=\prod_{p\mid m}\left(1+\frac{1-p^{-1}}{p^{1/3}}\right)
\cdot\zeta(1/3)\gamma.
\]
Since
\[
\prod_{p\mid N}(1-a_p)=\sum_{m\mid N}\mu(m)\prod_{p\mid m}a_p
\]
for any $a_p$ and $\prod_{p\mid N}(1-p^{-1})=\varphi(N)/N$,
the residue formulas follow. 
We consider the functional equation.
By Proposition \ref{prop:divisibleFE},
\begin{align*}
M(s)^{-1}\cdot\theta_N(1-s)
&=M(s)^{-1}\sum_{m|N}\mu(m)m\xi_m(1-s)\\
&=\sum_{m|N}\mu(m)m^{4s-2}
\sum_{m_1m_2m_3=m}\mu(m_1)m_2m_3^{2-4s}\xi^\ast_{m_2}(s)\\
&=\sum_{m_1m_2m_3m_4=N}\mu(m_2m_3)m_2(m_1m_2)^{4s-2}\xi^\ast_{m_2}(s)\\
&=\sum_{m_1m_2\mid N}\mu(m_2)(m_1m_2)^{4s-2}m_2\xi^\ast_{m_2}(s)
\sum_{m_3m_4=\frac{N}{m_1m_2}}\mu(m_3).
\end{align*}
By the M\"obius inversion formula,
$\sum_{m_3m_4=\frac{N}{m_1m_2}}\mu(m_3)$ is $1$ if $N=m_1m_2$
and $0$ otherwise.
Hence
\[
M(s)^{-1}\cdot\theta_N(1-s)
=N^{4s-2}\sum_{m\mid N}\mu(m)m\xi^\ast_m(s).
\]
By the Ohno-Nakagawa formula,
we have $\xi^\ast_m(s)=A\cdot\xi_m(s)$ for any $m$.
Hence
\[
M(s)^{-1}\cdot\theta_N(1-s)=N^{4s-2}A\cdot\theta_N(s).
\]
Since $\Delta(1-s)TM(s)=\Delta(s)TA^{-1}$, we have the formula.
\end{proof}

\section{Computation of residues}\label{sec:residue}
In this section we compute the residues of our orbital $L$-functions
and related zeta functions. We start by showing that
for a suitable choice $\Phi_a$
of test function, the adelic Shintani zeta function studied
by Wright \cite{wright} gives an integral expression for $\xi(s,\chi,a)$.
Hence its residues are described in terms of certain integrals
which have Euler products. The local analysis is
carried out in later subsections.
We note that a number of results of this section are already
obtained in the extensive work of Datskovsky-Wright \cite{dawra},
and we follow their approach to give
refinements of their results.

Recall that $\xi(s,\chi,a)$ is a vector consisting of
two Dirichlet series.
When we talk about the analytic properties
of $\xi(s,\chi,a)$, we mean so entrywise.
The locations of the poles coincide for the two series,
so we hope our meaning is clear.
In particular, for $z_0\in\C$, we denote by
$\res_{z=z_0}\xi(s,\chi,a)$
the column vector of residues of these Dirichlet series at $z=z_0$.

We fix some notation for adelic analysis.
Let $\R_+^\times=\{t\in\R\mid t>0\}$ and
$\C_1^\times=\{z\in\C^\times\mid|z|=1\}$.
For $t\in\Q_p^\times$, let $\ord_p(t)$ be the unique integer $m$
satisfying $t\in p^m\Z_p^\times$.
Let $|\cdot|_p\colon\Q_p^\times\rightarrow\R^\times_+$
be the normalized absolute value, hence $|t|_p=p^{-\ord_p(t)}$.
We normalize the Haar measure $du$ (resp. $\md t$) on $\Q_p$
(resp. $\Q_p^\times$)
so that the volume of $\Z_p$ is $1$ (resp. of $\Z_p^\times$ is $1$).
We put $\widehat\Z:=\prod_{\text{$p$:finite}}\Z_p$, so that
$\widehat\Z^\times=\prod_{\text{$p$:finite}}\Z_p^\times$.
As usual, let
$\A_\rf:=\widehat\Z\otimes\Q$ and $\A=\A_\rf\times\R$.
Let $\sS(V_\R), \sS(V_{\Q_p}), \sS(V_{\A_{\rf}}), \sS(V_\A)$
be the space of Schwarz-Bruhat functions on each of the
indicated domains.
For a Dirichlet character $\chi$, let $\delta(\chi)$ be $1$ if
$\chi$ is trivial and $0$ otherwise.

For the rest of this section
we fix a (primitive) Dirichlet $\chi$ of conductor $m$.
We introduce notation related to $\chi$.
The usual $L$-function and the local $L$-factors for $\chi$
are defined by
\[
L(s,\chi):=\prod_{p}L_p(s,\chi),
\qquad
L_p(s,\chi):=
\begin{cases}
1 & p\mid m,\\
\left(1-\chi(p)/p^s\right)^{-1} & p\nmid m.
\end{cases}
\]
Recall the direct product decomposition
$\A^\times=\R_+^\times\cdot\widehat\Z^\times\cdot\Q^\times$.
We lift $\chi$ to an idele class character
$\tilde\chi\colon\A^\times/\Q^\times\rightarrow\C_1^\times$
by using the compositum
\[
\tilde\chi\colon\A^\times
=\R_+^\times\cdot\widehat\Z^\times\cdot\Q^\times
\twoheadrightarrow\widehat\Z^\times
\twoheadrightarrow(\widehat\Z/m\widehat\Z)^\times
\cong (\Z/m\Z)^\times\overset{\chi}{\rightarrow}\C_1^\times.
\]
Let $\tilde\chi_p$ be the local character on $\Q_p^\times$
induced from $\tilde\chi$:
\[
\tilde\chi_p:=
\tilde\chi|_{\Q_p^\times}\colon \Q_p^\times\rightarrow\C_1^\times.
\]

Let $m=\prod p^{c_p}$.
Then corresponding to the decomposition
$(\Z/m\Z)^\times=\prod(\Z/p^{c_p}\Z)^\times$,
$\chi$ has a unique decomposition
$\chi=\prod\chi_p$,
where $\chi_p$ is a primitive character on
$(\Z/p^{c_p}\Z)^\times$.
(If $p\nmid m$, this means $c_p=0$ and $\chi_p$ is
the trivial character.)
We put
$\chi_p':=\prod_{p'\neq p}\chi_{p'}$.
This is a primitive character of conductor $m/p^{c_p}$
which is coprime to $p$.
Hence $\chi_p'(p)$ makes sense.

Let us describe $\tilde\chi_p$ explicitly
in terms of $\chi$.
Since $\Q_p^\times=\Z_p^\times\times p^\Z$, it is enough
to describe $\tilde\chi_p|_{\Z_p^\times}$ and $\tilde\chi_p(p)$,
and by definition these are given as follows.
\begin{lem}\label{lem:character}
The character $\tilde\chi_p$ restricted to $\Z_p^\times$ agrees with
the pullback of $\chi_p$ via the canonical surjection
$\Z_p^\times\rightarrow(\Z_p/p^{c_p}\Z_p)^\times
\cong(\Z/p^{c_p}\Z)^\times$. Also we have 
$\tilde\chi_p(p)=\chi_p'(p)^{-1}$.
\end{lem}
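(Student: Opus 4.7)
The plan is to unpack both assertions directly from the definition of $\tilde\chi$ by writing down an explicit decomposition of the relevant ideles in $\A^\times = \R_+^\times \cdot \widehat\Z^\times \cdot \Q^\times$, and then tracking what happens under the reductions $\widehat\Z^\times \twoheadrightarrow (\widehat\Z/m\widehat\Z)^\times \cong \prod_{\ell \mid m}(\Z/\ell^{c_\ell}\Z)^\times$ followed by $\chi = \prod_\ell \chi_\ell$.

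For the first assertion, I would take $u \in \Z_p^\times$ and embed it in $\A^\times$ as the idele $\alpha$ with $\alpha_p = u$ and $\alpha_v = 1$ for all other places $v$. Since $\alpha \in \widehat\Z^\times$ already, the $\R_+^\times$- and $\Q^\times$-components of its decomposition are trivial. Under the chain of maps defining $\tilde\chi$, $\alpha$ is sent to the tuple $(u \bmod p^{c_p}, 1, \ldots, 1) \in \prod_{\ell \mid m} (\Z/\ell^{c_\ell}\Z)^\times$, whose image under $\chi$ equals $\chi_p(u \bmod p^{c_p})$, exactly the claimed pullback.

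For the second assertion, the key step is to decompose the idele $\beta$ with $\beta_p = p$ and $\beta_v = 1$ elsewhere. Since $\beta$ itself is not in $\R_+^\times \cdot \widehat\Z^\times$, I must absorb a factor of $p$ into the $\Q^\times$-piece. Setting $q = p \in \Q^\times$ embedded diagonally, the quotient $\beta/q$ has component $1$ at the prime $p$, component $1/p \in \Z_\ell^\times$ at every finite prime $\ell \neq p$, and component $1/p \in \R_+^\times$ at infinity. Hence $\beta = r \cdot u \cdot q$ with $r = 1/p$, $q = p$, and $u \in \widehat\Z^\times$ having $u_p = 1$ and $u_\ell = 1/p$ for $\ell \neq p$. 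Since $\tilde\chi$ is trivial on the $\R_+^\times$- and $\Q^\times$-factors, we get
\[
\tilde\chi_p(p) = \chi(u \bmod m) = \chi_p(1) \prod_{\substack{\ell \mid m \\ \ell \neq p}} \chi_\ell(p^{-1} \bmod \ell^{c_\ell}) = \chi_p'(p)^{-1}.
\]

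There is no deep obstacle here: the whole argument is a careful bookkeeping exercise on the strong approximation decomposition of $\A^\times$. The only point that deserves emphasis is that, in order to evaluate $\tilde\chi_p$ at the uniformizer $p$, one is forced to push $p$ into the global $\Q^\times$-factor, which in turn injects inverse powers of $p$ into the components at all other primes dividing $m$. This is precisely the source of the inverse exponent and of the character $\chi_p'$ (rather than $\chi_p$) in the formula.
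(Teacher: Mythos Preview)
Your argument is correct and is exactly the unpacking the paper intends: the paper itself offers no detailed proof beyond the remark that these values follow ``by definition,'' and you have supplied precisely that definition-chasing via the decomposition $\A^\times = \R_+^\times \cdot \widehat\Z^\times \cdot \Q^\times$.
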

If $p\nmid m$, then $\chi_p'=\chi_p$ and so 
$\tilde\chi_p(p)=
\chi(p)^{-1}$.

\subsection{An integral expression for orbital $L$-functions}
We now return to the analysis of zeta functions.
In this subsection we fix a positive integer $N$
which is a multiple of $m$, and we also fix $a\in V_N$.
Let $V'_\Q:=\{x\in V_\Q\mid P(x)\neq0\}$.
For $\Phi\in\sS(V_\A)$,
Wright \cite{wright} introduced the global zeta function
\[
Z(\Phi,s,\chi)
:=\int_{G_\A/G_\Q}|\det g|_\A^{2s}\tilde\chi(\det g)
\sum_{x\in V'_\Q}\Phi(gx)\, dg.
\]
Here $dg$ is a Haar measure on $G_\A$ normalized as in
\cite[p.514]{wright}.

Let $\Phi_a=\Phi_\infty\times\Phi_{\rf,a}$
where $\Phi_\infty\in\sS(V_\R)$ is arbitrary and
$\Phi_{\rf,a}\in\sS(V_{\A_\rf})$ is the characteristic
function of $\tilde a+NV_{\widehat\Z}\subset V_{\widehat\Z}$,
where $\tilde a$ is a lift of $a$ under the surjection
$V_{\widehat\Z}\twoheadrightarrow V_{\widehat\Z/N\widehat\Z}\cong V_N$.
Let $G_\R^+:=\{g\in G_\R=\gl_2(\R)\mid \det g>0\}$ with Haar measure
$dg_\infty$ normalized as in \cite{wright}, and
let $\Gamma_\infty(\Phi_\infty,s)$ be the local zeta function 
defined in \eqref{eqn:vv_zeta}.

\begin{prop}\label{prop:unfold}
We have
\[
Z(\Phi_a,s,\chi)=|G_N|^{-1}
\Gamma_\infty(\Phi_\infty,s)\xi(s,\chi^{-1},a).
\]
If $\chi={\bf 1}$, $\xi(s,\chi,a)$ is holomorphic
except for simple poles
at $s=1$ and $5/6$. If $\chi\neq{\bf 1}, \chi^3={\bf 1}$,
then $\xi(s,\chi,a)$ is holomorphic except for a simple pole at $s=5/6$.
Otherwise it is entire.
\end{prop}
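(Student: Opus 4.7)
My plan is to transform the adelic integral $Z(\Phi_a,s,\chi)$ into the real integral appearing in the standard unfolding sketched for Theorem~\ref{thm:FESato}, matched against the function $f_{\chi^{-1},a}$ from Proposition~\ref{prop:f_inv_L_ogs}; the analytic properties will then follow by combining the resulting identity with Wright's analytic theory of $Z(\Phi,s,\chi)$.

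For the identity, I would first invoke the decomposition $G_\A = G_\R \cdot G_{\widehat\Z} \cdot G_\Q$ with diagonal intersection $G_\Z$, a consequence of strong approximation for $\spl_2$ together with $\Z$ being a principal ideal domain. This identifies $G_\A/G_\Q$ with $(G_\R \times G_{\widehat\Z})/G_\Z$; on this domain $|\det g|_\A = |\det g_\infty|$, since $\det g_\rf \in \widehat\Z^\times$. Because $\Phi_{\rf,a}$ is the indicator of $\tilde a + N V_{\widehat\Z}$ and $g_\rf \in G_{\widehat\Z}$ preserves $V_{\widehat\Z}$, the condition $\Phi_{\rf,a}(g_\rf x)=1$ for $x \in V_\Q$ forces $x \in V_\Z$ with $\bar x \equiv \bar g_\rf^{-1} a$ in $V_N$, where $\bar g_\rf$ denotes the reduction of $g_\rf$ modulo $N$.

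Next, I would decompose the integration over $G_{\widehat\Z}$ into the $|G_N|$ cosets of the principal congruence subgroup $K_N = \ker(G_{\widehat\Z} \twoheadrightarrow G_N)$, each of relative measure $|G_N|^{-1}$. Using Lemma~\ref{lem:character} to identify $\tilde\chi(\det g_\rf) = \chi(\det \bar g_\rf)$, the finite integration collapses into a character sum over $\bar g \in G_N$. A direct rearrangement over the stabilizer $G_{N,a}$ shows that, for each $\bar x \in V_N$, the coefficient of $\Phi_\infty(g_\infty x)$ becomes precisely $|G_N|^{-1} f_{\chi^{-1},a}(\bar x)$: it equals $|G_{N,a}|\chi(\det \bar g_0)$ when $\bar x = \bar g_0^{-1} a$ and vanishes unless $\chi\circ\det$ is trivial on $G_{N,a}$, matching the definition in Proposition~\ref{prop:f_inv_L_ogs}. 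The outer real integral over $G_\R^+/\Gamma(N)$ is then exactly the one evaluated in the sketch of Theorem~\ref{thm:FESato}, producing $|G_N|^{-1}\Gamma_\infty(\Phi_\infty,s)\,\xi(s,\chi^{-1},a)$; the sign contributions from $\tilde\chi_\infty$ and the negative-determinant components of $G_\Z$ are absorbed in the standard way.

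For the analytic properties, applying the identity with $\chi$ replaced by $\chi^{-1}$ gives $\Gamma_\infty(\Phi_\infty, s)\,\xi(s,\chi,a) = |G_N|\,Z(\Phi_a, s, \chi^{-1})$. Wright's main theorem \cite{wright} asserts that $Z(\Phi, s, \chi^{-1})$ extends meromorphically to $\C$ with at most simple poles at $s=1$ and $s=5/6$, the former occurring only when $\chi^{-1} = {\bf 1}$ and the latter only when $(\chi^{-1})^3 = {\bf 1}$. Choosing $\Phi_\infty$ so that $\Gamma_\infty(\Phi_\infty, s)$ is holomorphic and non-vanishing near these points (possible by \cite[proof of Theorem~5]{shintanib}) transfers these properties to $\xi(s,\chi,a)$. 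The main subtlety will be Wright's non-vanishing analysis of the residue at $s=5/6$: that it is present precisely for cubic characters rather than merely quadratic ones is a feature specific to binary cubic forms, originally due to Wright and further developed in \cite{dawra}, and I take this as input.
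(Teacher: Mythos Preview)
Your approach is correct and follows essentially the same strategy as the paper: both use strong approximation for $\spl_2$ to pass from the adelic integral to a real integral against a finite character sum, then unfold. The organizational difference is that the paper decomposes $G_\A=\bigsqcup_{t\in T_N}G_\R^+\mathcal K_N\tilde t\,G_\Q$ and computes each piece as $\xi(s,t^{-1}a)$, summing over $T_N$ via \eqref{eq:sum_TN}; you instead integrate over all of $G_{\widehat\Z}$ at once, so that the sum over $G_N$ assembles directly into $f_{\chi^{-1},a}$ and you can invoke $\xi(s,f_{\chi^{-1},a})=\xi(s,\chi^{-1},a)$. Both routes are equivalent, yours being slightly more conceptual in that it makes the appearance of $f_{\chi^{-1},a}$ transparent.

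One small correction: after integrating out $G_{\widehat\Z}$, the remaining real quotient is $G_\R/G_\Z\cong G_\R^+/\spl_2(\Z)$, not $G_\R^+/\Gamma(N)$ as you wrote. The point is that you used the full compact $G_{\widehat\Z}$ (whose intersection with $G_\Q$ inside $G_\R G_{\widehat\Z}$ is $G_\Z$), whereas the paper uses $\mathcal K_N$ (whose intersection is $\Gamma(N)$). This is harmless: since $f_{\chi^{-1},a}\in C(V_N,\chi^{-1})$ is $\spl_2(\Z)$-invariant, you should unfold over $G_\R^+/\spl_2(\Z)$ and appeal to Proposition~\ref{prop:f_inv_splZ} rather than to the $\Gamma(N)$-formulation in the sketch of Theorem~\ref{thm:FESato}. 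With that adjustment the argument goes through; the analytic input from \cite{wright} is used identically in both proofs.
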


\begin{proof}
Let $\mathcal K_N:=\ker(G_{\widehat\Z}\rightarrow G_{\Z/N\Z})$.
This is an open subgroup of $G_{\A_\rf}$.
Note that
$G_\R^+\mathcal K_N\cap G_\Q=\Gamma(N)$.
For $t\in T_N\subset G_N=G_{\Z/N\Z}$, we denote by
$\tilde t\in G_{\widehat\Z}$ its (arbitrary) lift.
Then $\mathcal K_N\tilde t$ does not depend on the choice of $\tilde t$.
By strong approximation for $\spl_2$, it is known
(see e.g. \cite{gelbart}) that $G_\A=G_\R^+\mathcal KG_\Q$
for any subgroup $\mathcal K\subset G_{\widehat\Z}$ such that
the determinant map is surjective onto $\widehat\Z^\times$.
One checks that the union $\cup_{t\in T_N}\mathcal K_N\tilde t$
is such a $\mathcal K$,
and it is not difficult to check that the union
$G_\A=\cup_{t\in T_N}G_\R^+\mathcal K_N\tilde t G_\Q$
is disjoint.

Corresponding to this decomposition, for each $t\in T_N$ we put
\[
Z_t(\Phi_a,s,\widetilde\chi)
:=\int_{G_\R^+\mathcal K_N\tilde t G_\Q/G_\Q}|\det g|_\A^{2s}\widetilde\chi(\det g)\sum_{x\in V'_\Q}\Phi_a(gx)\, dg.
\]
Let $dg_\rf$ be the Haar measure on $G_{\A_\rf}$ normalized so that
the volume of $G_{\widehat\Z}$ is $1$. Then $dg=dg_\infty dg_\rf$.
For this integral, we have the following process of modification:
\begin{align*}
Z_t(\Phi_a,s,\widetilde\chi)
&=\int_{G_\R^+\mathcal K_NG_\Q/G_\Q}|\det \tilde tg|_\A^{2s}\widetilde\chi(\det \tilde tg)\sum_{x\in V'_\Q}\Phi_a(\tilde t gx)\, dg\\
&=\chi(\det t)\int_{G_\R^+\mathcal K_NG_\Q/G_\Q}|\det g|_\A^{2s}\widetilde\chi(\det g)\sum_{x\in V'_\Q}\Phi_{t^{-1}a}(gx)\, dg\\
&=\chi(\det t)\int_{G_\R^+\mathcal K_N/G_\R^+\mathcal K_N\cap G_\Q}|\det g|_\A^{2s}\widetilde\chi(\det g)\sum_{x\in V'_\Q}\Phi_{t^{-1}a}(gx)\, dg\\
&=\chi(\det t)\int_{G_\R^+/\Gamma(N)\times\mathcal K_N}
|\det g_\infty|_\infty^{2s}
\sum_{x\in V'_\Q}\Phi_\infty(g_\infty x)\Phi_{\rf,t^{-1}a}(g_\rf x)\, dg_\infty dg_\rf \\
&=\chi(\det t)\int_{\mathcal K_N}dg_\rf \int_{G_\R^+/\Gamma(N)}
|\det g_\infty|_\infty^{2s}
\sum_{x\in V'_\Q}\Phi_\infty(g_\infty x)\Phi_{\rf,t^{-1}a}(x)\, dg_\infty\\
&=\frac{\chi(\det t)}{|G_N|}\int_{G_\R^+/\Gamma(N)}
|\det g_\infty|_\infty^{2s}
\sum_{x\in V'_\Q\cap (t^{-1}a+NV_{\widehat\Z})}\Phi_\infty(g_\infty x)\, dg_\infty\\
&=\frac{\chi(\det t)}{|T_N|}\frac{1}{[\Gamma(1):\Gamma(N)]}
\int_{G_\R^+/\Gamma(N)}
|\det g_\infty|_\infty^{2s}
\sum_{x\in V'_\Q\cap (t^{-1}a+NV_\Z)}\Phi_\infty(g_\infty x)\, dg_\infty\\
&=\frac{\chi(\det t)}{|T_N|}
\Gamma_\infty(\Phi_\infty,s)\xi(s,t^{-1}a).
\end{align*}
By \eqref{eq:sum_TN},
the formula is obtained by summing this formula
over all $t\in T_N$.
We now explain the process of modifications above.

The first equality follows from
$G_\R^+\mathcal K_N\tilde t=\tilde t G_\R^+\mathcal K_N$.
Since $\widetilde\chi$ and $\tilde t$ are both lifts,
$\widetilde\chi(\det\tilde t)=\chi(\det t)$
and $\tilde t\in G_{\widehat\Z}$ implies $|\det\tilde t|_\A=1$.
Also by definition $\Phi_a(\tilde tx)=\Phi_{t^{-1}a}(x)$.
Hence the second equality follows.
The third equality is obvious.
Since
$|\det(\mathcal K_N)|_\A=\widetilde\chi(\det(G_\R^+\mathcal K_N)=1$,
we have the fourth. The fifth equality is because $\Phi_{\rf,t^{-1}a}$
is $\mathcal K_N$-invariant.
Since $\int_{\mathcal K_N}dg_\rf 
=[G_{\widehat\Z}:\mathcal K_N]^{-1}\cdot \int_{G_{\widehat\Z}}dg_\rf 
=|G_N|^{-1}$ and $\Phi_{\rf,t^{-1}a}\in\sS(V_{\A_f})$
is the characteristic function of
$t^{-1}a+NV_{\widehat\Z}$, we have the sixth equality.
By $V_\Q\cap(t^{-1}a+NV_{\widehat\Z})=t^{-1}a+NV_\Z$,
we have the seventh. For the last equality,
we divide $V'_\Q\cap (t^{-1}a+NV_{\widehat\Z})$ into
its $\Gamma(N)$-orbits and consider each integral
separately. Then this equality follows from the unfolding method.
Note that
\[
\int_{G_\R^+/\Gamma(N)_x}
|\det g_\infty|_\infty^{2s}\Phi_\infty(g_\infty x)\, dg_\infty
=
\frac{|\Gamma(N)_x|^{-1}}{|P(x)|^s}
\int_{G_\R^+}
|P(g_\infty x)|_\infty^{s}\Phi_\infty(g_\infty x)\, dg_\infty.
\]
The second statement of the proposition follows from Wright's study
\cite{wright} of $Z(\Phi,s,\omega)$ combined with
the standard argument treating $\Gamma_\infty(\Phi_\infty,s)$.
For this, see the proof of \cite[Theorem 2.1]{shintania}, for example.
\end{proof}

\subsection{Residues as integrals}
We now start to compute the residues of $\xi(s,\chi,a)$.
We introduce the following constants.
\begin{defn}\label{defn:albtgm}
We define
\begin{equation}\label{eq:albtgm}
\alpha	:=\frac{\pi^2}{36}\begin{pmatrix}1\\3\\\end{pmatrix},
\quad
\beta	:=\frac{\pi^2}{12}\begin{pmatrix}1\\1\\\end{pmatrix},
\quad
\gamma	:=\frac{2\pi^2}{9\Gamma(2/3)^3}\begin{pmatrix}1\\\sqrt3\\\end{pmatrix}.
\end{equation}
\end{defn}
Shintani \cite{shintania} proved the following.
\begin{thm}[Shintani]\label{thm:residueShintani}
The residues of $\xi(s)$ are
\[
\res_{s=1}\xi(s)=\alpha+\beta,
\qquad
\res_{s=5/6}\xi(s)=\zeta(1/3)\gamma.
\]
\end{thm}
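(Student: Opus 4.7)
The strategy is to derive the theorem from the adelic integral representation already established in Proposition \ref{prop:unfold}. Setting $N=1$, $\chi=\mathbf{1}$, and $a=0\in V_1$ (the unique element) gives
\[
Z(\Phi_0, s, \mathbf{1}) \;=\; \Gamma_\infty(\Phi_\infty, s)\,\xi(s),
\]
where $\Phi_0 = \Phi_\infty \times \Phi_{\rf,0}$ and $\Phi_{\rf,0}$ is the characteristic function of $V_{\widehat{\Z}}$. The existence of simple poles at $s = 1$ and $s = 5/6$ is part of Proposition \ref{prop:unfold} (invoking Wright \cite{wright}), so the task reduces to evaluating $\res_{s=s_0} Z(\Phi_0, s, \mathbf{1})$ for $s_0 \in \{1, 5/6\}$ and dividing by $\Gamma_\infty(\Phi_\infty, s_0)$, which is nonzero for a suitably chosen $\Phi_\infty$ (e.g., a Gaussian times a polynomial separating the two sign components).

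To locate the poles I would follow Shintani's original route. Unfolding via strong approximation turns $Z(\Phi_0, s, \mathbf{1})$ into an incomplete theta integral over $\spl_2(\Z)\backslash G_\R^+$. A Siegel truncation at the diagonal parameter $t=1$ splits this into a piece with $t\geq 1$ which is entire in $s$, and a piece with $t<1$ to which Poisson summation is applied. The dual theta series decomposes along the $G$-orbit stratification of the discriminant-zero cone in $V^\ast$, and only the unstable strata contribute poles: an $s=1$ pole of Tate type from each nonzero unstable orbit, and the characteristic $s=5/6$ pole coming from a distributional boundary term supported on the cone of reducible forms, which is the prehomogeneous feature of the cubic case.

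At $s=1$ the unstable contribution splits along the $(1^3)$ and $(1^21)$ strata. At each finite place the local integral is an elementary geometric series whose product telescopes to $\zeta(2) = \pi^2/6$, and the archimedean integral, divided by $\Gamma_\infty(\Phi_\infty, 1)$, produces the stratum-dependent vector constants; the ratios $(1,3)$ for $\alpha$ and $(1,1)$ for $\beta$ arise from the relative volumes of real orbits in $V_\R^+$ versus $V_\R^-$ within each stratum. The main obstacle is the secondary pole at $s=5/6$: one must extract it cleanly from the Poisson-transformed $t<1$ integral, recognize that the finite-place product collapses to $\zeta(1/3)$, and evaluate the archimedean Mellin integral to obtain the $\Gamma(2/3)^{-3}$ factor together with the $(1,\sqrt{3})$ direction of $\gamma$. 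This last step is the genuinely nontrivial piece, as it has no counterpart in the theory of Epstein or Tate zeta functions and is precisely the phenomenon that distinguishes the binary cubic form zeta function from its classical analogues.
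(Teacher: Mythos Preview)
The paper does not give its own proof of this theorem: it is stated as a known result of Shintani and cited to \cite{shintania}. The constants $\alpha,\beta,\gamma$ in Definition~\ref{defn:albtgm} are simply \emph{defined} to be the values Shintani computed, and the paper's contribution is the generalization in Theorem~\ref{thm:residue_formal}, obtained by plugging the test function $\Phi_a$ into Wright's adelic residue formula \cite[Theorem~6.1]{wright} via Proposition~\ref{prop:unfold}. Setting $N=1$ there recovers Shintani's numbers tautologically, because the local factors $\esB_{p^0}$ and $\esC_{p^0}$ are normalized to equal $1$ and the remaining $L(1/3,\mathbf{1})=\zeta(1/3)$ is exactly the factor appearing in the statement. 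So there is no independent argument in the paper for you to compare against.

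Your sketch is a reasonable high-level outline of the Shintani--Wright method, but two points are off. First, the attribution of the two pieces of the $s=1$ residue to the $(1^3)$ and $(1^21)$ strata is not how the decomposition actually goes: in the adelic formulation \eqref{eq:residue_distribution} the $\alpha$ term is the pure volume term coming from the origin under Poisson summation, while the $\beta$ term comes from the single intermediate stratum $\{x_1=0\}$ (the distribution $\esB_\rf$); the $(1^3)$ stratum does not contribute a separate pole at $s=1$. Second, and more substantively, your outline does not actually carry out the archimedean computation that produces the explicit constants in $\alpha,\beta,\gamma$. Saying that ``the archimedean Mellin integral'' yields $\Gamma(2/3)^{-3}$ and the vector $(1,\sqrt{3})$ is precisely the nontrivial calculation in Shintani's paper, involving careful analysis of the real local zeta functions for the discriminant form; without it you have only located the poles, not evaluated the residues. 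As written, the proposal is a plan rather than a proof.
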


We define an averaging operator
$\cM_{\rf,\chi}$ on
$\sS(V_{\A_\rf})$ by
\begin{equation}\label{eq:average_global}
\cM_{\rf,\chi}\Phi_\rf(x)
=\int_{G_{\widehat\Z}}\tilde\chi(\det g_\rf)\Phi_\rf( g_\rf x)d g_\rf
\qquad
\left(\Phi_\rf\in\sS(V_{\A_\rf})\right).
\end{equation}
If $\chi$ is trivial then we write $\cM_{\rf}$ as well.
For $\Phi_\rf\in\sS(V_{\A_\rf})$, let
\begin{align*}
\esB_\rf(\Phi_{\rf})
&:=\zeta(2)^{-1}
\int_{\A_\rf^\times\times\A_\rf^2}
|t|_\rf^{2}\cM_{\rf}\Phi_{\rf}(0,t,u_3,u_4)\md t du_3du_4,\\
\esC_\rf(\Phi_{\rf},\chi,s)
&:=\int_{\A_\rf^\times\times\A_\rf^3}
\tilde\chi(t)|t|_\rf^{s}\cM_{\rf,\chi^{-1}}\Phi_{\rf}(t,u_2,u_3,u_4)\md t du_2du_3du_4,\\
\esC_\rf(\Phi_{\rf},\chi)
&:=\esC_\rf(\Phi_{\rf},\chi,1/3).
\end{align*}
We note that $\esC_\rf(\Phi_{\rf},\chi,s)$ is defined for $\Re(s)\leq1$
by analytic continuation.
Then for $a\in V_N$, by \cite[Theorem 6.1]{wright}
and Proposition \ref{prop:unfold}
(see \cite[pp. 69-70]{dawra} also),
we have
\begin{equation}\label{eq:residue_distribution}
\underset{s=1}{\res}\ \frac{\xi(s,\chi,a)}{|G_N|}
=\delta(\chi)\left(\frac{1}{N^4}\alpha+\esB_\rf(\Phi_{\rf,a})\beta\right),
\qquad
\underset{s=5/6}{\res}\ \frac{\xi(s,\chi,a)}{|G_N|}
=\delta(\chi^3)\esC_\rf(\Phi_{\rf,a},\chi)\gamma.
\end{equation}

Hence we will compute these values explicitly.
We define a local averaging operator
$\cM_{p,\chi}$ on $\sS(V_{\Q_p})$ similarly to
\eqref{eq:average_global}:
\begin{equation}\label{eq:average_local}
\cM_{p,\chi}\Phi_p(x)
=\int_{G_{\Z_p}}\tilde\chi_p(\det g_p)\Phi_p( g_p x)d g_p
\qquad
\left(\Phi_p\in\sS(V_{\Q_p})\right).
\end{equation}
Here we normalize the measure $d g_p$
on $G_{\Q_p}$ such that the volume of $G_{\Z_p}$ is $1$.
For $a\in V_{p^e}$ or $a\in V_{\Z_p}$ and $e \geq 0$, let
$\Phi_{p,a}\in\sS(V_{\Z_p})$ be the characteristic
function of $a+p^eV_{\Z_p}$.
We put
\begin{align}
\esB_{p^e}(a)
	&:=p^{4e}(1-p^{-2})\int_{\Q_p^\times\times \Q_p^2}
		|t|_p^2\mathcal M_p\Phi_{p,a}(0,t,u_3,u_4)d^\times tdu_3du_4,
\label{eq:defb}\\
\esC_{p^e}(a,\chi)
&	:=p^{4e}L_p(\frac13,\chi^{-1})^{-1}
	\int_{\Q_p^\times\times \Q_p^3}
		\tilde\chi_p(t)|t|_p^{1/3}\mathcal M_{p,\chi^{-1}}\Phi_{p,a}(t,u_2,u_3,u_4)d^\times tdu_2du_3du_4.
\label{eq:defc}
\end{align}
Note that these are $1$ if $e=0$ and $\chi$ is unramified at $p$.
Let $N=\prod p^{e_p}$ be the prime decomposition of $N$.
For $a\in V_N$, we define
\begin{equation}\label{eq:residue_Eulerp}
\esB_{N}(a):=\prod_{p\mid N}\esB_{p^{e_p}}(a_p),
\quad
\esC_{N}(a,\chi):=\prod_{p\mid N}\esC_{p^{e_p}}(a_p,\chi),
\qquad
\text{where $a_p=(a\mod p^e)\in V_{p^e}$}.
\end{equation}
Then
\[
\esB_\rf(\Phi_{\rf,a})
=N^{-4}\esB_{N}(a),
\qquad
\esC_\rf(\Phi_{\rf,a},\chi)
=N^{-4}\esC_{N}(a,\chi)L(1/3,\chi^{-1}).
\]
Hence by \eqref{eq:residue_distribution}
we have the following generalization
of Theorem \ref{thm:residueShintani}.
\begin{thm}\label{thm:residue_formal}
We have
\[
\underset{s=1}{\res}\ \frac{\xi(s,\chi,a)}{|G_N|}
=\delta(\chi)\left(\frac{1}{N^4}\alpha+\frac{\esB_N(a)}{N^4}\beta\right),
\quad
\underset{s=5/6}{\res}\ \frac{\xi(s,\chi,a)}{|G_N|}
=\delta(\chi^3)\frac{\esC_{N}(a,\chi)}{N^4}L(\frac13,\chi^{-1})\gamma.
\]
\end{thm}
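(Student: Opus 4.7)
The plan is to derive the theorem directly by substituting the Euler product factorizations
\[
\esB_\rf(\Phi_{\rf,a}) = N^{-4}\esB_N(a), \qquad \esC_\rf(\Phi_{\rf,a},\chi) = N^{-4}\esC_N(a,\chi)\, L(\tfrac{1}{3},\chi^{-1})
\]
into the adelic residue formula \eqref{eq:residue_distribution}. The latter follows from Proposition \ref{prop:unfold} combined with Wright's residue theorem (\cite[Theorem 6.1]{wright}), applied to the test function $\Phi_a = \Phi_\infty \times \Phi_{\rf,a}$. Once the two factorizations are in hand, the statement is immediate, with the $\delta(\chi)$ and $\delta(\chi^3)$ factors inherited from \eqref{eq:residue_distribution}.

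To establish the factorizations, I would exploit the product structure of the test function. Writing $N = \prod_p p^{e_p}$, we have $\Phi_{\rf,a} = \prod_p \Phi_{p, a_p}$, where for $p \mid N$ the function $\Phi_{p, a_p}$ is the characteristic function of $a_p + p^{e_p} V_{\Z_p}$ and for $p \nmid N$ it is the characteristic function of $V_{\Z_p}$. Since both the averaging operators and the adelic Haar measures split as restricted products of local factors, the integrals defining $\esB_\rf$ and $\esC_\rf$ factor as Euler products. For primes $p \mid N$, the local factor matches $p^{-4 e_p}\,\esB_{p^{e_p}}(a_p)$ or $p^{-4 e_p}\, L_p(\tfrac{1}{3}, \chi^{-1})\, \esC_{p^{e_p}}(a_p, \chi)$ directly from the definitions \eqref{eq:defb} and \eqref{eq:defc}, using also $\zeta(2)^{-1} = \prod_p (1 - p^{-2})$ to distribute the normalizing factor on the $\esB$ side.

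For the unramified primes $p \nmid N$, observe that $p \nmid m$ (since $m \mid N$), so by Lemma \ref{lem:character} the character $\tilde\chi_p$ is trivial on $\Z_p^\times$. Hence both $\cM_p$ and $\cM_{p, \chi^{-1}}$ preserve the characteristic function of $V_{\Z_p}$, and the $\esB$-local integral reduces to
\[
(1 - p^{-2}) \int_{\Z_p \setminus \{0\}} |t|_p^{2} \, d^\times t = (1 - p^{-2})\sum_{k\geq 0} p^{-2k} = 1,
\]
while the $\esC$-local integral, using $\tilde\chi_p(p) = \chi(p)^{-1}$ from Lemma \ref{lem:character}, becomes
\[
\int_{\Z_p \setminus \{0\}} \tilde\chi_p(t)\, |t|_p^{1/3} \, d^\times t = \sum_{k \geq 0} \chi(p)^{-k} p^{-k/3} = L_p(\tfrac{1}{3}, \chi^{-1}).
\]
Multiplying all local factors then yields the desired factorizations of $\esB_\rf(\Phi_{\rf,a})$ and $\esC_\rf(\Phi_{\rf,a},\chi)$.

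The main delicate point is not computational but bookkeeping: one must carefully track the interaction of $\chi$ and $\chi^{-1}$, since Proposition \ref{prop:unfold} relates $\xi(s,\chi^{-1},a)$ to $Z(\Phi_a,s,\chi)$, while the distribution $\esC_\rf$ involves $\tilde\chi(t)$ twisted against $\cM_{\rf,\chi^{-1}}$. Once these inversions are consistently unwound, the theorem emerges as a routine local-global refinement of Theorem \ref{thm:residueShintani}, which is simply the case $N = 1$ and $\chi = \mathbf{1}$.
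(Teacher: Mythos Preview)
Your proposal is correct and follows essentially the same route as the paper: the theorem is obtained by substituting the Euler product identities $\esB_\rf(\Phi_{\rf,a})=N^{-4}\esB_N(a)$ and $\esC_\rf(\Phi_{\rf,a},\chi)=N^{-4}\esC_N(a,\chi)L(\tfrac13,\chi^{-1})$ into the adelic residue formula \eqref{eq:residue_distribution}. The paper simply asserts these factorizations as immediate consequences of the definitions, whereas you supply the explicit verification of the local factors at unramified primes; this extra detail is correct and helpful but does not represent a different approach.
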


We now describe the residues of $\xi(s,f)$ for $f\in C(V_N,\chi)$.
For $f\in C(V_N)$
we define
\begin{equation}\label{eq:residue_f}
\esA_N(f)=\sum_{a\in V_N}\frac{f(a)}{N^4},
\quad
\esB_N(f)=\sum_{a\in V_N}\frac{f(a)\esB_N(a)}{N^4},
\quad
\esC_N(f,\chi)=\sum_{a\in V_N}\frac{f(a)\esC_N(a,\chi)}{N^4}.
\end{equation}
By definition these are multiplicative.
By Theorem \ref{thm:residue_formal} and
Proposition \ref{prop:f_inv_L_ogs},
we have:
\begin{prop}\label{prop:residue_f}
Let $f\in C(V_N,\chi)$. Then
\[
\underset{s=1}{\res}\ \xi(s,f)=
\delta(\chi)\left(\esA_N(f)\alpha+\esB_N(f)\beta\right),
\quad
\underset{s=5/6}{\res}\ \xi(s,f)
=\delta(\chi^3)\esC_{N}(f,\chi)L(\frac13,\chi^{-1})\gamma.
\]
\end{prop}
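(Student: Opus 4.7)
The plan is direct: the proposition is essentially a linear combination of the formulas already established in Theorem \ref{thm:residue_formal}, summed against $f$. The key input is the identity from Proposition \ref{prop:f_inv_L_ogs}, namely
\[
\xi(s,f) = \frac{1}{|G_N|} \sum_{a \in V_N} f(a)\, \xi(s,\chi,a),
\]
which expresses the zeta function associated to $f \in C(V_N,\chi)$ as a weighted sum of orbital $L$-functions. Both sides are meromorphic in $s$, and since the sum is finite, we may simply take residues term by term.

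First I would handle the pole at $s=1$. Apply Theorem \ref{thm:residue_formal} to each $\xi(s,\chi,a)$ in the sum. The factor $|G_N|$ cancels, yielding
\[
\res_{s=1}\xi(s,f) = \delta(\chi)\sum_{a \in V_N} f(a)\left(\frac{1}{N^4}\alpha + \frac{\esB_N(a)}{N^4}\beta\right).
\]
Regrouping the sums and invoking the definitions in \eqref{eq:residue_f} of $\esA_N(f)$ and $\esB_N(f)$ gives the first displayed formula. The factor $\delta(\chi)$ is a single scalar independent of $a$ and so passes through the sum cleanly.

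The pole at $s=5/6$ is treated identically. Theorem \ref{thm:residue_formal} gives
\[
\res_{s=5/6}\xi(s,f) = \delta(\chi^3) L\!\left(\tfrac{1}{3},\chi^{-1}\right)\sum_{a \in V_N}\frac{f(a)\,\esC_N(a,\chi)}{N^4}\,\gamma,
\]
and the sum on the right is $\esC_N(f,\chi)$ by definition \eqref{eq:residue_f}. Thus the two residue formulas follow without any further analysis. There is no genuine obstacle here, since the hard analytic work, namely the construction of the integral expression, the identification of the pole locations, and the local computations producing $\esB_N(a)$ and $\esC_N(a,\chi)$, has already been done in Proposition \ref{prop:unfold} and Theorem \ref{thm:residue_formal}; the only subtlety is bookkeeping in the passage from the per-orbit vectors $\alpha,\beta,\gamma$ to their $f$-weighted averages, and this is immediate from the linearity of $\res$.
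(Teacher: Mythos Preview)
Your proof is correct and takes essentially the same approach as the paper, which simply cites Theorem \ref{thm:residue_formal} and Proposition \ref{prop:f_inv_L_ogs} together with the definitions in \eqref{eq:residue_f}. You have just written out the linear-combination argument in slightly more detail than the paper bothers to.
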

Since $\xi(s,\chi,a)$ and $\xi(s,f)$ for $f\in C(V_N,\chi)$
are holomorphic if $\chi^3$ is non-trivial, we assume
that $\chi^3$ is trivial for the rest of this section:
\begin{asmp}
The Dirichlet character $\chi$ is cubic.
\end{asmp}
This of course implies that $\tilde\chi$, $\tilde\chi_p$,
$\chi_p$ and $\chi_p'$ are cubic characters also.

\subsection{Preliminaries for explicit computation}
Now our aim is to compute
$\esB_{p^e}(a)$ and $\esC_{p^e}(a,\chi)$
explicitly.
In this subsection we prepare several lemmas for the computation.
We fix a prime $p$ and denote the $p$-part of
the conductor of $\chi$ by $p^c$.
We assume $e\geq c$.

We first describe some basic properties satisfied
by $\esB_{p^e}(a)$ and $\esC_{p^e}(a,\chi)$.
\begin{lem}\label{lem:residue_basic}
\begin{enumerate}[{\rm (1)}]
\item
For $g\in G_{p^e}$,
\begin{equation}\label{eq:bc_invariant}
\esB_{p^e}(ga)=\esB_{p^e}(a),
\qquad
\esC_{p^e}(ga,\chi)=\chi_p(\det g)^{-1}\esC_{p^e}(a,\chi).
\end{equation}
\item
If $\chi$ is unramified at $p$, then
\begin{equation}\label{eq:bc_sum}
p^{-4e}\sum_{a\in V_{p^e}}\esB_{p^e}(a)
=1,
\qquad
p^{-4e}\sum_{a\in V_{p^e}}\esC_{p^e}(a,\chi)
=1.
\end{equation}
\item
Let $m\leq e-c$. For $a\in V_{p^{e-m}}$, we regard
$p^ma\in V_{p^{e}}$.
Then
\begin{equation}\label{eq:bc_multiple}
\esB_{p^e}(p^ma)=\esB_{p^{e-m}}(a),
\qquad
\esC_{p^e}(p^ma,\chi)=\tilde\chi_p(p)^mp^{2m/3}\esC_{p^{e-m}}(a,\chi).
\end{equation}
\end{enumerate}
\end{lem}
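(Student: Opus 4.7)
The plan is to prove all three parts by direct manipulation of the defining integrals \eqref{eq:defb} and \eqref{eq:defc}, using the translation and dilation invariance of the Haar measures on $G_{\Z_p}$, $\Q_p^\times$, $\Q_p$, together with the identification of $\tilde\chi_p|_{\Z_p^\times}$ with $\chi_p$ via Lemma \ref{lem:character}. For part (1), I would pick a lift $\tilde g\in G_{\Z_p}$ of $g\in G_{p^e}$, so that $\Phi_{p,ga}(x)=\Phi_{p,a}(\tilde g^{-1}x)$. The substitution $g_p\mapsto\tilde g g_p$ in the definition \eqref{eq:average_local} of $\cM_{p,\chi^{-1}}$ produces
\[
\cM_{p,\chi^{-1}}\Phi_{p,ga}=\tilde\chi_p(\det\tilde g)^{-1}\,\cM_{p,\chi^{-1}}\Phi_{p,a},
\]
and inserting this into \eqref{eq:defc} together with $\tilde\chi_p(\det\tilde g)=\chi_p(\det g)$ yields the claimed identity for $\esC_{p^e}$. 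The corresponding statement for $\esB_{p^e}$ is the trivial-character specialization, where the prefactor collapses to $1$.

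For part (2), the sets $\{a+p^eV_{\Z_p}\}_{a\in V_{p^e}}$ partition $V_{\Z_p}$, so summing gives $\sum_{a\in V_{p^e}}\Phi_{p,a}=\mathbf 1_{V_{\Z_p}}$. Since $V_{\Z_p}$ is $G_{\Z_p}$-stable and $\tilde\chi_p$ is trivial on $\Z_p^\times$ when $\chi$ is unramified at $p$, $\cM_{p,\chi^{-1}}\mathbf 1_{V_{\Z_p}}=\mathbf 1_{V_{\Z_p}}$. The right-hand sides of \eqref{eq:defb} and \eqref{eq:defc} then factor as integrals over $\Z_p$ in the $u_i$ variables (contributing $1$) and a geometric series $\sum_{n\ge 0}\tilde\chi_p(p)^np^{-ns}$ in the $t$ variable ($s=2$ or $s=1/3$). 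Using $\tilde\chi_p(p)=\chi^{-1}(p)$ in the unramified case, these geometric series are exactly $(1-p^{-2})^{-1}$ and $L_p(1/3,\chi^{-1})$, which cancel the normalizing prefactors and leave $1$.

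For part (3), the essential point is the scaling identity $\Phi_{p,p^m a}(p^m x)=\Phi_{p,a}(x)$, obtained by lifting $a$ to $\tilde a\in V_{\Z_p}$ and observing that $p^m\tilde a+p^eV_{\Z_p}=p^m(\tilde a+p^{e-m}V_{\Z_p})$. Since the $G$-action on $V$ is linear, this propagates through the averaging: $\cM_{p,\chi^{-1}}\Phi_{p,p^m a}(p^m x)=\cM_{p,\chi^{-1}}\Phi_{p,a}(x)$. I would then apply the change of variables $(t,u_2,u_3,u_4)\mapsto p^m(t,u_2,u_3,u_4)$ in \eqref{eq:defc}; the additive measures contribute $p^{-3m}$, the factor $|t|_p^{1/3}$ contributes $p^{-m/3}$, the character contributes $\tilde\chi_p(p)^m$, and combining with the ratio of prefactors $p^{4e}/p^{4(e-m)}=p^{4m}$ gives the asserted factor $\tilde\chi_p(p)^m p^{2m/3}$. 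The analogous change of variables in \eqref{eq:defb} produces factors $p^{-2m}$ (from $|t|_p^2$), $p^{-2m}$ (from $du_3\,du_4$), and $p^{4m}$, which collapse to $1$. The hypothesis $m\leq e-c$ is used only for bookkeeping, to ensure $\Phi_{p,a}$ still lies in the regime where $\tilde\chi_p$ is ``visible'' at the level of $V_{p^{e-m}}$. No step is substantive; the main place to be careful is exponent arithmetic in (3), which I would sanity-check against the cases $m=0$ and $\chi=\mathbf 1$.
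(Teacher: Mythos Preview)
Your proposal is correct and follows essentially the same approach as the paper, which simply states that the identities follow immediately from the definitions \eqref{eq:defb} and \eqref{eq:defc} (noting for (2) that $\sum_{a}\cM_{p,\chi^{-1}}\Phi_{p,a}=\mathbf 1_{V_{\Z_p}}$ in the unramified case). Your write-up is a careful expansion of exactly these computations, with the substitution and scaling steps spelled out; the only minor imprecision is the remark about the role of $m\le e-c$, which is simply the standing hypothesis $e-m\ge c$ needed so that $\esC_{p^{e-m}}(a,\chi)$ is defined.
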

\begin{proof}
These immediately follow from the definitions
\eqref{eq:defb}, \eqref{eq:defc}.
For (2), we note that if $\chi$ is unramified,
$\sum_{a\in V_{p^e}}\cM_{p,\chi^{-1}}\Phi_{p,a}
=\sum_{a\in V_{p^e}}\Phi_{p,a}$ is the characteristic function
of $V_{\Z_p}$.
\end{proof}
\begin{lem}\label{lem:residue_orbit}
Let $a\in V_{p^e}$.
Assume that
$G_{p^e}a+p^eV_{\Z_p}$ is a single $G_{\Z_p}$-orbit
and that $\chi_p\circ\det$ is trivial on $G_{p^e,a}$.
Then
\begin{equation}
\esB_{p^e}(a)=\esB_{p^{e'}}(a'),
\quad
\esC_{p^e}(a,\chi)=\esC_{p^{e'}}(a',\chi),
\end{equation}
for all $e'\geq e$ and $a'\in V_{p^{e'}}$ such that
$a'\mod p^e=a$.
\end{lem}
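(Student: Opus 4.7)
The plan is to reduce both identities to the single statement that the averaged test functions agree. Set $\Psi_e := p^{4e}\cM_{p,\chi^{-1}}\Phi_{p,a}$ and define $\Psi_{e'}$ analogously with $(a,e)$ replaced by $(a',e')$ (taking $\chi=\mathbf 1$ throughout in the $\esB_{p^e}$ case). Substituting the equality
\[
\Psi_e = \Psi_{e'}\quad\text{as functions on }V_{\Q_p}
\]
into the integral formulas \eqref{eq:defb} and \eqref{eq:defc} immediately yields the lemma, since the remaining factors in those integrands depend only on $x\in V_{\Q_p}$ and not on $(a,e)$. So it suffices to establish this pointwise identity.

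First I would propagate the two hypotheses from level $e$ to level $e'$. Fix a common lift $\tilde a\in V_{\Z_p}$ of $a$ and $a'$, and check directly that
\[
G_{\Z_p}\tilde a \;=\; G_{p^e}a+p^eV_{\Z_p} \;=\; G_{p^{e'}}a'+p^{e'}V_{\Z_p};
\]
the inclusion $G_{p^{e'}}a'+p^{e'}V_{\Z_p}\subseteq G_{\Z_p}\tilde a$ follows by lifting elements of $G_{p^{e'}}$ to $G_{\Z_p}$, while the reverse inclusion follows from reducing mod $p^{e'}$. In particular, the single-orbit condition holds at level $e'$ as well. Since the conductor exponent $c$ satisfies $c\le e\le e'$, and since the natural map $G_{p^{e'},a'}\to G_{p^e,a}$ commutes with $\det$, the triviality of $\chi_p\circ\det$ on $G_{p^e,a}$ forces the analogous triviality on $G_{p^{e'},a'}$.

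Now both $\Psi_e$ and $\Psi_{e'}$ are $(\tilde\chi_p\circ\det)$-relative invariant under $G_{\Z_p}$, by the standard change-of-variables computation $\cM_{p,\chi^{-1}}\Phi(g_0x)=\tilde\chi_p(\det g_0)\cM_{p,\chi^{-1}}\Phi(x)$, and both are supported on the common orbit $\mathcal O:=G_{\Z_p}\tilde a$. A function with these two properties is determined by its value at $\tilde a$. A short computation,
\[
\Psi_e(\tilde a) \;=\; p^{4e}\!\!\int_{\{g\in G_{\Z_p}:\, g\tilde a\equiv\tilde a\,\mathrm{mod}\,p^e\}}\!\!\!\tilde\chi_p(\det g)^{-1}\,dg \;=\; p^{4e}\,\frac{|G_{p^e,a}|}{|G_{p^e}|},
\]
evaluates the integrand to $1$ using the hypothesis on $\chi_p\circ\det$ together with Lemma \ref{lem:character} (which gives that $\tilde\chi_p|_{\Z_p^\times}$ factors through $(\Z/p^e\Z)^\times$ when $c\le e$). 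The orbit-stabilizer identity $\vol(\mathcal O)=|G_{p^e}\cdot a|\cdot p^{-4e}=p^{-4e}|G_{p^e}|/|G_{p^e,a}|$ then gives $\Psi_e(\tilde a)=\vol(\mathcal O)^{-1}$, and the identical computation at level $e'$ produces the same value. Hence $\Psi_e=\Psi_{e'}$. The only delicate step is bookkeeping: keeping the conductor bound, the stabilizer reductions, and the single-orbit propagation aligned; beyond these verifications, no new analytic input is needed.
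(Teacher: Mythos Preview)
Your proof is correct and follows essentially the same route as the paper: both reduce to showing $p^{4e}\cM_{p,\chi^{-1}}\Phi_{p,a}=p^{4e'}\cM_{p,\chi^{-1}}\Phi_{p,a'}$, evaluate the averaged function on the single $G_{\Z_p}$-orbit via the stabilizer, and match the two levels through the common orbit volume. The only cosmetic difference is that the paper writes out the explicit formula $\cM_{p,\chi^{-1}}\Phi_{p,a}(k\tilde a)=|G_{p^e,a}|\,|G_{p^e}|^{-1}\chi_p(\det k)$ and then deduces $|G_{p^{e'},a'}|=|G_{p^e,a}|$ from the volume identity, whereas you package the same computation as $\Psi_e(\tilde a)=\vol(\mathcal O)^{-1}=\Psi_{e'}(\tilde a)$.
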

\begin{proof}
By \eqref{eq:defb} and \eqref{eq:defc}, it is enough to show that
\[
p^{4e}\mathcal M_{p,\chi^{-1}}\Phi_{p,a}=
p^{4e'}\mathcal M_{p,\chi^{-1}}\Phi_{p,a'}.
\]
Let $\tilde a\in V_{\Z_p}$ be a lift of $a$.
Then by assumption $G_{p^e}a+p^eV_{\Z_p}=G_{\Z_p}\tilde a$.
We claim that
\begin{equation}\label{eq:residue_orbit}
\mathcal M_{p,\chi^{-1}}\Phi_{p,a}(x)
=\begin{cases}
|G_{p^e,a}||G_{p^e}|^{-1}\chi_p(\det k)  & x=k\tilde a , k\in G_{\Z_p},\\
0	&x\notin G_{\Z_p}\tilde a.
\end{cases}
\end{equation}
By \eqref{eq:average_local} and Lemma \ref{lem:character},
\begin{equation}\label{eq:average_unfold}
\cM_{p,\chi^{-1}}\Phi_{p,a}
=
|G_{p^e}|^{-1}\sum_{g\in G_{p^e}}
	\chi_p(\det g)\Phi_{p,ga}.
\end{equation}
Hence $\mathcal M_{p,\chi^{-1}}\Phi_{p,a}(x)$
vanishes unless $x\in G_{p^e}a+p^eV_{\Z_p}$.
Let $x=k\tilde a, k\in G_{\Z_p}$.
Then
\begin{align*}
\mathcal M_{p,\chi^{-1}}\Phi_{p,a}(k\tilde a)
&=\chi_p(\det k)|G_{p^e}|^{-1}\sum_{g\in G_{p^e}}
	\chi_p(\det g)\Phi_{p,ga}(\tilde a)\\
&=\chi_p(\det k)|G_{p^e}|^{-1}\sum_{g\in G_{p^e,a}}\chi_p(\det g)
=|G_{p^e,a}||G_{p^e}|^{-1}\chi_p(\det k),
\end{align*}
where the last equality follows from the assumption
and hence we have \eqref{eq:residue_orbit}.
Now it is enough to show that $|G_{p^{e'},a'}|=|G_{p^{e},a}|$.
For this, note the identity
$G_{p^{e'}}a'+p^{e'}V_{\Z_p}=G_{p^e}a+p^eV_{\Z_p}$.
These volumes are
$p^{-4e'}|G_{p^{e'}}|/|G_{p^{e'},a'}|$ and
$p^{-4e}|G_{p^{e}}|/|G_{p^{e},a}|$ respectively,
and hence we have $|G_{p^{e'},a'}|=|G_{p^{e},a}|$.
This finishes the proof.
\end{proof}

To begin our computation of $\esB_{p^e}(a)$ and $\esC_{p^e}(a,\chi)$,
we introduce the following notation.
\begin{defn}
\begin{enumerate}[{\rm (1)}]
\item
For $a=(a_1,a_2,a_3,a_4)\in V_{\Z_p}$, we define
\begin{equation}\label{eq:b'}
\esB_{p^e}'(a)
:=\begin{cases}
0	& a_1\notin p^eV_{\Z_p},\\
p^e(1+p^{-1})|a_2|_p & a_1\in p^e V_{\Z_p},a_2\notin p^e V_{\Z_p},\\
1& a_1,a_2\in p^e V_{\Z_p}.\\
\end{cases}
\end{equation}
\item
For  $y\in\Z_p$, we define
$I_{p^e}(y,\chi)$ as follows.
If $c=0$, we put
\begin{equation}\label{eq:i_ur}
I_{p^e}(y,\chi)
:=\begin{cases}
p^{2e/3}\tilde\chi_p(p)^e& \ord_p(y)\geq e,\\
\dfrac{1-\tilde\chi_p(p)p^{-1/3}}{1-p^{-1}}{\tilde\chi_p(y)}|y|_p^{-2/3} & \ord_p(y)<e.
\end{cases}
\end{equation}
If $c\geq 1$, then we put
\begin{equation}\label{eq:i_rm}
I_{p^e}(y,\chi)
:=\begin{cases}
(1-p^{-1})^{-1}{\tilde\chi_p(y)}|y|_p^{-2/3} & \ord_p(y)\leq e-c,\\
0 & \ord_p(y)> e-c.\end{cases}
\end{equation}
\end{enumerate}
Since these quantities respectively depend only on $(a\mod p^e)\in V_{p^e}$ or
$(y\mod p^e)\in \Z/p^e\Z$, we use the same notation
for $a\in V_{p^e}$ or $y\in \Z/p^e\Z$ as well.
\end{defn}
An easy computation shows that
\begin{align}
\esB_{p^e}'(a)
&=p^{4e}(1-p^{-2})\int_{\Q_p^\times\times \Q_p^2}
		|t|_p^2\Phi_{p,a}(0,t,u_3,u_4)d^\times tdu_3du_4,
\label{eq:b'int}\\
I_{p^e}(y,\chi)
&=p^eL_p(\frac13,\chi^{-1})^{-1}\int_{y+p^e\Z_p}\tilde\chi_p(t)|t|_p^{1/3}\md t.
\label{eq:i_int}
\end{align}

Let $W=\aff^2$ be the affine space of 2-dimensional row vectors.
The $G$ naturally acts on $W$ from the right.
We put $W_{p^e}:=W_{\Z/p^e\Z}=(\Z/p^e\Z)^2$
and
\[
W'_{p^e}:=\{(u,v)\in (\Z/p^e\Z)^2\mid
\text{$u\in (\Z/p^e\Z)^\times$ or $v\in (\Z/p^e\Z)^\times$}\},
\]
which is the $G_{p^e}$-orbit of $(1,0)\in W_{p^e}$.
The following formulas for
$\esB_{p^e}(a)$ and $\esC_{p^e}(a,\chi)$ hold.
\begin{lem}\label{lem:residue_finiteorbit}
\begin{enumerate}[{\rm (1)}]
\item
We have
\begin{equation}\label{eq:bb'}
\esB_{p^e}(a)=|G_{p^e}|^{-1}\sum_{g\in G_{p^e}}\esB_{p^e}'(ga).
\end{equation}
\item
If $\chi$ is unramified at $p$ (i.e., if $c=0$), we have
\begin{equation}\label{eq:cg}
\esC_{p^e}(a,\chi)
=|G_{p^e}|^{-1}\sum_{g\in G_{p^e}}I_{p^e}((ga)_1,\chi),
\end{equation}
where $(ga)_1\in\Z/p^e\Z$ is the first entry of $ga\in V_{p^e}$.
Moreover for any $\chi$,
\begin{equation}\label{eq:cw}
\esC_{p^e}(a,\chi)
=|W'_{p^e}|^{-1}\sum_{(u,v)\in W'_{p^e}}I_{p^e}(a(u,v),\chi).
\end{equation}
Here we are plugging particular values of $u$ and $v$ into the
binary cubic form $a$.
\end{enumerate}
\end{lem}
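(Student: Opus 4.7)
The plan is to unfold the integrals defining $\esB_{p^e}(a)$ and $\esC_{p^e}(a,\chi)$ using the finite-orbit formula \eqref{eq:average_unfold}, which rewrites each averaging operator over $G_{\Z_p}$ as a finite sum over $G_{p^e}$. This is available because both $\Phi_{p,a}$ and $\tilde\chi_p\circ\det$ factor through the reduction $G_{\Z_p}\twoheadrightarrow G_{p^e}$ (the latter since $e\geq c$), and the kernel of this map has $dg$-volume $|G_{p^e}|^{-1}$.

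For (1), I will substitute $\cM_p\Phi_{p,a} = |G_{p^e}|^{-1}\sum_{g\in G_{p^e}}\Phi_{p,ga}$ into \eqref{eq:defb} and interchange the sum with the integral; the inner integral for each $g$ is exactly $\esB_{p^e}'(ga)$ by the defining identity \eqref{eq:b'int}, which gives \eqref{eq:bb'} at once.

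For (2), I will similarly substitute $\cM_{p,\chi^{-1}}\Phi_{p,a} = |G_{p^e}|^{-1}\sum_{h\in G_{p^e}}\chi_p(\det h)\Phi_{p,ha}$ into \eqref{eq:defc}. Each of the three $u_i$-integrals contributes a factor $p^{-e}$, and by \eqref{eq:i_int} the remaining $t$-integral over $(ha)_1+p^e\Z_p$ equals $p^{-e}L_p(1/3,\chi^{-1})\,I_{p^e}((ha)_1,\chi)$. After cancelling prefactors this yields
\[
\esC_{p^e}(a,\chi) = \frac{1}{|G_{p^e}|}\sum_{h\in G_{p^e}}\chi_p(\det h)\,I_{p^e}((ha)_1,\chi),
\]
and when $\chi$ is unramified $\chi_p\equiv 1$, recovering \eqref{eq:cg} directly.

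Obtaining \eqref{eq:cw} in full generality will require two further ingredients. First, the action formula gives $(ha)_1 = (\det h)^{-1}\,a((1,0)\cdot h)$, where $W=\aff^2$ is viewed with its right $G$-action. Second, I will verify the homogeneity $I_{p^e}(uy,\chi) = \tilde\chi_p(u)\,I_{p^e}(y,\chi)$ for $u\in\Z_p^\times$ by checking the three cases in \eqref{eq:i_ur} and \eqref{eq:i_rm} separately; this small bookkeeping is the main technical point, the only subtle case being the ramified one with $\ord_p(y)>e-c$, where both sides vanish. Granted these, the factor $\tilde\chi_p((\det h)^{-1}) = \chi_p(\det h)^{-1}$ extracted from $I_{p^e}((ha)_1,\chi)$ cancels $\chi_p(\det h)$, so the sum collapses to $|G_{p^e}|^{-1}\sum_h I_{p^e}(a((1,0)\cdot h),\chi)$. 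Since $G_{p^e}$ acts transitively on $W'_{p^e}$ from the right with $(1,0)$-stabilizer of order $|G_{p^e}|/|W'_{p^e}|$, each $(u,v)\in W'_{p^e}$ is hit with that multiplicity as $h$ ranges over $G_{p^e}$, yielding \eqref{eq:cw}.
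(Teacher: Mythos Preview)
Your proof is correct and follows essentially the same approach as the paper's own proof: both unfold the averaging operator via \eqref{eq:average_unfold}, identify the inner integral with $\esB_{p^e}'$ (resp.\ $I_{p^e}$) through \eqref{eq:b'int} (resp.\ \eqref{eq:i_int}), and then for \eqref{eq:cw} use the identity $(ha)_1=(\det h)^{-1}a((1,0)h)$ together with the unit-homogeneity of $I_{p^e}$ and the transitivity of the right $G_{p^e}$-action on $W'_{p^e}$. The paper states the homogeneity as $I_{p^e}(ty,\chi)=\chi_p(t)I_{p^e}(y,\chi)$ for $t\in(\Z/p^e\Z)^\times$ without the case analysis you outline, but this is the same fact via Lemma~\ref{lem:character}.
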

\begin{proof}
(1) is obtained by \eqref{eq:average_unfold} with $\chi_p$ trivial
and \eqref{eq:b'int}.
We consider (2). Let
\begin{equation*}\label{eq:defc'}
\esC_{p^e}'(a,\chi)
	=p^{4e}L_p(\frac13,\chi^{-1})^{-1}\int_{\Q_p^\times\times \Q_p^3}
		\tilde\chi_p(t)|t|_p^{1/3}\Phi_{p,a}(t,u_2,u_3,u_4)d^\times tdu_2du_3du_4.
\end{equation*}
Then by \eqref{eq:i_int} we have
$\esC_{p^e}'(a,\chi)
=I_{p^e}(a_1,\chi)$.
Hence by \eqref{eq:average_unfold} we have
\[
\esC_{p^e}(a,\chi)
=|G_{p^e}|^{-1}\sum_{g\in G_{p^e}}\chi_p(\det g)\esC_{p^e}'(ga,\chi)
=|G_{p^e}|^{-1}\sum_{g\in G_{p^e}}\chi_p(\det g)I_{p^e}((ga)_1,\chi).
\]
In particular we have \eqref{eq:cg}. Moreover, note that
$(ga)_1=(\det g)^{-1}a((1,0)g)$.
Since $I_{p^e}(ty,\chi)=\chi_p(t)I_{p^e}(y,\chi)$ for
$t\in (\Z/p^e\Z)^\times$, we have
\begin{align*}
\esC_{p^e}(a,\chi)
&=|G_{p^e}|^{-1}\sum_{g\in G_{p^e}}I_{p^e}(a((1,0)g),\chi)
=|W'_{p^e}|^{-1}\sum_{(u,v)\in W'_{p^e}}I_{p^e}(a(u,v),\chi),
\end{align*}
as desired.
\end{proof}

\subsection{Unramified computation}
\label{subsec:ur}

In this subsection, we compute
$\esB_{p^e}(a)$ and $\esC_{p^e}(a,\chi)$ for
$p\nmid m$, that is, when $\chi$ is unramified at $p$.
By \eqref{eq:bc_invariant},
these depend only on
the $G_{p^e}$-orbit of $a$.
For $\esC_{p^e}$, we list the values of $(1-p^{-2})\esC_{p^e}(a,\chi)$
for convenience.

When $e=1$, we have the following.
\begin{prop}\label{prop:residue_ur_1}
For $e=1$, 
$\esB_p(a)$ and $(1-p^{-2})\esC_p(a,\chi)$
are given by:
\[
\begin{array}{c|c|l}
\hline
\text{\rm Type of $a$}
&\esB_p(a)
&(1-p^{-2})\esC_p(a,\chi)\\
\hline
\hline
(3)
&0
&(1-\chi(p)^{2}p^{-1/3})(1+p^{-1})\\
(21)
&1
&1-\chi(p)^{2}p^{-4/3}\\
(111)
&3
&(1-\chi(p)p^{-2/3})(1+\chi(p)^{2}p^{-1/3})^2\\
(1^21)
&\frac{p+2}{p+1}
&(1+\chi(p)^{2}p^{-1/3})(1-p^{-1})\\
(1^3)
&\frac{1}{p+1}
&1-\chi(p)^{2}p^{-4/3}\\
(0)
&1
&(1-p^{-2})\chi(p)^2p^{2/3}\\
\hline
\end{array}
\]
\end{prop}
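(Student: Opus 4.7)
The strategy is to apply the orbit-averaging formulas \eqref{eq:bb'} and \eqref{eq:cw} (both established above) at level $e = 1$ with $\chi$ unramified. Since $V_p(\sigma)$ is exactly the $G_p$-orbit of $a$, equation \eqref{eq:bb'} reduces to
\[
\esB_p(a) \;=\; \frac{1}{|V_p(\sigma)|}\sum_{b \in V_p(\sigma)} \esB_p'(b),
\]
and by \eqref{eq:b'} the value $\esB_p'(b)$ depends only on whether $b_1$ and $b_2$ vanish modulo $p$: it equals $p+1$ when $b_1 \equiv 0,\ b_2 \not\equiv 0 \pmod p$, equals $1$ when $b_1 \equiv b_2 \equiv 0 \pmod p$, and is $0$ otherwise. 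The condition $b_1 \equiv 0 \pmod p$ is equivalent to $v \mid b(u,v)$, i.e.\ $(0{:}1) \in \mathbb P^1_{\mathbb F_p}$ being a root of $b$; similarly $b_1 \equiv b_2 \equiv 0$ is equivalent to $v^2 \mid b(u,v)$, i.e.\ $(0{:}1)$ being at least a double root. For each type $(\sigma)$ I would count these by the standard factorization description of $V_p(\sigma)$ (for instance in type $(1^21)$: $v \mid b$ either because $v$ is the simple linear factor or because it is the double factor, giving $p(p-1)$ elements of each kind, etc.).

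For $\esC_p(a,\chi)$ I would use \eqref{eq:cw}. The key observation is that for $e = c = 0$, Lemma~\ref{lem:character} gives $\tilde\chi_p|_{\mathbb Z_p^\times} = \mathbf 1$ and $\tilde\chi_p(p) = \chi(p)^{-1}$, so \eqref{eq:i_ur} collapses to the two-valued function
\[
I_p(y,\chi) \;=\;
\begin{cases}
p^{2/3}\chi(p)^{-1} & y \equiv 0 \bmod p, \\[2pt]
\dfrac{1 - \chi(p)^{-1} p^{-1/3}}{1 - p^{-1}} & y \not\equiv 0 \bmod p.
\end{cases}
\]
Parametrizing $W_p'$ as $p - 1$ scalar copies of the $p + 1$ points of $\mathbb P^1_{\mathbb F_p}$ (each scaling preserves vanishing), the sum in \eqref{eq:cw} depends only on the number $r$ of roots of $a$ in $\mathbb P^1_{\mathbb F_p}$, which is $r = 0,1,3,2,1$ respectively for types $(3),(21),(111),(1^21),(1^3)$. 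Type $(0)$ is treated separately, directly from the definition: $\esC_p(0,\chi) = I_p(0,\chi) = p^{2/3}\chi(p)^{-1}$.

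Substituting these counts into \eqref{eq:cw} yields, for $r \in \{0,1,2,3\}$, a one-parameter family $\esC_p(a,\chi) = r\,p^{2/3}\chi(p)^{-1}/(p+1) + (p+1-r)(1 - \chi(p)^{-1} p^{-1/3})\cdot p/((p+1)(p-1))$; multiplying by $1 - p^{-2}$ and using $\chi(p)^{-1} = \chi(p)^2$ (the assumption that $\chi$ is cubic) collapses each case to the displayed product form. The main obstacle is just the algebraic verification that these linear-in-$r$ expressions factor cleanly, most delicately for type $(111)$ where one must recognise the squared factor $(1 + \chi(p)^2 p^{-1/3})^2$; apart from this the work is bookkeeping, and no integrals need to be computed directly because \eqref{eq:bb'} and \eqref{eq:cw} have already absorbed them.
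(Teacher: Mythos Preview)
Your approach is correct and essentially the same as the paper's. The paper also reduces via \eqref{eq:bb'} to counting, for each orbit type $(\sigma)$, the proportions $n_p^i(\sigma)/n_p(\sigma)$ of elements with $a_1\neq 0$, with $a_1=0,a_2\neq 0$, and with $a_1=a_2=0$ (exactly your ``$(0{:}1)$ is a simple/double root'' dichotomy), and for $\esC_p$ it uses the companion formula \eqref{eq:cg} rather than \eqref{eq:cw}; since $(ga)_1=0$ iff $(1,0)g$ is a zero of $a$, the two collapse to the same count, namely your $r/(p+1)$, and the algebra is then identical (the paper packages it via $x=\chi(p)^2p^{-1/3}$ with $x^3=p^{-1}$).
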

\begin{proof}
For the computation of $\esC_{p^e}(a,\chi)$,
it is convenient to put
\begin{equation}\label{eq:x}
\ww:=\tilde\chi_p(p)p^{-1/3}=\chi(p)^{2}p^{-1/3}.
\end{equation}
We note that since $\chi$ is a cubic character, $\ww^3=p^{-1}$.
Let
$n_p^0(\sigma)=|\{a\in V_p(\sigma)\mid a_1\neq0\}|$,
$n_p^1(\sigma)=|\{a\in V_p(\sigma)\mid a_1=0,a_2\neq0\}|$, and
$n_p^2(\sigma)=|\{a\in V_p(\sigma)\mid a_1=a_2=0\}|$.
Note that $\sum_{0\leq i\leq 2} n_p^i(\sigma)=n_p(\sigma)$.
Then for $a\in V_p(\sigma)$,
respectively by \eqref{eq:bb'}, \eqref{eq:b'} and
by \eqref{eq:cg}, \eqref{eq:i_ur},
we have
\begin{align*}
\esB_p(a)
&=\frac{n_p^0(\sigma)}{n_p(\sigma)}\cdot 0
+\frac{n_p^1(\sigma)}{n_p(\sigma)}\cdot p(1+p^{-1})
+\frac{n_p^2(\sigma)}{n_p(\sigma)}\cdot 1,\\
\esC_p(a,\chi)
&=\frac{n_p^0(\sigma)}{n_p(\sigma)}\cdot\frac{1-\ww}{1-\ww^3}
+\frac{n_p^1(\sigma)+n_p^2(\sigma)}{n_p(\sigma)}\cdot\frac{1}{\ww^2}.
\end{align*}
For $(\sigma)=(1^21)$, we see that
the ratio $n_p^0(\sigma):n_p^1(\sigma):n_p^2(\sigma)$
is $(p-1):1:1$ and hence
\begin{align*}
\esB_p(a)
&=\frac{1}{p+1}\cdot p(1+p^{-1})+\frac{1}{p+1}\cdot 1=\frac{p+2}{p+1},\\
\esC_p(a,\chi)
&=\frac{p-1}{p+1}\cdot \frac{1-\ww}{1-\ww^3}+
\frac{2}{p+1}\cdot \frac{1}{\ww^2}
=\frac{1-\ww^3}{1+\ww^3}\cdot\frac{1-\ww}{1-\ww^3}
+\frac{2\ww^3}{1+\ww^3}\cdot\frac{1}{\ww^2}
=\frac{1+\ww}{1+\ww^3}.
\end{align*}
The other cases are computed similarly.
For $(\sigma)=(3), (21), (111), (1^21), (1^3)$, and $(0)$,
the ratio $n_p^0(\sigma):n_p^1(\sigma):n_p^2(\sigma)$
is $1:0:0$, $p:1:0$, $(p-2):3:0$, $(p-1):1:1$, $p:0:1$,
and $0:0:1$ respectively, and the result follows.
\end{proof}

We now consider the case $e\geq2$. In connection with counting cubic fields,
we mainly work for $a\in V_{p^e}^{\max}$, i.e.,
of type 
$(3), (21), (111), (1^21_{\max})$, or $(1^3_{\max})$.
This is a generalization of results of
Datskovsky-Wright \cite[Theorem 5.2 and Proposition 5.3]{dawra}
to unramified characters.

\begin{prop}\label{prop:residue_ur_max}
Let $e\geq2$
and $a\in V_{p^e}^{\max}$.
Then
$\esB_{p^e}(a)$ and $(1-p^{-2})\esC_{p^e}(a,\chi)$
are given by:
\[
\begin{array}{c|c|l}
\hline
\text{\rm Type of $a$}
&\esB_{p^e}(a)
&(1-p^{-2})\esC_{p^e}(a,\chi)\\
\hline
\hline
(3)
&0
&(1-\chi(p)^{2}p^{-1/3})(1+p^{-1})\\
(21)
&1
&1-\chi(p)^{2}p^{-4/3}\\
(111)
&3
&(1-\chi(p)p^{-2/3})(1+\chi(p)^{2}p^{-1/3})^2\\
(1^21_{\max})
&1
&(1+\chi(p)^{2}p^{-1/3})(1-\chi(p)p^{-2/3})\\
(1^3_{\max})
&0
&1-\chi(p)p^{-2/3}\\
\hline
\end{array}
\]
In particular, they depend only on the orbital type of $a$.
\end{prop}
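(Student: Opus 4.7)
The plan is to reduce the computation to small values of $e$ by invoking the orbit-lifting results of Section \ref{sec:orbit}, and then to evaluate the residual expressions explicitly. The key enabling fact is that, since $\chi$ is unramified at $p$, the local character $\chi_p$ is trivial, so $\chi_p \circ \det$ is trivial on every subgroup of $G_{p^e}$. Lemma \ref{lem:residue_orbit} therefore applies unconditionally whenever $G_{p^e} \cdot a + p^e V_{\Z_p}$ is a single $G_{\Z_p}$-orbit, yielding $\esB_{p^{e'}}(a') = \esB_{p^e}(a)$ and $\esC_{p^{e'}}(a', \chi) = \esC_{p^e}(a, \chi)$ for every $e' \geq e$ and every lift $a'$ of $a$.

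For the types $(3), (21), (111)$, Proposition \ref{prop:e_enough}(1) provides the required single-orbit lift already at $e = 1$. Hence the values $\esB_{p^e}(a), \esC_{p^e}(a,\chi)$ agree with $\esB_p(a \bmod p), \esC_p(a \bmod p,\chi)$ for every $e \geq 1$, and the first three rows of the table are directly inherited from Proposition \ref{prop:residue_ur_1}.

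For $(1^21_{\max})$ with $p \neq 2$ and $(1^3_{\max})$ with $p \neq 3$, Proposition \ref{prop:e_enough}(2),(3) reduces the problem to $e = 2$. I would take orbit representatives from Proposition \ref{prop:singular_orbits}, namely $a = (0,1,0,pu)$ for $(1^21_{\max})$ and $a = (1,0,0,pu)$ for $(1^3_{\max})$ with $u \in (\Z/p\Z)^\times$, and compute: for $\esB_{p^2}(a)$, apply \eqref{eq:bb'} and partition the sum over $G_{p^2}$ by the $p$-adic valuations of $(ga)_1$ and $(ga)_2$, using the stabilizer descriptions of Lemma \ref{lem:stabilizer_singular_p^2} to keep track of multiplicities; for $\esC_{p^2}(a,\chi)$, apply \eqref{eq:cw} and partition the sum over $(u,v) \in W'_{p^2}$ by the valuation of $a(u,v)$, then use \eqref{eq:i_ur} with $\ww := \chi(p)^2 p^{-1/3}$ and the relation $\ww^3 = p^{-1}$ to simplify. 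The exceptional cases $(p,\sigma) = (3,(1^3_{\max}))$ and $(2,(1^21_{\max}))$ are handled at higher $e$: for the former, Proposition \ref{prop:e_enough_3} gives the reduction to $e = 3$; for the latter, one lifts to an $e$ at which the two $G_{\Z_2}$-orbits corresponding to the two ramified quadratic extensions of $\Q_2$ separate, and runs the analogous computation.

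The principal obstacle is the verification that the final answer depends only on the orbital type, not on the specific $G_{p^2}$-orbit within that type. For $(1^21_{\max})$ there are two such orbits (indexed by $u$ modulo $(\mathbb{F}_p^\times)^2$), and for $(1^3_{\max})$ with $p \equiv 1 \pmod 3$ there are three (indexed by $u$ modulo $(\mathbb{F}_p^\times)^3$). Lemma \ref{lem:residue_basic}(1) a priori guarantees only $G_{p^e}$-invariance on each single orbit, so the cancellation of the $u$-dependence after summation must be extracted from the explicit computation. This is the expected outcome, since intrinsically the local integrals should depend only on the $\Q_p$-isomorphism class of the cubic $\Q_p$-algebra $R \otimes \Q_p$ attached to $a$, which is determined by the orbital type.
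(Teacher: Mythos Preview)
Your treatment of types $(3),(21),(111)$ matches the paper exactly: reduce to $e=1$ via Lemma~\ref{lem:residue_orbit} and Proposition~\ref{prop:e_enough}(1), then quote Proposition~\ref{prop:residue_ur_1}.

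For the ramified types $(1^21_{\max})$ and $(1^3_{\max})$, however, the paper takes a shorter and more uniform route than yours. Rather than invoking Proposition~\ref{prop:e_enough}(2),(3) to reduce to $e=2$, the paper works directly at arbitrary $e\geq 2$: it takes a generic representative $a=(0,1,a_3,a_4)\in\cD_{p^e}(1^21_{\max})$ (resp.\ $a=(1,a_2,a_3,a_4)\in\cD_{p^e}(1^3_{\max})$), writes down $(ga)_1$ for $g=\left(\begin{smallmatrix}q&r\\s&t\end{smallmatrix}\right)\in G_{p^e}$ explicitly, and reads off $\ord_p((ga)_1)$ and $\ord_p(a(u,v))$ from the valuation constraints on $a_2,a_3,a_4$. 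The resulting case split is short and, crucially, never uses any hypothesis on $p$: the argument works uniformly for $p=2$ and $p=3$ as well. This entirely eliminates the exceptional-prime analysis you propose.

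Your plan for those exceptional primes contains a factual error: $\Q_2$ has six ramified quadratic extensions, not two (since $|\Q_2^\times/(\Q_2^\times)^2|=8$), so the orbit picture at $p=2$ is more complicated than you suggest. The paper's direct computation sidesteps this entirely.

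Finally, what you call the ``principal obstacle''---that the answer depend only on the orbital type and not on the individual $G_{p^e}$-orbit---is a non-issue in the paper's approach. Because the computation is carried out for a \emph{generic} element of $\cD_{p^e}(\sigma)$ and the final expression visibly does not involve the specific values of $a_2,a_3,a_4$ (only their valuations, which are fixed by $\sigma$), independence of the particular orbit is automatic.
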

\begin{proof}
For $(\sigma)=(3), (21), (111)$,
$V_{\Z_p}(\sigma)$ is a single $G_{\Z_p}$-orbit.
Hence,
if $a$ is of one of these types,
Lemma \ref{lem:residue_orbit} reduces our calculation to 
the case $e=1$, handled in Proposition \ref{prop:residue_ur_1}.
Therefore we consider the remaining cases. We put
$R=\Z/p^e\Z$, 
and again write $\ww = \chi(p)^2 p^{-1/3}$ as in \eqref{eq:x}.

First let $a$ be of type $(1^21_{\rm max})$.
By Theorem \ref{thm:defa}, each orbit in $V_{p^e}(1^21_{\max})$
contains some $a=(0,1,a_3,a_4)$
where $a_3\in pR$ and $a_4\in pR^\times$.
Let $g=\twtw qrst\in G_{p^e}$.
Then the first coordinate $(ga)_1$ of $ga$ is
$(ga)_1=(\det g)^{-1}r(q^2+a_3qr+a_4r^2)$.
Since $a_3\in pR$ and $a_4\in pR^\times$,
\[
\ord_p(q^2+a_3qr+a_4r^2)=
\begin{cases}
0 & \ord_p(q)=0,\\
1 & \ord_p(q)\geq1.
\end{cases}
\]
Hence $(ga)_1=0$ if and only if $r=0$
and in this case, $(ga)_2=q\in R^\times$.
Hence by \eqref{eq:b'}, 
$\esB_{p^e}'(ga)=p^e(1+p^{-1})$ if $r=0$ and $0$ otherwise.
Since
$|\{g\in G_{p^e}\mid r=0\}|=p^{3e}(1-p^{-1})^2$,
by \eqref{eq:bb'} we have
$\esB_{p^e}(a)=1$.

We compute $\esC_{p^e}(a,\chi)$ using
\eqref{eq:cw} and \eqref{eq:i_ur}.
Let $(u,v)\in W'_{p^e}$. Then $a(u,v)=v(u^2+a_3uv+a_4v^2)$.
If $u^2+a_3uv+a_4v^2\notin R^\times$, then
$u\notin R^\times$ and hence
$v\in R^\times$. Therefore,
\[
\ord_p(a(u,v))
=\begin{cases}
1 & \ord_p(u)\geq1,\\
\ord_p(v) & \ord_p(u)=0.\\
\end{cases}
\]
The cardinality of the subsets
$\{\ord_p(u)\geq1\}$,
$\{\ord_p(u)=0,\ord_p(v)=m<e\}$
and
$\{v=0\}$ of $W'_{p^e}$
are 
$p^{2e-1}(1-p^{-1})$,
$p^{2e-m}(1-p^{-1})^2$
and $p^{e}(1-p^{-1})$, respectively.
Hence by
\eqref{eq:cw} and \eqref{eq:i_ur},
\begin{align*}
\esC_{p^e}(a,\chi)
&=\frac{p^{2e-1}(1-p^{-1})}{p^{2e}(1-p^{-2})}\cdot\frac{(1-\ww)p\ww}{1-p^{-1}}
+\sum_{0\leq m<e}\frac{p^{2e-m}(1-p^{-1})^2}{p^{2e}(1-p^{-2})}\cdot\frac{(1-\ww)p^m\ww^m}{1-p^{-1}}\\
&\quad+\frac{p^{e}(1-p^{-1})}{p^{2e}(1-p^{-2})}\cdot p^e\ww^e
=\frac{(1+\ww)(1-\ww^2)}{1-p^{-2}}.
\end{align*}

Next let $a$ be of type $(1^3_{\rm max})$. We may assume
$a=(1,a_2,a_3,a_4)$ where $a_2,a_3\in pR$, $a_4\in pR^\times$.
Let $g\in G_{p^e}$ as above. Then
$(ga)_1=(\det g)^{-1}(q^3+a_2q^2r+a_3qr^2+a_4r^3)$.
Since $a_2,a_3\in pR$ and $a_4\in pR^\times$,
$\ord_p(q^3+a_2q^2r+a_3qr^2+a_4r^3)\leq 1$
and hence $(ga)_1$ is always nonzero. Hence $\esB_{p^e}(a)=0$.
Also the order of $\ord_p(a(u,v))$ is $0$ if $\ord_p(u)=0$
and $1$ otherwise. Hence
\[
\esC_{p^e}(a,\chi)
=\frac{p^{2e}(1-p^{-1})}{p^{2e}(1-p^{-2})}\cdot\frac{(1-\ww)}{1-p^{-1}}
+\frac{p^{2e-1}(1-p^{-1})}{p^{2e}(1-p^{-2})}\cdot\frac{(1-\ww)p\ww}{1-p^{-1}}
=\frac{1-\ww^2}{1-p^{-2}}.
\]
This finishes the proof.
\end{proof}

As corollaries we have the following.
Recall that we introduced
$f_p\in C(V_p)$, $\Phi_{p},\Phi_{p}'\in C(V_{p^2})$
in Definitions \ref{defn:f_p}, \ref{defn:f_p^2}
and the distributions $\esA_N,\esB_N,\esC_N$
in \eqref{eq:residue_f}.
\begin{cor}\label{cor:f_p}
Assume that $\chi$ is unramified at $p$. We have
\[
\esA_p(f_p)=\esB_p(f_p)
=\frac{1}{p}+\frac{1}{p^2}-\frac{1}{p^3},
\quad
\esC_p(f_p,\chi)
=\frac{1}{p}+\frac{\chi(p)^{2}}{p^{4/3}}-\frac{\chi(p)^{2}}{p^{7/3}}.
\]
\end{cor}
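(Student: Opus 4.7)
The proof is a direct computation, combining three ingredients already established: the definition of $f_p$ as the characteristic function of $\{a \in V_p : P(a) = 0\} = V_p(1^21) \sqcup V_p(1^3) \sqcup V_p(0)$; the orbit cardinalities $n_p(1^21) = p(p^2-1)$, $n_p(1^3) = p^2 - 1$, $n_p(0) = 1$ from Lemma \ref{lem:n_p(sigma)}; and the orbit-by-orbit values of $\esB_p(a)$ and $\esC_p(a,\chi)$ from Proposition \ref{prop:residue_ur_1}. Since $\esB_p$ and $\esC_p$ depend only on the $G_p$-orbit of $a$ (Lemma \ref{lem:residue_basic}(1)), one can simply multiply the relevant cardinalities by the listed values.

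For $\esA_p(f_p)$, the sum of cardinalities is $p(p^2-1) + (p^2-1) + 1 = p^3 + p^2 - p$, giving $p^{-1} + p^{-2} - p^{-3}$. For $\esB_p(f_p)$ I would insert $\esB_p(a_{(1^21)}) = (p+2)/(p+1)$, $\esB_p(a_{(1^3)}) = 1/(p+1)$, $\esB_p(0) = 1$; after cancelling $(p+1)$ against $p^2 - 1 = (p-1)(p+1)$, the numerator becomes $p(p-1)(p+2) + (p-1) + 1 = p^3 + p^2 - p$, matching $\esA_p(f_p)$.

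The computation for $\esC_p(f_p, \chi)$ is the only one requiring care. Writing $\psi := \chi(p)^2$ and multiplying each summand by the common denominator $(1-p^{-2})$, the three contributions factor as
\[
(p-1)(p+1)\bigl[(p-1)(1 + \psi p^{-1/3}) + (1 - \psi p^{-4/3}) + \psi p^{-4/3}\bigr],
\]
where the three terms in the bracket come from types $(1^21), (1^3), (0)$ respectively. The $\psi p^{-4/3}$ cancellation is the key simplification; the bracket collapses to $p + (p-1)\psi p^{-1/3}$. Dividing by $(1-p^{-2}) = (p^2-1)/p^2$ and then by $p^4$ yields $p^{-1} + \psi p^{-4/3} - \psi p^{-7/3}$, which is the claimed formula.

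There is no real obstacle here; the result is essentially a bookkeeping consequence of the preceding proposition. The one place that requires minor attention is verifying that the value of $\esB_p$ and $\esC_p$ at $0$ is correctly read off from the $(0)$-row of the table in Proposition \ref{prop:residue_ur_1}, and that the factor $(1-p^{-2})$ built into that tabulation is uniformly cleared; beyond that the proof is a two-line computation for each of the three identities.
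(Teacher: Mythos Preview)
Your proposal is correct and follows essentially the same route as the paper's proof: both plug the orbit cardinalities from Lemma~\ref{lem:n_p(sigma)} and the orbit values from Proposition~\ref{prop:residue_ur_1} into the defining sums \eqref{eq:residue_f} and simplify. The only cosmetic difference is that the paper packages the computation with the variable $\ww=\chi(p)^2p^{-1/3}$ (exploiting $\ww^3=p^{-1}$), whereas you keep $\psi=\chi(p)^2$ separate from the powers of $p$; the algebra and the key cancellation are the same.
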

\begin{proof}
Let $\ww$ be as in \eqref{eq:x}.
By Lemma \ref{lem:n_p(sigma)} and Proposition \ref{prop:residue_ur_1},
we have
\begin{align*}
\esA_p(f_p)
&=\frac{p(p^2-1)}{p^4}+\frac{(p^2-1)}{p^4}+\frac{1}{p^4}
=\frac{1}{p}+\frac{1}{p^2}-\frac{1}{p^3},\\
\esB_p(f_p)
&=\frac{p(p^2-1)}{p^4}\frac{p+2}{p+1}+\frac{(p^2-1)}{p^4}\frac{1}{p+1}+\frac{1}{p^4}
=\frac{1}{p}+\frac{1}{p^2}-\frac{1}{p^3},\\
\esC_p(f_p,\chi)
&=\frac{p(p^2-1)}{p^4}\frac{1+\ww}{1+\ww^3}
+\frac{(p^2-1)}{p^4}\frac{1-\ww^4}{1-\ww^6}
+\frac{1}{p^4}\frac{1}{\ww^2}
=\ww^3+\ww^4-\ww^7,
\end{align*}
as desired. Again we note that $\ww^3=p^{-1}$.
\end{proof}
\begin{cor}\label{cor:f_p^2_residue}
Assume that $\chi$ is unramified at $p$. Then
\begin{align*}
\esA_{p^2}(\Phi_{p})&=\frac{1}{p^2}+\frac{1}{p^3}-\frac{1}{p^5},
&\esA_{p^2}(\Phi_{p}')&=\frac{2}{p^2}-\frac{1}{p^4},\\
\esB_{p^2}(\Phi_{p})&=\frac{2}{p^2}-\frac{1}{p^4},
&\esB_{p^2}(\Phi_{p}')&=\frac{2}{p^2}-\frac{1}{p^4},\\
\esC_{p^2}(\Phi_{p},\chi)&=\frac{\chi(p)}{p^{5/3}}+\frac{1}{p^{2}}-\frac{\chi(p)}{p^{11/3}},
&\esC_{p^2}(\Phi_{p}',\chi)&=\frac{\chi(p)}{p^{5/3}}+\frac{2}{p^{2}}-\frac{\chi(p)}{p^{8/3}}-\frac{1}{p^3}.
\end{align*}
\end{cor}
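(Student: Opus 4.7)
The plan is to decompose the supports of $\Phi_p$ and $\Phi_p'$ into pieces on which the distributions $\esB_{p^2}(\cdot)$ and $\esC_{p^2}(\cdot,\chi)$ are either already tabulated (Propositions \ref{prop:residue_ur_1}, \ref{prop:residue_ur_max}) or else can be reached by the global identity \eqref{eq:bc_sum}. The computation of $\esA_{p^2}(\Phi_p)$ and $\esA_{p^2}(\Phi_p')$ is immediate from the definition \eqref{eq:residue_f}: these are $|V_{p^2}^{\rm nm}|/p^8$ and $|\tilde V_{p^2}^{\rm nm}|/p^8$, and the cardinalities are assembled from Lemma \ref{lem:n_p^2(sigma)} together with $|pV_{p^2}|=p^4$.

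For $\esB_{p^2}$ and $\esC_{p^2}$, I split each sum according to the decomposition
\[
V_{p^2}^{\rm nm}=pV_{p^2}\sqcup V_{p^2}(1^3_{\ast\ast})\sqcup V_{p^2}(1^3_\ast)\sqcup V_{p^2}(1^21_\ast),
\]
with the extra piece $V_{p^2}(1^3_{\max})$ added for $\Phi_p'$. On $pV_{p^2}$, Lemma \ref{lem:residue_basic} (3) reduces to $V_p$: writing $a=pb$ with $b\in V_p$, we get $\esB_{p^2}(pb)=\esB_p(b)$ and $\esC_{p^2}(pb,\chi)=\tilde\chi_p(p)p^{2/3}\esC_p(b,\chi)=\chi(p)^{-1}p^{2/3}\esC_p(b,\chi)$; then \eqref{eq:bc_sum} at $e=1$ gives
\[
\sum_{a\in pV_{p^2}}\esB_{p^2}(a)=p^4,\qquad
\sum_{a\in pV_{p^2}}\esC_{p^2}(a,\chi)=\chi(p)^{-1}p^{14/3}.
\]
On $V_{p^2}(1^3_{\max})$, Proposition \ref{prop:residue_ur_max} gives $\esB_{p^2}(a)=0$ and the constant value $\esC_{p^2}(a,\chi)=(1-\chi(p)p^{-2/3})/(1-p^{-2})$; multiplying by $n_{p^2}(1^3_{\max})$ from Lemma \ref{lem:n_p^2(sigma)} handles this piece. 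In particular, the vanishing of $\esB_{p^2}$ on $V_{p^2}(1^3_{\max})$ explains at once why $\esB_{p^2}(\Phi_p)=\esB_{p^2}(\Phi_p')$.

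The remaining singular piece $V_{p^2}(1^3_{\ast\ast})\sqcup V_{p^2}(1^3_\ast)\sqcup V_{p^2}(1^21_\ast)$ is the awkward one, since orbit-by-orbit values of $\esB_{p^2}$ and $\esC_{p^2}$ on these strata have not been computed and the hypothesis of Lemma \ref{lem:residue_orbit} fails there. I would sidestep this by applying the total-sum identity \eqref{eq:bc_sum} at $e=2$,
\[
\sum_{a\in V_{p^2}}\esB_{p^2}(a)=\sum_{a\in V_{p^2}}\esC_{p^2}(a,\chi)=p^8,
\]
and subtracting off the $pV_{p^2}$ contribution (computed above) together with the $V_{p^2}^{\max}$ contribution (assembled from Proposition \ref{prop:residue_ur_max} weighted by the cardinalities in Lemma \ref{lem:n_p^2(sigma)}); what remains is precisely the sought-after singular sum. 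For $\esB$, a direct bookkeeping gives $\sum_{V_{p^2}^{\max}}\esB_{p^2}(a)=p^4(p^2-1)^2$, so the singular sum is $p^8-p^4(p^2-1)^2-p^4=2p^6-2p^4$, and assembling yields $\esB_{p^2}(\Phi_p)=p^{-8}(p^4+2p^6-2p^4)=2p^{-2}-p^{-4}$.

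Dividing by $p^8$ and assembling the three pieces gives the stated closed forms. The main obstacle is bookkeeping in the $\esC_{p^2}$ calculation: the orbital values in Proposition \ref{prop:residue_ur_max} involve the fractional powers $p^{-1/3}, p^{-2/3}, p^{-4/3}$ weighted by powers of $\chi(p)$, and combining them requires care. The substitution $w=\chi(p)^2p^{-1/3}$ (so $w^3=p^{-1}$ and $\chi(p)p^{-2/3}=w^2$) used in the proof of Proposition \ref{prop:residue_ur_1} should streamline this; in that variable the $pV_{p^2}$ contribution to $\esC_{p^2}(\Phi_p,\chi)$ is $w^{10}$ and the target value is $w^5+w^6-w^{11}$, so the singular sum must equal $w^5(1+w)(1-w^5)$, a cross-check one can verify directly from the max-orbit bookkeeping.
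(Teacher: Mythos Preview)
Your approach is correct and essentially the same as the paper's: both use the identity \eqref{eq:bc_sum} at $e=2$ and compute the $V_{p^2}^{\max}$ contribution from Proposition \ref{prop:residue_ur_max} weighted by Lemma \ref{lem:n_p^2(sigma)}, with the same substitution $w=\chi(p)^2p^{-1/3}$. The only difference is that the paper writes directly $\esC_{p^2}(\Phi_p,\chi)=1-p^{-8}\sum_{a\in V_{p^2}^{\max}}\esC_{p^2}(a,\chi)$ (since $V_{p^2}^{\rm nm}=V_{p^2}\setminus V_{p^2}^{\max}$), whereas you separate out the $pV_{p^2}$ contribution first and then subtract it back in; as your own bookkeeping shows, that contribution cancels and the detour is unnecessary.
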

\begin{proof}
Let $\ww$ be as in \eqref{eq:x}.
By \eqref{eq:bc_sum}, we have
\[
\esC_{p^2}(\Phi_{p},\chi)
=p^{-8}\sum_{a\in V_{p^2}^{\rm nm}}\esC_{p^2}(a,\chi)
=1-p^{-8}\sum_{a\in V_{p^2}^{\rm max}}\esC_{p^2}(a,\chi).
\]
Hence by Lemma \ref{lem:n_p^2(sigma)} and Proposition \ref{prop:residue_ur_max},
\begin{align*}
\esC_{p^2}(\Phi_{p},\chi)
&=1-\frac{p^4(p^2-1)(p^2-p)}{p^8}\frac{1-\ww^2}{1-\ww^6}
\left(\frac{1-\ww+\ww^2}{3}+\frac{1+\ww^2}{2}+\frac{(1+\ww)^2}{6}+\frac{1+\ww}{p}+\frac{1}{p^2}\right)\\
&=1-(1-\ww^3)(1-\ww^2)(1+\ww^2+\ww^3+\ww^4+\ww^6)=\ww^5+\ww^6-\ww^{11},
\end{align*}
and the result follows.
By adding the contribution from $V_{p^2}(1^3_{\rm max})$, we also have
\[
\esC_{p^2}(\Phi_{p}',\chi)
=1-(1-\ww^3)(1-\ww^2)(1+\ww^2+\ww^3+\ww^4)
=\ww^5+2\ww^6-\ww^8-\ww^9,
\]
as desired.
The other formulas are proved similarly.
\end{proof}

For its own interest, when $e=2$ we compute
$\esB_{p^e}(a)$ and $\esC_{p^2}(a,\chi)$
for $a\in V_{p^2}^{\rm nm}$ also. By Lemma \ref{lem:residue_basic} (3),
we may assume $a\in V_{p^2}^{\rm nm}\setminus pV_{p^2}$.
\begin{prop}\label{prop:residue_ur_nm}
Let $a\in V_{p^2}$
be of type $(1^21_{\ast})$, $(1^3_{\ast})$ or $(1^3_{\ast\ast})$.
Then $\esB_{p^e}(a)$ and $(1-p^{-2})\esC_{p^2}(a,\chi)$
are given in the following table.
\[
\begin{array}{c|c|c}
\hline
\text{\rm Type of $a$} & \esB_{p^2}(a) & (1-p^{-2})\esC_{p^2}(a,\chi)\\
\hline\hline
& \frac{2p+1}{p+1}\ (p\neq2)\\
(1^21_{\ast}) & \frac43\ (p=2, a=(0,1,0,0)) & (1+\chi(p)p^{1/3})(1-p^{-1})\\
& \frac53\ (p=2, a=(0,1,2,0))\\
\hline
(1^3_\ast) & 1 & (1+\chi(p)p^{1/3})(1-\chi(p)p^{-2/3})\\
\hline
(1^3_{\ast\ast}) & \frac{1}{p+1} & (1+\chi(p)p^{1/3})(1-\chi(p)p^{-2/3})\\
\hline
\end{array}
\]
\end{prop}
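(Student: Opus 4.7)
The proof follows the template of Propositions \ref{prop:residue_ur_1} and \ref{prop:residue_ur_max}: choose an orbital representative from Proposition \ref{prop:singular_orbits} in each case and evaluate $\esB_{p^2}$ and $\esC_{p^2}$ via the formulas \eqref{eq:bb'} and \eqref{eq:cw} of Lemma \ref{lem:residue_finiteorbit}. The natural representatives are $a = (0, 1, 0, 0)$ for $(1^21_\ast)$ (single orbit when $p \neq 2$), $a = (1, 0, pu_0, 0)$ with $u_0 \in (\Z/p\Z)^\times$ for $(1^3_\ast)$, and $a = (1, 0, 0, 0)$ for $(1^3_{\ast\ast})$. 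By Lemma \ref{lem:residue_basic}(1), $\esB_{p^2}$ is a class function; and for $\esC_{p^2}$, a direct check will show that the value does not depend on the choice of $u_0$ modulo cubes in each of the two $(1^3_\ast)$-orbits.

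For $\esC_{p^2}(a, \chi)$ I will stratify $(u, v) \in W'_{p^2}$ according to $\ord_p(a(u, v)) \in \{0, 1, \geq 2\}$ in $R = \Z/p^2\Z$ and sum the corresponding values from \eqref{eq:i_ur}. For $(1^21_\ast)$ the form $a(u, v) = u^2 v$ produces three strata of cardinalities $p^2(p-1)^2$, $p(p-1)^2$ and $p(p-1)(p+1)$; for $(1^3_\ast)$ the form $a(u, v) = u(u^2 + pu_0 v^2)$ contributes only two strata, since $u \in pR$ already forces $u^3 \in p^3R = 0$ and $pu_0 u v^2 \in p^2R = 0$, hence $a(u, v) = 0$ in $R$; and the identical phenomenon occurs for $a(u, v) = u^3$ in type $(1^3_{\ast\ast})$. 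This common stratification explains the agreement of the $\esC$ values for $(1^3_\ast)$ and $(1^3_{\ast\ast})$. To match the claimed product forms I will use the identities $\ww^3 = p^{-1}$ (so $p\ww^2 = \ww^{-1}$) and $\chi(p)^3 = 1$, which give $\chi(p) p^{-2/3} = \ww^2$ and $\chi(p) p^{1/3} = \ww^{-1}$; for $(1^21_\ast)$ the resulting expression simplifies directly to $(1 + \chi(p) p^{1/3})(1 - p^{-1})$.

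For $\esB_{p^2}(a)$ I will write out $(ga)_1$ and $(ga)_2$ as polynomials in the entries of $g = \twtw qrst$ using the action matrix from Section \ref{sec:GVdefn}, then partition $G_{p^2}$ into a few loci based on the $p$-adic valuations of $q$ and $r$, subject to $\det g = qt - rs \in R^\times$. For $a = (1, 0, 0, 0)$, $(ga)_1 = (\det g)^{-1}q^3$ vanishes in $R$ exactly when $q \in pR$; then $q^2 = 0$ in $R$ forces $(ga)_2 = (\det g)^{-1} \cdot 3q^2 s = 0$; counting such $g$ (with $r, s \in R^\times$ and $t$ free) gives $p^5(p-1)^2$, yielding $\esB_{p^2}(a) = 1/(p+1)$. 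For $a = (1, 0, pu_0, 0)$ the same method shows that $(ga)_1 = 0$ still forces $q \in pR$ (since $q^2 + pu_0 r^2 \in R^\times$ when $q \in R^\times$), and the contributing strata $q = 0$ and $q \in pR^\times$ combine to give $\esB_{p^2}(a) = 1$. For $a = (0, 1, 0, 0)$ the condition $(ga)_1 = 0$ reduces to $q^2 r = 0$, and parsing the subcases $r = 0$, $q = 0$, and $q \in pR^\times$ produces the value $(2p+1)/(p+1)$ when $p \neq 2$.

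The remaining work is the case $p = 2$ of type $(1^21_\ast)$, not covered by Proposition \ref{prop:singular_orbits}(1). Here a direct check shows that $\cD_{p^2}(1^21_\ast)$ breaks into two $G_{p^2}$-orbits, represented by $(0, 1, 0, 0)$ and $(0, 1, 2, 0)$, and the $\esB$ and $\esC$ computations above must be redone separately for each, using the simplifications that $4 \cdot (\text{anything}) = 0$ in $\Z/4\Z$. The main obstacle throughout is organizational rather than conceptual: cleanly enumerating the $(q, r)$-strata and the cross-term cancellations (particularly for $p = 2$) without double-counting. The underlying arithmetic is elementary once the case structure is in place.
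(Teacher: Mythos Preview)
Your proposal is correct and follows essentially the same approach as the paper: pick the orbital representatives from Proposition \ref{prop:singular_orbits}, stratify $W'_{p^2}$ by $\ord_p(a(u,v))$ to evaluate $\esC_{p^2}$ via \eqref{eq:cw} and \eqref{eq:i_ur}, and analyze $(ga)_1,(ga)_2$ over the $(q,r)$-loci of $G_{p^2}$ to evaluate $\esB_{p^2}$ via \eqref{eq:bb'} and \eqref{eq:b'}, with the $p=2$ case of $(1^21_\ast)$ handled separately for the second orbit $(0,1,2,0)$. The only cosmetic difference is that the paper computes $\esC_{p^2}$ for a general $a=(0,1,a_3,0)\in\cD_{p^2}(1^21_\ast)$ at once (so the $p=2$ case of $\esC$ requires no extra work), whereas you specialize to $a_3=0$ first.
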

\begin{proof}
Since the proof is quite similar to Proposition \ref{prop:residue_ur_max},
we shall be brief. Let $R=\Z/p^2\Z$.

Let $a=(0,1,a_2,0)\in\cD_{p^2}(1^21_\ast)$.
We first compute $\esC_{p^2}(a,\chi)$.
Let $(u,v)\in W'_{p^2}$. Then
\[
\ord_p(a(u,v))=\ord_p(uv(u+a_2v))=
\begin{cases}
2 & u\in pR,\\
\ord_p(v) & u\in R^\times.\\
\end{cases}\]
Hence by \eqref{eq:cw} and \eqref{eq:i_ur},
\[
\esC_{p^2}(a,\chi)
=\frac{(1-p^{-1})^2}{1-p^{-2}}\cdot\frac{1-x}{1-p^{-1}}
+\frac{p^{-1}(1-p^{-1})^2}{1-p^{-2}}\cdot\frac{(1-x)px}{1-p^{-1}}
+\frac{p^{-1}(1-p^{-2})}{1-p^{-2}}\cdot p^2x^2
=\frac{1+x^{-1}}{1+p^{-1}}.
\]
We consider $\esB_{p^2}(a)$.
Let $a=(0,1,0,0)$ and $g=\twtw qrst\in G_{p^2}$.
Then $(ga)_1=0$ if and only if $r=0$ or $q\in pR$.
Moreover,
if $r=0$ then $(ga)_2\in R^\times$ and
if $q\in pR$ then $(ga)_2\in 2qR^\times$.
Then by \eqref{eq:bb'} and \eqref{eq:b'}, if $p\neq2$
we have
\[
\esB_{p^2}(a)
=\frac{p^{-2}(1-p^{-1})}{1-p^{-2}}\cdot p^2(1+p^{-1})
+\frac{p^{-1}(1-p^{-1})^2}{1-p^{-2}}\cdot p(1+p^{-1})
+\frac{p^{-2}(1-p^{-1})}{1-p^{-2}}\cdot1
=\frac{2p+1}{p+1},
\]
and if $p=2$ we have
\[
\esB_{p^2}(a)
=\frac{p^{-2}(1-p^{-1})}{1-p^{-2}}\cdot p^2(1+p^{-1})
+\frac{p^{-1}(1-p^{-1})}{1-p^{-2}}\cdot1
=\frac{p+2}{p+1}=\frac43.
\]
When $p\neq2$, $V_{p^2}(1^21_\ast)$
is a single orbit by Proposition \ref{prop:singular_orbits}
and hence this is enough.
When $p=2$ we see numerically
that there is one other orbit represented by $a=(0,1,2,0)$
in $V_{p^2}(1^21_\ast)$, and by a similar consideration
we have $\esB_{p^2}(a)=\frac53$ for this $a$.

This finishes the proof for type $(1^21_\ast)$.
The arguments for types $(1^3_\ast)$ and $(1^3_{\ast\ast})$
are similar and easier, so we omit the detail.
\end{proof}

\subsection{Ramified computation}
\label{subsec:rm}

In this subsection, we compute $\esC_{p^e}(a,\chi)$
for $p\mid m$, that is, at the primes $p$ where $\chi$ is ramified.
Since $\chi$ is cubic,
either $p\equiv1\mod 3$ or $p=3$,
and the conductor $p^c$ of $\chi$ is
\[
p^c=\begin{cases}
p & p\equiv1\mod 3,\\
p^2 & p=3.
\end{cases}
\]
We assume $e\geq c$.
We mainly work for $p\equiv1\mod 3$.
For $p=3$, the computation seems to be more complicated theoretically.
Fortunately there are only finitely many cases
so we simply use PARI/GP \cite{pari} for evaluation.

We first treat the case $e=1$ and (hence) $p\equiv1\mod 3$. 
The following is a refinement
of \cite[Proposition 5.4]{dawra}.
As in \cite{dawra},
we will encounter a curious ``cubic character sum
of a cubic polynomial'' which was
evaluated by Wright \cite{wrightc}.
\begin{prop}\label{prop:residue_rm_1}
Assume $p\equiv1\mod 3$.
Let $e=1$. We have
\begin{equation*}
(1-p^{-2})\esC_{p}(a,\chi)
=\begin{cases}
p^{-2}\tau(\chi_p)^3\chi_p(P(a)) & \text{$a$ : of type $(3)$, $(111)$},\\
-p^{-2}\tau(\chi_p)^3\chi_p(P(a)) & \text{$a$ : of type $(2)$},\\
0 &\text{$a$ : of type $(1^21)$, $(0)$},\\
\chi_p(\det g)^2 &\text{$a$ : of type $(1^3)$, $a=g(1,0,0,0), g\in G_p$}.\\
\end{cases}
\end{equation*}
Here
$\tau(\chi_p)=\sum_{t\in\mathbb F_p^\times}\chi_p(t)\exp(2\pi it/p)$
is the usual Gauss sum.
\end{prop}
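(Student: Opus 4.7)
The plan is to reduce $\esC_p(a,\chi)$ to a cubic character sum over $\F_p^2$ using formulas \eqref{eq:cw} and \eqref{eq:i_rm}. Since $\chi$ is ramified at $p$ with $c=1$, $I_p(y,\chi)$ equals $\chi_p(y)/(1-p^{-1})$ when $y\not\equiv 0\pmod p$ and vanishes otherwise; combined with $|W'_p|=p^2-1$ this gives
\[
(1-p^{-2})\esC_p(a,\chi)=\frac{1}{p(p-1)}\,S(a),
\qquad
S(a):=\sum_{(u,v)\in\F_p^2}\chi_p(a(u,v)),
\]
with the convention $\chi_p(0)=0$. Since $a(tu,tv)=t^3a(u,v)$ and $\chi_p(t^3)=1$ for $t\in\F_p^\times$ (as $\chi_p$ is cubic), $\chi_p(a(u,v))$ is constant on $\F_p^\times$-orbits, so $S(a)$ reduces to $(p-1)$ times a sum of $\chi_p\circ a$ over $\mathbb P^1_{\F_p}$.

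The degenerate types then follow directly. For $a=0$, $S(a)=0$ trivially. For type $(1^3)$, writing $a=g\cdot(1,0,0,0)$ gives $a(u,v)=(\det g)^{-1}\ell(u,v)^3$ for a nonzero linear form $\ell$; on the $p^2-p$ points where $\ell\neq 0$ one has $\chi_p(a(u,v))=\chi_p((\det g)^{-1})=\chi_p(\det g)^2$, so $S(a)=\chi_p(\det g)^2\cdot p(p-1)$ and the claimed value $\chi_p(\det g)^2$ follows. For type $(1^21)$, a $G_p$-translate brings $a$ to the form $c\,uv^2$; the sum factors as $\chi_p(c)\bigl(\sum_u\chi_p(u)\bigr)\bigl(\sum_v\chi_p(v^2)\bigr)$, and the first factor vanishes because $\chi_p$ is non-trivial.

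For the remaining types $(3)$, $(21)$, $(111)$, where $P(a)\in\F_p^\times$, I would appeal to Wright's evaluation \cite{wrightc} of the cubic character sum of a cubic polynomial. Picking a $G_p$-translate with $a_1\in\F_p^\times$ and dehomogenizing to $f(t):=a(t,1)$, one has $\Disc(f)=P(a)$ directly from the definition of the binary cubic discriminant, and Wright's formula evaluates $\sum_{t\in\F_p}\chi_p(f(t))$ as a signed multiple of $\tau(\chi_p)^3\chi_p(P(a))/p$, with the sign depending only on the splitting type of $f$ over $\F_p$. After folding in the contribution from $[1:0]\in\mathbb P^1_{\F_p}$, this produces
\[
S(a)=\pm(p-1)\,\tau(\chi_p)^3\,\chi_p(P(a))/p,
\]
with sign $+$ for types $(3)$ and $(111)$ and $-$ for type $(21)$; dividing by $p(p-1)$ yields the claimed $\pm\,p^{-2}\tau(\chi_p)^3\chi_p(P(a))$.

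The main obstacle is tracking the sign of Wright's formula carefully across the three splitting types. As a sanity check, for the type-$(111)$ representative $a(u,v)=uv(u-v)$ (so $P(a)=1$), the substitution $u=vw$ reduces $S(a)$ to $(p-1)\sum_w\chi_p(w(w-1))=(p-1)\chi_p(-1)J(\chi_p,\chi_p)$, and using $\chi_p(-1)=1$ and $\tau(\chi_p)\tau(\bar\chi_p)=p$ one computes $J(\chi_p,\chi_p)=\tau(\chi_p)^3/p$, matching the claimed formula exactly.
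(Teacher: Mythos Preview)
Your proposal is correct and follows essentially the same approach as the paper: both reduce $\esC_p(a,\chi)$ to the cubic character sum $S(a)=\sum_{(u,v)\in\F_p^2}\chi_p(a(u,v))$ via \eqref{eq:cw} and \eqref{eq:i_rm}, handle the degenerate types by direct computation, and invoke Wright \cite{wrightc} for the nondegenerate types, including the identity $J(\chi_p,\chi_p)=\tau(\chi_p)^3/p$. The only substantive difference is for type $(1^21)$: the paper observes that $\chi_p\circ\det$ is nontrivial on the stabilizer $G_{p,a}\cong\F_p^\times$ and concludes $\esC_p(a,\chi)=0$ immediately from \eqref{eq:bc_invariant}, whereas you compute $S$ directly for a representative (which is equally valid, using that $S(ga)=\chi_p(\det g)^{-1}S(a)$); also, the paper cites Wright's theorem in its binary form $S(a)=\pm(p-1)\chi_p(P(a))J(\chi_p,\chi_p)$ directly rather than dehomogenizing first.
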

\begin{proof}
If $a$ is of type $(1^21)$ or $(0)$, then
$\chi_p\circ\det$ is non-trivial on ${G_{p,a}}$ and hence
$\esC_p(a,\chi)=0$. We consider the other cases.
By \eqref{eq:cw} and \eqref{eq:i_rm} we have
\[
\esC_{p}(a,\chi)
=\frac{1}{(p^2-1)(1-p^{-1})}
\sum_{\substack{(u,v)\in\mathbb F_p^2,\ a(u,v)\neq0}}\chi_p(a(u,v)).
\]
The sum in the right hand side was studied by Wright \cite{wrightc}.
Let $J(\chi_p,\chi_p)
=\sum_{t\in\mathbb F_p^\times, t\neq1}\chi_p(t)\chi_p(1-t)$
be the Jacobi sum. If $a$ is of type $(3)$, $(21)$ or $(111)$,
then by \cite[Theorem 1]{wrightc},
\[
\sum_{\substack{(u,v)\in\mathbb F_p^2,\ a(u,v)\neq0}}\chi_p(a(u,v))
=\pm(p-1)\chi_p(P(a))J(\chi_p,\chi_p),
\]
where the sign is $+$ if $a$ is of type $(3)$ or $(111)$
and $-$ if $a$ is of type $(2)$. 
Since $\chi_p^2=\overline\chi_p\neq{\bf 1}$, we have
\[
J(\chi_p,\chi_p)
=\tau(\chi_p)^2/\tau(\overline\chi_p)
=\tau(\chi_p)^2\cdot \chi_p(-1)\tau(\chi_p)/p
=\tau(\chi_p)^3/p,
\]
and the result follows.
Let $a$ be of type $(1^3)$.
We have $a=ga_0$ for some $g\in G_p$,
where $a_0=(1,0,0,0)$.
Then $\esC_{p}(a,\chi)
=\chi_p(\det g)^{2}\esC_{p}(a_0,\chi)$
and
\[
\sum_{\substack{(u,v)\in\mathbb F_p^2,\ a_0(u,v)\neq0}}\chi_p(a_0(u,v))
=\sum_{u\in\mathbb F_p^\times,v\in\mathbb F_p}\chi_p(u^3)
=\sum_{u\in\mathbb F_p^\times,v\in\mathbb F_p}1
=p(p-1).
\]
Hence we have the formula.
\end{proof}

We now study the case $e\geq2$.
When $e=2$ and $a\in V_{p^2}^{\rm nm}$,
this is fairly easy. As before we may assume
$a\notin pV_{p^2}$. We have the following.
\begin{prop}\label{prop:residue_rm_nm}
If $a$ is of type $(1^21_\ast)$, $\esC_{p^2}(a,\chi)=0$.
Let $a$ be of type $(1^3_{\ast})$ or $(1^3_{\ast\ast})$.
We may assume $a=(1,a_2,a_3,0)$ is in
$\cD_{p^2}(1^3_{\ast})$
or $\cD_{p^2}(1^3_{\ast\ast})$,
and for these $a$,
\[
(1-p^{-2})\esC_{p^2}(a,\chi)
=\begin{cases} 1 & p\equiv1\mod 3,\\
\frac13(1+\chi_p(1+a_2+a_3)+\chi_p(1-a_2+a_3)) & p=3.\\
\end{cases}
\]
\end{prop}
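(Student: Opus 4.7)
The plan is to apply the formula (\ref{eq:cw}) together with the ramified expression (\ref{eq:i_rm}) for $I_{p^e}$ in each of the three cases, with the character sum reducing very concretely in each.

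First, for type $(1^21_{\ast})$: by Proposition~\ref{prop:singular_orbits}(1)(i), since $p\neq 2$ in both ramified situations, $V_{p^2}(1^21_{\ast})$ is a single $G_{p^2}$-orbit with representative $a=(0,1,0,0)$. Lemma~\ref{lem:stabilizer_singular_p^2}(1) gives its stabilizer as all matrices $\mathrm{diag}(1,t)$ with $t\in R^{\times}$. The character $\chi_p$ is non-trivial on $R^{\times}=(\Z/p^2\Z)^{\times}$ in both ramified situations (the conductor of $\chi_p$ being $p$ when $p\equiv 1\bmod 3$ and $p^2$ when $p=3$), so picking $g\in G_{p^2,a}$ with $\chi_p(\det g)\neq 1$ and applying (\ref{eq:bc_invariant}) immediately forces $\esC_{p^2}(a,\chi)=0$.

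For types $(1^3_{\ast})$ and $(1^3_{\ast\ast})$, I plan to apply (\ref{eq:cw}) to the representative $a=(1,a_2,a_3,0)$ directly. The key observation is that $a(u,v)=u(u^2+a_2uv+a_3v^2)$ vanishes identically in $R=\Z/p^2\Z$ when $u\in pR$ (using $a_2,a_3\in pR$ and $p\cdot p=0$ in $R$), so those terms drop out by (\ref{eq:i_rm}) and only the piece $u\in R^{\times}$, $v\in R$ contributes. For such $(u,v)$ I substitute $v=uw$ with $w\in R$, giving $a(u,uw)=u^3(1+a_2w+a_3w^2)$, in which the second factor lies in $1+pR\subset R^{\times}$. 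Cubicity of $\chi$ yields $\tilde\chi_p(u^3)=1$, so the $u$-dependence disappears from $\tilde\chi_p$, and after counting $|R^{\times}|$ values of $u$ per $w$ and simplifying cardinalities against $|W'_{p^2}|=p^2(p^2-1)$ and $(1-p^{-1})^{-1}$ from (\ref{eq:i_rm}), the formula reduces to
\[
(1-p^{-2})\esC_{p^2}(a,\chi)=\frac{1}{p^2}\sum_{w\in R}\tilde\chi_p(1+a_2w+a_3w^2).
\]

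It then remains to evaluate this character sum $S$. When $p\equiv 1\bmod 3$, the conductor is $p$, so $\tilde\chi_p$ factors through $\F_p^{\times}$, and each argument is $\equiv 1\bmod p$; hence every summand equals $1$, $S=p^2$, and the stated value $1$ drops out. When $p=3$, the conductor is $9$, and the main step is to check that $1+a_2w+a_3w^2\bmod 9$ depends only on $w\bmod 3$: the linear term $a_2w$ clearly does, and writing $a_3=3a_3'$ one sees $a_3w^2\equiv 0\bmod 9$ when $w\equiv 0\bmod 3$ and $a_3w^2\equiv a_3\bmod 9$ otherwise (since $w^2\equiv 1\bmod 3$ and $3a_3=0$ in $\Z/9\Z$). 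Each of the three residues $w\equiv 0,1,2\bmod 3$ then occurs $3$ times and produces $1$, $1+a_2+a_3$, $1-a_2+a_3$ respectively, giving $S=3[1+\chi_p(1+a_2+a_3)+\chi_p(1-a_2+a_3)]$ and the stated formula. There is no genuine obstacle; the only step requiring a little care is this mod-$9$ reduction for the $p=3$ case, which is short and entirely explicit.
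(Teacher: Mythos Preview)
Your proof is correct and follows essentially the same approach as the paper's own argument: for type $(1^21_\ast)$ both use the explicit stabilizer from Lemma~\ref{lem:stabilizer_singular_p^2}(1) together with the invariance (\ref{eq:bc_invariant}), and for types $(1^3_\ast)$, $(1^3_{\ast\ast})$ both apply (\ref{eq:cw}) with (\ref{eq:i_rm}), substitute $v=uw$, use cubicity to kill the $u^3$ factor, and reduce to the sum $p^{-2}\sum_{w\in R}\chi_p(1+a_2w+a_3w^2)$. Your treatment of the $p=3$ reduction modulo $3$ is in fact slightly more explicit than the paper's, but the content is identical.
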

\begin{proof}
If $a$ is of type $(1^21_\ast)$,
then by Lemma \ref{lem:stabilizer_singular_p^2} (1), 
$\chi\circ\det$ is non-trivial on $G_{p^2,a}$.
Hence Lemma \ref{lem:residue_basic} (1)
implies that $\esC_{p^2}(a,\chi)$ must vanish.

We put $R=\Z/p^2\Z$.
Let $a=(1,a_2,a_3,0)$
with $a_2,a_3\in pR$.
Depending on the type of $a$,
$a_3$ is in $pR^\times$ or $0$.
For $(u,v)\in W_{p^2}'$,
$a(u,v)=0$ if $u\in pR$. Hence by
\eqref{eq:cw} and \eqref{eq:i_rm},
\begin{align*}
(1-p^{-2})\esC_{p^2}(a,\chi)
&=p^{-4}(1-p^{-1})^{-1}
\sum_{u\in R^\times,v\in R}\chi_p(u^3+a_2u^2v+a_3uv^2).
\end{align*}
By changing $v$ to $uv$ and using that $\chi_p$ is cubic,
\begin{align*}
(1-p^{-2})\esC_{p^2}(a,\chi)
&=p^{-2}\sum_{v\in R}\chi_p(1+a_2v+a_3v^2).
\end{align*}
If $p\equiv1\mod 3$, then $\chi_p(1+a_2v+a_3v^2)=1$
since the conductor of $\chi_p$ is $p$.
If $p=3$, then $\chi_p(1+a_2v+a_3v^2)$ is determined
by $(v\mod 3)\in\{0,\pm1\}$.
Hence we have the formula.
\end{proof}

For $e\geq2$, we now compute $\esC_{p^e}(a,\chi)$
for $a\in V_{\Z_p}^{\max}$ or $a\in V_{p^e}^{\max}$.
For stating our result as well as
applications to counting cubic fields \cite{scc},
it is convenient to instead compute a quantity closely related to $\esC_{p^e}(a,\chi)$.
Let $a\in V_{\Z_p}^{\rm max}$.
We choose $e\geq c$ such that
\begin{enumerate}[(i)]
\item $G_{p^e}a+p^eV_{\Z_p}$ is a single $G_{\Z_p}$-orbit,
\item the value $\tilde\chi_p(P(a)/p^{\ord_p(P(a))})$ depends
only on $a\mod p^e$,
\end{enumerate}
and define
\begin{equation}
\tilde \esC_{p^e}(a,\chi):=(1-p^{-2})
\frac{\esC_{p^e}(a,\chi)}{\tilde\chi_p(P(a)/p^{\ord_p(P(a))})}.
\end{equation}
This depends only on the $G_{\Z_p}$-orbit of $a$, and
only on $a\mod p^e$. Hence
this depends only on the $G_{p^e}$-orbit of $a\mod p^e$.
For each $a$, we can choose such $e\geq c$ satisfying (i) and (ii)
as follows.
\begin{lem}
\begin{enumerate}[{\rm (1)}]
\item Let $p\equiv1\mod3$. For $a$ of type $(3), (21), (111)$, $e=1$ is enough.
For $a$ of type $(1^21_{\max}), (1^3_{\max})$, $e=2$ is enough.
\item Let $p=3$. For $a$ of type $(3), (21), (111)$, $e=2$ is enough.
For $a$ of type $(1^21_{\max}), (1^3_{\max})$, $e=3$ is enough.
\end{enumerate}
\end{lem}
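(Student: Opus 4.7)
The plan is to verify conditions (i) and (ii) separately, reducing (i) to orbit-counting results already in the paper and (ii) to a short $p$-adic Taylor estimate for $P$.

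For condition (i), I will simply cite the preceding results. For types $(3), (21), (111)$, Proposition \ref{prop:maximality} asserts that $V_{\Z_p}(\sigma)$ is already a single $G_{\Z_p}$-orbit, so $e=1$ works. For type $(1^{2}1_{\max})$ with $p\neq 2$, Proposition \ref{prop:e_enough}(2) gives that $e=2$ suffices (and hence $e=3$ does for $p=3$). For $(1^{3}_{\max})$ with $p\equiv 1\pmod 3$, Proposition \ref{prop:e_enough}(3) gives $e=2$, and for $p=3$, Proposition \ref{prop:e_enough_3} gives $e=3$.

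For condition (ii), set $k:=\ord_p(P(a))$, so by Proposition \ref{prop:stra_val} we have $k=0,1,2$ for $(3)/(21)/(111)$, $(1^21_{\max})$, and $(1^3_{\max})$, respectively. Since $\tilde\chi_p|_{\Z_p^\times}$ factors through $(\Z_p/p^c\Z_p)^\times$, it suffices to show that for every $b\in V_{\Z_p}$,
\[
P(a+p^e b)\equiv P(a)\pmod{p^{k+c}},
\]
from which both $\ord_p(P(a+p^eb))=k$ and the congruence of normalized values will follow. Taylor-expanding $P$ around $a$, each multidegree-$i$ term in $b$ contributes a quantity of $p$-adic valuation at least $ie+v_p(\text{$i$-th derivative at }a)$. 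Terms with $i\geq 2$ contribute at least $2e$, so I will need $2e\geq k+c$; the $i=1$ terms force $v_p(\partial_j P(a))\geq k+c-e$ for each $j$.

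Checking the numerics case by case, $2e\geq k+c$ holds for every $(k,c,e)$ appearing in the lemma, and the first-derivative condition $v_p(\partial_j P(a))\geq k+c-e$ is automatic (i.e., reduces to $v_p\geq 0$) except in the single case of type $(1^3_{\max})$, where it requires $\partial_j P(a)\in p\Z_p$ for all $j$. This last bound I will verify by taking a canonical representative $a=(a_1,pa_2',pa_3',pa_4')\in\cD_{p^2}(1^3_{\max})$ with $a_1,a_4'\in\Z_p^\times$ from Section \ref{sec:orbit}, substituting into the explicit partial derivatives of $P$, and reading off that $\partial_j P(a)\in p^2\Z_p$ for $j=1,2,3$ and $\partial_4P(a)\in p\Z_p$ (with additional $3$-adic divisibility coming from the coefficients $18,27,54$ when $p=3$). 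Since the valuations of all $\partial_j P(a)$ are $G_{\Z_p}$-invariant (via the relative invariance $P(gx)=(\det g)^2 P(x)$ and the chain rule), these estimates extend from $\cD_{p^2}(1^3_{\max})$ to the whole orbit, completing condition (ii). The only mildly delicate step is the $p=3$ bookkeeping in the Taylor expansion, where the extra factors of $3$ hidden in $18,27,54$ must be tracked carefully; no new idea beyond explicit computation is required.
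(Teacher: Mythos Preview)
Your treatment of condition (i) is correct and matches the paper. Your Taylor-expansion strategy for (ii) is sound and works for $p\equiv 1\pmod 3$; it is a clean alternative to the paper's orbit-by-orbit computation.

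However, there is a genuine gap in the $p=3$, type $(1^3_{\max})$ case. You invoke Proposition~\ref{prop:stra_val} to set $k=\ord_p(P(a))=2$, but that proposition explicitly excludes $p=3$ for the $(1^3)$ strata. In fact, for $p=3$ the orbits in $V_{\Z_3}(1^3_{\max})$ have $\ord_3(P(a))\in\{3,4,5\}$ (see the table in Proposition~\ref{prop:residue_rm_max}). With $c=2$ and $e=3$ this breaks your numerics: for $k=5$ one has $2e=6<7=k+c$, so second-order Taylor terms are not automatically negligible, and the first-order requirement becomes $v_3(\partial_j P(a))\geq 4$, far stronger than the $v_3\geq 1$ you verify. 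Your framework therefore does not close in this case without substantially more input.

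The paper sidesteps this by arguing directly on each of the nine orbit representatives: it transports $a'$ back to a neighborhood of the representative $a_{\rm rep}$ via $g^{-1}$, then evaluates $P$ monomial by monomial, exploiting both the $3$-divisibility of the specific coordinates of $a_{\rm rep}$ and the coefficients $18,27$ in $P$ to obtain congruences such as $P(g^{-1}a')\equiv -4a_2^3a_4-27a_1^2a_4^2\equiv P(a_{\rm rep})\pmod{3^6}$. Your Taylor approach could in principle be pushed through by tracking second- and third-order derivatives at each representative (where the needed extra $3$-adic valuation does appear), but at that point it is no easier than the paper's direct computation. At minimum, you must correct the value of $k$ and carry out the $p=3$ analysis orbit by orbit.
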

\begin{proof}
By Propositions \ref{prop:e_enough}, \ref{prop:e_enough_3},
these $e$ satisfy (i).
We check (ii)
for each orbit individually.
For elements of type $(3), (21), (111)$ this is trivial, so
we work for the remaining cases.
Assume $p=3$ and
$a\in V_{\Z_p}(1^3_{\rm max})$.
A set of representatives of the various $G_{\Z_p}$-orbits are given
in the second table of the next proposition.
If $a$ lies, say, in the orbit of $a_{\rm rep}=(1,3,0,3)$
and $a'\equiv a\mod p^3$,
then $\ord_p(P(a))=4$.
Let $a=ga_{\rm rep}$. Then $g^{-1}a'\equiv a_{\rm rep}\mod p^3$.
If we write $g^{-1}a'=(a_1,a_2,a_3,a_4)$,
then by the definition of the polynomial $P$, we have
\[
P(g^{-1}a')\equiv -4a_2^3a_4-27a_1^2a_4^2\equiv P(a_{\rm rep})
\mod p^6.
\]
This shows that $P(a')/p^4\equiv P(a)/p^4\ \mod p^2$.
Hence the values of $\tilde\chi_p$ for $a, a'$ coincide;
recall that the conductor of $\tilde\chi_p$ is $p^2$. This proves (ii)
for this $G_{\Z_p}$-orbit.

The elements $a$ in other orbits are treated similarly, and
we omit the detail.
\end{proof}

We now give the value of $\tilde \esC_{p^e}(a,\chi)$ for
each $G_{\Z_p}$-orbit in $V_{\Z_p}^{\max}$.
We also list the minimal $e$, $|P(a)|_p^{-1}$ and $|G_{\Z_p,a}|$
for convenience. The result for $p\equiv1\mod 3$ is
due to Datskovsky-Wright \cite[Proposition 5.4]{dawra}.

\begin{prop}\label{prop:residue_rm_max}
Let $a\in V_{\Z_p}^{\max}$.
If $p\equiv1\mod 3$, we have the following table:
\[
\begin{array}{c|ccccc}
\hline
\text{\rm Type of $a$} & a\in V_{\Z_p} & e\geq & \tilde \esC_{p^e}(a,\chi) & |P(a)|_p^{-1} & |G_{\Z_p,a}|\\
\hline
\hline
(3) & a & 1 & \tau(\chi_p)^3/p^2 & 1 & 3\\
\hline
(21) & a & 1 & -\tau(\chi_p)^3/p^2 & 1 & 2\\
\hline
(111) & a & 1 & \tau(\chi_p)^3/p^2 & 1 & 6\\
\hline
(1^21_{\max}) & (0,1,0,p\alpha), \alpha\in\Z_p^\times& 2
	& \chi_p(2)\chi_p'(p)^2p^{-1/3} & p & 2\\
\hline
(1^3_{\max}) & (1,0,0,p\alpha), \alpha\in\Z_p^\times& 2
	& \chi_p(\alpha)+\chi_p(\alpha)^2\chi_p'(p)^2p^{-1/3} & p^2 & 3\\
\hline
\end{array}
\]
If $p=3$, we have
\[
\begin{array}{c|ccccc}
\hline
\text{\rm Type of $a$} & a\in V_{\Z_p} & e\geq & \tilde \esC_{p^e}(a,\chi) & |P(a)|_p^{-1} & |G_{\Z_p,a}|\\
\hline
\hline
(3) & a & 2 & \tau(\chi_p)^3/p^4=\chi_p(2)/p & 1 & 3\\
\hline
(21) & a & 2 & \tau(\chi_p)^3/p^4=\chi_p(2)/p & 1 & 2\\
\hline
(111) & a & 2 & \tau(\chi_p)^3/p^4=\chi_p(2)/p & 1 & 6\\
\hline
(1^21_{\max}) & (0,1,0,\pm3) & 3
	& \pm(1-\chi_p(4))\chi_p'(p)^2p^{-4/3} & p & 2\\
\hline
& (1,0,3,3) & 3 & (\chi_p(2)-1)/p & p^3 & 1\\
& (1,0,6,3) & 3 & (2\chi_p(2)+1)/p & p^3 & 1\\
(1^3_{\max}) & (1,3,0,3) & 3 & \chi_p(4)\chi_p'(p)^2p^{-1/3} & p^4 & 1\\
& (1,-3,0,3\alpha), \alpha=1,4,7 & 3 & \chi_p(\alpha)^2+\chi_p'(p)^2p^{-1/3} & p^4 & 3\\
& (1,0,0,3\alpha), \alpha=1,4,7 & 3 & \chi_p(\alpha)^2\chi_p'(p)^2p^{-1/3} & p^5 & 1\\
\hline
\end{array}
\]
\end{prop}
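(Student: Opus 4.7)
The plan is to apply formula \eqref{eq:cw} from Lemma \ref{lem:residue_finiteorbit},
\[
\esC_{p^e}(a,\chi) = \frac{1}{|W'_{p^e}|}\sum_{(u,v)\in W'_{p^e}} I_{p^e}(a(u,v),\chi),
\]
together with the ramified evaluation \eqref{eq:i_rm} of $I_{p^e}$, to each of the tabulated representatives. Since $\tilde\esC_{p^e}(a,\chi)$ is a $G_{\Z_p}$-orbit invariant by Lemma \ref{lem:residue_basic}(1) and the discussion preceding the statement, any choice of representative suffices. The cases fall into three groups of increasing difficulty.

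First, for types $(3),(21),(111)$ with $p\equiv 1\pmod 3$, the result is immediate from Proposition \ref{prop:residue_rm_1}: since $P(a)$ is a $p$-adic unit, one has $\tilde\chi_p(P(a)/p^{\ord_p P(a)})=\chi_p(P(a))$, and dividing the formula there by $\chi_p(P(a))$ yields $\pm\tau(\chi_p)^3/p^2$. For $p=3$ and these same types, I would repeat Wright's computation \cite{wrightc} at level $e=2$; the sum over $W'_{p^2}$ still reduces to a Jacobi sum $J(\chi_p,\chi_p)$ via the same orbit decomposition. The identity $\tau(\chi_p)^3 = p^c\,J(\chi_p,\chi_p)/\chi_p(-1) = 9\,J(\chi_p,\chi_p)$, combined with the specific evaluation $J(\chi_p,\chi_p) = 3\chi_p(2)$ for a primitive cubic character of conductor $9$, gives $\tau(\chi_p)^3/p^4 = \chi_p(2)/p$.

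Second, for types $(1^21_{\max})$ and $(1^3_{\max})$ with $p\equiv 1\pmod 3$, the computation is carried out by hand on $W'_{p^2}$. For example, for $a=(1,0,0,p\alpha)$ one has $a(u,v)=u^3+p\alpha v^3$; split $W'_{p^2}$ into the parts where $u\in(\Z/p^2\Z)^\times$ and where $u\in p(\Z/p^2\Z)$, $v\in(\Z/p^2\Z)^\times$. In the first part, substituting $v=uw$ and using that $\chi_p$ is cubic of conductor $p$ reduces the summand to $\chi_p(1+p\alpha w^3)=1$; in the second, $\ord_p a(u,v)=1$ and $a(u,v)/p\equiv\alpha v^3\pmod p$, producing a factor $\chi_p(\alpha)\tilde\chi_p(p)p^{2/3}$. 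Combining these with the normalization $\tilde\chi_p(P(a)/p^2)=\chi_p(-27\alpha^2)=\chi_p(\alpha)^2$ yields the tabulated answer. Type $(1^21_{\max})$, with $a(u,v)=v(u^2+p\alpha v^2)$, is treated analogously.

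The main obstacle is $p=3$ for the singular types $(1^21_{\max})$ and $(1^3_{\max})$. Here $V_{\Z_p}(1^3_{\max})$ breaks into nine $G_{\Z_p}$-orbits, one for each totally ramified cubic extension of $\Q_3$ (cf.\ Proposition \ref{prop:e_enough_3}), and the conductor of $\chi_p$ being $p^2=9$ rather than $p$ means $\chi_p(a(u,v))$ depends delicately on $a(u,v)\bmod 9$, so the elegant cubic substitution used above no longer collapses the sum. I would compute each entry by explicit enumeration over $W'_{p^3}$ using PARI/GP \cite{pari}, following the pattern of Proposition \ref{prop:e_enough_3}: for each listed representative one enumerates its $G_{\Z_p}$-orbit modulo $p^3$, verifies orbit invariance of $\tilde\esC_{p^3}(a,\chi)$, and confirms the tabulated value. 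A useful consistency check, given the list of all orbits and their stabilizers, is that the totals match the sum rules \eqref{eq:bc_sum}.
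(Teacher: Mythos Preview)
Your approach for $p\equiv 1\pmod 3$ matches the paper's exactly: types $(3),(21),(111)$ reduce to Proposition \ref{prop:residue_rm_1} via Lemma \ref{lem:residue_orbit}, and the singular types $(1^21_{\max}),(1^3_{\max})$ are handled by direct evaluation of \eqref{eq:cw} on $W'_{p^e}$ exactly as you describe. For $p=3$ on the singular types, your PARI/GP proposal also matches the paper.

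There is a gap, however, in your treatment of $p=3$ for types $(3),(21),(111)$. Wright's result \cite{wrightc} is a statement about cubic character sums over the finite field $\F_p$; here the character $\chi_p$ has conductor $9$, so $\chi_p(a(u,v))$ depends on $a(u,v)\bmod 9$, and the orbit-decomposition argument in \cite{wrightc} uses the $\gl_2(\F_p)$-action on $\F_p^2$, not the $G_{p^2}$-action on $W'_{p^2}$. Your assertion that ``the sum over $W'_{p^2}$ still reduces to a Jacobi sum via the same orbit decomposition'' therefore needs substantial additional justification (for instance, a first-order Taylor expansion $a(u_0+3u_1,v_0+3v_1)\equiv a(u_0,v_0)+3(\ldots)\pmod 9$ together with a separate analysis of the inner sum over $(u_1,v_1)$). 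The paper does not attempt this: it simply evaluates \emph{all} $p=3$ entries of the table, including types $(3),(21),(111)$, by explicit computation of \eqref{eq:cw} in PARI/GP, and then records the identity $\chi_p(2)=p^{-3}\tau(\chi_p)^3$ as a closed-form observation after the fact.

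One minor point: your proposed consistency check via \eqref{eq:bc_sum} does not apply here, since Lemma \ref{lem:residue_basic}(2) assumes $\chi$ is unramified at $p$.
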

\begin{proof}
Let $p\equiv1\mod 3$.
By Lemma \ref{lem:residue_orbit},
for orbits of type $(3),(21),(111)$
this follows from Proposition \ref{prop:residue_rm_1}.
We consider orbits of type $(\sigma)=(1^21_{\max})$ or $(1^3_{\max})$.
By Lemma \ref{lem:residue_orbit} we may assume $e=2$,
but for potential further applications of our argument
we let $e\geq2$ be arbitrary.
We use \eqref{eq:cw} and \eqref{eq:i_rm} for computation.
Let $R=\Z/p^e\Z$. For $0\neq x\in R$,
let $\tilde\chi_p(x)=\tilde\chi_p(\tilde x)$ where
$\tilde x\in \Z_p$ is an arbitrary lift of $x$.
This is well defined since the conductor of $\chi_p$ is $p$.
Also if $x\in p^mR^\times$ for some
$0\leq m<e$ and $y\in x+p^{m+1}R$,
then $\tilde\chi_p(y)=\tilde\chi_p(x)$.

(i)
Let $a\in V_{p^e}(1^21_{\max})$.
We may assume $a=(0,1,a_3,a_4)$ where $a_3\in pR$ and $a_4\in pR^\times$.
Let  $(u,v)\in W_{p^{e}}'$.
If $u\in pR$, then $v\in R^\times$ and hence
$a(u,v)\in a_4v^3+p^2R$. This implies
$\tilde\chi_p(a(u,v))=\tilde\chi_p(a_4v^3)=\tilde\chi_p(a_4)$.
If $u\in R^\times$, then $a(u,v)\neq0$ if and only if $v\neq0$,
and in this case $a(u,v)\in v(u^2+pR)$ and hence
$\tilde\chi_p(a(u,v))=\tilde\chi_p(u^2v)$.
Hence by \eqref{eq:cw}, \eqref{eq:i_rm}, 
\begin{align*}
\esC_{p^e}(a,\chi)
&=\frac{1}{p+1}\frac{\tilde\chi_p(a_4)p^{2/3}}{1-p^{-1}}
+\frac{1}{p^{2e}(1-p^{-2})}\sum_{u\in R^\times,\ v\in R\setminus\{0\}}
\frac{\tilde\chi_p(u^2v)|v|^{-2/3}}{1-p^{-1}}\\
&=\frac{\tilde\chi_p(a_4)p^{-1/3}}{1-p^{-2}}
+\frac{1}{p^{2e}(1-p^{-2})(1-p^{-1})}\sum_{u\in R^\times}\tilde\chi_p(u^2)
\sum_{0\leq m<e}p^{2m/3}\sum_{v\in p^mR^\times}\tilde\chi_p(v).
\end{align*}
Since $\tilde\chi_p$ induces a non-trivial character on each
$(\Z/p^{e-m}\Z)^\times$ for $m<e$, we have
\[
\sum_{v\in p^mR^\times}\tilde\chi_p(v)
=\tilde\chi_p(p)^m\sum_{v'\in(\Z/p^{e-m}\Z)^\times}\tilde\chi_p(v')=0.
\]
Hence $\esC_{p^e}(a,\chi)=(1-p^{-2})^{-1}\tilde\chi_p(a_4)p^{-1/3}$.
In particular for $a=(0,1,0,p\alpha)\in V_{\Z_p}$, $\alpha\in\Z_p^\times$,
\[
\tilde\esC_{p^e}(a,\chi)
=\frac{\tilde\chi_p(p\alpha)p^{-1/3}}{\tilde\chi_p(-4\alpha)}
=\tilde\chi_p(2p)p^{-1/3}
=\chi_p(2)\chi_p'(p^2)p^{-1/3}.
\]
Note that the last equality follows from Lemma \ref{lem:character}.

(ii)
Let $a\in V_{p^e}(1^3_{\max})$.
We may assume $a=(1,a_2,a_3,a_4)$ where $a_2,a_3\in pR$ and $a_4\in pR^\times$.
If $u\in R^\times$, then $a(u,v)\in u^3+pR$ and hence
$\tilde\chi_p(a(u,v))=1$.
If $u\in pR$, then $a(u,v)\in a_4v^3+p^2R$ and hence
$\tilde\chi_p(a(u,v))=\tilde\chi_p(a_4)$.
Hence by \eqref{eq:cw}, \eqref{eq:i_rm}, 
\[
\esC_{p^e}(a,\chi)
=\frac{p}{p+1}\frac{1}{1-p^{-1}}+\frac{1}{p+1}\frac{\tilde\chi_p(a_4)p^{2/3}}{1-p^{-1}}
=\frac{1+\tilde\chi_p(a_4)p^{-1/3}}{1-p^{-2}}.
\]
The result in the table follows from this.
This finishes the proof for $p\equiv1\mod 3$.

Let $p=3$.
Then
since $\Q_p^\times=p^\Z\times\Z_p^\times$
and $(\Z_p/p^2\Z_p)^\times\cong(\Z/9\Z)^\times$
is generated by $2\in\Z_p^\times$,
$\tilde\chi_p$ is determined uniquely
by $\tilde\chi_p(p)$ and $\tilde\chi_p(2)$.
Now the results in the second table are verified
by explicitly evaluating the sum \eqref{eq:cw} using PARI/GP \cite{pari}.
Note the identity $\chi_p(2)=p^{-3}\tau(\chi_p)^3$.
\end{proof}

\begin{rem}\label{rem:orbitalL}
We now explain how Theorem \ref{thm:introL}
follows from our arguments. The functional equations of $\xi(s,a)$
and $\xi(s,\chi,a)$ are obtained as special cases of
Theorem \ref{thm:FESato}, due to F. Sato, and for 
$\xi(s,\chi,a)$ we stated this as Proposition \ref{prop:FEorbital}.
By Proposition \ref{prop:partial_orbital},
the residues of $\xi(s,a)$ are obtained from
those of $\xi(s,\chi,a)$.
By Proposition \ref{prop:unfold},
$\xi(s,\chi,a)$ is entire if $\chi^3$ is nontrivial,
and Theorem \ref{thm:residue_formal} and
Lemma \ref{lem:residue_finiteorbit}
express the residues of $\xi(s,\chi,a)$ as a sum over $G_N$ for $\chi$ cubic.
When $a$ corresponds to a maximal cubic ring for all $p\mid N$,
explicit residue formulas are proved in Propositions \ref{prop:residue_ur_max}
and \ref{prop:residue_rm_max}. When $N$ is cube free,
explicit formulas
are proved in Propositions \ref{prop:residue_ur_1}, \ref{prop:residue_rm_1} for $p\mid\mid N$,
and Propositions \ref{prop:residue_ur_max}, \ref{prop:residue_ur_nm},
\ref{prop:residue_rm_nm}, \ref{prop:residue_rm_max}
for $p^2\mid\mid N$.
\end{rem}

\section{Examples;
bias of class numbers in arithmetic progressions}\label{sec:example}
Let $\chi$ be a primitive Dirichlet character of conductor $m$.
For each sign we define
\begin{equation}\label{eq:standardL}
\xi_\pm(s,\chi)
:=\sum_{\substack{x\in\spl_2(\Z)\backslash V_\Z^\pm\\(P(x),m)=1}}
	\frac{|{\rm Stab}(x)|^{-1}\chi(P(x))}{|P(x)|^s},
\end{equation}
where $V_\Z^\pm=\{x\in V_\Z\mid \pm P(x)>0\}$
and ${\rm Stab}(x)$ denotes the stabilizer group of $x$ in $\spl_2(\Z)$.
This is also a standard construction of $L$-functions from
Shintani's zeta functions $\xi_\pm(s)$.
In this section we apply our analysis to describe the residues of
these zeta functions and their relatives, and prove biases of
class numbers in arithmetic progressions.
We also discuss how these results relate to Theorem \ref{thm:introsccchi}.

Let $h\in C(V_m)$ be the function defined by
\[
h(a)
=\begin{cases}
\chi(P(a)) 	& P(a)\in(\Z/m\Z)^\times,\\
0		& \text{otherwise},
\end{cases}
\qquad
a\in V_m.
\]
Then $h\in C(V_m,\chi^2)$
and by Proposition \ref{prop:f_inv_splZ},
$\xi(s,h)={}^{t}(\xi_+(s,\chi),\xi_-(s,\chi))$.
Proposition \ref{prop:residue_f}
asserts that each of $\xi_\pm(s,\chi)$ is holomorphic
if $\chi^6$ is non-trivial.

Assume $\chi^6=\bf1$.
We consider the case where $m$ is a power of an odd prime $p$.
(Since $m$ is the conductor of $\chi$,
$\chi^6=\bf1$ implies that $m=p$ except for the case
$p=3$ and $\chi$ is not quadratic, where $m=p^2$.)
Let $\lambda_p\in C(V_p)$ be as follows:
$\lambda_p(a)=1$ if $a$ is of type $(3)$ or $(111)$,
$\lambda_p(a)=-1$ if $a$ is of type $(21)$,
and $\lambda_p(a)=0$ otherwise.

If $\chi$ is quadratic, then Proposition \ref{prop:residue_f} implies that
$\xi_\pm(s,\chi)$ has possible simple poles at $s=1$ and $5/6$.
We compute the quantity $\esC_p(h,{\bf1})$ defined in \eqref{eq:residue_f}.
In this case $h=\lambda_p$, and
by Proposition \ref{prop:residue_ur_1} with Lemma \ref{lem:n_p(sigma)},
\[
\esC_p(h,{\bf1})
=(1-p^{-1})
\left\{
	\frac{(1-p^{-1/3})(1+p^{-1})}{3}
	-\frac{1-p^{-4/3}}{2}
	+\frac{(1-p^{-2/3})(1+p^{-1/3})^2}{6}
\right\}=0.
\]
Similarly $\esA_p(h)=\esB_p(h)=0$.
Hence $\xi_\pm(s,h)$ is in fact entire.

Now assume that $\chi^2\neq\bf1$ but $\chi^6=\bf1$, i.e.,
$\chi$ is either cubic or sextic.
Then $\xi_\pm(s,\chi)$ has a possible simple pole at $s=5/6$
and is holomorphic elsewhere.
Let $m=p\neq3$.
By Proposition \ref{prop:residue_rm_max},
\begin{align*}
\esC_p(h,\chi^2)
&=p^{-4}\sum_{P(a)\neq0}
\chi(P(a))\esC_p(a,\chi^2)
=\frac{p^{-6}\tau(\chi^2)^3}{1-p^{-2}}\sum_{P(a)\neq0}\lambda_p(a)\chi(P(a))^3\\
&=\begin{cases}
0 & \text{$\chi$ is cubic},\\
p^{-2}(1-p^{-1})\tau(\chi^2)^3 & \text{$\chi$ is sextic}.\\
\end{cases}
\end{align*}
If $m=p^2=3^2$, we have
\[
\esC_{p^2}(h,\chi^2)=
\begin{cases}
p^{-4}(1-p^{-1})\tau(\chi^2)^3 & \text{$\chi$ is cubic},\\
0 & \text{$\chi$ is sextic}.\\
\end{cases}
\]
So $\xi_\pm(s,\chi)$ has a pole at $s=5/6$ when
$\esC_{p^e}(h, \chi^2)$ does not vanish.

Now let $m$ be an arbitrary odd integer. Since $\esA,\esB,\esC$
have Euler products, based on the computations above
we get the following residue formula.
Recall the decomposition
$\chi=\prod\chi_p$ we introduced at the beginning of
Section \ref{sec:residue}.
\begin{thm}\label{thm:standardL}
Assume that the conductor $m$ of $\chi$ is odd and $m\neq1$.
Then, the $\xi_\pm(s,\chi)$
are holomorphic except for a simple pole at $s=5/6$
which occurs if $\chi_p$ is of order $6$ for all $3\neq p\mid m$
and in addition $\chi_3$ is of order $3$ if $3\mid m$.
In this case the residues are
\begin{equation*}\label{eq:residue_standardL}
\underset{s=5/6}{\res}\ \xi_\pm(s,\chi)
=K_\pm\frac{2\pi^2\prod_{p\mid m}(1-p^{-1})}{9\Gamma(2/3)^3m^2}
\tau(\chi^2)^3L(1/3,\chi^{-2}).
\end{equation*}
Here $K_+=1, K_-=\sqrt3$,
$\tau(\chi^2)=\sum_{t\in(\Z/m\Z)^\times}\chi^2(t)\exp(2\pi it/m)$
is the Gauss sum, and $L(s,\chi)$ is the usual Dirichlet $L$-function.
If $\chi$ is of odd conductor $m>1$ but does not satisfy
the properties above, then the $\xi_\pm(s,\chi)$ are entire.
\end{thm}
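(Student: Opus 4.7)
The plan is to identify $\xi_\pm(s,\chi)$ with a zeta function already handled by the machinery of Section \ref{sec:residue}. By Proposition \ref{prop:f_inv_splZ} applied to the function $h\in C(V_m,\chi^2)$ introduced just after the theorem, the column ${}^t(\xi_+(s,\chi),\xi_-(s,\chi))$ equals $\xi(s,h)$. Via Proposition \ref{prop:f_inv_L_ogs} this is a finite $\C$-linear combination of orbital $L$-functions $\xi(s,\chi^2,a)$, each of which, by Proposition \ref{prop:unfold}, is meromorphic on $\C$ with at worst simple poles at $s=1$ and $s=5/6$; Proposition \ref{prop:residue_f} then expresses these two residues in terms of $\esA_m(h)$, $\esB_m(h)$ and $\esC_m(h,\chi^2)$.

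For the pole at $s=1$, Proposition \ref{prop:residue_f} forces the residue to vanish unless $\chi^2={\bf 1}$, and in that quadratic case $h$ is exactly the Legendre-type function $\lambda_p$ (at a prime) discussed just before the theorem. Since $\esA$ and $\esB$ are multiplicative, the vanishing $\esA_p(\lambda_p)=\esB_p(\lambda_p)=0$ already verified in the excerpt (via Proposition \ref{prop:residue_ur_1} and Lemma \ref{lem:n_p(sigma)}, noting that $\chi^2={\bf 1}$ means the distributions are evaluated at the trivial character and hence the unramified formulas apply) propagates to $\esA_m(h)=\esB_m(h)=0$. Hence $\xi_\pm(s,\chi)$ is holomorphic at $s=1$ in every case.

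The pole at $s=5/6$ requires $\chi^6={\bf 1}$ by Proposition \ref{prop:residue_f}. From the Chinese remainder factorization $h=\prod_{p\mid m}h_p$ with $h_p(a)=\chi_p(P(a))$ (or $0$ when $p\mid P(a)$), and the multiplicativity of $\esC$ in \eqref{eq:residue_f}, the Euler product $\esC_m(h,\chi^2)=\prod_{p\mid m}\esC_{p^{e_p}}(h_p,\chi_p^2)$ reduces the question to a prime-by-prime computation. The local analysis is exactly what was already carried out in the text preceding the theorem: $\esC_{p^{e_p}}(h_p,\chi_p^2)$ vanishes when $\chi_p$ is quadratic, vanishes when $\chi_p$ is cubic with $p\neq 3$, and vanishes when $\chi_p$ is sextic with $p=3$; in the two remaining cases it equals $p^{-2e_p}(1-p^{-1})\tau(\chi_p^2)^3$. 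This gives the nonvanishing criterion exactly as stated, together with $\esC_m(h,\chi^2)=m^{-2}\prod_{p\mid m}(1-p^{-1})\prod_p\tau(\chi_p^2)^3$.

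Finally I plan to assemble the local Gauss sums into the global Gauss sum $\tau(\chi^2)$ via the standard CRT identity $\tau(\chi^2)=\prod_{p\mid m}\chi_p^2(m/p^{e_p})\tau(\chi_p^2)$; cubing and using $\chi_p^6={\bf 1}$ to kill the correcting characters gives $\tau(\chi^2)^3=\prod_p\tau(\chi_p^2)^3$. Substituting back into Proposition \ref{prop:residue_f} with the explicit vector $\gamma$ from \eqref{eq:albtgm} produces the stated residue with $K_+=1$ and $K_-=\sqrt{3}$. The only step that is not a direct invocation of Section \ref{sec:residue} is this Gauss-sum assembly, whose cleanness is precisely the point of the cube (which is forced on us by $\chi^2$ being cubic) combined with the hypothesis $\chi^6={\bf 1}$ that must hold for a pole to exist at all.
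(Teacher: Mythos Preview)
Your proposal is correct and follows essentially the same route as the paper: identify $\xi_\pm(s,\chi)=\xi(s,h)$ with $h\in C(V_m,\chi^2)$, invoke Proposition \ref{prop:residue_f}, reduce via multiplicativity to the local factors $\esC_{p^{e_p}}(h_p,\chi^2)$ already computed in the text, and reassemble the local Gauss sums using $\chi_p^6={\bf 1}$. The only cosmetic difference is that the paper separates the cases $3\nmid m$ and $3\mid m$ explicitly, whereas you handle both at once with the exponent $e_p$; also note that the paper's convention is to keep the global character $\chi^2$ in the local factor (writing $\esC_{p^{e_p}}(h_p,\chi^2)$ rather than $\esC_{p^{e_p}}(h_p,\chi_p^2)$), since by \eqref{eq:defc} the local quantity depends on $\tilde\chi_p$ and not merely on $\chi_p$.
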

\begin{proof}
We assume $\chi^6=\bf1$ and compute the residue at $s=5/6$;
the residue computation at $s=1$ is similar.
Since $\chi$ is primitive, $\chi^6=\bf1$ implies that
each $\chi_p$ is quadratic, cubic, or sextic.

We first consider the case $3\nmid m$. Then $m$ is square free.
Let us write $h=\prod_{p\mid m}h_p$, $h_p\in C(V_p)$,
so that
$h_p(a)=\chi_p(P(a))$ if $P(a)\in(\Z/p\Z)^\times$
and $h_p(a) = 0$ if $P(a)=0$.
Then by \eqref{eq:residue_Eulerp} and \eqref{eq:residue_f},
$\esC_N(h,\chi^2)=\prod_{p\mid m}\esC_p(h_p,\chi^2)$.
$\esC_p(h_p,\chi^2)$ is computed as above;
it is $p^{-2}(1-p^{-1})\tau(\chi_p^2)^3$ if $\chi_p$ is sextic
and $0$ if $\chi_p$ is quadratic or cubic.
Hence $\esC_N(h,\chi^2)=0$ if any $\chi_p$ is not sextic.
Thus assume that all $\chi_p$ are sextic.
Then by the decomposition formula for the classical Gauss sum
(recalled before Proposition \ref{prop:ogs_decomposition}),
\[
\frac{1}{m^2}\cdot{\tau(\chi^2)^3}=
\frac{1}{m^2}\prod_{p\mid m}\chi_p(m/p)^6\tau(\chi_p^2)^3=\prod_{p\mid m}\frac{\tau(\chi_p^2)^3}{p^2},
\]
and we conclude that
\[
\esC_N(h,\chi^2)=\frac{\tau(\chi^2)^3}{m^2}\prod_{p\mid m}(1-p^{-1}).
\]
Hence \eqref{eq:residue_standardL} follows from
Proposition \ref{prop:residue_f} and \eqref{eq:albtgm}.

The case $3\mid m$ is similarly done;
$\esC_N(h,\chi^2)=0$ unless $\chi_3$ is cubic
and $\chi_p$ is sextic for all $3\neq p\mid m$,
and in this case we have \eqref{eq:residue_standardL}
because of the identity
\[
\frac{1}{m^2}\cdot{\tau(\chi^2)^3}
=\frac{\tau(\chi_3^2)^3}{3^4}\prod_{p\mid m}\frac{\tau(\chi_p^2)^3}{p^2}.
\vspace{-.8cm}
\]
\end{proof}
From this result, we can prove the two main terms of
the function counting the class numbers of
integral binary cubic forms in arithmetic progressions.
This result implies that there is a bias
in the second main term if and only if
the odd modulus admits a character of order $6$.
\begin{thm}\label{thm:bias_classnumber}
Let $h_\pm(n)$ be the coefficients of Shintani's
original zeta function $\xi_\pm(s)$,i.e., $\xi_\pm(s)=\sum{h_\pm(n)}/n^s$.
Let $N$ be an odd integer and $a$ an integer coprime to $m$. Then
\begin{equation*}
\sum_{\substack{0<n<X\\ n\equiv a(\!\!\!\!\!\mod N)}}\!\!\!h_\pm(n)
=C_\pm'\frac{\pi^2\prod_{p\mid N}(1-p^{-2})}{9N}\cdot X
+K_1(N,a)\frac{2K_\pm\pi^2}{9\Gamma(2/3)^3N}\cdot \frac{X^{5/6}}{5/6}
+O_{N,\epsilon}(X^{3/5+\epsilon}),
\end{equation*}
where
$C_+'=1,C_-'=3/2, K_+=1,K_-=\sqrt3$, and
\begin{equation*}
K_1(N,a)=\sideset{}{'}\sum_{\chi^6={\bf1}}\chi(a)^{-1}
\frac{\tau(\chi^2)^3L(1/3,\chi^{-2})}{m_\chi^2}\prod_{\substack{p\mid N,\ p\nmid m_\chi}}(1-\chi(p)^{-2}p^{-4/3}).
\end{equation*}
Here the sum above
is over primitive characters $\chi$ whose conductor $m_\chi$
is a divisor of $N$
(including the trivial character modulo $1$),
such that if we write $\chi=\prod_{p\mid m_\chi}\chi_p$
then each $\chi_p$ has exact order $6$ for $p\neq3$ and
moreover $\chi_3$ has exact order $3$ if $3\mid m_\chi$.
\end{thm}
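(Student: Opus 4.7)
The plan is to extract the congruence-constrained partial sum via character orthogonality modulo $N$ and a Mellin--Perron contour argument applied to the resulting twisted Dirichlet series. Since $(a,N)=1$, we have
\[
\sum_{\substack{0<n<X\\ n\equiv a\!\!\!\!\mod N}}\!\!\! h_\pm(n)
=\frac{1}{\varphi(N)}\sum_{\chi\!\!\mod N}\chi(a)^{-1}\, A_{\pm,\chi}(X),
\qquad A_{\pm,\chi}(X):=\sum_{0<n<X} h_\pm(n)\chi(n).
\]
Writing $\chi$ as the lift of the primitive character $\chi^*$ of conductor $m_\chi\mid N$, the identity $\mathbf{1}_{(n,N/m_\chi)=1}=\sum_{d\mid\mathrm{rad}(N/m_\chi)}\mu(d)\mathbf{1}_{d\mid n}$ expresses the associated Dirichlet series $D_{\pm,\chi}(s):=\sum_n h_\pm(n)\chi(n)n^{-s}$ as a $\mu$-weighted combination of ``$d$-divisible $\chi^*$-twists'' of Shintani's zeta function, each of which is a finite linear combination of orbital $L$-functions $\xi_\pm(s,\chi^{*2},b)$ for appropriate $b\in V_{N}$.

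By Proposition \ref{prop:unfold} and Theorem \ref{thm:standardL}, each $D_{\pm,\chi}(s)$ admits a meromorphic continuation to the half-plane $\Re(s)>3/5$, with at most simple poles at $s=1$ (only for $\chi$ principal) and $s=5/6$ (only when $\chi_p$ has order $6$ for every $p\mid m_\chi$ with $p\ne 3$, and $\chi_3$ has order $3$ when $3\mid m_\chi$). The residue at $s=1$ for the principal character, combined with inclusion-exclusion at primes $p\mid N$, produces the main term of order $X$ with coefficient $C'_\pm\pi^2\prod_{p\mid N}(1-p^{-2})/(9N)$. For the residue at $s=5/6$, one combines Theorem \ref{thm:standardL} applied to $\xi_\pm(s,\chi^*)$ with the $\mu$-weighted local computation at each $p\mid N$ with $p\nmid m_\chi$; this sum collapses to the local factor $(1-\chi(p)^{-2}p^{-4/3})$, with the exponent $p^{-4/3}$ being precisely what already appears in Corollary \ref{cor:f_p}. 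Assembling the contributions over $\chi$ produces the expression $K_1(N,a)$ stated in the theorem.

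The error term $O_{N,\epsilon}(X^{3/5+\epsilon})$ will then follow by standard analytic methods: apply Perron's formula, shift the contour to $\Re(s)=3/5+\epsilon$, and bound the remaining vertical integral via a Phragm\'en--Lindel\"of convexity estimate. The convexity estimate rests on the functional equation for $D_{\pm,\chi}(s)$, inherited from Shintani's functional equation (Theorem \ref{thm:intro_Shintnai}) through the representation as orbital $L$-functions and F.~Sato's Theorem \ref{thm:FESato}, combined with the trivial bound on the line $\Re(s)=1+\epsilon$.

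The main obstacle will be the bookkeeping in the second step: showing that the $\mu$-weighted sum of residues of the divisible $\chi^*$-twists collapses cleanly to the local product $\prod_{p\mid N,\,p\nmid m_\chi}(1-\chi(p)^{-2}p^{-4/3})$ that appears in $K_1(N,a)$. This requires carefully combining the multiplicativity of the distributions $\esB_N$ and $\esC_N$ of \eqref{eq:residue_Eulerp} with the explicit local formulas of Corollaries \ref{cor:f_p} and \ref{cor:f_p^2_residue} and with the Gauss sum identities underlying Theorem \ref{thm:standardL}; this is precisely where the asymmetry between characters of order $1,2,3,6$ --- and hence the bias in the secondary term --- is actually manifested. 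The error-term step is routine once the functional equation is in hand, although verifying the required convexity uniformly over the finite family of characters modulo $N$ still demands some care.
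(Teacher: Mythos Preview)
Your approach is correct and would succeed, but it diverges from the paper's proof in two places worth noting.

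For the error term, the paper does not carry out a Perron--contour--convexity argument; it simply invokes the Tauberian theorem of Sato--Shintani \cite[Theorem~3]{sash}, which is tailored to zeta functions of prehomogeneous vector spaces and delivers the exponent $3/5+\epsilon$ off the shelf (the exponent being $(d-1)/(d+1)$ with $d=4$ gamma factors). Your proposed argument is essentially a hand-built version of the same machinery; it works, but the paper's citation avoids the need to verify convexity bounds explicitly.

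For the residue bookkeeping, the paper does not go through your M\"obius-inclusion-exclusion over divisors of $N/m_\chi$. Instead it defines directly the restricted series
\[
\xi_\pm^N(s,\chi)=\sum_{\substack{x\in\spl_2(\Z)\backslash V_\Z^\pm\\ (P(x),N)=1}}\frac{|{\rm Stab}(x)|^{-1}\chi(P(x))}{|P(x)|^s},
\]
observes that this is $\xi_\pm(s,h')$ for the obvious $h'\in C(V_N,\chi^2)$, and then reads off the residues via the multiplicative distributions $\esA_N,\esB_N,\esC_N$ of Proposition~\ref{prop:residue_f}. The extra local factor $\prod_{p\mid N,\,p\nmid m_\chi}(1-\chi(p)^{-2}p^{-4/3})$ at $s=5/6$ drops out of Corollary~\ref{cor:f_p} immediately, without any M\"obius cancellation. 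Your decomposition reaches the same answer, but the paper's direct computation is shorter and sidesteps the ``main obstacle'' you identified.
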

By the Delone-Faddeev correspondence, we can also state
Theorem \ref{thm:bias_classnumber} as a formula
counting discriminants of cubic rings in arithmetic progressions.
\begin{proof}
Let $\chi$ be a primitive Dirichlet character
whose conductor $m_\chi$ is a divisor of $N$.
We define
$\xi_\pm^N(s,\chi)$
by the formula \eqref{eq:standardL} with the sum
restricted to those $x$ with $P(x)$ coprime to $N$
(rather than $m_\chi$).
Then Proposition \ref{prop:residue_f} and Corollary \ref{cor:f_p} imply
\begin{align*}
\underset{s=1}{\res}\ \xi_\pm^N(s,{\bf 1})
&=\underset{s=1}{\res}\ \xi_\pm(s,{\bf 1})
\prod_{p\mid N}\left(1-p^{-1}\right)\left(1-p^{-2}\right),\\
\underset{s=5/6}{\res}\ \xi_\pm^N(s,\chi)
&=\underset{s=5/6}{\res}\ \xi_\pm(s,\chi)
\prod_{p\mid N,\ p\nmid m_\chi}
\left(1-p^{-1}\right)\left(1-\chi(p)^{-2}p^{-4/3}\right),
\end{align*}
where if $\chi^6\neq{\bf 1}$ the second formula means
the formal equality $0=0$.
On the other hand, Sato-Shintani's Tauberian theorem
\cite[Theorem 3]{sash} asserts
\[
\sum_{0<n<X, (n,N)=1}h_\pm(n)\chi(n)=
\underset{s=1}{\res}\ \xi_\pm^N(s,\chi)X
+\underset{s=5/6}{\res}\ \xi_\pm^N(s,\chi)\frac{X^{5/6}}{5/6}
+O_{N,\epsilon}(X^{3/5+\epsilon}).
\]
Now the theorem follows from the residue formulas in
Theorems \ref{thm:residueShintani} and \ref{thm:standardL}
with the orthogonality of characters.
Note that $\varphi(N)=N\prod_{p\mid N}(1-p^{-1})$.
\end{proof}

Let $\mathcal P$ be a finite set of primes.
We define the $\mathcal P$-maximal $L$-function
$\xi_\pm^{\mathcal P}(s,\chi)$
by the formula \eqref{eq:standardL} with the sum
restricted to those $x$ satisfying $(x\mod p^2)\in V_{p^2}^{\max}$
for all $p\in\mathcal P$. Note that
$(P(x),m)=1$ implies $(x\mod p^2)\in V_{p^2}^{\max}$
for $p\mid m$, hence only primes $p\in\mathcal P$ coprime
to $m$ are relevant for the definition.
Then $\xi_\pm^{\mathcal P}(s,\chi)$ again
has a pole under the same condition
for $\xi_\pm(s,\chi)$, and by Proposition \ref{prop:residue_f}
and Corollary \ref{cor:f_p^2_residue}
the residues are
\begin{equation}\label{eq:residue_standardLmax}
\underset{s=5/6}{\res}\ \xi_\pm^{\mathcal P}(s,\chi)
=\underset{s=5/6}{\res}\ \xi_\pm(s,\chi)
\prod_{p\in\mathcal P,\ p\nmid m}
\left(1-p^{-2}\right)\left(1-{\chi^2(p)p^{-5/3}}\right).
\end{equation}

The poles at $s=5/6$ of $\xi_\pm^{\mathcal P}(s,\chi)$,
as well as of $\xi_\pm(s,\chi)$, are the source of the biases
we described in Theorem \ref{thm:introsccchi}.
Indeed, for $\chi$ as in Theorem \ref{thm:standardL},
we prove in \cite{scc} that
\begin{equation}\label{eq:density_twist}
\sum_{\substack{[F:\Q]=3,\ 0<\pm\Disc(F)<X\\ (\Disc(F),m)=1}}\chi(\Disc(F))
=\frac{K_\pm(\chi)}{2}\frac{X^{5/6}}{5/6}+O(m^{8/9}X^{7/9+\epsilon}),
\end{equation}
where $K_\pm(\chi)$
is the limit of \eqref{eq:residue_standardLmax}
as $\mathcal P$ tends to the set of all primes:
\[
K_\pm(\chi):=
\frac{4K_\pm\tau(\chi^2)^3}{3\Gamma(2/3)^3m^2\prod_{p\mid m}(1+p^{-1})}
\frac{L(1/3,\chi^{-2})}{L(5/3,\chi^2)}.
\]
The $2$ in the denominator of $\frac{K_\pm(\chi)}{2}$
in \eqref{eq:density_twist} is the index $[\gl_2(\Z):\spl_2(\Z)]$.
This appears because the Shintani zeta functions count
$\spl_2(\Z)$-orbits, while cubic fields correspond to $\gl_2(\Z)$-orbits.

We briefly explain other variations as well.
Suppose first that the conductor $m$ is a power of $p=2$.
Then there are no cubic nor sextic characters, but
are three quadratic characters $\chi$. One is of conductor $4$,
and the two others are of conductor $8$.
To compute the residues, we note that
for $a\in V_{\Z_2}$ of type $(3)$, $(21)$ or $(111)$,
$P(a)\mod 8$ is given by
\begin{equation}\label{eq:P(a)mod8}
P(a)\equiv
\begin{cases}
1 \mod 8& \text{$a$ : of type $(3)$, $(111)$},\\
5 \mod 8& \text{$a$ : of type $(21)$}.
\end{cases}
\end{equation}
This is easily verified for the representatives
$(1,0,1,1)\in V_{\Z_2}(3)$, $(0,1,1,1)\in V_{\Z_2}(21)$
and $(0,1,1,0)\in V_{\Z_2}(111)$ and hence is true for any element $a$
because $P(ga)=(\det g)^2P(a)$ and
$(\det g)^2\in(\Z_2^\times)^2=1+8\Z_2$.
Let $\chi$ be of conductor $4=p^2$.
By \eqref{eq:P(a)mod8}
$h(a)=\chi(P(a))$ is always $1$ when $P(a)\in(\Z/4\Z)^\times$,
and we have
\[
\esA_{p^2}(h)=\esB_{p^2}(h)=(1-p^{-1})(1-p^{-2}),
\quad
\esC_{p^2}(h,{\bf1})=(1-p^{-1})(1-p^{-4/3}).
\]
Hence $\xi_\pm(s,\chi)$ has poles both at $s=1$ and $s=5/6$ for this $\chi$.
But this is fairly reasonable,
because $P(x)$ is always $\equiv 0,1\mod 4$
for $x\in V_\Z$, and so
$\xi_\pm(s,\chi)$ simply counts orbits with $P(x)\equiv1\mod4$
without a twist.
On the other hand, if $\chi$ is either character of conductor $8=p^3$,
by \eqref{eq:P(a)mod8}
$h(a)=1$ if $a$ is of type $(3)$ or $(111)$,
$h(a)=-1$ if $a$ is of type $(21)$,
and $h(a)=0$ otherwise.
Hence we have
$
\esA_{p^3}(h)=\esB_{p^3}(h)=\esC_{p^3}(h,{\bf1})=0
$
and the $\xi_\pm(s,\chi)$ are entire.

Second, this observation for $m=2^c$
allows us to extend Theorem \ref{thm:standardL}
to $m$ even. This consists of case by case
descriptions corresponding to conditions on $\chi_2$,
and we omit the detail.

Third, let $r$ be a positive integer. Then
\begin{equation}\label{eq:standardLr}
\xi_\pm(s,r,\chi)
:=\sum_{\substack{x\in\spl_2(\Z)\backslash V_\Z^\pm\\r\mid P(x),\ (P(x)/r,m)=1}}
	\frac{|{\rm Stab}(x)|^{-1}\chi(P(x)/r)}{|P(x)|^s}
\end{equation}
is also a natural $L$-function.
Since ${}^{t}(\xi_+(s,r,\chi),\xi_-(s,r,\chi))=\xi(s,h)$
for an appropriate $h\in C(V_N,\chi^2)$,
we can study these as well. In particular
it is entire if $\chi^6\neq\bf1$, and
we can describe their residues explicitly when $\chi^6=\bf1$
for the case we can apply the residual computations.
This includes the case when $r$ is cubefree and
$p\nmid m$ for all $p^2\mid r$.
As a simplest example, let $m=r=p\neq 2,3$.
Then $h\in C(V_{p^2},\chi^2)$ is given by
\[
h(a)
=\begin{cases}
\chi(P(a)/p) 	& a\in V_{p^2}(1^21_{\max}),\\
0		& \text{otherwise}.
\end{cases}
\]
For $\chi$ quadratic,
by Proposition \ref{prop:residue_ur_max} with
Proposition \ref{prop:singular_orbits} (1) (ii),
we see that
$\esA_{p^2}(h)=\esB_{p^2}(h)=\esC_{p^2}(h,{\bf1})=0$.
For $\chi$ cubic or sextic,
by Proposition \ref{prop:residue_rm_max} we have
\begin{align*}
\esC_{p^2}(h,\chi^2)
&=\sum_{a\in V_{p^2}(1^21_{\max})}\chi(P(a)/p)\esC_p(a,\chi^2)
=\frac{\chi(4)p^{-1/3}}{1-p^{-2}}\sum_{a\in V_{p^2}(1^21_{\max})}\chi^3(P(a)/p)\\
&=
\begin{cases}
\chi(4)(1-p^{-1})p^{-4/3} & \text{$\chi$ is cubic},\\
0& \text{$\chi$ is sextic}.\\
\end{cases}
\end{align*}
Hence the $\xi_\pm(s,p,\chi)$ are entire unless $\chi$ is cubic,
in which case their residues at $s=5/6$ are
\begin{equation*}\label{eq:residue_standardLr}
\underset{s=5/6}{\res}\ \xi_\pm(s,p,\chi)
=K_\pm\frac{2\pi^2\chi(4)(1-p^{-1})}{9\Gamma(2/3)^3p^{4/3}}
L(1/3,\chi^{-2}).
\end{equation*}
This again is a source of the bias in Theorem \ref{thm:introsccchi}
for $m=p^2$ and $(m,a)=p$.

Moreover, if $r=r(X_N)$
for a union of $G_N$-orbits $X_N$ in $V_N$ is well defined,
then we can define $\xi_\pm(s,X_N,\chi)$
by \eqref{eq:standardLr} with the sum
restricted to those $x$ satisfying $(x\mod N)\in X_N$.
This is in fact possible if $X_N$ detects certain
maximal cubic rings over $\Z_p$ for each $p\mid N$,
and as we computed the contributions to the residues
for all $a\in V_{p^e}^{\rm max}$
in Propositions \ref{prop:residue_ur_max}, \ref{prop:residue_rm_max},
we can describe the residues explicitly. This enables us to impose
local specifications while counting
cubic fields in arithmetic progressions.
For details, see \cite[Section 6.4]{scc}.

\end{document}